\newtheorem{theorem}{Theorem}
\newtheorem{remark}{Remark}
\newtheorem{proposition}[theorem]{Proposition}
\newcommand{\Vj}{{\mathbf{j}}}
\newcommand{\Vk}{{\mathbf{k}}}
\newcommand{\VF}{{\mathbf{F}}}
\newcommand{\VU}{{\mathbf{U}}}
\newcommand{\VV}{{\mathbf{V}}}
\newcommand{\VW}{{\mathbf{W}}}
\newcommand{\VX}{{\mathbf{X}}}
\providecommand{\CC}{{\cal C}}
\providecommand{\CD}{{\cal D}}
\providecommand{\CF}{{\cal F}}
\providecommand{\CK}{{\cal K}}
\providecommand{\CN}{{\cal N}}
\providecommand{\CO}{{\cal O}}
\providecommand{\bbE}{\mathbb{E}}
\providecommand{\bbN}{\mathbb{N}}
\providecommand{\bbP}{\mathbb{P}}
\providecommand{\bbR}{\mathbb{R}}
\providecommand{\bbV}{\mathbb{V}}
\newcommand{\Dx}{\ensuremath{\Delta x}}
\newcommand{\Dy}{\ensuremath{\Delta y}}
\newcommand{\dx}{\mathrm{d}x}
\newcommand{\dt}{\mathrm{d}t}
\newcommand{\figsize}{0.4}
\newcommand{\main}{./}
\title{A Monte-Carlo ab-initio algorithm for the multiscale simulation of compressible multiphase flows}
\author{M. Petrella$^1$ \and R. Abgrall$^2$ \and S. Mishra$^1$}
\date{
    $^1$Seminar of Applied Mathematics, ETH Zurich, Switzerland.\\%
    $^2$Department of Mathematics, University of Zurich, Switzerland.\\ %
\today
}
\date{\footnotesize{\today}}
\begin{document}
\maketitle
\begin{abstract}
We propose a novel Monte-Carlo based \emph{ab-initio} algorithm for directly computing the statistics for quantities of interest in an immiscible two-phase compressible flow. Our algorithm samples the underlying probability space and evolves these samples with a sharp interface front-tracking scheme. Consequently, statistical information is generated without resorting to any closure assumptions and information about the underlying microstructure is implicitly included. The proposed algorithm is tested on a suite of numerical experiments and we observe that the ab-initio procedure can simulate a variety of flow regimes robustly and converges with respect of refinement of number of samples as well as number of bubbles per volume. The results are also compared with a state-of-the-art discrete equation method to reveal the inherent limitations of existing macroscopic models.   
\end{abstract}

\section{Introduction}
\emph{Multiphase flows} arise in a wide variety of physical phenomena ranging from bubble dynamics and shock wave interactions with material discontinuities to  detonation of high energetic materials, hypervelocity impacts, cavitating flows and combustion systems \cite{Ishii, Drew&Passman}. Given their importance in applications, the design of a suitable mathematical framework to describe multiphase flows and efficient numerical methods to simulate them is imperative. 

A key observation regarding multiphase flows is the significant amount of uncertainty in the exact locations of particular constituents at any given time. 
%Moreover, experiments conducted with nominally identical fluids in identical viscometers yield identical macroscopic properties, even though the exact initial locations of the suspended particles in the experiments were very different \cite{Drew&Passman}. 
%Given these observations, 
Hence, the description of multiphase flow phenomena need to be provided in terms of \emph{statistical} quantities of interest for the flow.  However, derivation of such a suitable statistical description is highly non-trivial. To contextualize the problems, we provide a summary of the underlying mathematical framework, for instance from \cite{Drew&Passman}, here.  

Non-mixing two-phase flow in one space dimension consists of two phases, each of which is assumed to occupy a time-dependent, phase-wise disjoint domain. To be more precise, denote $D\subset\mathbb{R}$ a domain and $T>0$ a time horizon. For any time $t \in [0,T]$ and \emph{realization} $\omega \in \Omega$ (a probability space),  we define the time-dependent (random) domain occupied by phase $k \in \lbrace 1,2\rbrace$ as $D_k(t,\omega) = \lbrace x\in D\, \vert\, \textit{phase } k \textit{ is present at } (x,t) \textit{ in realization }\omega\rbrace$. The phase domains need to verify,
\begin{subequations}
\label{ansatz}
\begin{align}
\label{eq:SaturationCondition}
\textit{(Saturation Condition)}
\qquad
\overline{
D^{(1)}(t,\omega)
\cup
D^{(2)}(t,\omega)
} \setminus \partial D
&= D,\qquad t\in [0,T]\\
\label{eq:NonMixingCondition}
\textit{(Non-mixing Condition)}
\qquad
D^{(1)}(t,\omega)\cap D^{(2)}(t,\omega) &= \emptyset,\qquad t\in [0,T]
\end{align}
\end{subequations}
where $\partial D$ denotes the boundary of $D$, and the interface between two phases is defined as $I(t,\omega) := \overline{
D^{(1)}(t,\omega)}
\cap 
\overline{
D^{(2)}(t,\omega)
} \setminus \partial D $. Within its underlying domain, each phase is assumed to be governed by the (single phase) Euler equations of gas dynamics,
\begin{equation}
\label{eq:Euler_k}
\partial_t \VU^{(k)} + \partial_x \VF^{(k)} = 0 \qquad \textit{ in } D_k(t,\omega)\qquad \forall\, k\in \lbrace 1, 2 \rbrace.
\end{equation}
where 
\begin{equation}
\VU^{(k)} = 
\begin{bmatrix}
\rho^{(k)}\\
\rho^{(k)}u^{(k)}\\
\rho^{(k)}E^{(k)}
\end{bmatrix},
\qquad\quad
\VF^{(k)} = 
\begin{bmatrix}
\rho^{(k)}u^{(k)}\\
\rho^{(k)}{u^{(k)}}^2 + p^{(k)}\\
u^{(k)}\left(\rho^{(k)}E^{(k)} + p^{(k)}\right)
\end{bmatrix}.
\end{equation}
Here $\rho^{(k)}, u^{(k)}, p^{(k)}, E^{(k)}$ denote the density, velocity, pressure and total energy of the phase $\Sigma_k$. The total energy is defined in terms of the velocity field and of the (very generic form of ) internal energy $e^{(k)}$ via
\[
E^{(k)} = \frac{1}{2}{u^{(k)}}^2 + e^{(k)}
\]

Each phase can be identified by its \emph{characteristic function}, a random field given by,
\begin{equation}
\label{eq:CF}
X^{(k)}(x,t;\omega) = \begin{cases}
1 & \textit{if}\quad x\in D_k(t,\omega)\\
0 & \textit{otherwise}.
\end{cases}
\end{equation}
By \eqref{ansatz}, one then deduces that
\begin{equation}
\label{eq:SaturationCondition}
X^{(1)}(x,t;\omega)+X^{(2)}(x,t;\omega) = 1, 
\qquad
\, for\, a.e.\, (x,t)\in D\times (0,T],\,\,
\forall\,\omega\in\Omega.
\end{equation}
Therefore, the statistical description of two-phase flow reduces to the following PDEs,
\begin{equation}
\label{eq:mean-Euler-k}
\mathbb{E}\Bigg[ X^{(k)}\left( \partial_t \VU^{(k)} + \partial_x \VF^{(k)}\right) \Bigg ] = 0, \qquad 
\forall\, (x,t)\in D\times(0,T],\,
\forall\, k \in \lbrace1,2\rbrace
\end{equation}
under the consistency condition (\ref{eq:SaturationCondition}). Here, $\mathbb{E}$ denotes the \emph{expectation} (statistical mean) with respect to the underlying probability space $(\Omega,\mathcal{F},\mathbb{P})$.

As the Equation \eqref{eq:mean-Euler-k} is not tractable in general, further assumptions need to be made. Roughly speaking, one follows \cite{Drew&Passman} by assuming that the expectation and differential operators \emph{commute} and uses Reynolds' transport theorem to obtain, 
\begin{equation}
\label{eq:Etwo-phase}
\begin{cases}
\partial_t \mathbb{E}\left[X^{(k)}\tilde{\VU}^{(k)}\right] + \partial_x\mathbb{E}\left[X^{(k)} \tilde{\VF}^{(k)}\right] = \mathbb{E}\left[\left(\tilde{\VF}_I^{(k)} - \sigma \tilde{\VU}_I^{(k)}\right)\partial_x X^{(k)}\right]\\
\mathbb{E}\left[X^{(1)}\right] + \mathbb{E}\left[X^{(2)}\right] = 1
\end{cases}
\end{equation}
where $\sigma$ denotes the speed of the interface, $\tilde{\VU}^{(k)} := [1,\VU^{(k)}]$, $\tilde{\VF}^{(k)} = [0,\VF^{(k)}]$ and the subindex $I$ stands for evaluation of corresponding quantities at the interface.

In analogy with turbulence modelling, one then tries to rewrite both sides of \eqref{eq:Etwo-phase} in terms of the so-called \emph{Favre-averaged variables}
\begin{equation}
\label{eq:Favre-variables}
\alpha^{(k)} := \mathbb{E}\left[X^{(k)}\right],\quad
\overline{\rho}^{(k)} := \frac{\mathbb{E}\left[X^{(k)}\rho^{(k)}\right]}{\alpha^{(k)}},\quad
\overline{u}^{(k)} := \frac{\mathbb{E}\left[X^{(k)}\rho^{(k)}u^{(k)}\right]}{\alpha^{(k)}\overline{\rho}^{(k)}}, \quad
\textit{etc},
\end{equation} 
hoping to derive a self-consistent set of PDEs. 

This derivation clearly highlights the problems emanating from this procedure as one cannot \emph{close} the equations, purely in terms of the Favre-averaged variables. Rather, \emph{closure} relations have to be specified in order to make the resulting PDEs self-consistent. 

The derivation of such closure models has been the topic of intensive research in the last years with \cite{BaerNunziato,Bdzil,Bo,Gallouet,Kapila,Murrone,Saurel&Abgrall,Stewart} providing a very selected list of references. However, all these models have intrinsic drawbacks such as the appearance of non-conservative products, possibly negative volume fractions and lack of monotonicity with respect to mixture sound speed, among others. See \cite{Saurel2009} and reference therein for an extensive review of the criticisms related to these models. 
In general, it seems unlikely that a universal closure model will be found as information about phenomena at finer scales is missing in such models and will always lead to models that might fit observed data in one flow regime while significantly deviating in others. 

A different, yet related, approach was proposed in \cite{Abgrall&Saurel}. 
With the aim of recovering information lost in averaging over fine scales, the authors proposed a \emph{discrete equation method} (DEM), where underlying numerical schemes were used to simulate the flow and the resulting flows were averaged to recover statistical information. 
This approach belongs to the schemes of the so-called diffuse interface type \cite{Saurel2009} (i.e. one allows each interface to be smeared over several computational cells, as a result of numerical viscosity) and consists of seven equations in one-space dimension. 
Such a methodology has been shown to be able to deal with complex flow regimes, dynamical creation of interfaces as well as strong pressure differences \cite{Saurel2018}. 

Given this context, it is conceivable that taking the limit (with respect to mesh resolution) of the discrete equation method will yield PDEs that provide a suitable mathematical description of multiphase flow. 
The closure relations will implicitly arise from the underlying microstructure in this approach. 
In fact, this procedure was shown to converge to some well-known reduced models for multiphase \cite{Saurel2003, Petrella2022} in the limit of stiff mechanical relaxation, see also  \cite{Saurel2018}.

Unfortunately, such discrete equation models also have considerable drawbacks. In particular and as described in a recent paper \cite{Petrella2022} that provided a comprehensive analysis of the discrete equation method, this approach is \emph{under-determined} and gives rise to an infinite family of possible solutions. These solutions are further characterized in terms of parameters, one modeling the underlying probability coefficients and another the effects of stiff mechanical relaxation. Consequently, \cite{Petrella2022} demonstrated that the discrete equation method is incomplete without further information about the underlying micro-structure.  

It is clear from the above discussion that current modeling strategies for multiphase flow have reached a possible impasse and alternative approaches are needed to supply the missing information about microstructure. Presenting such an approach is the key goal of the current paper. 

Our starting points are the equations \eqref{eq:Etwo-phase} that describe two-phase flow at a \emph{microscopic level}. Instead of commuting expectation and differential operators to derive a macroscropic model in terms of the Favre-averaged variables, we will directly simulate the solutions of \eqref{eq:Etwo-phase}. In order of perform such \emph{ab-initio} simulations, we will require the following key ingredients, 
\begin{itemize}
    \item A Monte-Carlo type sampling and ensembling averaging procedure is used to approximate the expectation operator in \eqref{eq:Etwo-phase}. In particular, an ensemble of flow realizations are generated from the specified initial and boundary conditions, propagated in time with a suitable numerical method and averaged to extract relevant statistical quantities of interest. 
    \item A \emph{front-tracking} algorithm is employed as the numerical solution operator for time-propagation of the ensemble. Front tracking provides a \emph{sharp interface} method that circumvents the issues arising from numerical viscosity in smearing interfaces. 
\end{itemize}
Combining these ingredients into a \emph{novel} ab-initio algorithm for simulating two-phase flows, we will explore various flow configurations to verify the robustness of our procedure and to discover interesting facets of multiphase phenomena. In particular, these ab-initio simulations at the \emph{microscopic scale} will be compared vis a vis macroscopic simulations such as with the discrete equation method in order to glean out the limitations of macroscopic modeling in this context. 

This paper is organized as follows: we first detail the idea of the ab-initio method in Section \ref{sec:alg}, followed by a discussion of each building block. Sections \ref{sec:MSR} introduces a regime-generating strategy, taken as an initial condition for the application of the numerical evolution operator discussed in Section \ref{sec:FT_res}.
The Monte-Carlo ensembling is introduced in Section \ref{sec:MC}. The algorithm is then exemplified on a suite of numerical tests presented in Section \ref{sec:AI:MC:NE}, followed by a discussion of what was observed in Section \ref{sec:disc}. Conclusions are drawn in Section \ref{sec:concl}.

\section{An algorithm to compute two-phase flow solutions}
\label{sec:alg}

Let $(\Omega,\CF, \bbP)$ be a probability space.
Given the above discussion, it results clear that there is a one-to-one correspondence between the sets $D_k(t;\omega)$ and their characteristic functions $X^{(k)}(x,t;\omega)$.
Therefore, we will term the pair $\lbrace X^{(k)}(x,t;\omega) \rbrace_{k=1,2}$ for $\omega\in\Omega$ a \emph{random two-phase distribution}, and will make no distinction between $X^{(k)}(x,t;\omega)$ and $D_k(t;\omega)$.\\

Given a random two-phase distribution $X^{(k)}_0(\cdot;\omega)$, we shall show that
for sufficiently small initial data $\VU^{(k)}_0\in\CD^{(k)}$, there exist a time-parametrized two-phase distribution $D_k(t;\omega)$ such that the problems (\ref{eq:Euler_k}) for any $k=1,2$, admit a random solution in the weak sense.
The notion of random weak entropy solution for systems of hyperbolic conservation laws was firstly defined in \cite{Abgrall2017}.
Essentially such definition requires the underlying deterministic systems to be well-posed, so that random weak entropy solutions are defined by the path-wise equivalent of the deterministic ones.
Such a notion has as been studied for scalar conservation laws \cite{Mishra10} and for linear systems of hyperbolic conservation laws \cite{Sukys2014}, whereas for systems of non-linear hyperbolic systems the situation is more delicate.
Indeed, due to the limited well-posedness in the deterministic case of such systems \cite{Baiti01}, defining a notion for weak entropy solutions is limited to the small-BV bound \cite{Dafermos}.\\

Moreover, unfortunately, the notion of a random weak solution does not apply to systems (\ref{eq:Euler_k}), since the set of equations does not extend to the phasic boundary (i.e. the interface). 
Indeed, across it, solutions are not defined, and supplementary relations need to be added.
To this extent, we assume that the EOS associated to different phases admit a unique parametrization, in the sense that both phasic EOS can be written in a unique form, which defines a thermodynamically consistent EOS \cite{Menikoff}.\\
An example of joint parametrization for the usual EOS of common use is provided by the (shifted) Noble-Abel Stiffened Gas (NASG) EOS \cite{SaurelNASG}
\begin{equation}
\label{eq:NASG}
e^{(k)} = \frac{p^{(k)} + \gamma^{(k)} \pi^{(k)}}{(\gamma^{(k)} - 1)} \left(\frac{1}{\rho^{(k)}}-b^{(k)}\right)
\end{equation}
The sound speed associated to (\ref{eq:NASG}) reads \cite{SaurelNASG}
\begin{equation}
\label{eq:NASG_SoundSpeed}
{a^{(k)}}^2 = \gamma^{(k)}\frac{p^{(k)} + \pi^{(k)}}{(1-b^{(k)}\rho^{(k)})\rho^{(k)}}.
\end{equation}
To show that (\ref{eq:NASG}) can be considered as a parametrization, one first need to acknowledge that is provides a thermodynamical consistent EOS \cite{SaurelNASG}. 
Furthermore, let us denote by $\Vk^{(k)}:=[\gamma^{(k)}, \pi^{(k)}, b^{(k)}]$ the set of parameters involved in (\ref{eq:NASG}).
Then (\ref{eq:NASG}) can be reduced to model the ideal-gas EOS (IG-EOS) \cite[Chapter $1.2.4$]{ToroRS} by setting $\pi=b=0$, the co-volume EOS (CV-EOS) \cite[Chapter $1.2.5$]{ToroRS} by setting $\pi=0$ and the Stiffened Gas Equation of State (SG-EOS) \cite{Chen71}  upon setting $b=0$.\\

By exploiting conservation of mass, one can recast the time-constant behavior of the parameters $\Vk$ of the joint-parametrization via the (trivial) conservation laws
\[
\partial_t \left(\rho \Vk\right) + \partial_x \left( \rho u \Vk \right) = 0.
\]
In turn, for given initial conditions $\VU^{(k)}_0(x)$ and initial random two-phase distributions $X^{(k)}(x,t;\omega)$, one can argue for $\bbP$-a.e. $\omega\in\Omega$ and identify a solution of (\ref{eq:Euler_k}) as a solution of the discontinuous-flux system
\begin{equation}
\label{eq:Euler_dis_path}
\begin{cases}
\partial_t \VU(x,t;\omega) + \partial_x \VF\left(\VU(x,t;\omega)\right) = 0
& (x,t)\in D \times (0,T)\\
\VU(x,0;\omega) = 
\begin{cases}
\begin{bmatrix}
\VU^{(1)}_0(x)\\
\rho^{(1)}_0(x)\Vk^{(1)}
\end{bmatrix}
& x\in D_1(0;\omega)\\
\begin{bmatrix}
\VU^{(2)}_0(x)\\
\rho^{(2)}_0(x)\Vk^{(2)}
\end{bmatrix}
& x\in D_2(0;\omega)
\end{cases}
\end{cases}
\qquad
\forall
\omega\in\Omega
\end{equation}
where $\VU = [\rho,\rho u, \rho E,\rho\Vk]$ and $\VF = [\rho u, \rho u^2 + p, u(\rho E + p),\rho u\Vk]$ and the EOS in the total energy $E$ is given by the joint-parametrization.\\
Notice that, due to the conservation law form in (\ref{eq:Euler_dis_path}), Rankine-Hugoniot relations imply
\[
\VF^{(1)}(\VU^{(1)}(x(t),t;\omega)) - s\VU^{(1)}(x(t),t;\omega)
=
\VF^{(2)}(\VU^{(2)}(x(t),t;\omega)) - s\VU^{(2)}(x(t),t;\omega)
\]
where $s=s(t;\omega)$ denotes the (Lax-admissible) interface speed. 

If a solution to (\ref{eq:Euler_dis_path}) in the weak sense exists, then the random two-phase distribution $X^{(k)}(x,t;\omega)$ can also be equivalently written
\[
X^{(k)}(x,t;\omega)
= 
\begin{cases}
1 & \Vk(x,t;\omega) = \Vk^{(k)}\\
0 & \textit{otherwise}
\end{cases}
\]
and the functions $\VU^{(k)} = X^{(k)}\VU$ verify (\ref{eq:Euler_k}) in the weak sense.

In complete analogy to the standard practice for defining random weak entropy solutions, one would like to establish well-posedness for the deterministic version of (\ref{eq:Euler_dis_path}). Unfortunately, such system is not strictly hyperbolic, in general, and classical results granting well-posedness do not apply \cite{Dafermos,Risebro}.\\
Recently, by exploiting the Front-Tracking (FT) approach, we were able \cite{Petrella22FT} to show that a unique, Lax-admissible weak solution for the deterministic equivalent of (\ref{eq:Euler_dis_path}) exists and that it is unique, under the small-BV assumption.\\

Such a procedure, construct for $\bbP$-a.e. $\omega\in\Omega$ a solution via the FT-algorithm and all the stability properties are inherited immediately $\bbP$-a.s. \cite{Mishra10}.
Hence, the aforementioned description provide a road map for approximating two-phase flow solutions: under the assumption of knowing a random two-phase distribution, one aims at computing statistics of the random field $\VU(x,t;\omega)$.
Specifically, the ab-initio method performs MC-based approximations on the stochastic dimension of the corresponding random field. 
Loosely speaking, the strategy is to adopt a regime-generating procedure to produce the initial two-phase distributions, evolve them via the (numerical) Front-Tracking (FT) method discussed in \cite{Petrella22FT} and make ensemble averaging of such approximate solutions.
For the sake of clarity we will discuss each of such step, starting with the generation of a random two-phase distribution $D_k(0;\omega)$ $k=1,2$.

Since the strategy applies at the numerical level, we consider the computational domain $D = [a,b]$ with $a,b\in\bbR$ and discretize it into $M\in\bbN$ cells (control-volumes), namely
\begin{equation}
\label{eq:AI:FV_discretization}
D = \bigcup_{i=1}^{M} \CC_i, \qquad \CC_i :=[x_{i-\frac{1}{2}},x_{i+\frac{1}{2}}], \qquad x_{i+\frac{1}{2}} = x_i + \frac{\Delta x}{2}, \qquad \forall i=1,\ldots, M.
\end{equation}
where $x_i = a + i\Delta x$ and the mesh width $\Delta x = \frac{\vert b-a\vert }{M}$.

In a similar fashion, we introduce a time-mesh by letting $t^n = n\Delta t$, where $\Delta t>0$ is the time step, whose precise meaning will be specified later.\\
For any realization $\omega_l\in\Omega$, let $q^{(k)}(x_i,t^n;\omega_l) \in (L^\infty\cap BV)\left(D_k(t^n)\right)$ denote the value of the quantity $q$ associated to phase $k$ of the two-phase flow at the spacial location $x =x_i$ and time $t=t^n$.

The (piece-wise constant) Finite Volume (FV) approximation of $q^{(k)}$ over $\CC_i$ then reads
\begin{equation}
\label{eq:AI:FV-approximation}
I_i^{(k)} [q^{(k)}](t^n;\omega_l) := \frac{1}{\Delta x} \int_{x_{i-\frac{1}{2}}}^{x_{i+\frac{1}{2}}} X^{(k)}(x,t^n;\omega_l) q^{(k)}(x,t^n;\omega_l)\,\mathrm{d}x \approx q^{(k)}(x_i,t^n;\omega_l)
\end{equation} 
where the integration is performed via a suitable quadrature rules. 

Notice that if $q^{(k)}(x,t^n;\omega_l) = q$ for any $x\in \mathcal{C}_i$ (i.e. $q^{(k)}$ is constant), then $I^{(k)}_i$ agrees with the space-integral average of $q^{(k)}$ if and only if the characteristic function is of one value throughout $\mathcal{C}_i$.
Such a consistency requirement is mainly the reason why Favre-like averages become very useful in this context.

Given an initial condition $\VU_0^{(k)}$ associated to phase $k$, one computes the corresponding FT approximation over the mesh (\ref{eq:AI:FV_discretization}) by (an approximation of)
\[
\alpha_i^{(k),0}\VU^{(k),0}_i = \frac{1}{\Delta x} \int_{x_{i-\frac{1}{2}}}^{x_{i+\frac{1}{2}}} \bbE\left[X^{(k)}(x,0)\VU^{(k)}_0(x)\right]\,\dx
\]
thus resulting in a pair of data inside each volume $\CC_i$.
Notice that, as it is usual, the initial information does not make any approximation at the stochastic level.

\section{Micro-scale Generation}
\label{sec:MSR}
\noindent
In order to compute Monte-Carlo statistics, we need to design a strategy to generate independent and identically distributed realizations starting from the given initial condition. 
This latter is typically provided in average, so that the aim of this section is to design a volume-wise sub-discretization generating a distribution of phases whose mean coincides with the given initial datum. 

For a given volume $C_i$ in the computational domain with corresponding
volume fractions $\alpha_i^{(k),0}$ with $k\in\lbrace 1,2\rbrace$, the flow realization indexed by $\omega\in\Omega$ is generated by performing the Algorithm \ref{al:AI:MGP}.

\begin{algorithm}
\caption{(Volume-based) Micro-scale Generation algorithm}\label{al:AI:MGP}
\begin{algorithmic}
\STATE $\textbf{Data\,:\,}$ 
Volume $\CC_i$ with width $\Delta$, volume fractions $\alpha_i^{(k),0}$ $k=1,2$, corresponding vectors of conserved variables $\VU^{(k),0}_i$, a number of subcells $N^{(i)}$;\\
\STATE $\textbf{Output\,:\,}$ Two-phase distribution $X^{(k)}_0(x;\omega)$ and corresponding initial condition $\VU_{0,i}(x)$;

\STATE $\bullet$ Subdivide $\CC_i$ into $N^{(i)}$ (sub-)cells, i.e. generate a (local) sub-mesh $\Big\lbrace \xi_j^{(i)}\Big\rbrace_{j=0}^{N^{(i)}}$ such that
\[
x_{i-\frac{1}{2}}=\xi_0<\xi_1<\ldots<\xi_{N^{(i)}-1}<\xi_{N^{(i)}}=x_{i+\frac{1}{2}}
\]
\STATE $\bullet$ Generate a sample $\VX(\omega) = (X_1(\omega),\ldots,X_{N^{(i)}}(\omega)) \in \lbrace 1,2\rbrace^{N^{(i)}}$ as to ensure that the (path-wise) two-phase distribution
\[
X^{(k)}(x;\omega) := \begin{cases}
1 & x\in [\xi_j,\xi_{j+1})\textit{ and } X_j = k\textit{  for some } j=0,\ldots,N^{(i)}-1\\
0 & \textit{otherwise}
\end{cases}
\]
verifies
\begin{equation}
\label{eq:AI:ConsistencyCond-initialLength}
\lim_{
\Delta x\rightarrow 0
}
\mathbb{E}
\Bigg [
\sum_{j=0}^{N^{(i)}-1} X^{(k)}\left( \frac{\xi^{(i)}_{j+1}+\xi^{(i)}_j}{2} ,0;\omega_l\right)\frac{\xi^{(i)}_{j+1}-\xi^{(i)}_j}{\Delta} 
\Bigg ]
= \alpha_i^{(k),0}
\end{equation}
\STATE $\bullet$ Define $\VU_{0,i}(x) = X^{(1)}_0(x;\omega)\VU^{(1),0}_{i} + X^{(2)}_0(x;\omega)\VU^{(2),0}_{i}$.
\end{algorithmic}
\end{algorithm}
For each $\omega\in\Omega$, after performing Alg.\ref{al:AI:MGP} the space-integral average (\ref{eq:AI:FV-approximation}) over $\CC_i$ results in 
\begin{equation}
\label{eq:AI:Estimator-VF-single}
I_i^{(k)}\left[X^{(k)}\right](0;\omega) = \sum_{j=0}^{N-1} X^{(k)}\left( \frac{\xi_{j+1}+\xi_j}{2} ,0;\omega\right)\frac{\xi_{j+1}-\xi_j}{\Delta x}.
\end{equation}
which demonstrates that the condition (\ref{eq:AI:ConsistencyCond-initialLength}) is a necessary, consistency requirement imposed by the (space integral average of the) initial condition.

Notice that Alg.\ref{al:AI:MGP} has at least two degrees of freedom: the size of each subcell and how to generate the realization $X(\omega)$, which are user-defined hyperparameters that may vary from case to case.\\ 
In our simulations we used a simple approximation, namely an equi-spaced sub-discretization, whose corresponding algorithm is provided in Alg.\ref{al:AI:eMGP}.
\begin{algorithm}
\caption{Equispaced (volume-based) micro-scale Generation algorithm}\label{al:AI:eMGP}
\begin{algorithmic}
\STATE $\textbf{Data\,:\,}$ 
Volume $\CC_i$ with width $\Delta$, volume fractions $\alpha_i^{(k),0}$ $k=1,2$, corresponding vectors of conserved variables $\VU^{(k),0}_i$, a number of subcells $N^{(i)}$;\\
\STATE $\textbf{Output\,:\,}$ Two-phase distribution $X^{(k)}_0(x;\omega)$ and corresponding initial condition $\VU_{0,i}(x)$;

\FOR{$i=1,\ldots, M$}
\STATE $\bullet$ Subdivide $\CC_i$ into  into $N^{(i)}$ equi-spaced subcells, i.e. 
\[
x_{i-\frac{1}{2}}=\xi_0^{(i)}<\xi_1^{(i)}<\ldots<\xi_{N^{(i)}-1}^{(i)}<\xi_{N^{(i)}}^{(i)}=x_{i+\frac{1}{2}}
\] 
where $\xi_j^{(i)} = x_{i-\frac{1}{2}} + j\frac{\Delta x}{N^{(i)}}$ for any $j\in\lbrace 0, \ldots, N^{(i)}\rbrace$.
\STATE $\bullet$ Compute the number $N_{k} := \lceil\alpha_i^{(k),0}N^{(i)}\rfloor$ of subcells affected by phase $k$, 
where $\lceil\cdot\rfloor$ denotes the rounding to the nearest integer function;
\STATE $\bullet$ Generate $X(\omega) = [X_1(\omega), \ldots, X_{N^{(i)}}(\omega)]\in \lbrace 1,2\rbrace^{N^{(i)}}$ by repeatedly sampling the discrete random variable
$Y\sim \mathrm{Unif}( [0,N^{(i)}]\cap\bbN )$ up to the point of generating indexes $\Vj^{(k)} := \left[j_1^{(k)},\ldots, j^{(k)}_{N_k}\right]$ such that 
\[
X_{j^{(k)}_s}(\omega) = k,\qquad j_{s}^{(k)}\in\lbrace 1, \ldots, N^{(i)} \rbrace
\]
and $j^{(p)}_s \neq j^{(q)}_r$, $\forall s\neq r \in\lbrace 1, \ldots, N_k\rbrace$, $\forall p, q \in\lbrace 1, 2\rbrace$, see Fig.\ref{Fig:Assign-Merge};
\STATE $\bullet$ Merge adjacent subcells that contain the same indexes;
\STATE $\bullet$ Define the initial condition $\VU_0$ and the corresponding two-phase distribution like in Alg.\ref{al:AI:MGP}.
\ENDFOR
\end{algorithmic}
\end{algorithm}
\begin{figure}[h!]
	\begin{center}
		\begin{tikzpicture}[scale=0.8]
		
		\pgfmathsetmacro{\Dx}{7};
		\pgfmathsetmacro{\hei}{3};
		\pgfmathsetmacro{\top}{1};
		\pgfmathsetmacro{\sp}{1};		
		
		\pgfmathsetmacro{\xi}{0};
		\pgfmathsetmacro{\xL}{\xi -\Dx};
		\pgfmathsetmacro{\xLL}{\xL - \sp};
		\pgfmathsetmacro{\xR}{\xi + \Dx};
		\pgfmathsetmacro{\xRR}{\xR + \sp};
		
		\pgfmathsetmacro{\yL}{0};
		
		\tkzDefPoint(\xLL, \yL){xim};
		\tkzLabelPoint[below, xshift = 0.3cm](xim){$x_{i-\frac{1}{2}}$};
		\tkzDefPoint(\xRR, \yL){xip};
		\tkzLabelPoint[below, xshift = -0.3cm](xip){$x_{i+\frac{1}{2}}$};
		\tkzDefPoint(\xi, \yL){xi};
		\tkzLabelPoint[below](xi){$x_{i}$};
		
		\draw [thick] (xim) -- (xip)
					  (\xL, \yL -\top) -- (\xL, \yL+ \hei + \top)
					  (\xR, \yL -\top) -- (\xR, \yL + \hei + \top);		
				
		%Rectangles
		\pgfmathsetmacro{\N}{12};
		\pgfmathsetmacro{\D}{2*\Dx/\N};
				
		\foreach[count=\i] \ind in {1,1,2,1,2,2,1,2,2,2,1,2}
		{	
			\ifthenelse{\ind = 1}{
				\draw [fill=blue!20] (\xL + \i*\D - \D,\yL) rectangle (\xL + \i*\D,\yL+\hei);
				\node [color=blue!50!black] at (\xL + \i*\D -0.5*\D,\yL+0.5*\hei){$U^{(1)}_i$};
			}{
				\draw [fill=green!20] (\xL + \i*\D - \D,\yL) rectangle (\xL + \i*\D,\hei);
				\node [color=green!50!black] at (\xL + \i*\D -0.5*\D,\yL+0.5*\hei){$U^{(2)}_i$};
			};			
		}		

	\pgfmathsetmacro{\dsp}{1};
	
	\draw [thick, -stealth] (\xi,\yL-\top) -- (\xi, \yL-\top -\dsp); 	
	
	\pgfmathsetmacro{\yL}{\yL - 2*\top -\dsp -\hei};

%		%Rectangles		
		\foreach \Dn/\ind [evaluate=\Dn as \ss using \Dn + \s,
		remember=\ss as \s (initially 0)] in {2.3/1,1.2/2,1.2/1,2.3/2,1.2/1,3.4/2,1.2/1,1.2/2}
		{	
			\ifthenelse{\ind=1}{
				\draw [fill=blue!20] (\xL + \s,\yL) rectangle (\xL+\ss,\yL+\hei);	
				\node [color=blue!50!black] at (\xL + \s + 0.5*\Dn,\yL+0.5*\hei){$U^{(1)}_i$};
			}{
				\draw [fill=green!20] (\xL + \s,\yL) rectangle (\xL+\ss,\yL + \hei);
				\node [color=green!50!black] at (\xL + \s + 0.5*\Dn,\yL + 0.5*\hei){$U^{(2)}_i$};
			};			
		}

		\tkzDefPoint(\xLL, \yL){xim};
		\tkzLabelPoint[below, xshift = 0.3cm](xim){$x_{i-\frac{1}{2}}$};
		\tkzDefPoint(\xRR, \yL){xip};
		\tkzLabelPoint[below, xshift = -0.3cm](xip){$x_{i+\frac{1}{2}}$};
		\tkzDefPoint(\xi, \yL){xi};
		\tkzLabelPoint[below](xi){$x_{i}$};
		
		\draw [thick] (xim) -- (xip)
					  (\xL, \yL -\top) -- (\xL, \yL+ \hei + \top)
					  (\xR, \yL -\top) -- (\xR, \yL + \hei + \top);
		
		\end{tikzpicture}
	\end{center}
\caption{Schematic representation of an equi-spaced subdivision, random assignment of states $\VU^{(k),0}_i$ and merge. 
In the depicted example the total number of subcells is $N^{(i)} = 12$, with $N_1 = 5$, $N_2 = 7$. 
Randomly generated indexes are $\textbf{j}^{(1)}= \left[1,2,4,7,11\right]$ and $\textbf{j}^{(2)}=\left[3,5,6,8,9,10,12\right]$.}\label{Fig:Assign-Merge}
\end{figure}
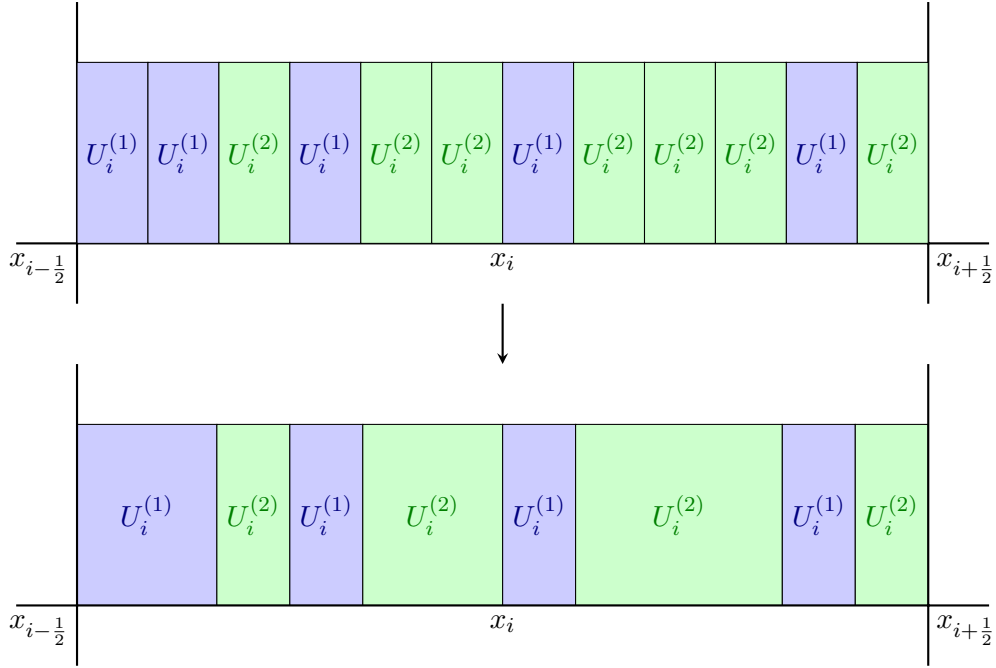

The following remarks are in order.
\begin{enumerate}
\item By performing an equispaced sub-discretization, the value $\Delta = \frac{\Delta x}{N^{(i)}}$ corresponds to choosing $\Delta$ as the finest resolution of dispersed matter inside $\CC_i$, initially;
\item Alg. \ref{al:AI:eMGP} is well-posed: notice that, due to the Law of Large Numbers, one is always able to produce the (distinct) vectors $\Vj^{(k)}$ for $k=1,2$ for sufficiently large sampling of the variable $Y$. This ensures that the algorithm comes to end, or equivalently, rules out the possibility of looping endlessly in the iterative process;
\item By (\ref{eq:AI:FV-approximation}) and (\ref{eq:AI:Estimator-VF-single}), one computes
\begin{equation}
\label{eq:AI:rounding}
I_i^{(k)}\left[X^{(k)}\right](0,\omega) = \frac{N_{k}}{N^{(i)}}
=
\alpha^{(k),0}_i \pm \frac{\delta_i^{(k)}}{N^{(i)}}
\end{equation}
for some rounding-off error $\delta_i^{(k)}\in [0,1)$.
Hence, the sub-discretization generated by Alg. \ref{al:AI:eMGP} verifies (\ref{eq:AI:ConsistencyCond-initialLength}) in the limit of $N^{(i)}\rightarrow \infty$. This can be understood as the Alg.\ref{al:AI:eMGP} to be only asymptotically unbiased, in general.
\item Notice that, due to the freedom in choosing the sub-discretization in Alg. \ref{al:AI:MGP}, the number of generated two-phase distributions is infinite.
The underlying idea that motivated us to consider Alg.\ref{al:AI:eMGP} is the underlying idea typical in Finite Element Methods (FEM): in order to solver a problem on a infinite dimensional set, we localize the problem on a sub-space of finite dimension, and aim at computing the solution on the former set via the limit of the latter.
Indeed, algorithm \ref{al:AI:MGP} defines all possible two-phase distributions with positive finite length of dispersed matter. For any of such realization, we aim at reproducing the same realization with a sufficiently large $N^{(i)}$ in Alg. \ref{al:AI:eMGP}. In particular, one is defining solutions with possibly infinitely many interfaces as the limit of processes having a finite number of interfaces. 
\end{enumerate}

\subsection{Choosing the number of sub-volumes}
\label{sec:AI:MG:volumes}

Alg. \ref{al:AI:eMGP} constructs two-phase distributions that are also in the range of the more general (meta-)algorithm Alg.\ref{al:AI:MGP}.
The former, allows for arbitrary choices of $N^{(i)}$ and in this section we would like to discuss the implication of choosing such hyper-parameter.
Two natural choices are possible : either to fix the number of sub-volumes for each computational cell $\CC_i$ (\emph{uniform} case) or to select it randomly (\emph{random} case) inside each volume.\\ 
In order to appreciate the difference we discuss the probability space both strategies are sampling from. 
Let us first consider the uniform case, where the number of sub-volumes $N^{(i)} = N$ for each volume $\CC_i$ with $i=1,\ldots, M$.
Inside $\mathcal{C}_i$, the (vector-valued) random variable $X = [X_1,\ldots,X_{N}]$ defined in the algorithm takes values in the discrete set 
\begin{equation}
\label{eq:vol_prob_space_uniform}
\begin{split}
\Omega^{(i)}_{N} &= 
\Bigg\lbrace 
\VX^{(i)} = \left(X^{(i)}_1, \ldots, X^{(i)}_{N}\right) \in \lbrace 1,2\rbrace^{N}\, \Big\vert \, 
\forall k\in\lbrace 1,2\rbrace,
\,
\#\lbrace j\in [0,N]\cap\bbN\,|\, X_j^{(i)} = k \rbrace
= N_k
\Bigg\rbrace.
\end{split}
\end{equation}
Notice that the inclusion $\Omega_{N}^{(i)} \subset\lbrace 1,2\rbrace$ is proper: depending on the volume fraction $\alpha^{(k),0}_i$, the element $\VX = (1,\ldots, 1) \in \lbrace 1,2\rbrace^{N} \setminus \Omega_{N}^{(i)}$. 
In particular, the space $\Omega^{(i)}_{N}$ consists of all the possible combinations (with permutation) of $N_k$ times $k$ in $N$ places; it then follows that $\vert \Omega_{N}^{(i)} \vert = \binom{N_k + N - 1}{N_k} = \binom{N_k + N - 1}{N -1}$.
In such case, the global probability space associated to the uniform sub-discretization using $N$ cells reads
\begin{equation}
\label{eq:AI:sampling_prob_space}
\Omega_{N} := \bigoplus_{i=0}^M \Omega_{N}^{(i)} := 
\Big\lbrace
\bm{\omega}^{(N)} := (\VX^{(1)}, \ldots, \VX^{(M)}) \in \Omega^{(1)}_{N}\times \ldots \times \Omega^{(M)}_{N}
\Big\rbrace
\subset
\lbrace 1,2\rbrace^{M\cdot N}
\end{equation}
The dimension of $\Omega_{N}$ is readily given by $|\Omega_{N}| = \vert\Omega_{N}^{(1)}\vert^M$. 

Conversely, a much bigger space is necessary when dealing with a random choice of the number of subcells: this may typically be provided by fixing an upper bound $N$ on the number of sub-volumes and then choosing $N^{(i)} \in [1,N]\cap\bbN$.
If so, inside the cell $\CC_i$, the random variable $\VX$ takes values in

\begin{equation}
\label{eq:vol-sampling-prob-space}
\Omega^{(i)}_{\leq N} := \bigcup_{ 1\leq N^{(i)}\leq N } \Omega_{N^{(i}}^{(i)},
\end{equation}
where $\Omega^{(i)}_{N^{(i)}}$ is defined in (\ref{eq:vol_prob_space_uniform}). 
The corresponding global probability space is given by $\Omega_{\leq N} = \bigoplus_{i=0}^M \Omega_{\leq N}^{(i)}$.

By monotonicity of the counting measure, it follows that, for fixed $N$, the dimension of $\Omega_{\leq N}$ is bigger than $\Omega_{N}$, showing that selecting the number of sub-volumes randomly needs to integrate over a much bigger space than the one associated to the uniform sub-discretization.
Since MC averages can also be interpreted as integration techniques with a fixed volume over the domain of interest, one then concludes that the uniform sub-discretization would offer a faster converging integration technique as opposed to the random one, when using the same number of samples.

We conclude this section by stressing that over a computational mesh of size $M\in\bbN$, where each volume is further sub-divided using $N\in\bbN$ volumes, there exists finitely many possible two-phase distribution over $D$ generated by Alg. \ref{al:AI:MGP}.

\section{Numerical Evolution Operator}
\label{sec:FT_res}

It is now established that high-resolution finite volume schemes can numerically approximate the single-phase Euler equations of gas dynamics in a robust and efficient manner. However, translating these schemes to simulate multi-phase flow has been unsuccessful, on account of oscillations of large amplitude as well as high frequency near phase interfaces \cite{Saurel&Abgrall} and reference therein. 

Given this, one needs to innovate in order to obtain a suitable numerical evolution operator. 
In this context, existing approaches can be divided into two categories i.e., diffuse interface methods and sharp interface methods, see \cite{Saurel1999} and references therein for comparison. 
Diffuse interface methods include traditional finite volume type schemes. 
However, the inherent numerical viscosity of these schemes leads to smearing of sharp interfaces and results in artificial mixing zones. 
It is unclear which equation of state should hold in these artificial mixing zones leading to possible conceptual failure of the algorithm. 
Typically, one circumvent such difficulty by making use of non-equilibrium models, which are though unable to track interface without assuming closure conditions \cite{Cocchi}. 
However, even if assuming this procedure to be viable, one would still need to fulfill a CFL-like condition which drastically reduces the time-step when the number of interfaces increases.\\
Consequently, sharp interface methods appear to be more promising alternative, particularly in one space dimension. 
These methods include Lagrangian based schemes \cite{Daude, Donea}, which are well-known to provide limited accuracy on relatively coarse meshes. 
These are some of the motivations that led us to explore another avenue \cite{Petrella22FT}, that of the \emph{front tracking} (FT) scheme \cite{Risebro} and references therein. 
Front tracking has been widely used in proving theoretical results for hyperbolic systems of conservation laws such as existence and stability (and thus uniqueness). 
Moreover, they constitute a powerful and efficient numerical simulator for hyperbolic systems, particularly in one space dimension \cite{Risebro}. 
Given their inherent lack of numerical viscosity, we modify the front tracking scheme to be used as the numerical evolution operator.

\begin{figure}[h!]
	\begin{center}
		\begin{tikzpicture}[scale=0.8]
		
		\pgfmathsetmacro{\xL}{-7};
		\pgfmathsetmacro{\Dx}{15};
		\pgfmathsetmacro{\xR}{\xL + \Dx};
		
		\pgfmathsetmacro{\yL}{-1};
		\pgfmathsetmacro{\Dy}{5};
		\pgfmathsetmacro{\yR}{\yL + \Dy};
		
		% Cartesian axes		
		\tkzDefPoint(\xL, 0){xl};
		\tkzDefPoint(\xR, 0){xr};
		\tkzDefPoint(\xL+1, \yL){yl};
		\tkzDefPoint(\xL+1, \yR){yr};
		
		\tkzLabelPoint[below](xr){$x$};
		\tkzLabelPoint[left](yr){$t$};

		\draw[->] (xl) -- (xr);
		\draw[->] (yl) -- (yr);	
		
		% First Generation fronts
		\pgfmathsetmacro{\dt}{1.5};
		
		\foreach \s in {1.5,0.5,-0.4,-0.5,-0.6,-0.7,-0.8,-0.9}
		{
			\pgfmathparse{\s > 1.0 ? 1 : 0}; 
			\ifthenelse{ \pgfmathresult = 0 }{
				\draw (-2.75, 0) -- (-2.75 + \s * 2 * \dt, 2 * \dt);
			}{
				\draw (-2.75, 0) -- (-2.75 + \s * \dt, \dt);
			};
		}
		
		\foreach \s in {-1,0.1,0.5}
		{
			\pgfmathparse{\s > 0.0 ? 1 : 0}; 
			\ifthenelse{ \pgfmathresult = 1 }{
				\draw (1, 0) -- (1 + \s * 2 * \dt, 2 * \dt);
			}{
				\draw (1, 0) -- (1 + \s * \dt, \dt);
			};
		}
		
		\foreach \s in {-1.2,-0.3,0.4,0.5,0.6,0.7,0.8,0.9}
		{
			\pgfmathparse{\s < -0.5 ? 1 : 0}; 
			\ifthenelse{ \pgfmathresult = 0 }{
				\draw (5, 0) -- (5 + \s * 2 * \dt, 2 * \dt);
			}{
				\draw (5, 0) -- (5 + \s * 1.57* \dt, 1.57*\dt);
			};
		}
		
		%Second Generation
		
		\pgfmathsetmacro{\col}{1 - \dt};

		\draw (\col, \dt) -- (0.4,\dt + \dt);
		\draw (\col, \dt) -- (0,\dt + \dt);
		\draw (\col, \dt) -- (-0.3,\dt + \dt);
		
		\pgfmathsetmacro{\Col}{1 + 0.5* 1.57* \dt};

		\draw (\Col, 1.57* \dt) -- (2.,2*\dt);
		\draw (\Col, 1.57* \dt) -- (3.5,2*\dt);		
			
		\end{tikzpicture}
	\end{center}
\caption{Schematic representation of a typical FT approximation.}\label{Fig:FT}
\end{figure}
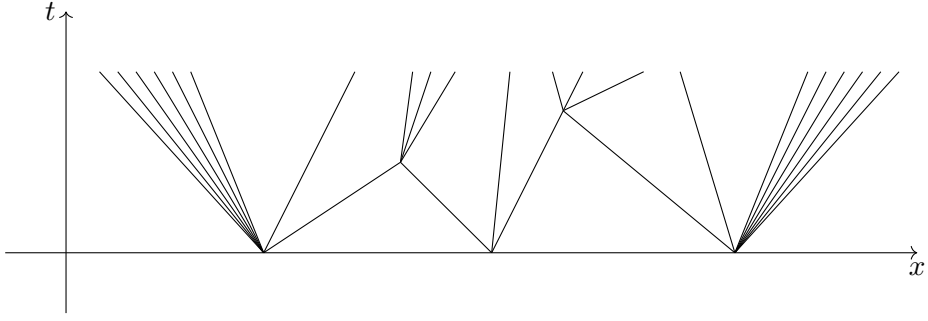

Classical FT approximations stems from the representation of wave interactions in characteristics space: the evolution of discontinuities in space-time is visualized by means of straight lines emanating from the initial discontinuity position. 
As it is well-known, in the paradigm of entropy solutions, only three types of solutions are involved: shocks, contact discontinuities and rarefactions. 
This latter is the only continuous type of solution and, as such, it gets approximated by a stair-like function, where the number of steps is controlled by an accuracy parameter $\delta>0$.
Each of the resulting discontinuity lines is called a \emph{front}, and FT starts with the assumptions that the initial condition is projected onto a piece-wise constant function, such that this latter converges to the initial condition as $\delta\rightarrow 0$, in suitable norm.
The thereby generated discontinuities are then evolved in time until two fronts collide. 
At this stage the simulation is stopped, the resulting Riemann Problem resolved and the newly generated fronts are inserted in the flow FT approximation. 
Iterating over all possible collisions yields the FT approximation of the solution at the output time $T>0$, see Fig. \ref{Fig:FT}.

Although intuitive, this procedure involve a non-negligible amount of approximations and its well-posedness is far from being trivial.
Nevertheless, in \cite{Petrella22FT} we discuss the well-posedness and stability of the methodology when applied to system of non-strictly hyperbolic conservation laws, providing a numerical algorithm for the FT approximation of corresponding solutions.

Concerning the time step $\Delta t$, due to the step-free property of the FT algorithm, no stability constraint must be enforced and arbitrary choices can be made, in principle.
This shall be treated as an hyper-parameter of the algorithm which is case dependent.
On the other hand, the concrete value for such parameter should be made as to balance efficiency of the computations and accuracy. 
Indeed, when projecting the solution onto a fixed Eulerian mesh, one introduces numerical viscosity in the simulation, which increases with the number of re-sampling steps. On the other hand large time steps allow for large number of collisions before re-sampling, thus reducing the overall speed.\\
Popular choices for a time step is provided by a simple equi-spaced rule
$
t^n = n\Delta t$, where $
\Delta t = \frac{T}{N}
$
for a user defined $N\in\bbN$, or the celebrated CFL rule (\ref{eq:CFL_tp}).
We detial the algorithmic procedure to carry out the FT method with resampling in Alg.\ref{al:AI:FT}.
\begin{algorithm}
\caption{Front-Tracking algorithm with re-sampling}\label{al:AI:FT}
\begin{algorithmic}
\STATE $\textbf{Data}\,:\,$ Initial datum $\VU^{(k)}_0\in L^{\infty}(D)$ $k=1,2$ with known material discontinuities, mesh width $\Delta>0$, a FT numerical evolution operator $S^\Delta$ and a end time $T>0$.
\STATE $\textbf{Output}\,:\,$ Evolved data $\VU^{(k)}(x,T)$.
\STATE $\bullet$ Generate a numerical mesh $\mathcal{T}^\Delta$ composed of volumes with width $\Delta$;
\STATE $\bullet$ Compute the volume averages $(\textbf{U}_0^{(k),h})_{h\in\mathcal{T}^\Delta}$ of the IC $\textbf{U}^{(k)}_0$ on $\mathcal{T}^\Delta$;
\STATE $\bullet$ Initialize the input time $t_{in}$, a time step $\Delta t$ and the output time: $t_{in} \leftarrow 0$, $\Delta t \leftarrow 0$, $t_{out} \leftarrow t_{in} + \Delta t$;
\WHILE {$t_{out} < T$}
\STATE $\bullet$ Generate a positive time step $0 < \Delta t = \mathrm{Stepper}\Big(t_{in},(\textbf{U}_0^h)_{h\in\mathcal{T}^\Delta},T\Big)$;
\STATE $\bullet$ $t_{out} \leftarrow t_{in} + \Delta t$;
\IF { $t_{out} \geq T$ }
\STATE \textbf{break};
\ENDIF
\IF { $t_{out}> T$ }
\STATE $\Delta t \leftarrow T - t_{out}$;
\ENDIF
\STATE $\bullet$ Evolve till the output time $t_{out}$ the volume averages $(\textbf{U}_0^h)_{h\in\mathcal{T}^\Delta}$ using the FT evolution operator $S^\Delta$ of Alg. \ref{al:FT:FT}, namely 
$$
(\textbf{U}_0^h)_{h\in\mathcal{T}^\Delta} = S^\Delta_{t_{out}}\Bigg((\textbf{U}_0^h)_{h\in\mathcal{T}^\Delta}\Bigg)
$$
\STATE $\bullet$ $t_{in} \leftarrow t_{out}$
\ENDWHILE
\end{algorithmic}
\end{algorithm}

Given the well-posedness of the evolution operator, the aim of the forthcoming sections is to provide an approximation of two-phase flow solutions via Monte-Carlo (MC) approximations.
As it is classical in two-phase flow, solutions are sought in statistical sense, where expectancy is take over all possible regimes.
This presents the restrictive property of requiring an infinite amount of possible regimes to be computed, in principle, whose number of interfaces grow towards infinity.
We aim at capturing such solutions by taking the limit of FT-approximations involving a finite number of interfaces.
Such a procedure essentially corresponds to the extension of what done in \cite{Petrella22FT} to the closure taken as $N\rightarrow\infty$:
let us consider an initial random field $\VU_{0,N}(x;\omega)$ generated by Alg.\ref{al:AI:eMGP} using $N\in\bbN$ number of sub-volumes. 
By denoting as $S^\delta_t$ the FT operator applied until time $t$ using an accuracy parameter $\delta$, then \cite{Petrella22FT} grants well-posedness of $\VU_N(x,t;\omega) = \lim_{\delta\rightarrow 0} S^\delta_t(\VU_{0,N}(x;\omega))$. 
If there exists the limit $\VU(x,t;\omega) = \lim_{N\rightarrow \infty} \VU_N(x,t;\omega)$, then one can write
\begin{align*}
\Vert \bbE\left[\VU(x,t;\cdot)\right] - \bbE\left[S^\delta_t(\VU_{0,N}(x;\cdot))\right] \Vert_{L^2(D)} 
&\leq 
\Vert \bbE\left[\VU(x,t;\cdot)\right] - \bbE\left[\VU_N(x,t;\cdot)\right] \Vert_{L^2(D)}\\
&\qquad
+
\Vert \bbE\left[\VU_N(x,t;\cdot)\right] - \bbE\left[S^\delta_t(\VU_{0,N}(x;\cdot))\right]\Vert_{L^2(D)}.
\end{align*}
where the first terms corresponds to the error introduced by taking a finite sub-scale resolution approximation (i.e. error introduced by Alg.\ref{al:AI:eMGP}) and the second corresponds to the error associated to the $\delta$-approximation of the numerical FT approximation.
To this extent, based on the forthcoming numerical tests we claim that the limit $\VU$ verifies
\begin{equation}
\label{eq:AI:FT_bubbles_approx_ansatz}
\Vert \bbE\left[\VU(x,t;\cdot)\right] - \bbE\left[\VU_N(x,t;\cdot)\right] \Vert_{L^2(D)} \leq C N^{-q}.
\end{equation}
where $C$ is a positive constant, and $q\in\bbN$ is the rate of sub-scale refinement.

The success of MC-based algorithm has been acknowledged in many recent works including \cite{Mishra10} for the first formulation (and corresponding convergence) of the Finite Volume (FV) Monte Carlo (MC) and FV Multi-Level Monte Carlo (FV-MLMC) method (see also \cite{Mishra2012}, and references therein), \cite{Risebro12} for the convergence of Front-Tracking (FT) MC (FT-MC) and FT MLMC  (FT-MLMC) methods.
Furthermore, FV-MC and FV-MLMC methods have been proven to be well-posed also for conservation laws with discontinuous fluxes \cite{Ruf21}.
We refer to these for technical details and convergence results.

\begin{figure}[htbp!]
\begin{center}
\begin{tikzpicture}[->,>=stealth',auto,node distance=2cm]

\pgfmathsetmacro{\figDim}{0.3};
\pgfmathsetmacro{\frontDim}{0.2};
\pgfmathsetmacro{\xshift}{5cm};
\pgfmathsetmacro{\yshift}{3.1cm};

\tikzstyle{block} = [rectangle, rounded corners, minimum width=3cm, minimum height=1cm, draw=black, shade]

\node (mgp) [block, top color=blue!10, minimum width = 17cm, minimum height = 10cm] {};

\node (IC) [below of=mgp, yshift = +4.5cm] {\includegraphics[width=\figDim\textwidth]{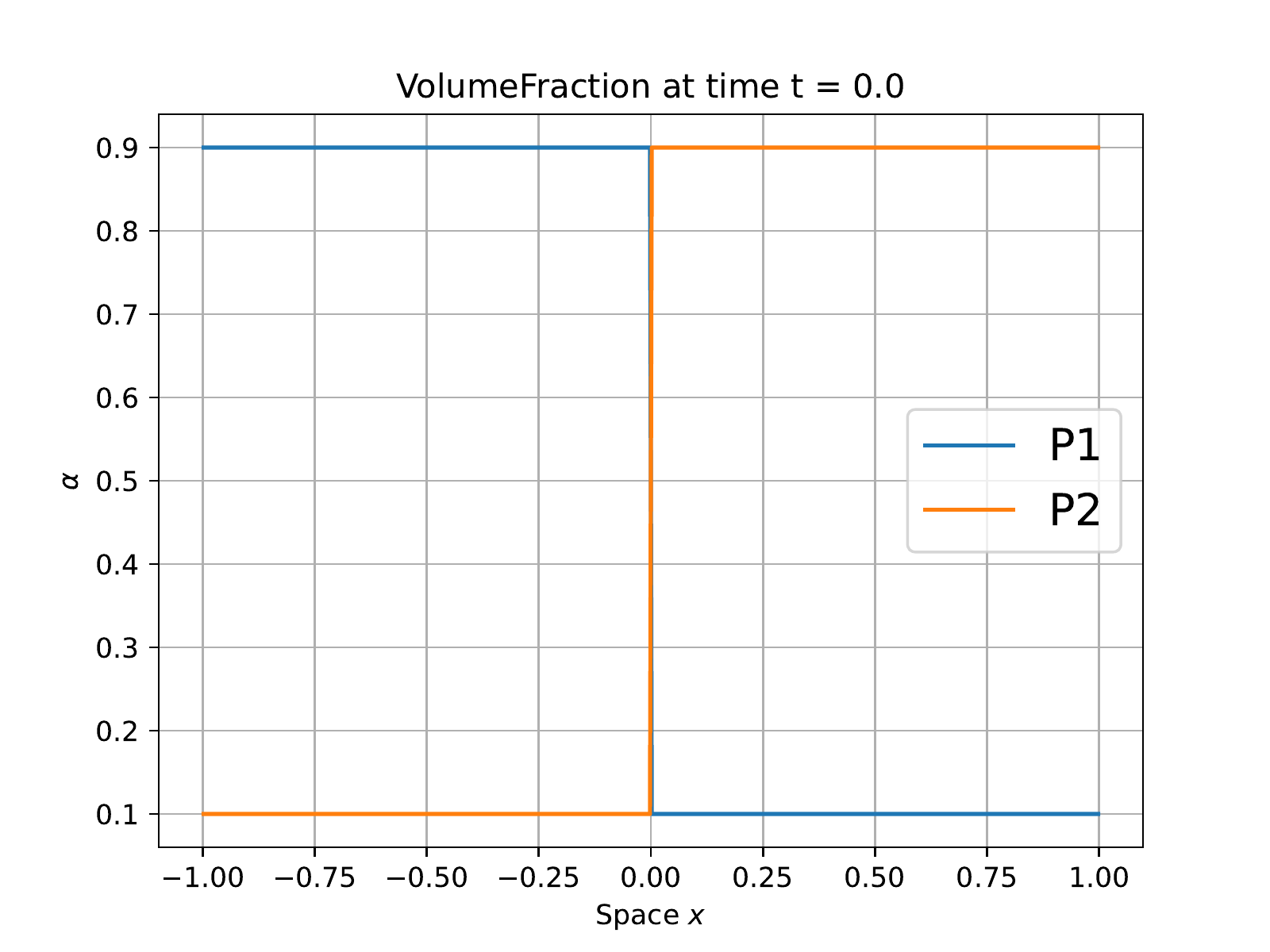}};

% Right blue block
\node (S0) [block, top color=blue!20, minimum width = 6cm, minimum height = 13cm, shade,
	below of=mgp, yshift=-4.5cm, xshift=5cm] {};
	
\node [below of=IC, yshift = -\yshift, xshift=\xshift] (IC1) {\includegraphics[width=\figDim\textwidth]{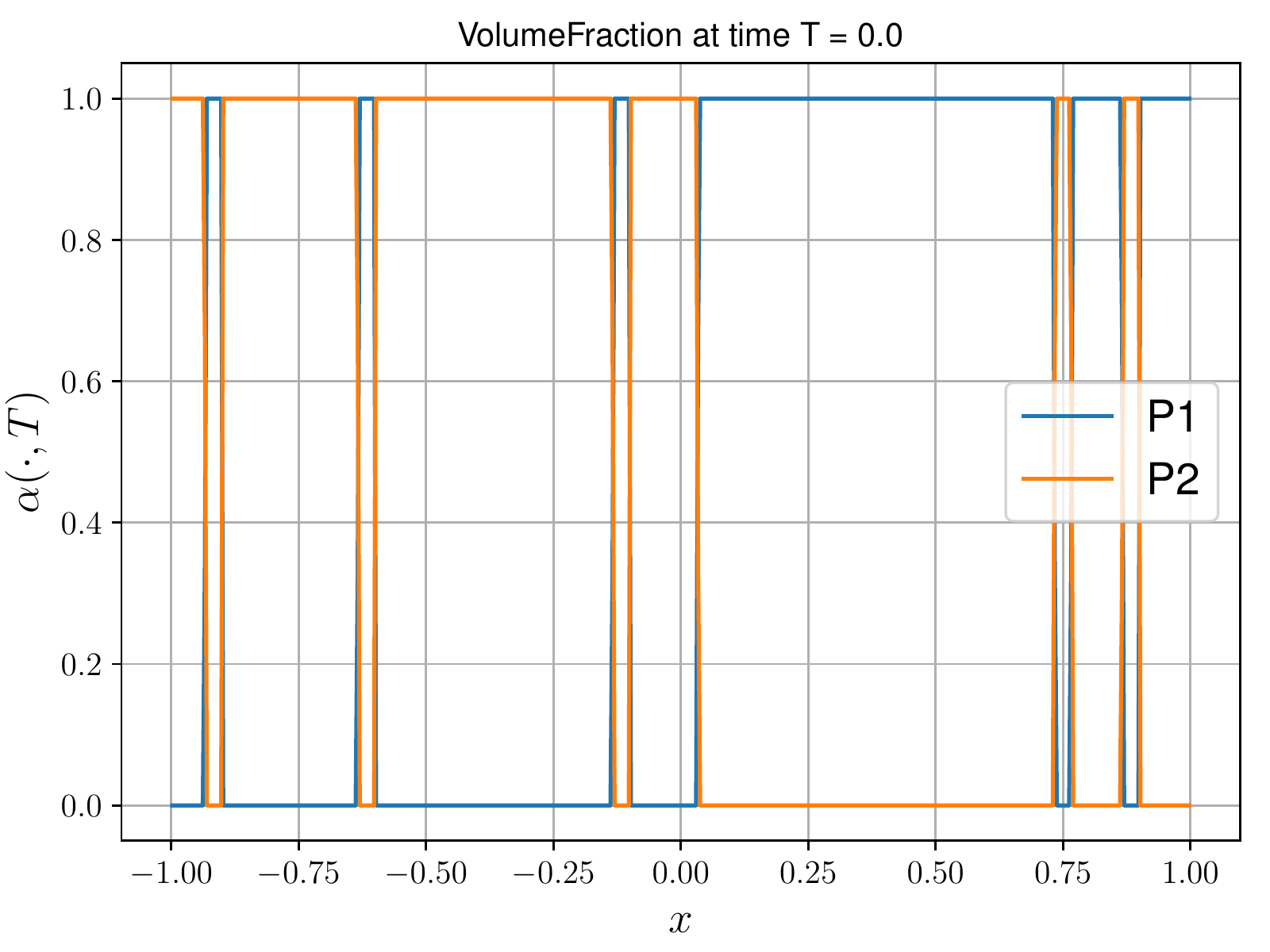}};
\node [below of=IC1, yshift = -2*\yshift] (T1) {\includegraphics[width=\figDim\textwidth]{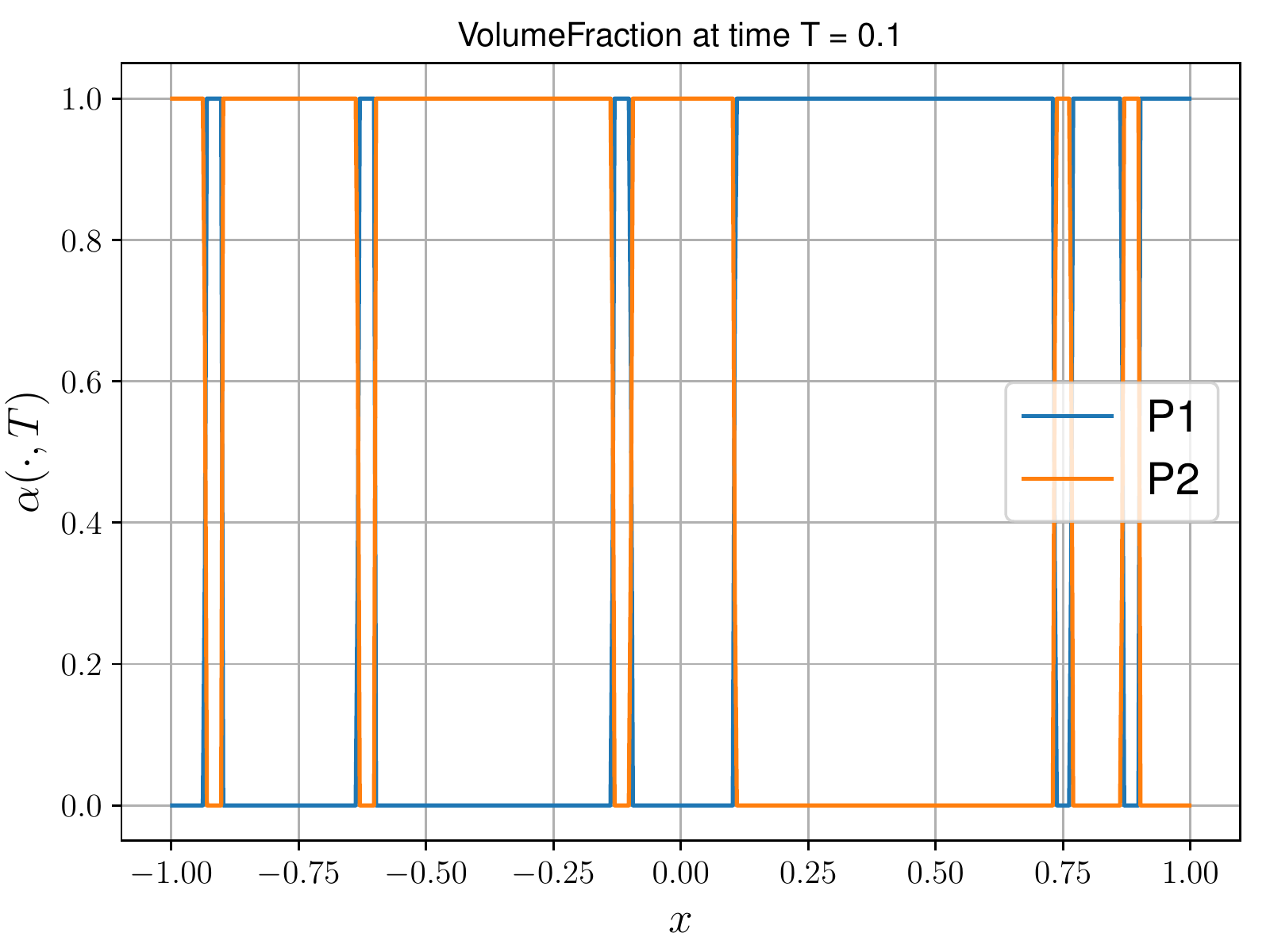}};

\foreach \n [count=\xi] in {0.5,0.75,1,1.25,1.5,1.75,2,2.25,2.5,2.75,3,3.25,3.5,3.75,4,4.25,4.5,4.75,5}
{
\node (S\xi) [block, top color=blue!20, minimum width = 6cm, minimum height = 13cm, 
	below of=mgp, yshift=-4.5cm, xshift=-\n*1cm] {};
}

\node [below of=IC, yshift = -\yshift, xshift=-\xshift] (IC0) {\includegraphics[width=\figDim\textwidth]{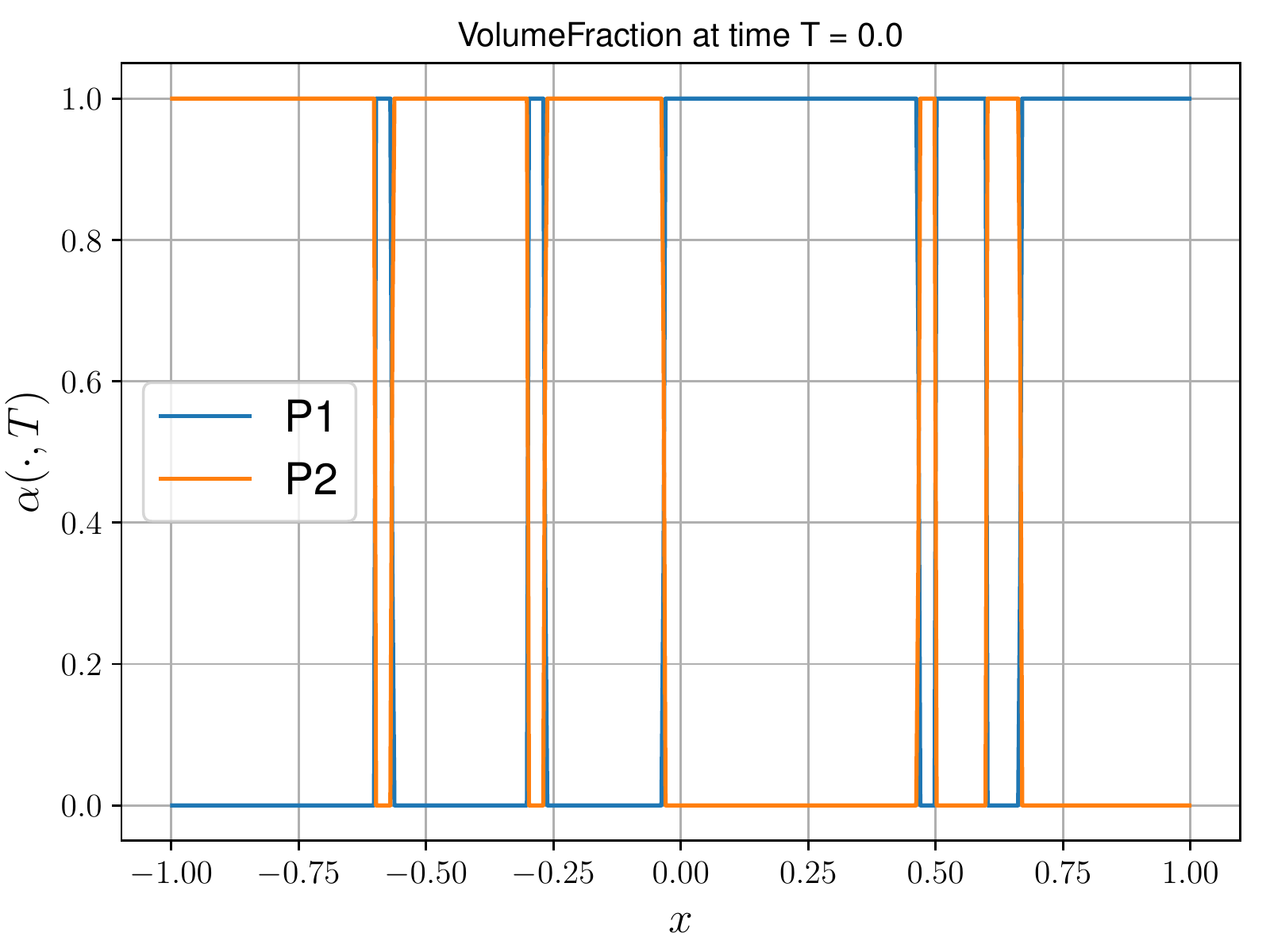}};

\node [below of=IC0, yshift = -2*\yshift] (T0) {\includegraphics[width=\figDim\textwidth]{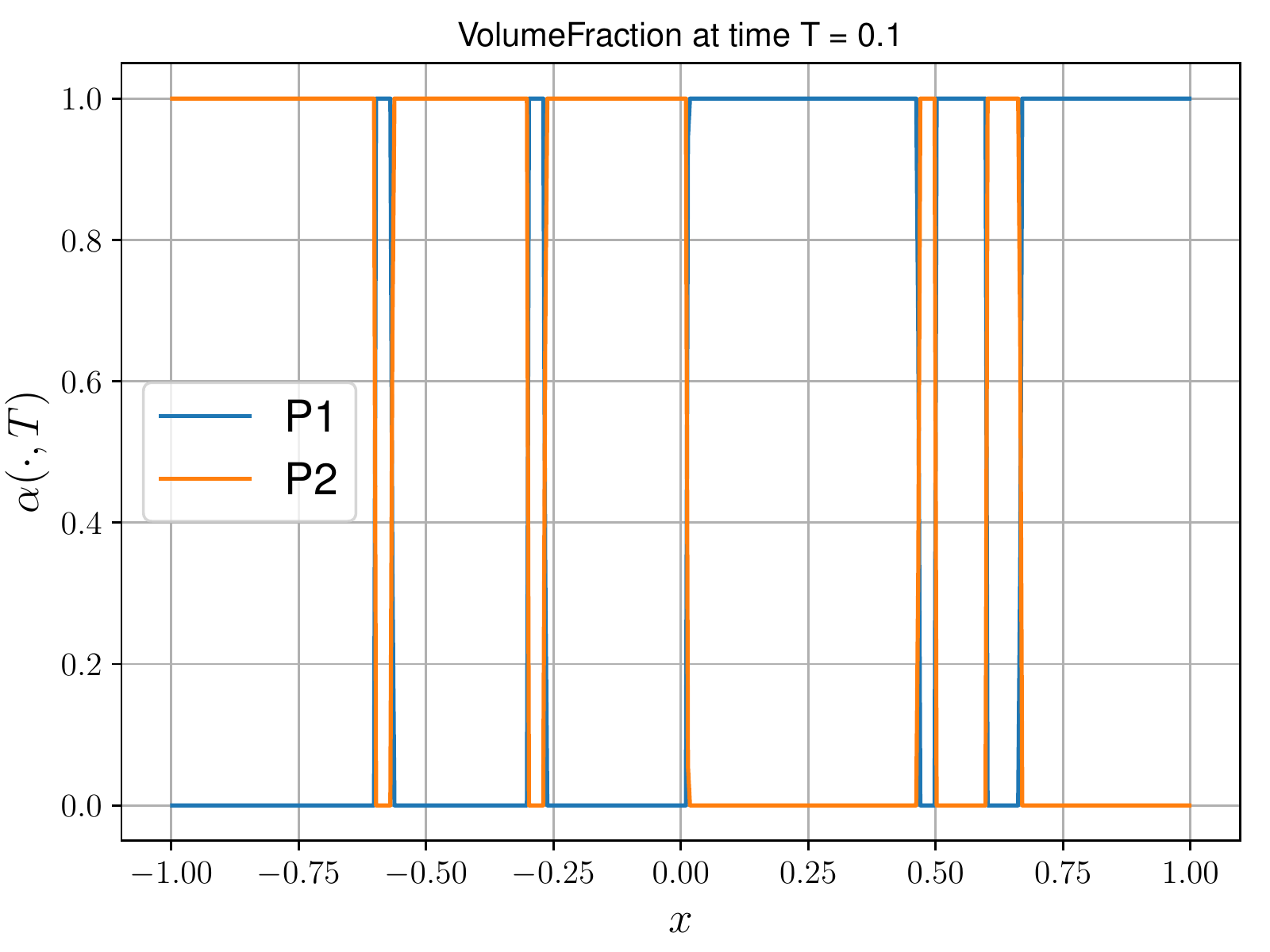}};

\draw [->] (IC0) -- (T0);
\draw [->] (IC1) -- (T1); 

\node [below of=IC0, yshift=-0.7*\yshift] (FT0)
{\includegraphics[width=\frontDim\textwidth]{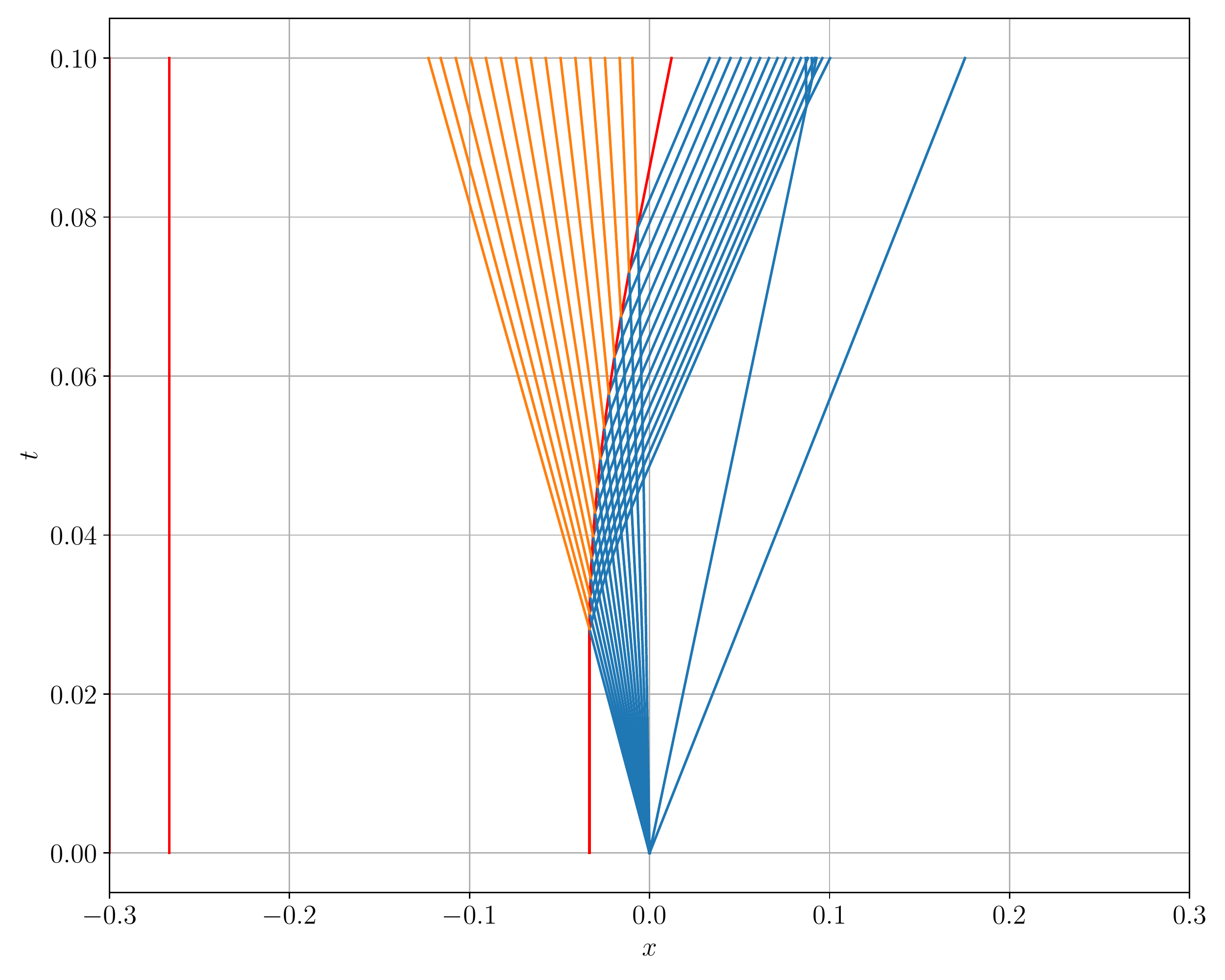}};
\node [below of=IC1, yshift=-0.7*\yshift] (FT1)
{\includegraphics[width=\frontDim\textwidth]{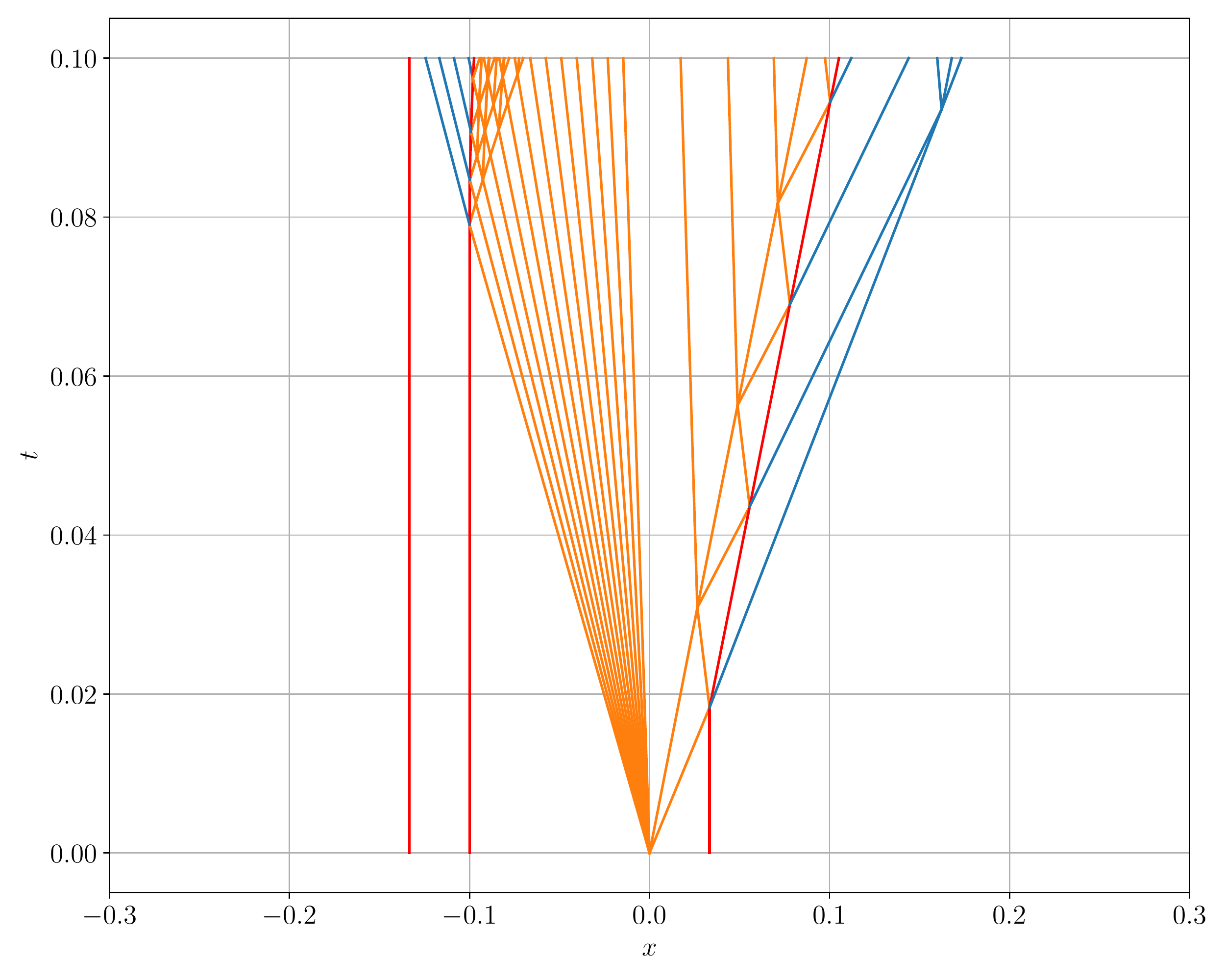}};

\node (MC) [block, top color=blue!30, minimum width = 8cm, minimum height = 5cm, 
	below of=mgp, yshift=-14cm] {};

\node[below of=IC, yshift =-5.5*\yshift] (T)
{\includegraphics[width = \figDim\textwidth]{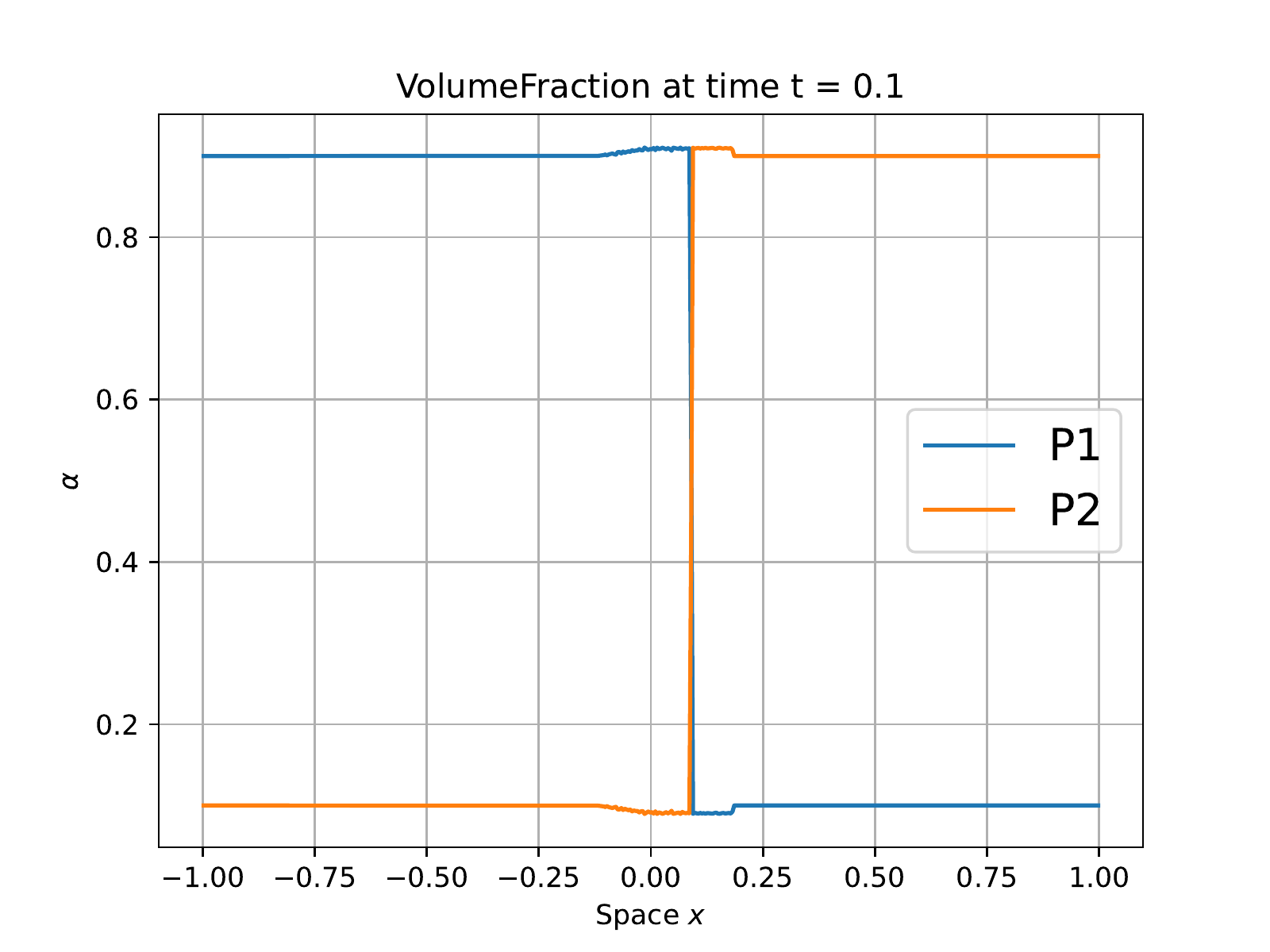}};

%Labels
\node (mgp-tit) at (-5.5, 4.6) {Microscale Generation (Alg. \ref{al:AI:eMGP})};
\node (FT-tit-start) at (-5,-0.25) {Front-Tracking (Alg. \ref{al:AI:FT})};
\node (FT-tit-end) at (5,-0.25) {Front-Tracking (Alg. \ref{al:AI:FT})};
\node (MC-tit) at (0.0,-14) {Monte Carlo Statistics (Alg. \ref{al:AI:MC_es})};

\path[every node/.style={font=\sffamily\small}]
(IC) edge [out=180, in=90] node [right] {} (S19)
(IC) edge [out=0, in=90] node [right] {} (S0)

(S19) edge [out=270, in=180] node [right] {} (MC)
(S0) edge [out=270, in=0] node [right] {} (MC);

\end{tikzpicture}
\end{center}
\caption{Schematic representation of the Alg. \ref{al:AI:MC}. }
\label{fig:abinitio}
\end{figure}

\section{Monte-Carlo algorithm}
\label{sec:MC}

Starting from the composition of the procedure detailed in Section \ref{sec:MSR} followed by the one of Section \ref{sec:FT_res}, one computes the evolution of a specific realization of the random two-phase distribution $D_k(0;\omega)$ with $\omega\in\Omega$, and aims at computing (ensemble averages) of the generated solutions.

Given $L\in\mathbb{N}$ independent and identically distributed (i.i.d.) realizations $\omega_1, \ldots, \omega_L \in\Omega$, the empirical mean $\bbE_L$ and empirical variance $\bbV_L$ operators of the random field $q^{(k)}\, : \, \Omega\rightarrow C([0,T];L^\infty\cap BV(\mathbb{R}))$ are defined as (the unbiased estimators) 
\begin{subequations}
\label{eq:AI:MC-estimators}
\begin{align}
\mathbb{E}_{L}\left[q^{(k)}\right] &:= \frac{1}{L}\sum_{l=1}^L q^{(k)}(\omega_l)
\label{eq:AI:MC-estimators:mean}
\\
\mathbb{V}_{L}\left[q^{(k)}\right] &:= \frac{1}{L-1}\sum_{l=1}^L \left(q^{(k)}(\omega_l) - \mathbb{E}_{L}\left[q^{(k)}\right]\right)^2 
= \frac{L}{L-1}
\left(
\mathbb{E}_L\left[\left(q^{(k)}\right)^2\right]
- 
\mathbb{E}^2_{L}\left[q^{(k)}\right]
\right)
\label{eq:AI:MC-estimators:var}
\end{align}
\end{subequations}
In practice, it is well-known that these formulas lead to the so called subtractive cancellation phenomenon \cite{Sukys2014}, therefore, we will make use of the (more stable) Welford's on-line algorithm \ref{al:AI:MC_es}.
\begin{algorithm}
\caption{Welford's on-line algorithm}\label{al:AI:MC_es}
\begin{algorithmic}
\STATE $\textbf{Data\,:\,}$ I.i.d. samples $q_1,\ldots, q_L$;\\
\STATE $\textbf{Output\,:\,}$ Empirical mean and variance $\bbE_L$ $\bbV_L$;\\
\STATE $\bullet$ Set $\overline{q}_0 = 0$ and $M_0^{(2)} = 0$;
\FOR{$l=1,\ldots, L$}
\STATE $\bullet$ Update on-line mean
$
\overline{q}_l = \overline{q}_{l-1} + \frac{q_l - \overline{q}_l}{l}
$
\STATE $\bullet$ Update on-line (un-normalized) variance
$
M^{(2)}_l = M^{(2)}_{l-1} + (q_{l} - \overline{q}_l)(q_{l} - \overline{q}_{l-1})
$
\ENDFOR

\STATE $\bullet$ Set $\bbE_L = \overline{q}_L$ and $\bbV = M^{(2)}/(L-1)$.
\end{algorithmic}
\end{algorithm}

\subsection{Relevant quantities of interest and algorithm}
\label{sec:AI:relevant_quants}
\noindent

For a phasic variable $q^{(k)}$ we are interested in the following quantity of interest
\[
\begin{split}
\overline{q}^{(k)}(x_i, t^n) &:= \mathbb{E}\left[X^{(k)}q^{(k)}\right](x_i,t^n) = \int_\Omega X^{(k)}(x_i, t^n;\omega)q^{(k)}(x_i, t^n;\omega) \frac{d\bbP(\mathrm{d}\omega)}{\alpha^{(k)}(x_i, t^n)}
\end{split}
\]
where $\alpha^{(k)}(x_i,t^n) := \mathbb{E}\left[X^{(k)}(x_i,t^n;\cdot)\right]$ is the volume fraction, and represents the probability of finding phase $k$ in volume $\mathcal{C}_i$ at time $t=t^n$.
The aforementioned quantity is well-defined away from $\alpha^{(k)}(\cdot,t^{n})=0$, and we extend it by fixing its value at $\alpha^{(k)}=0$ as to be $0$.
The rational for doing so is that any random variable $X(\omega)\in [0,1]$ such that $\bbE[X] = 0$ implies $X(\omega) = 0$ $\bbP$-a.s..

A second quantity of interest is provided based on the second moment of the quantity $q$, namely the variance.
Unfortunately, it is not immediate to define variance when it comes to Favre-averaged quantities: for any variable $y\in\lbrace \rho, u, p \rbrace$, one defines the corresponding perturbed variable $\phi^{(k)} := X^{(k)}y^{(k)}$ so that the Reynolds and Favre decomposition (respectively) would read
\begin{align*}
(Reynolds)
\qquad\qquad\qquad
\phi^{(k)} & = \overline{\phi}^{(k)} + \phi^{(k),*}
\qquad
\qquad
\overline{\phi}^{(k)} := \bbE[\phi^{(k)}]\\
(Favre)
\qquad\qquad\qquad
\phi^{(k)} & = \tilde{\phi}^{(k)} + \phi^{(k),**}
\qquad
\qquad
\tilde{\phi}^{(k)} := \frac{\overline{\phi}^{(k)}}{\overline{X}^{(k)}}.
\end{align*}
In turn, variance can be defined in both cases as a measure of the quadratic deviation from a given value, namely
\begin{align*}
\bbV [\phi^{(k)}] 
&= 
\overline{ \left( \phi^{(k)} - \overline{\phi}^{(k)} \right)^2 }
= \overline{ \phi^{(k),*}\phi^{(k),*} }\\
\tilde{\bbV} [\phi^{(k)}] 
&= 
\overline{ \left( \phi^{(k)} - \tilde{\phi}^{(k)} \right)^2 }
= \overline{ \phi^{(k),**}\phi^{(k),**} }\\
&= \bbV[\phi^{(k)}] + \left(\overline{\phi}^{(k)} - \tilde{\phi}^{(k)}\right)^2
\end{align*}
where the last equality follows by $\overline{\tilde{\phi}^{(k)}} = \tilde{\phi}^{(k)}$. The last addendum in the Favre-averaged variance $\tilde{\bbV}$ is constant and constitutes a measure of the distance between Reynolds and Favre averaging, which coincide in the pure phase case, i.e. $\overline{X}^{(k)} = 1$ for some $k=1,2$. 
Since stochastic behaviors enter in flow simulations only though the different dispersion of phases, in the following we will consider only the first term as a measure of variance.

At the numerical level, we are computing finite volume approximations of the hereby introduced quantities. 
In particular, for any time level $t^n\in [0,T]$ we have that
\begin{equation}
\label{eq:AI:mean_var_MC}
\begin{split}
\alpha^{(k)}(x_i,t^n) &\approx \mathbb{E}_{MC}\left[ I^{(k)}_i\left[X^{(k)}\right](t^n,\cdot) \right] =: \alpha^{(k), n}_i\\
\hat{\alpha}^{(k)}(x_i,t^n) &\approx \mathbb{V}_{MC}\left[ I^{(k)}_i\left[X^{(k)}\right](t^n,\cdot) \right] =: \hat{\alpha}^{(k), n}_i\\
\overline{q}^{(k)}(x_i, t^n) &\approx \frac{\mathbb{E}_{MC}\left[X^{(k)}q^{(k)}\right](x_i,t^n)}{\alpha^{(k),n}_i} =: \overline{q}^{(k),n}_i \\
\hat{q}^{(k)}(x_i, t^n) &\approx \mathbb{V}_{MC}\left[X^{(k)}q^{(k)}\right](x_i,t^n) =: \hat{q}^{(k),n}_i
\end{split}
\end{equation}
where $\mathbb{E}_{MC}$ and $\mathbb{V}_{MC}$ are the Monte-Carlo mean and variance defined in \eqref{eq:AI:MC-estimators}. 

The resulting Monte-Carlo algorithm for the ab-initio method is then provided in Alg. \ref{al:AI:MC}, and schematically illustrated in Fig.\ref{fig:abinitio}.

\begin{algorithm}
\caption{Monte-Carlo Ab-initio algorithm}\label{al:AI:MC}
\begin{algorithmic}
\STATE $\textbf{Data}\,:\,$ Initial datum $\textbf{U}_0\in L^{\infty}(D)$, mesh width $\Delta>0$, end-time $T>0$ and number of samples $L\in \mathbb{N}$;
\STATE $\textbf{Output}\,:\,$ Averages $\bbE[\VU^{(k)}](x,T)$;
\STATE $\bullet$ Compute initial volume averages $\VU^{(k),0}_i$;
\STATE $\bullet$ Merge volumes affected by the same states;
\STATE $\bullet$ Generate $L$ i.i.d samples
$\textbf{U}^{l}_0 \in L^\infty(D);$
using Alg.\ref{al:AI:eMGP}.
\FOR{$l=1,\ldots, L$}
\STATE Evolve the sample $\VU^{l}_0$ in time, $\VU_l^\Delta = S_T^\Delta(\VU^{l}_0)$, using Alg.\ref{al:AI:FT};
\ENDFOR
\STATE $\bullet$ Compute statistical mean $\bbE_L\left[\VU^\Delta\right]$ and variance $\bbV_L\left[\VU^\Delta\right]$ via Alg. \ref{al:AI:MC_es};\\
\STATE $\bullet$ Compute (Favre-averaged) statistical quantities $\overline{\VU}^\Delta$ as in (\ref{eq:AI:mean_var_MC}).
\end{algorithmic}
\end{algorithm}

\section{Numerical Experiments}
\label{sec:AI:MC:NE}

We consider here some numerical experiment for the MC-based ab-initio method.
The physical domain is $D = [-1,1]$ subdivided into $M=500$ volumes, over which the FT-approximation is run using accuracy parameters $\bm{\delta} = [\delta^{(1)},\delta^{(2)}]$.
In this section we will consider two phases associated to the ideal-gas EOS with $\gamma^{(1)} = 1.4$ and $\gamma^{(2)}=1.6$.\\
Initial conditions will be provided in terms of the primitive variables $\VV = [\alpha,\VW] = [\alpha, \rho, u, p, \gamma]$.
For each simulation, we run the FT-algorithm presented in \cite{Petrella22FT} to produce $L\in\bbN$ realizations of the flow field, and corresponding mean and variance are then calculated via Alg.\ref{al:AI:MC_es}.
For the sake of clarity, results are shown by normalizing (Favre-averaging) corresponding moments as discussed in Section \ref{sec:AI:relevant_quants}, while un-normalized variables are used for computing the Cauchy rates during the convergence studies.

As our aim is to compare the two-phase flows simulated at the \emph{microscopic level} with the ab-initio algorithm \ref{al:AI:MC} against the corresponding state-of-the-art macroscopic simulations. Specifically, we choose the \emph{generalized Discrete Equation Model} (DEM) of the recent paper \cite{Petrella2022} as the macroscopic simulator. 
This scheme is recalled and summarized in Appendix \ref{app:r-model}. 
In particular, it involves a key parameter, denoted by $r \ in [0,1]$, which models the underlying probability coefficients for each phase. 
For most of the test cases we shall plot the ab-initio results with that generated by the two extreme values $r=0,1$ for the DEM. 

\subsubsection{Phases in mechanical equilibrium}
\label{sec:AI:MC:NE:Un}

We consider two phases initiated at mechanical equilibrium. As postulated by Abgrall's criterion, the evolution of the mixture is expected to maintain uniform conditions throughout time, so that it can also understood as to be a relaxation-free test case. 
Therefore, it represents a suitable test case to investigate sub-scale impacts on macroscopic quantities, and a necessary check for Abgrall's fulfillment.
The associated initial condition reads
\[
\VV_0(x) = 
\begin{cases}
\begin{bmatrix}
\alpha^{(1)}_L = 0.9\\
\VW_L^{(1)}\\
\alpha^{(2)}_L = 0.1\\
\VW_L^{(2)}
\end{bmatrix}
& x< 0\\
\begin{bmatrix}
\alpha^{(1)}_R = 0.1\\
\VW_R^{(1)}\\
\alpha^{(2)}_R = 0.9\\
\VW_R^{(2)}
\end{bmatrix}
& x>0
\end{cases}
\qquad
\qquad
\VW^{(k)}_L
=
\begin{bmatrix}
\rho_L\\
0.9\\
0.3
\end{bmatrix}
\quad
\VW^{(k)}_R
=
\begin{bmatrix}
\rho_R\\
0.9\\
0.3
\end{bmatrix}
\]
where $\rho_L = 1$ and $\rho_R = 0.125$. The end time $T=0.1$ and the parameters used to produce results displayed in Fig.\ref{Fig:AI:T1} are listed in Table \ref{Tab:AI:T1}.
\begin{table}[!h]
\begin{center}
\begin{tabular}{c||c|c|c|c|c|c|}
&
$M$ & $\tilde{N}$ & $\mathrm{CFL}$ & $\delta^{(1)}$ & $\delta^{(2)}$ & $L$\\
\hline
\hline
\textbf{Ab-initio} & $1000$ & $16384$ & $-$ & $0.01$ & $0.01$ & $1024$
\end{tabular}
\end{center}
\caption{Parameters of computed solutions reported in Fig. \ref{Fig:AI:T1} }\label{Tab:AI:T1}
\end{table}
This test case considers the motion of a macroscopic discontinuity in the mixture composition, rigidly moving to the right of the computational domain. 
Such behavior is clearly visible in the motion of the initial discontinuity from (the initial location) $x=0$ to $x=0.9\cdot 1.1 = 0.99$ in the volume fraction and density plots. 
Preservation of (the initial) uniform mechanical conditions is instead visible in pressure and velocity plots, demonstrating the intrinsic ability of the proposed methodology to maintain uniform conditions (i.e. it fulfills Abgrall's criterion).\\
\begin{figure}[!htbp]
\begin{center}
\includegraphics[scale=\figsize]{\main/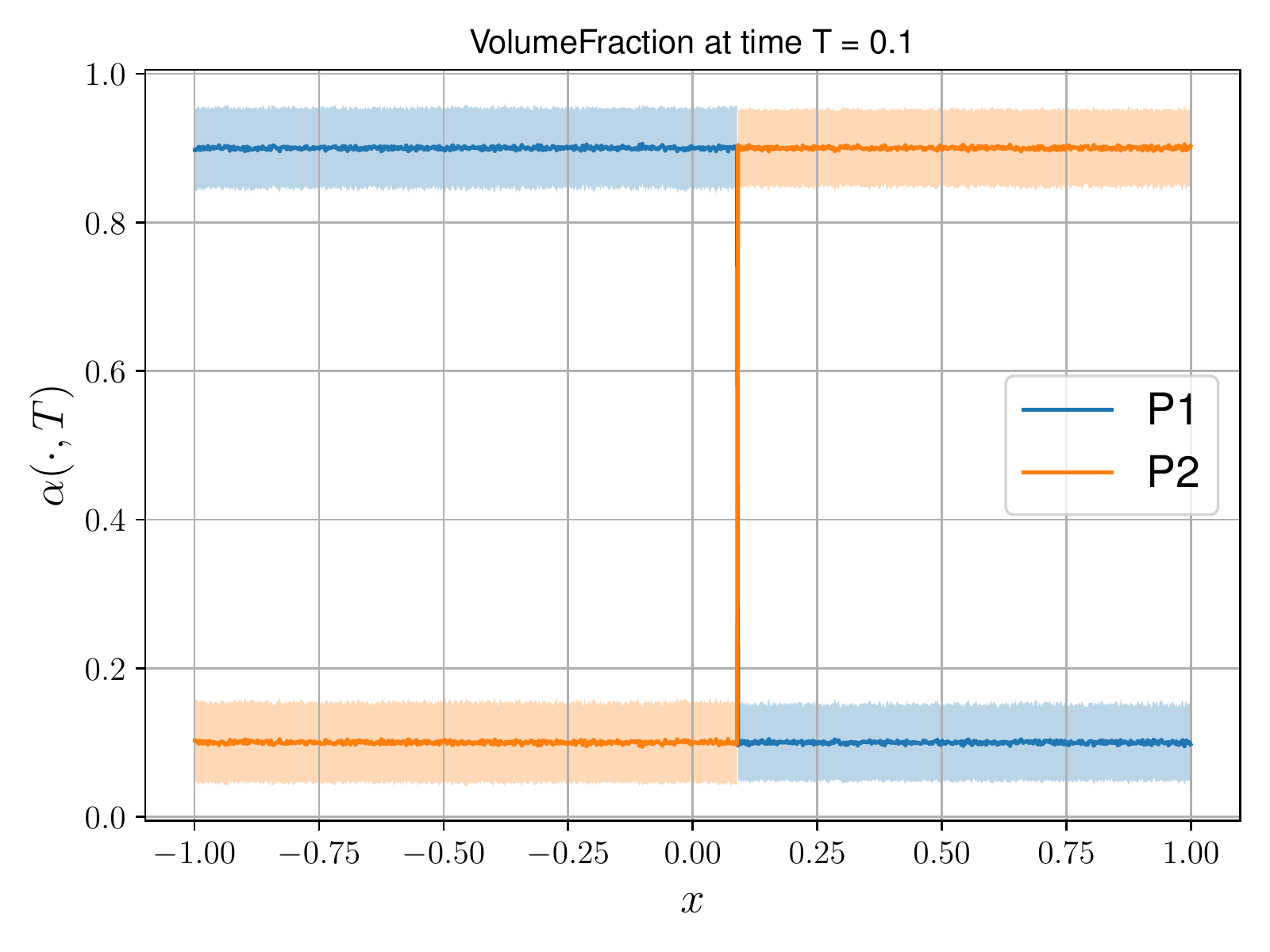}\,
\includegraphics[scale=\figsize]{\main/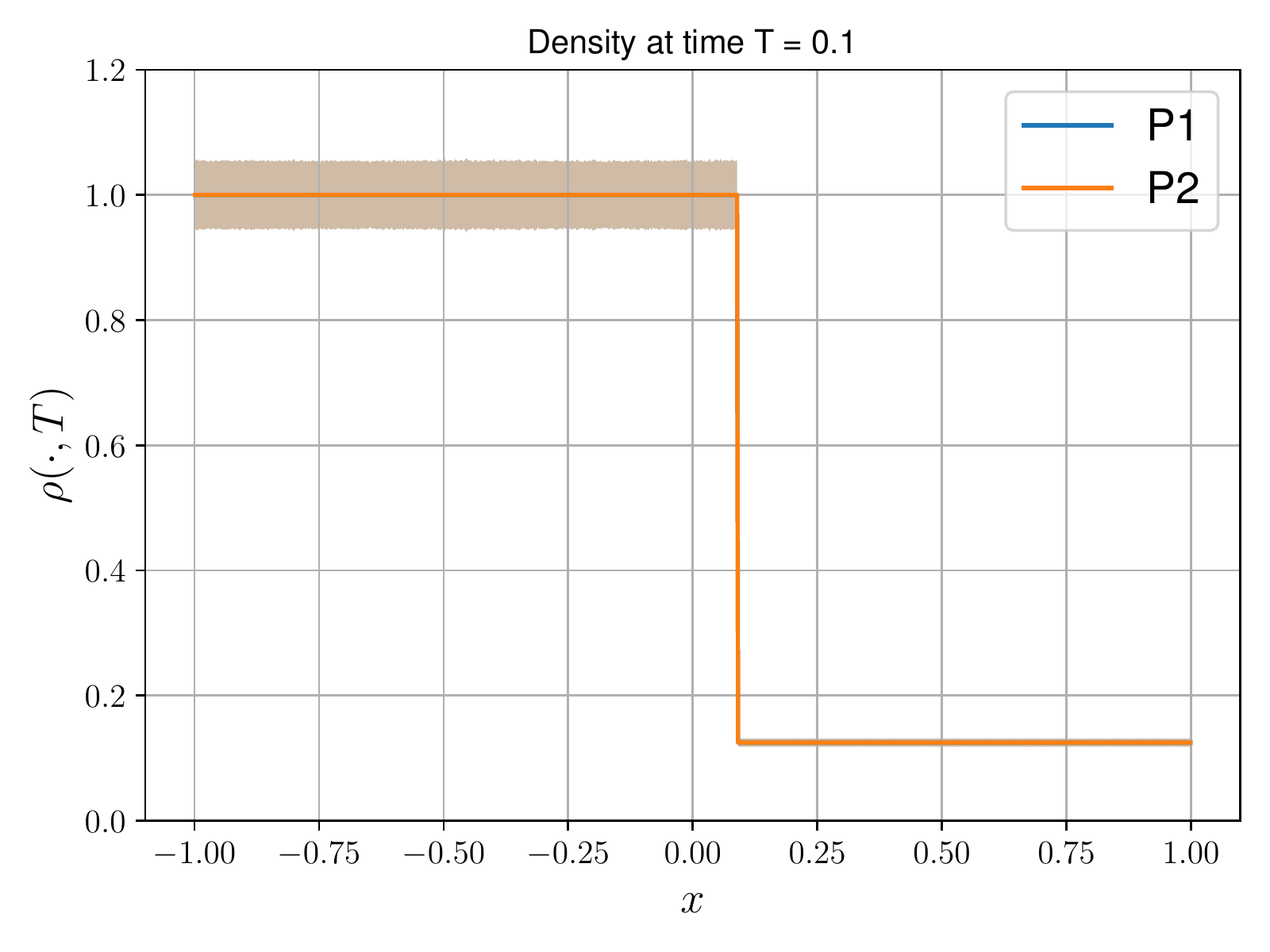}\\
\includegraphics[scale=\figsize]{\main/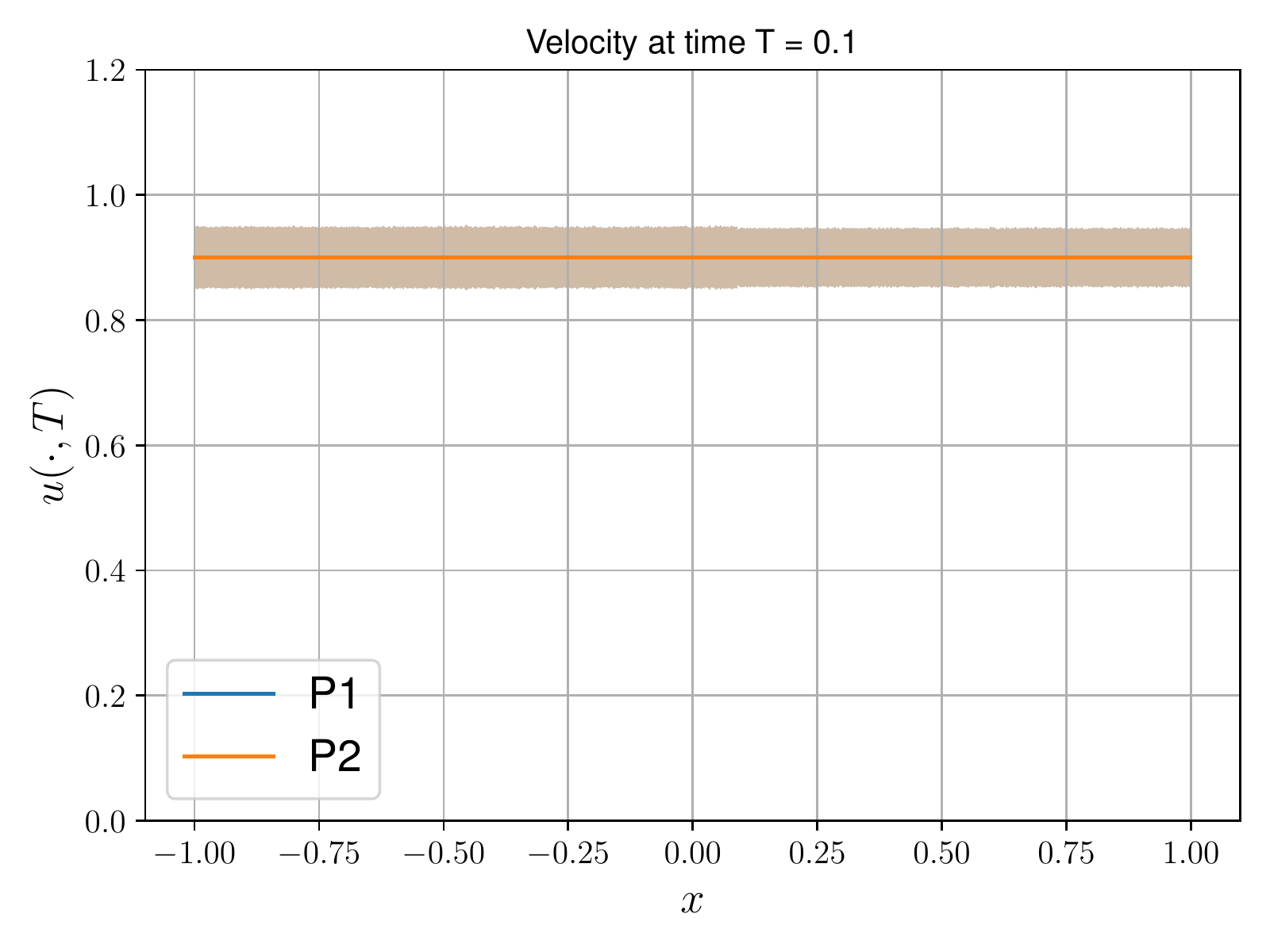}\,
\includegraphics[scale=\figsize]{\main/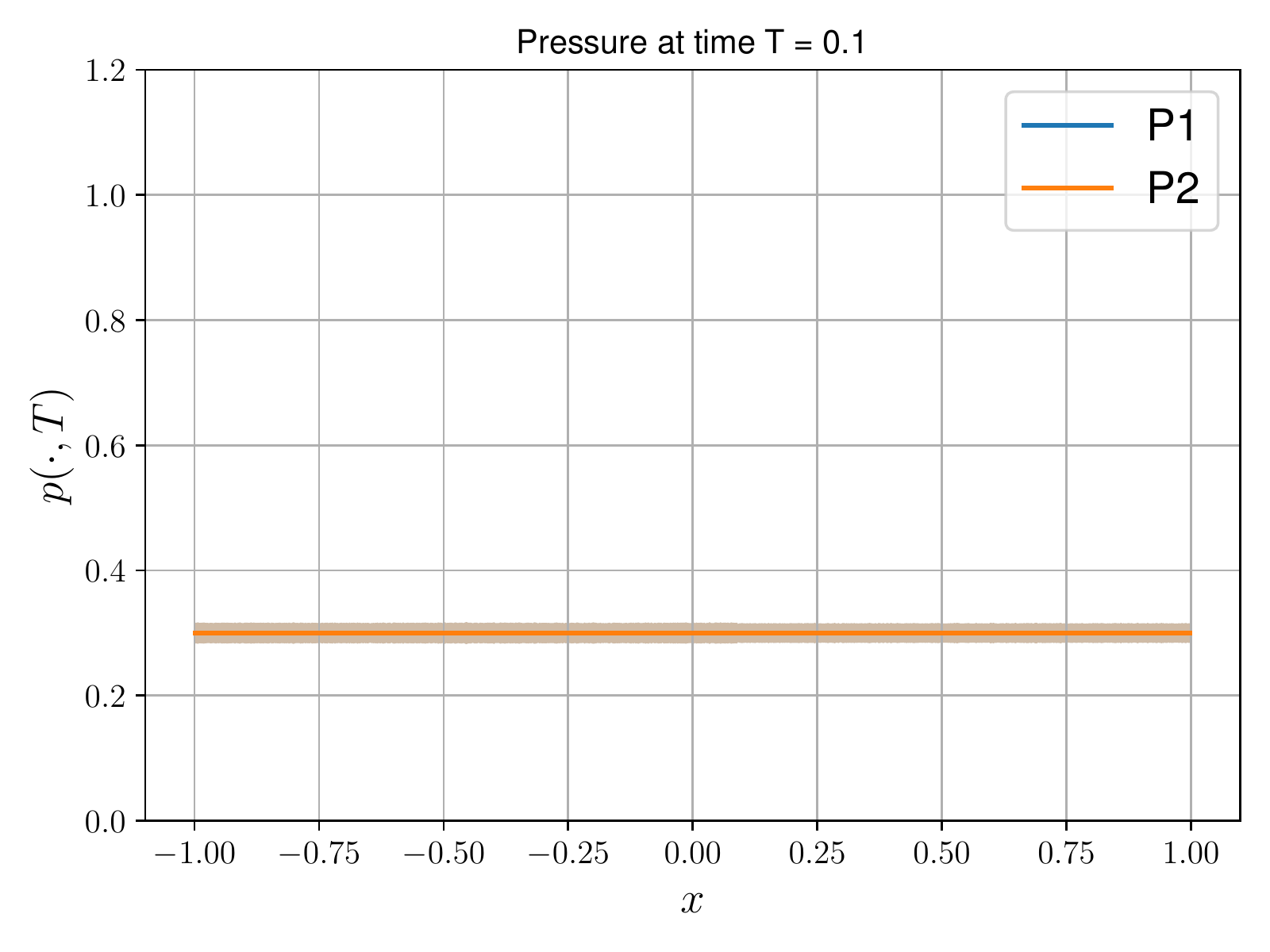}
\caption{Results for the mixtures under mechanical equilibrium. Confidence intervals are shown as colored areas.}
\label{Fig:AI:T1}
\end{center}
\end{figure}

Notice that, by the mechanical equilibrium initially present at each interface, evolved solutions do not involve interactions through the interface, and thus implying that this test case is relaxation-free. Therefore, we use the present test case to study sub-discretization properties.

First, we are interested in investigating the impact that the strategy used to define the maximum number of volumes $N$ has on macroscopic quantities, when applying Alg.\ref{al:AI:eMGP}. In Section \ref{sec:AI:MG:volumes} we distinguish between two major strategies: fixing the same number across all the volumes of the physical discretization (termed uniform case), and a random choice for each volume.
We run the present test case for an increasing number of sub-volumes $N_j = 2^{7+j}$ $j=0,\ldots, 8$, and compute the Cauchy rates $e^{(k)}_j(y)$ associated to the phase $k$ and variable $y\in \lbrace\alpha, \rho, u,p \rbrace$ based on $L=250$ samples
\[
e_j^{(k)}(y) 
:= 
\Vert 
\bbE_L
\left[
y_{N_{j+1}}
(T)
\right]
-
\bbE_L
\left[
y_{N_{j}}
(T)
\right]
\Vert_{L^1(D)}.
\]
Results are shown in Fig.\ref{Fig:AI:T1_conv} in the log-log scale.
\begin{figure}[!htbp]
\begin{center}
\includegraphics[scale=\figsize]{\main/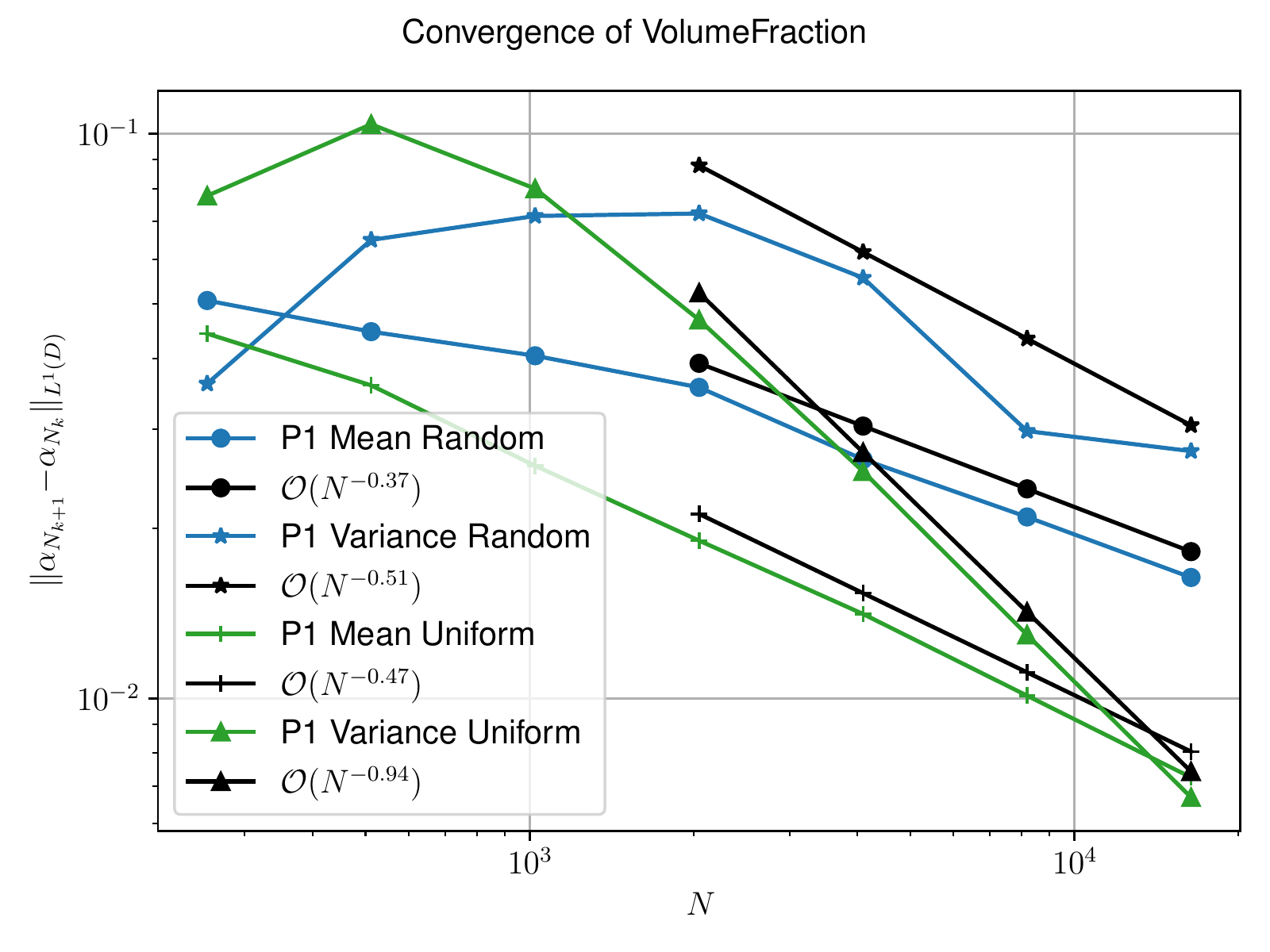}\,
\includegraphics[scale=\figsize]{\main/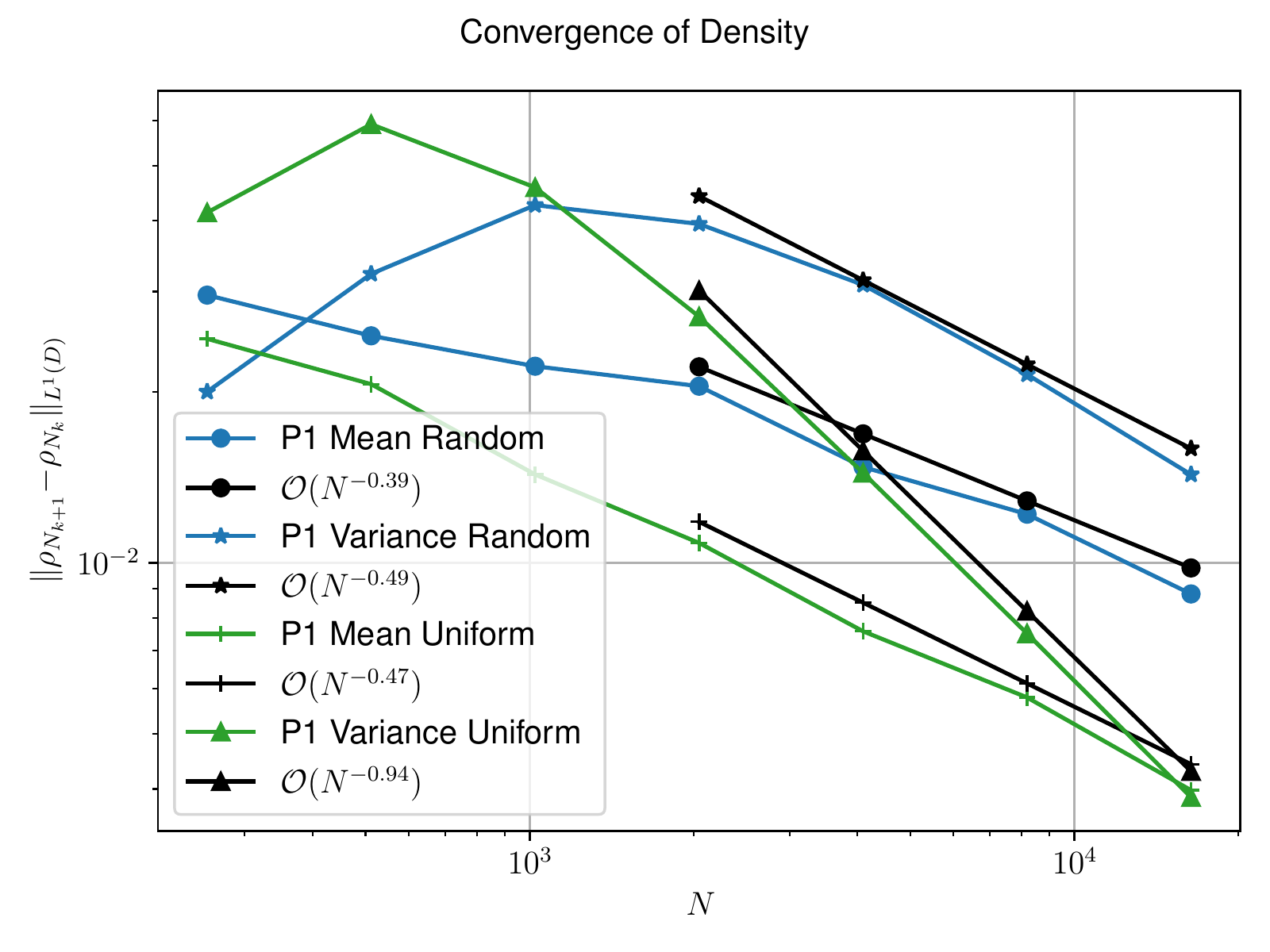}\\
\includegraphics[scale=\figsize]{\main/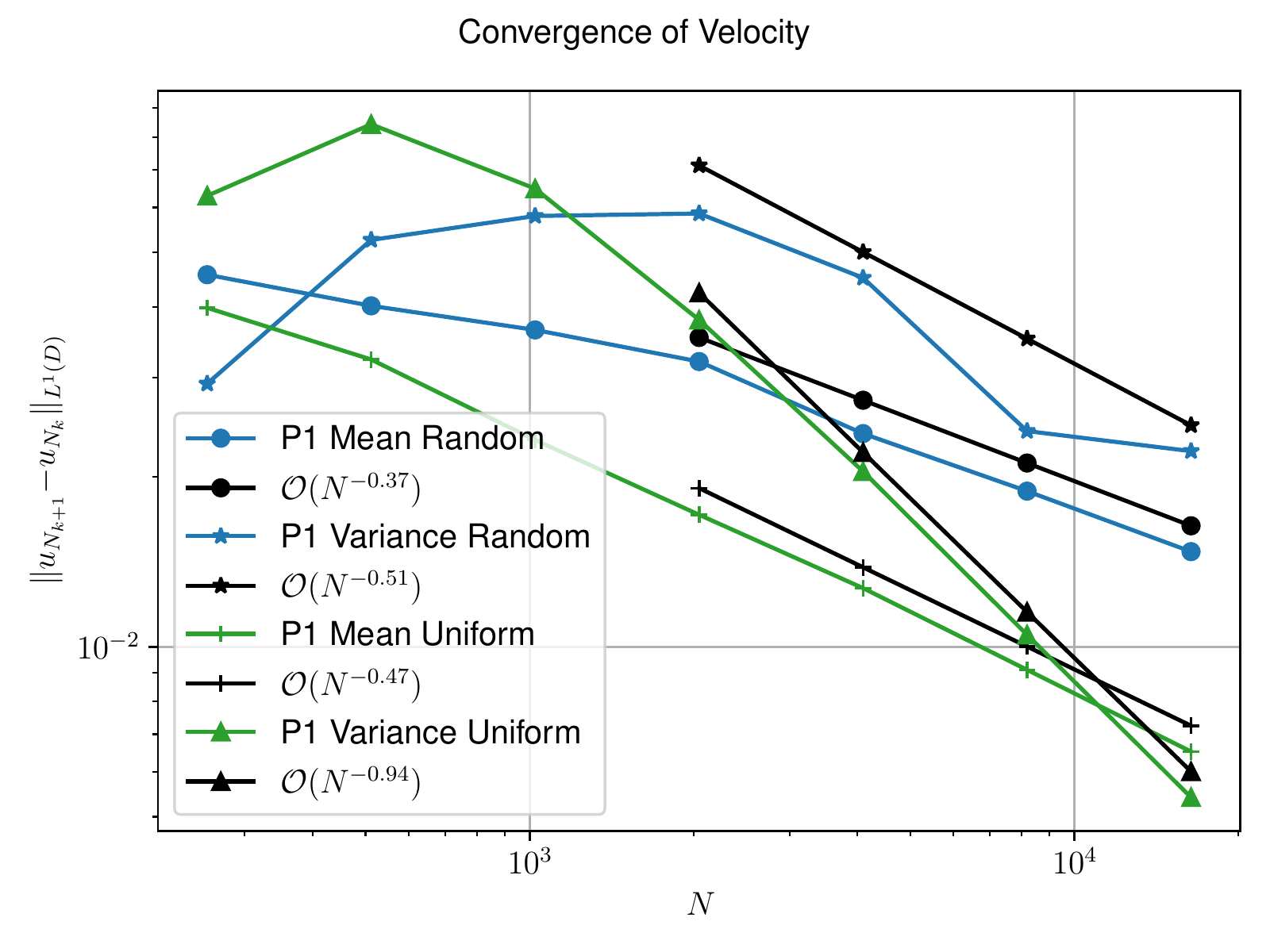}\,
\includegraphics[scale=\figsize]{\main/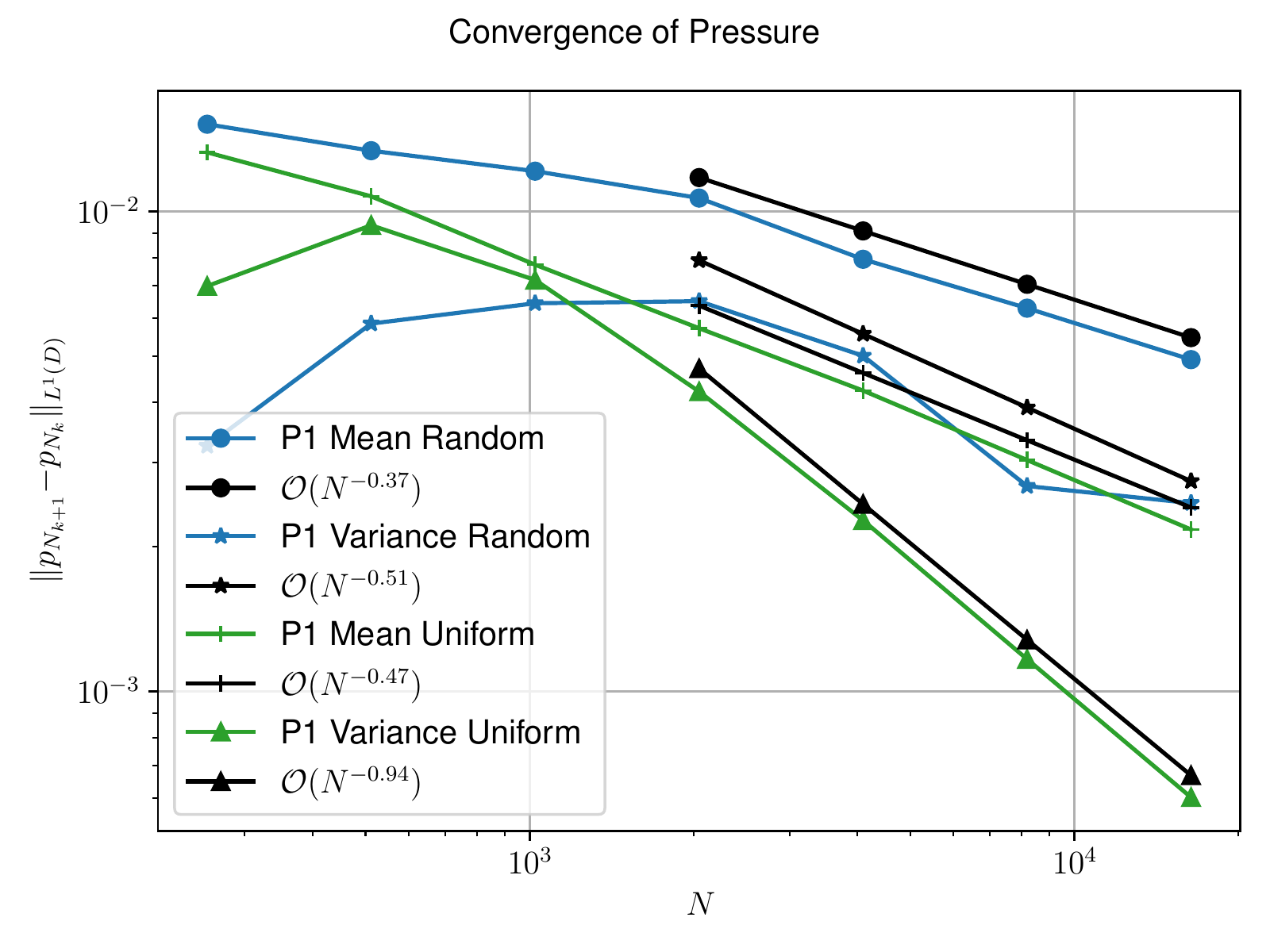}
\caption{Empirical convergence study for the mixtures in mechanical equilibrium under number of sub-volumes refinement. 
Only results of phase $1$ are presented to improve readability of the pictures; corresponding results for phase $2$ are analogous.}\label{Fig:AI:T1_conv}
\end{center}
\end{figure}
Plotted results suggests convergence of all the macroscopic quantities of interest under number of sub-volumes refinement for each strategy. Remarkably, the Random strategy seems to provide a slower converging sequence for both mean and variance, where this latter presents a slow-down of a factor $2$ as compared to the Uniform one. This is in complete accordance with the discussion performed in Section \ref{sec:AI:MG:volumes}.\\

Second, we investigate convergence of both strategies as the number of samples is increased: we run the same test using an increasingly higher number of samples $L_j = 2^{3+j}$ $j=0,\ldots,8$, all computed with $N=1000$ sub-volumes.
Results of corresponding Cauchy rates for all the variables in the log-log scale are presented in Fig.\ref{Fig:AI:T1_conv_sample}.
Expected order of convergence $\frac{1}{2}$ are recovered for both mean and variance for any sub-discretization strategy.
\begin{figure}[!htbp]
\begin{center}
\includegraphics[scale=\figsize]{\main/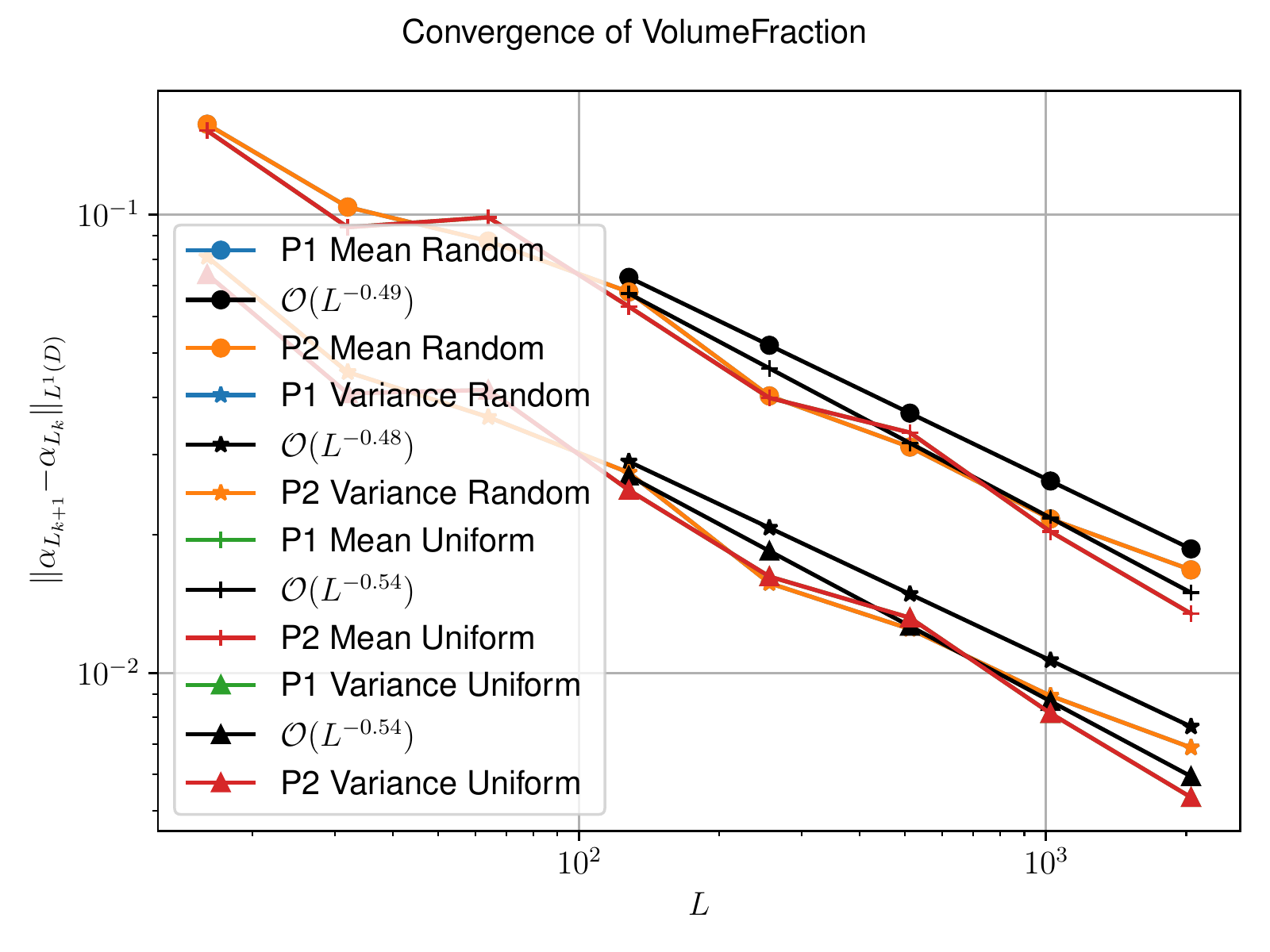}\,
\includegraphics[scale=\figsize]{\main/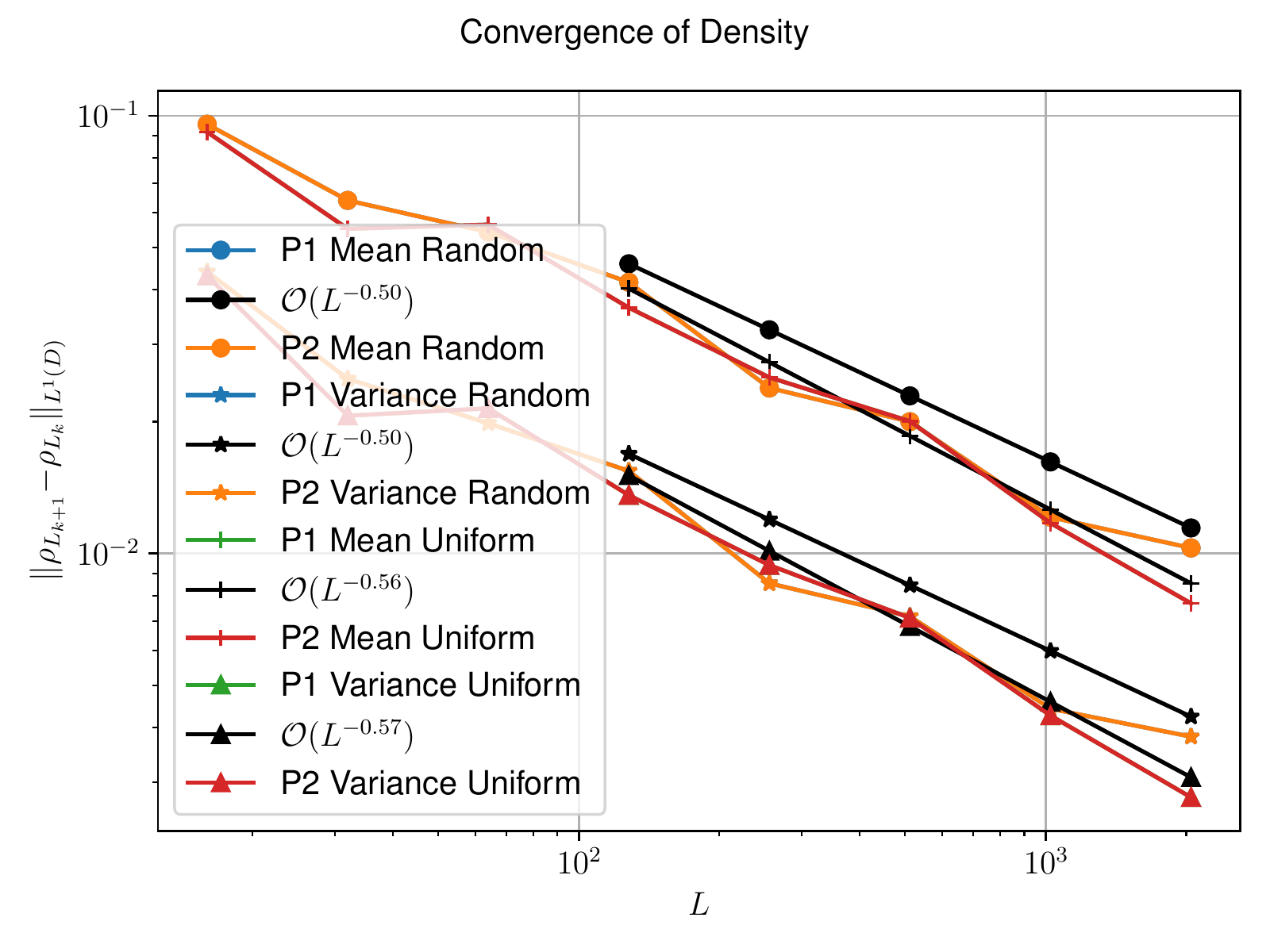}\\
\includegraphics[scale=\figsize]{\main/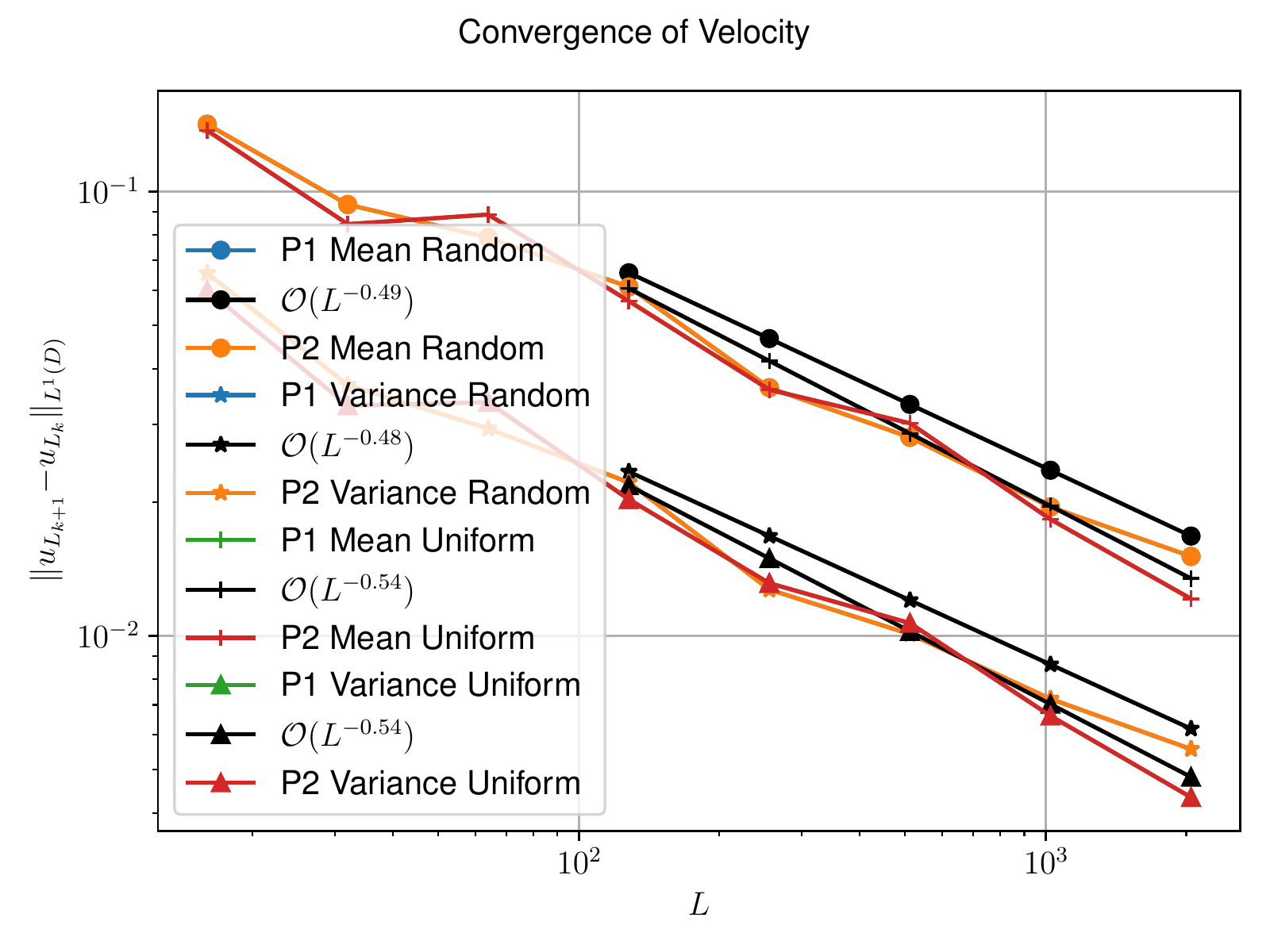}\,
\includegraphics[scale=\figsize]{\main/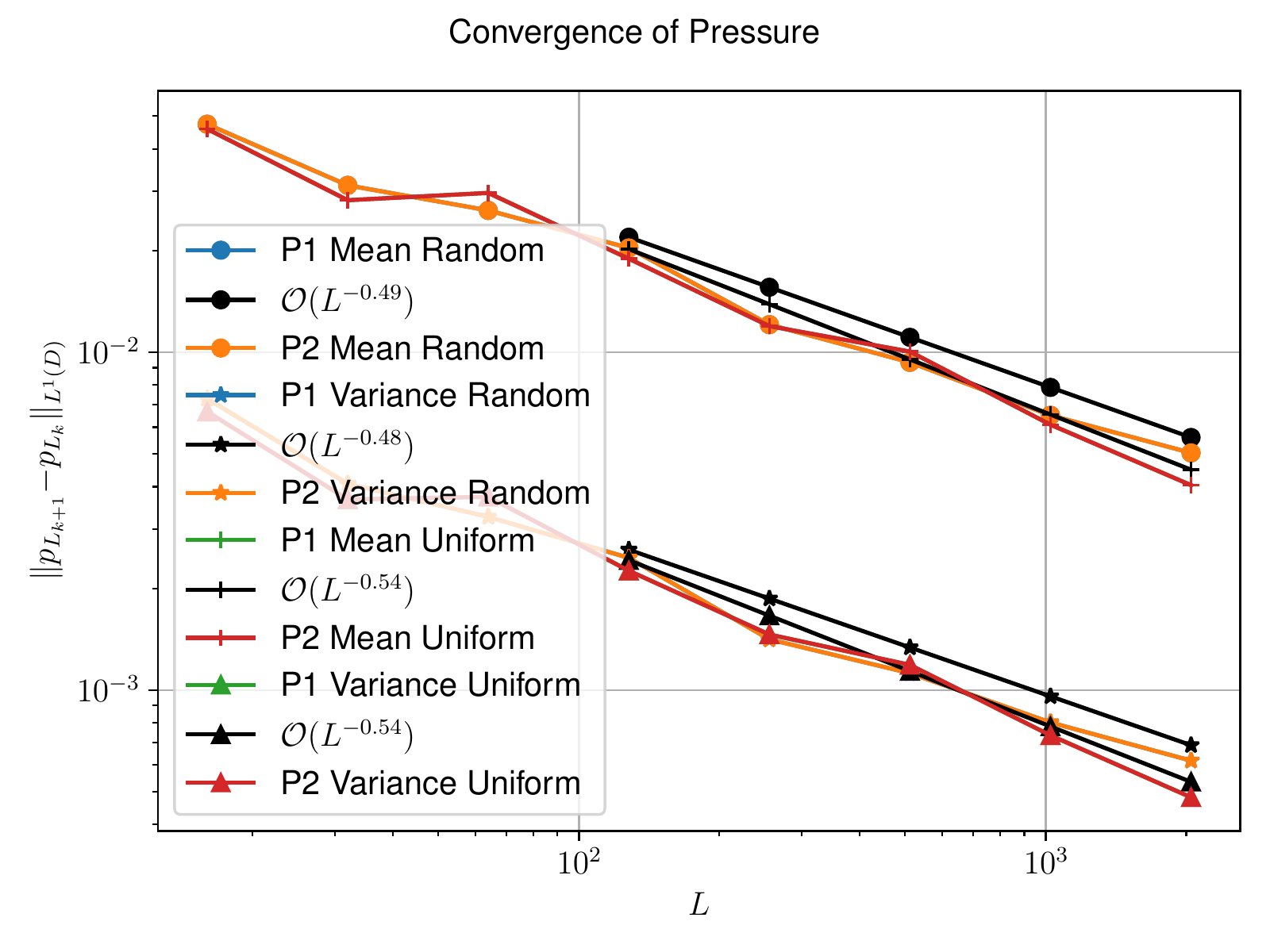}
\caption{Empirical convergence study for the mixtures in mechanical equilibrium under number of samples refinement.}\label{Fig:AI:T1_conv_sample}
\end{center}
\end{figure}
Notice that, again, computed results suggests that for given number of samples and macroscopic resolution, choosing the number of samples in a uniform manner is achieving better asymptotic properties and lower errors. This is the reason why in the following only the Uniform strategy will be considered.\\

Based on the current discussion, we observe that convergence under sub-scale refinement and number of samples is obtained, thus constituting an evidence that the strategy is stable. 
This, however, it does not provide any information about how to choose the corresponding (hyperparameter) number of sub-volumes $N\in\bbN$. 
Indeed, at the numerical level, the smaller the width of a phase component, the higher the number of interactions that the FT evolution operator needs to resolve, and thus the higher the computational demand. 
Hence, for computational efficiency it would be desirable to fix a possibly large number of sub-volumes. 
Unfortunately, this seems, even for (such) simple test cases, not possible: to demonstrate such claim we show via a convergence study that solutions computed on low sub-sale resolution cannot compute solutions generated using a fine sub-scale resolution, independently of the number of samples.
First, we fix the results generated using the parameters listen in Table \ref{Tab:AI:T1} as our target solution. 
Subsequently, we compute, for several sub-scale resolutions, predictions on a sequence of increasingly higher number of samples. 
Corresponding results are compared (in norm) to the first solution (computed on a fine sub-scale resolution), and convergence rates are reported in the loglog scale in Fig.\ref{Fig:AI:T1:err_vs_L_sub-scale}.
\begin{figure}[!htbp]
\begin{center}
\subfloat[$N=200$]{\includegraphics[scale=0.27]{\main/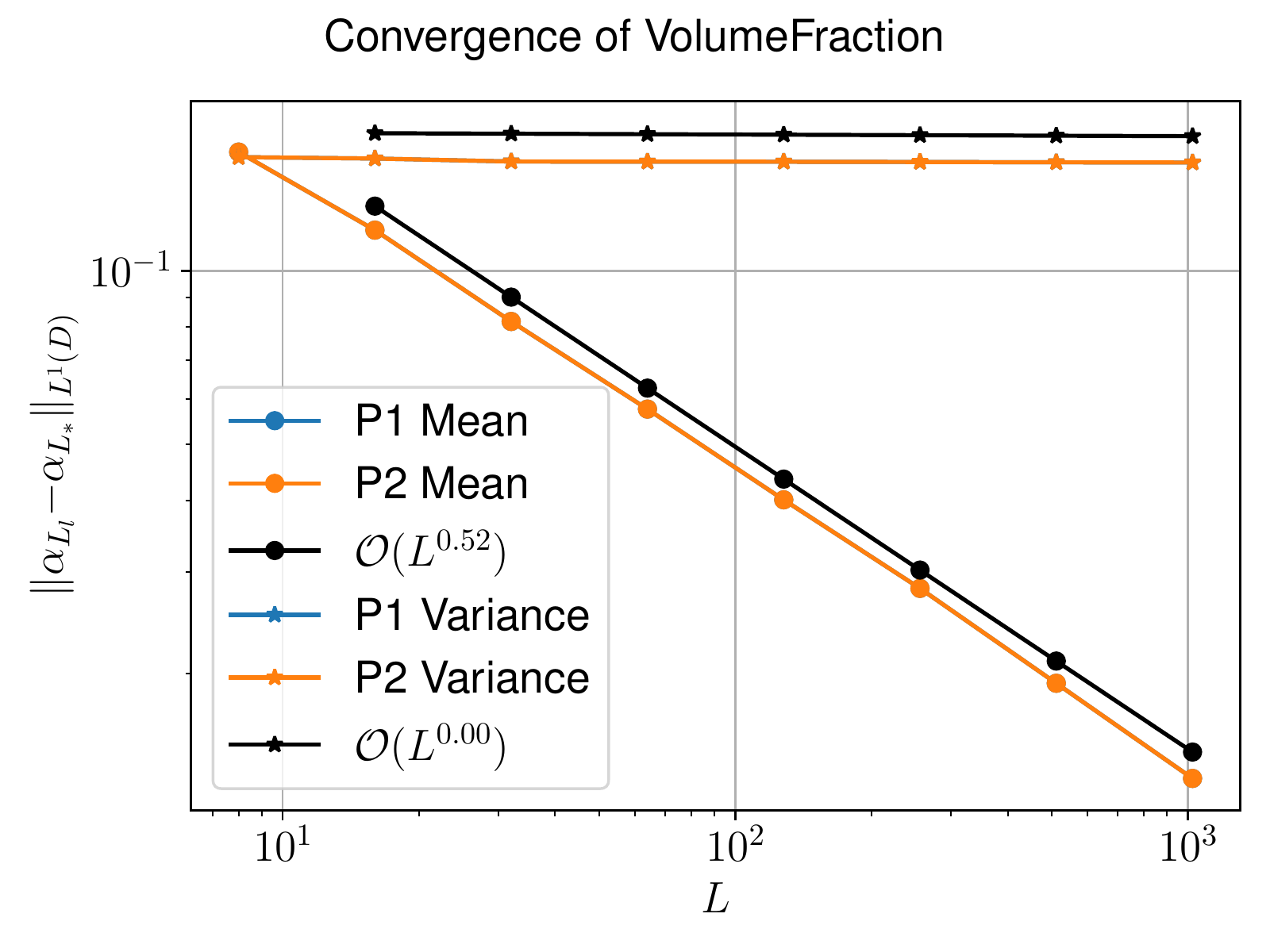}}\,
\subfloat[$N=800$]{\includegraphics[scale=0.27]{\main/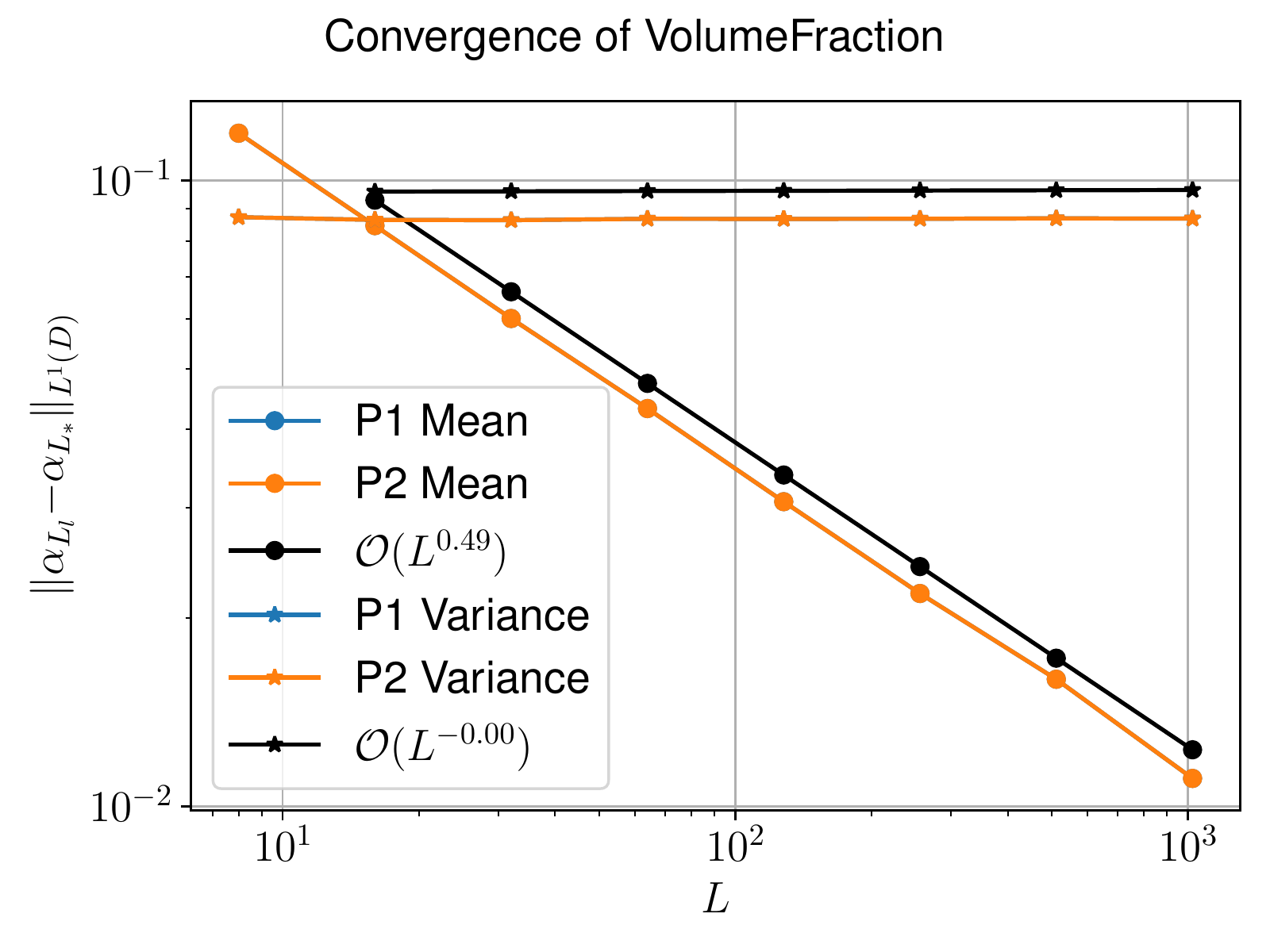}}\,
\subfloat[$N=3200$]{\includegraphics[scale=0.27]{\main/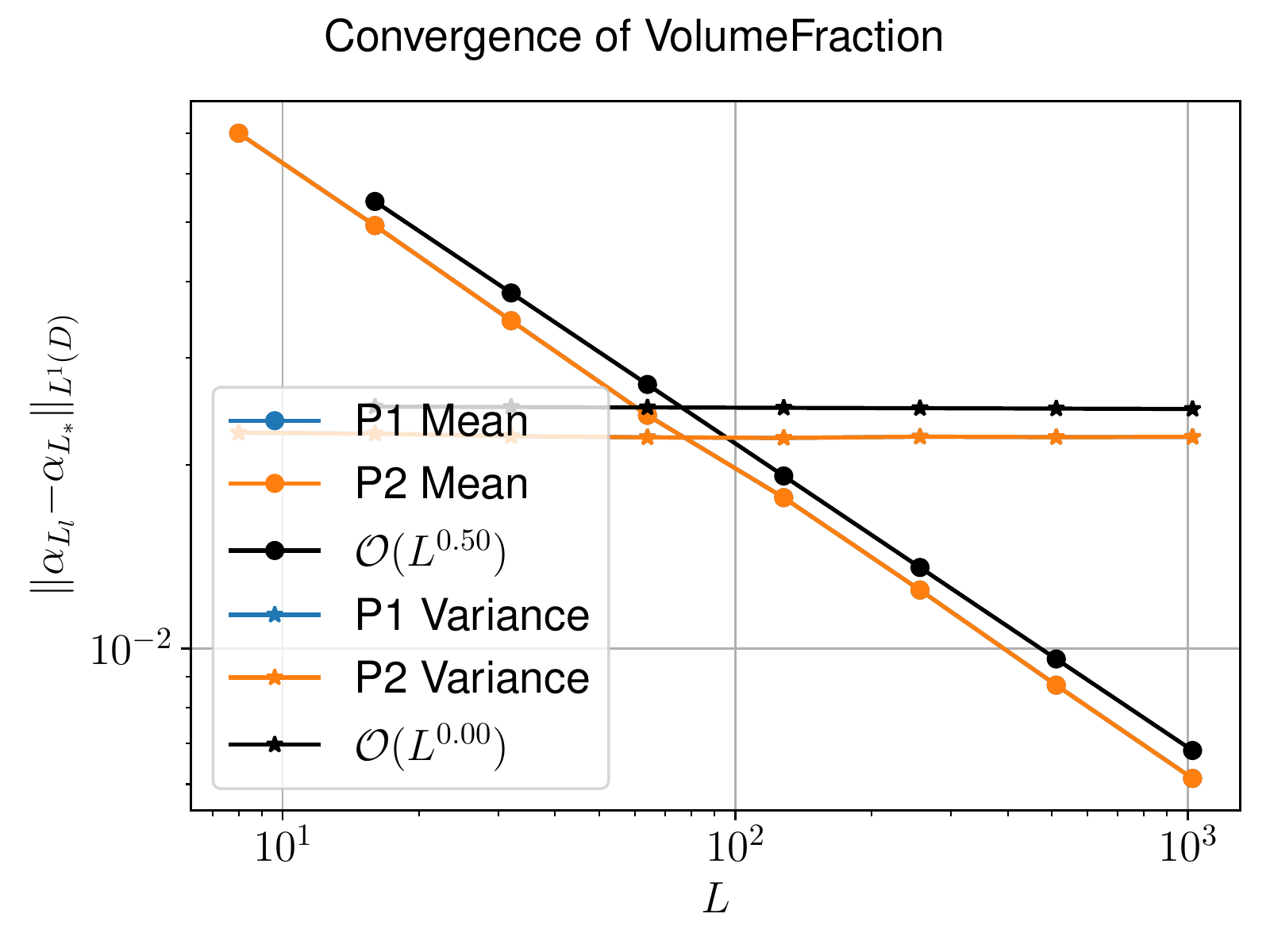}}
\caption{Empirical convergence studies for the mixture under uniform conditions: error-vs-number of samples ($L$), for several sub-scale resolutions.}\label{Fig:AI:T1:err_vs_L_sub-scale}
\end{center}
\end{figure}
Results show that expectations computed on different sub-scale refinements agree with each-other in the limit of an infinite number of samples.
Interestingly, the same conclusion does not hold true for the second moment. 
Indeed, a low level description of the microstructure introduces events that are not realizable via high level resolutions, and thus convergence does not take place. 
Conversely, refinement in the sub-scale resolution produces a decrease in variance, thus identifying as target solutions those computed in the limit of vanishing sub-scale resolution.

\begin{figure}
\begin{center}
\subfloat[$L=1,\,\,M=100$]{
\includegraphics[scale=0.25]{\main/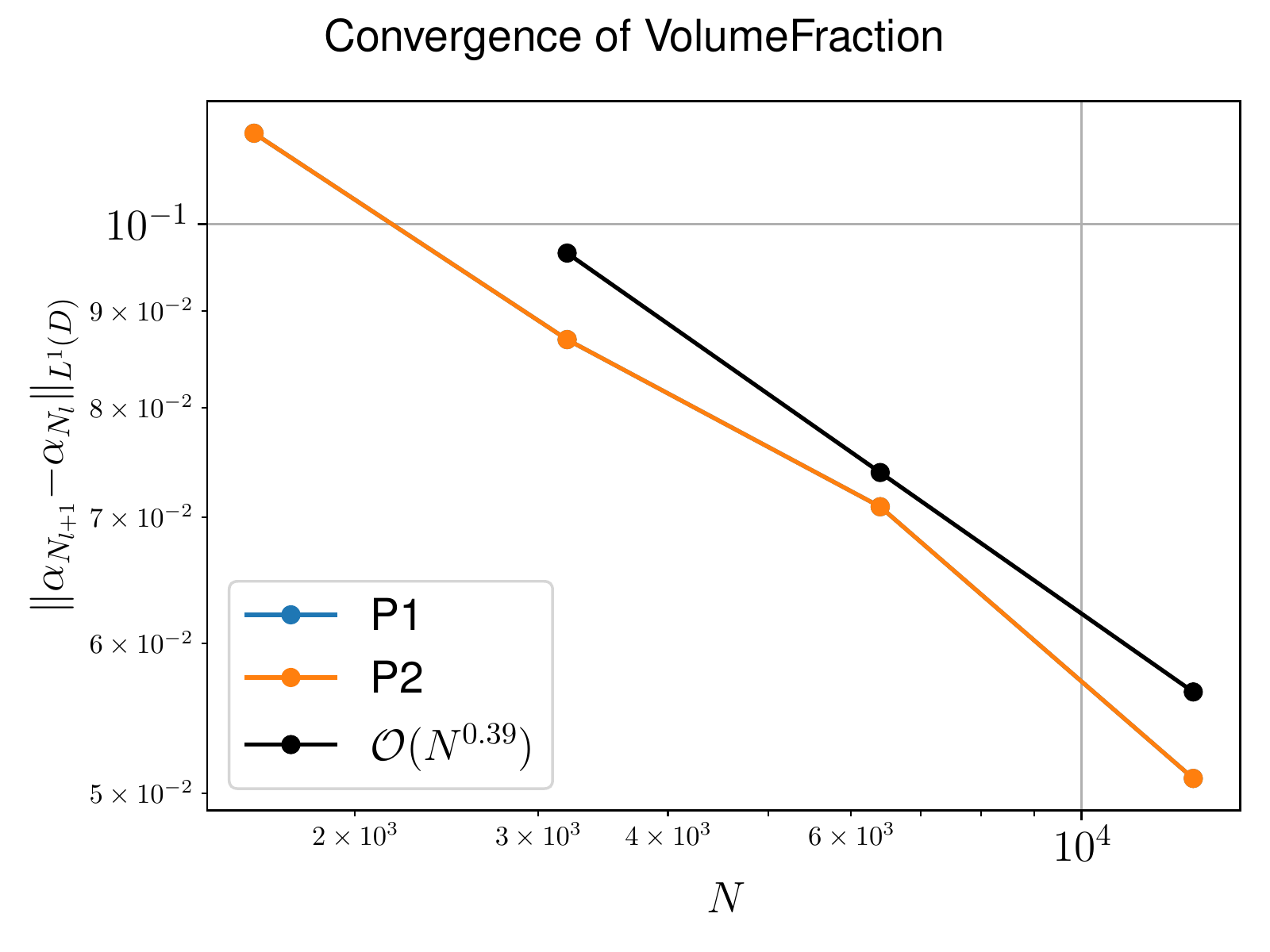}
}
\,
\subfloat[$L=100,\,\,M=100$]{
\includegraphics[scale=0.25]{\main/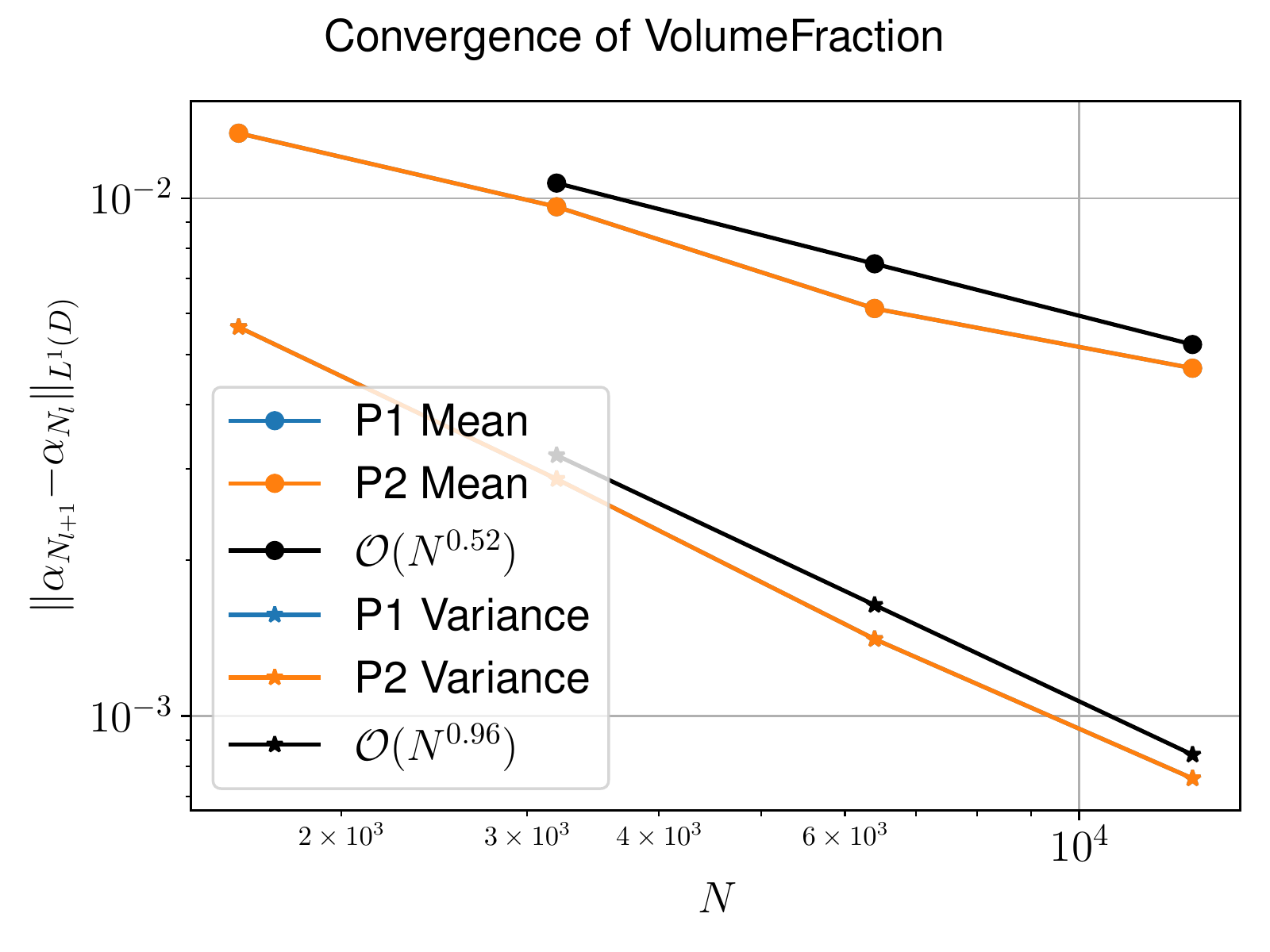}
}
\,
\subfloat[$L=1000,\,\,M=100$]{
\includegraphics[scale=0.25]{\main/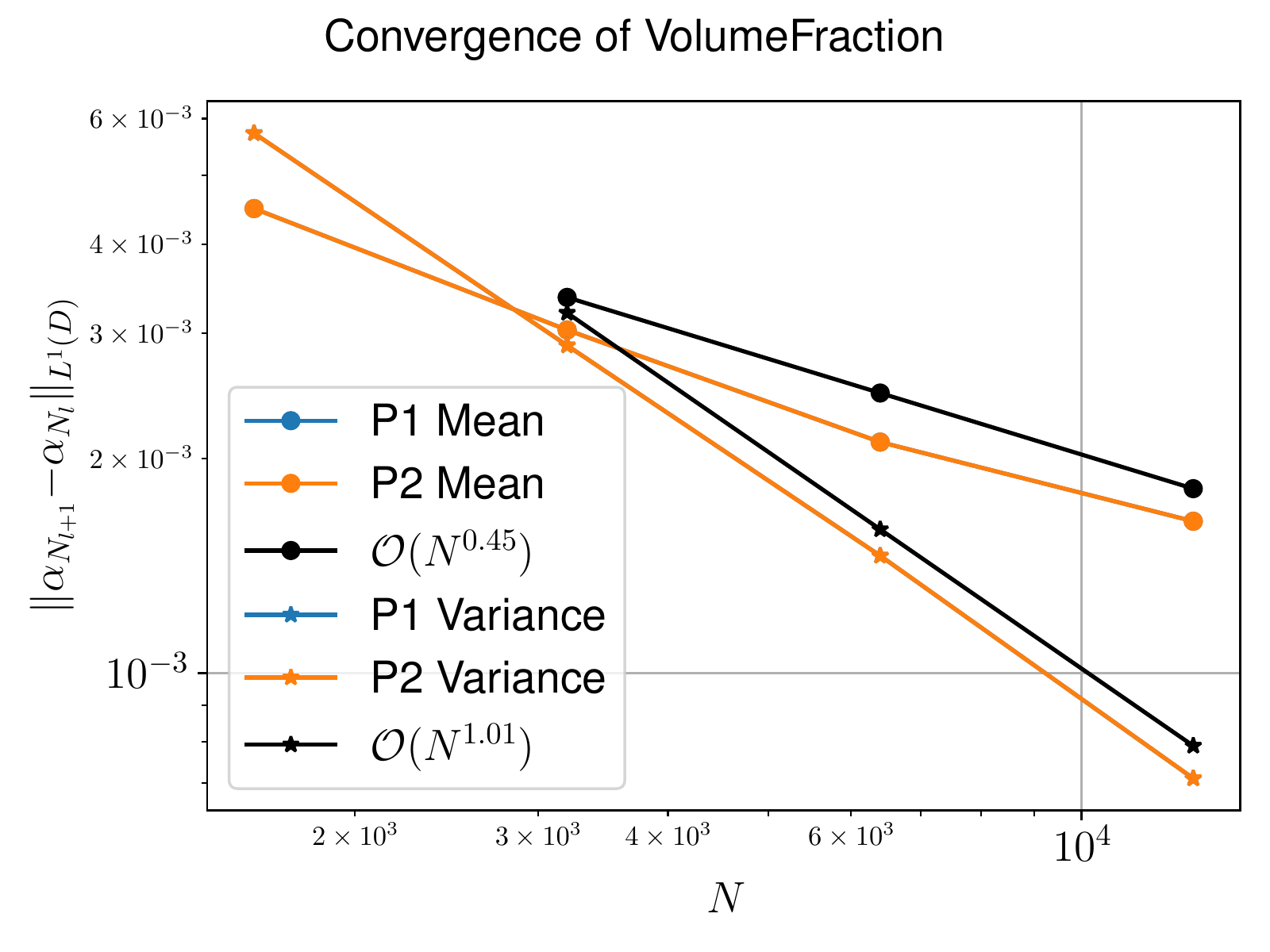}
}
\\
\subfloat[$L=1,\,\,M=500$]{
\includegraphics[scale=0.25]{\main/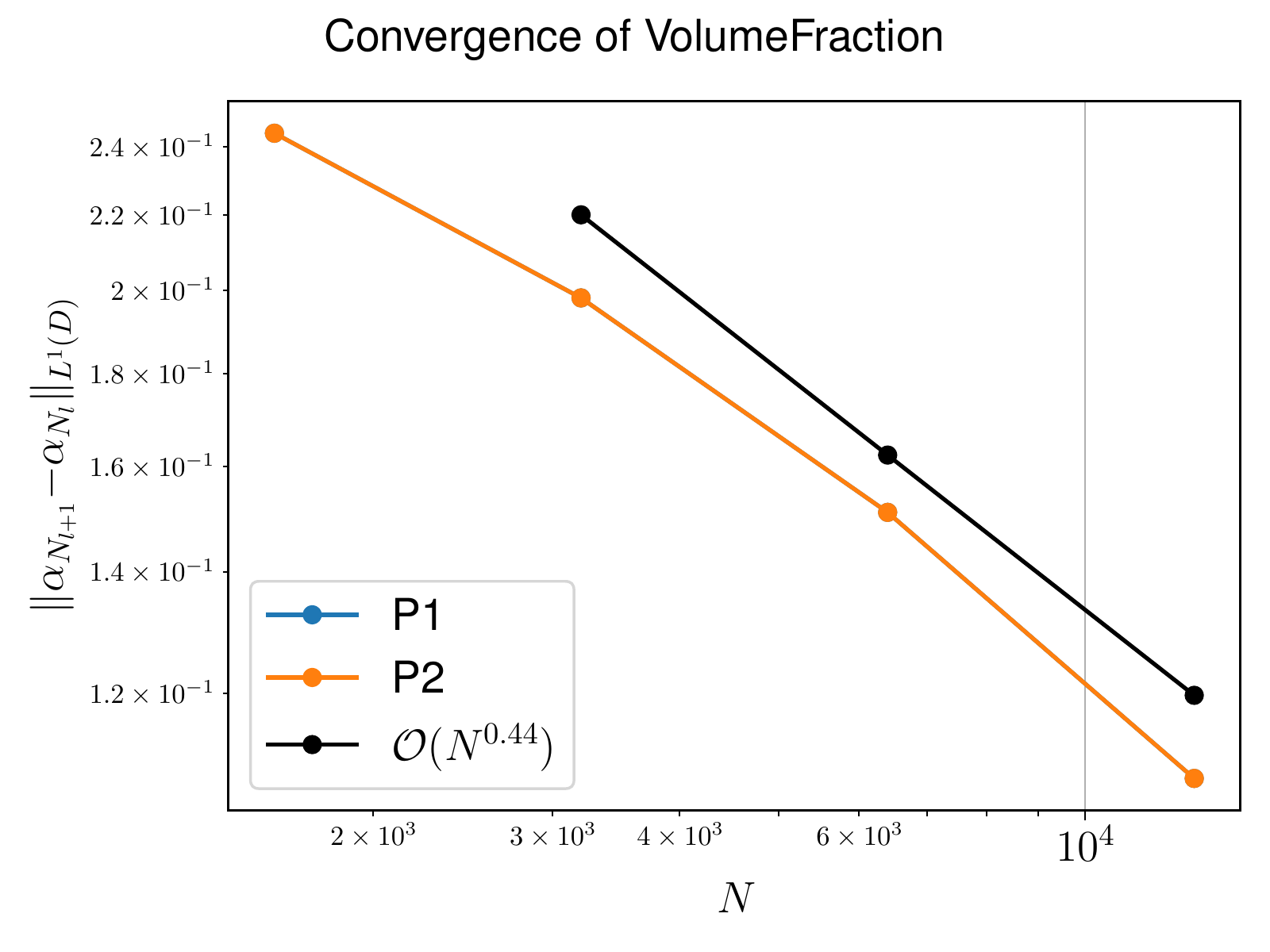}
}
\,
\subfloat[$L=100,\,\,M=500$]{
\includegraphics[scale=0.25]{\main/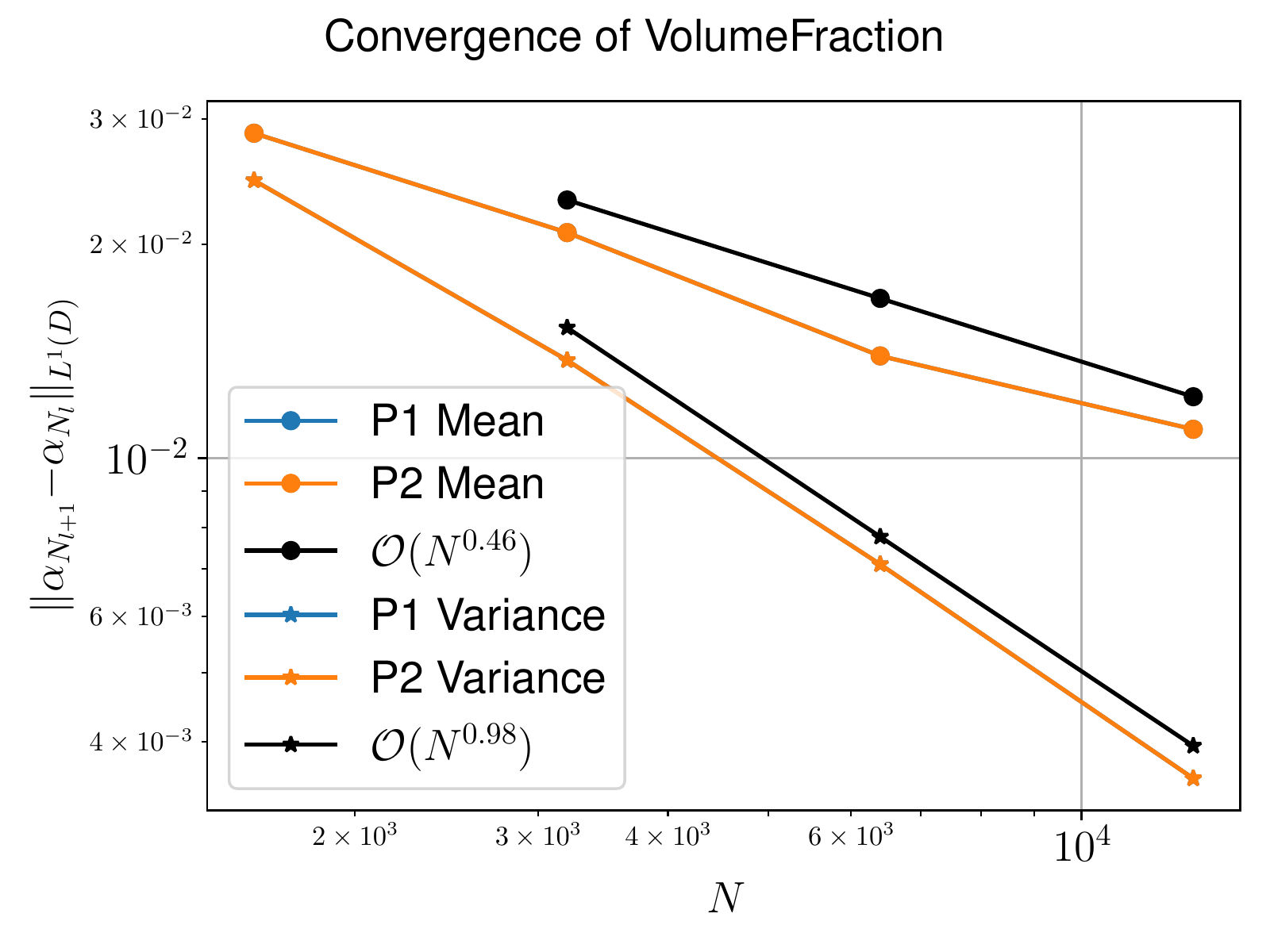}
}
\,
\subfloat[$L=1000,\,\,M=500$]{
\includegraphics[scale=0.25]{\main/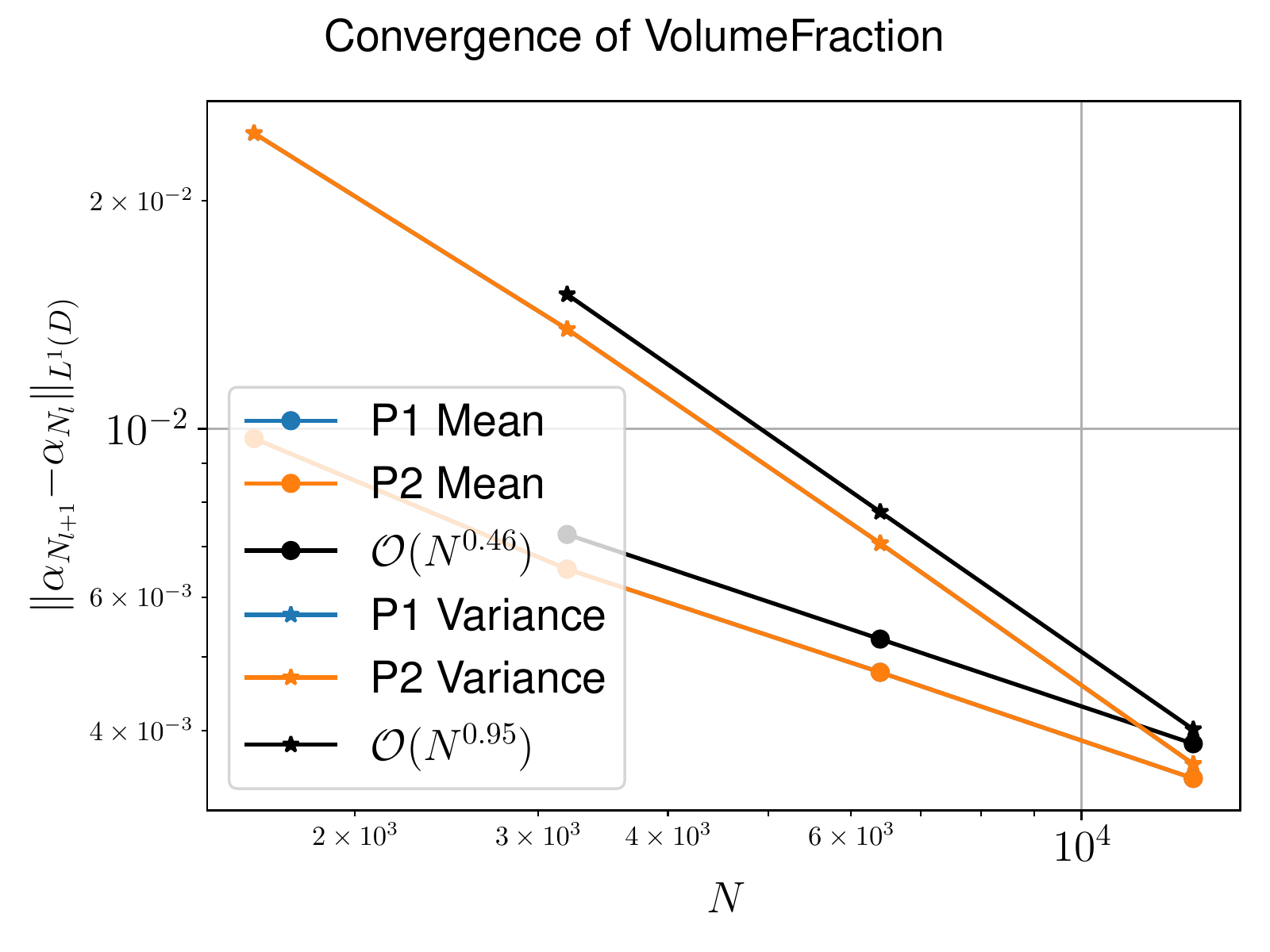}
}
\\
\subfloat[$L=1,\,\,M=1000$]{
\includegraphics[scale=0.25]{\main/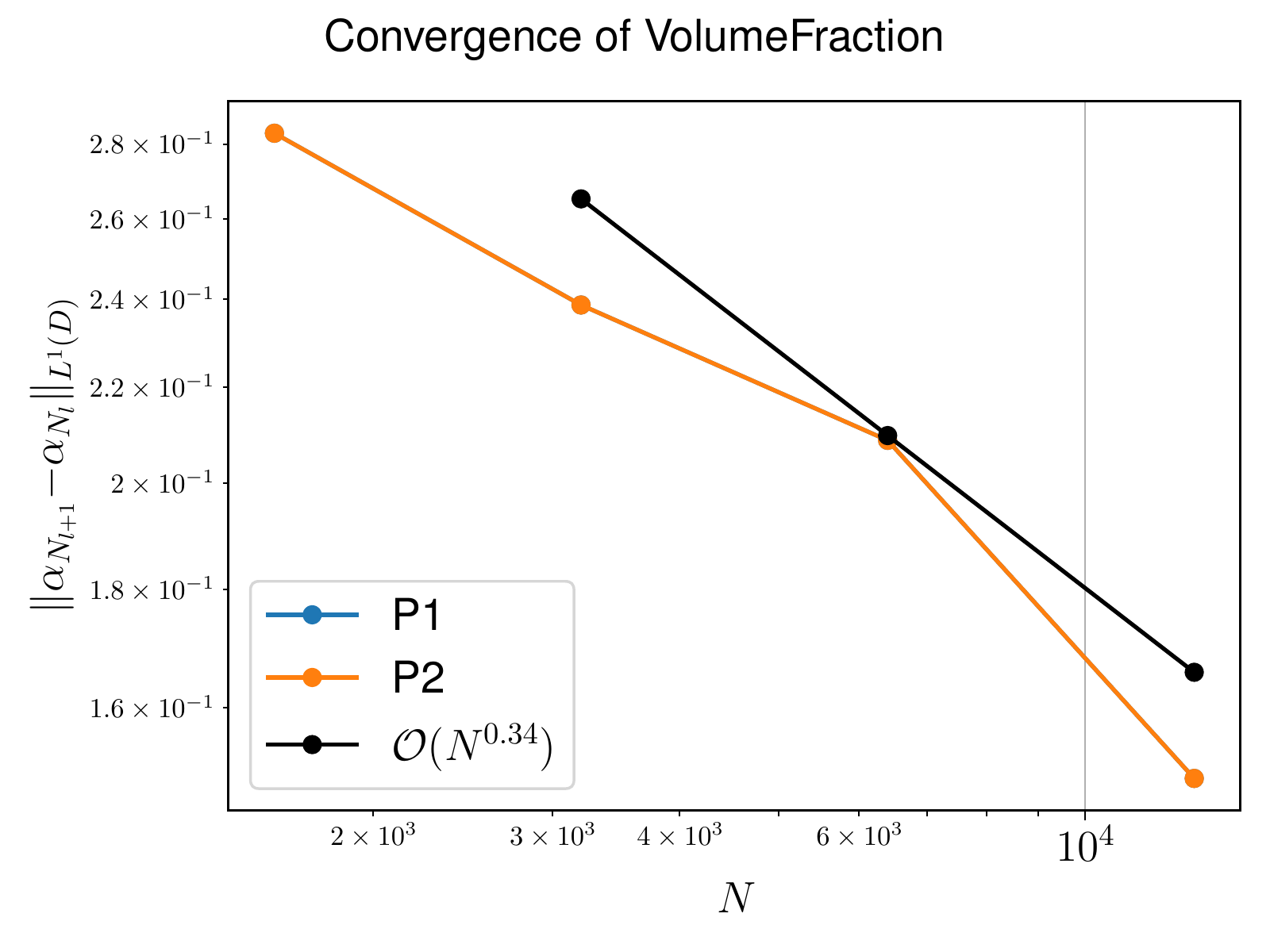}
}
\,
\subfloat[$L=100,\,\,M=1000$]{
\includegraphics[scale=0.25]{\main/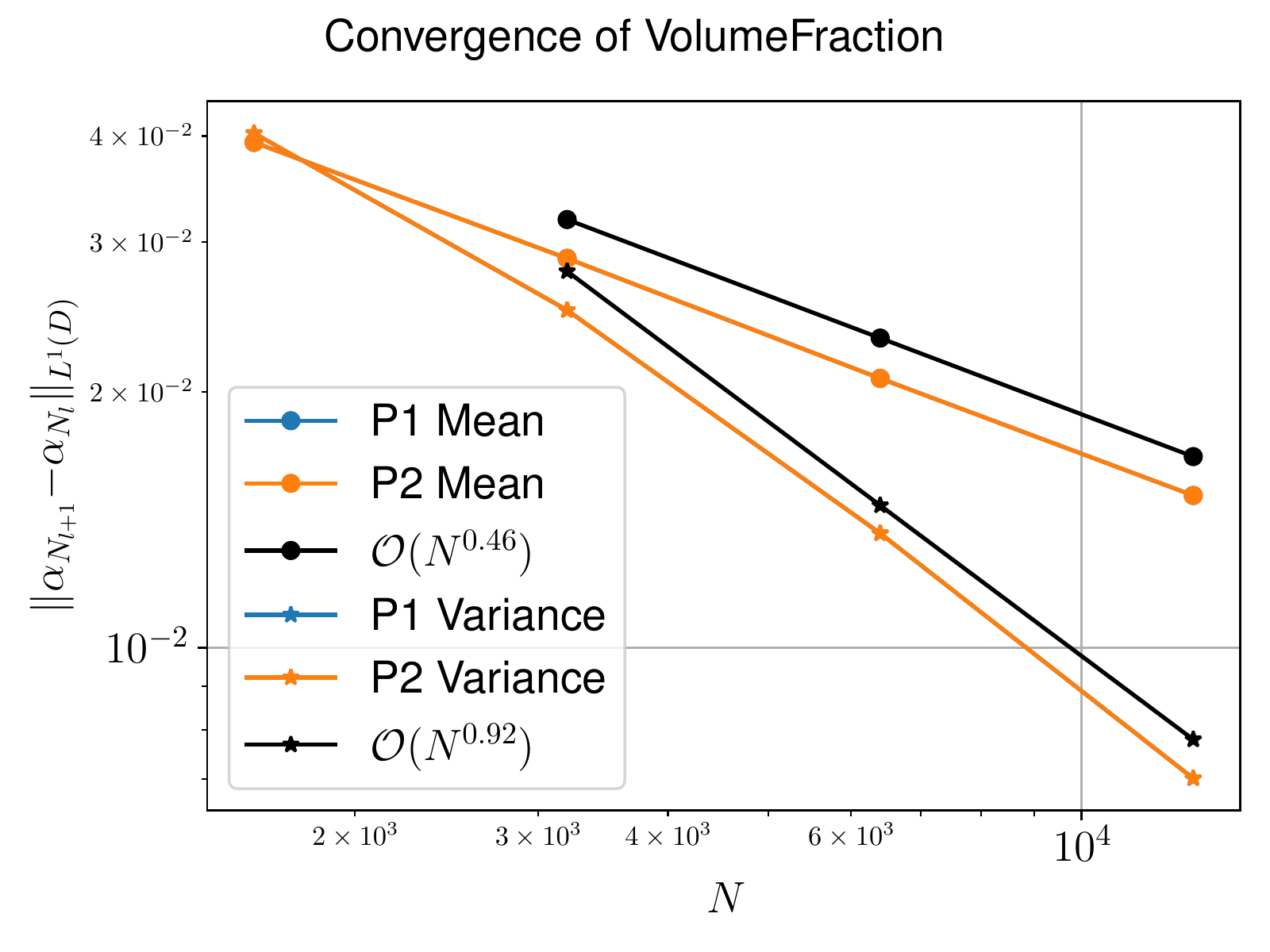}
}
\,
\subfloat[$L=1000,\,\,M=1000$]{
\includegraphics[scale=0.25]{\main/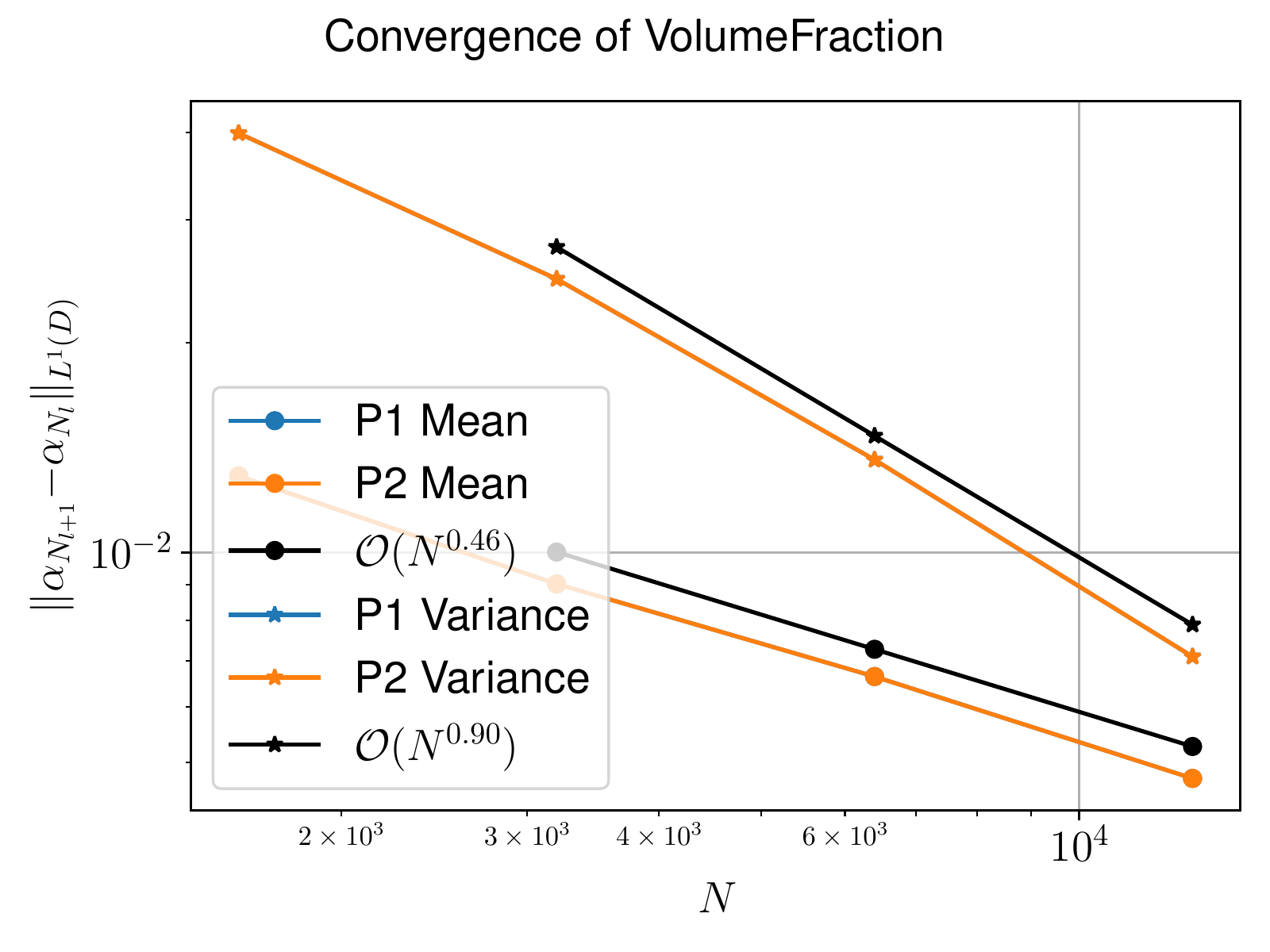}
}
\\
\subfloat[$L=1,\,\,M=5000$]{
\includegraphics[scale=0.25]{\main/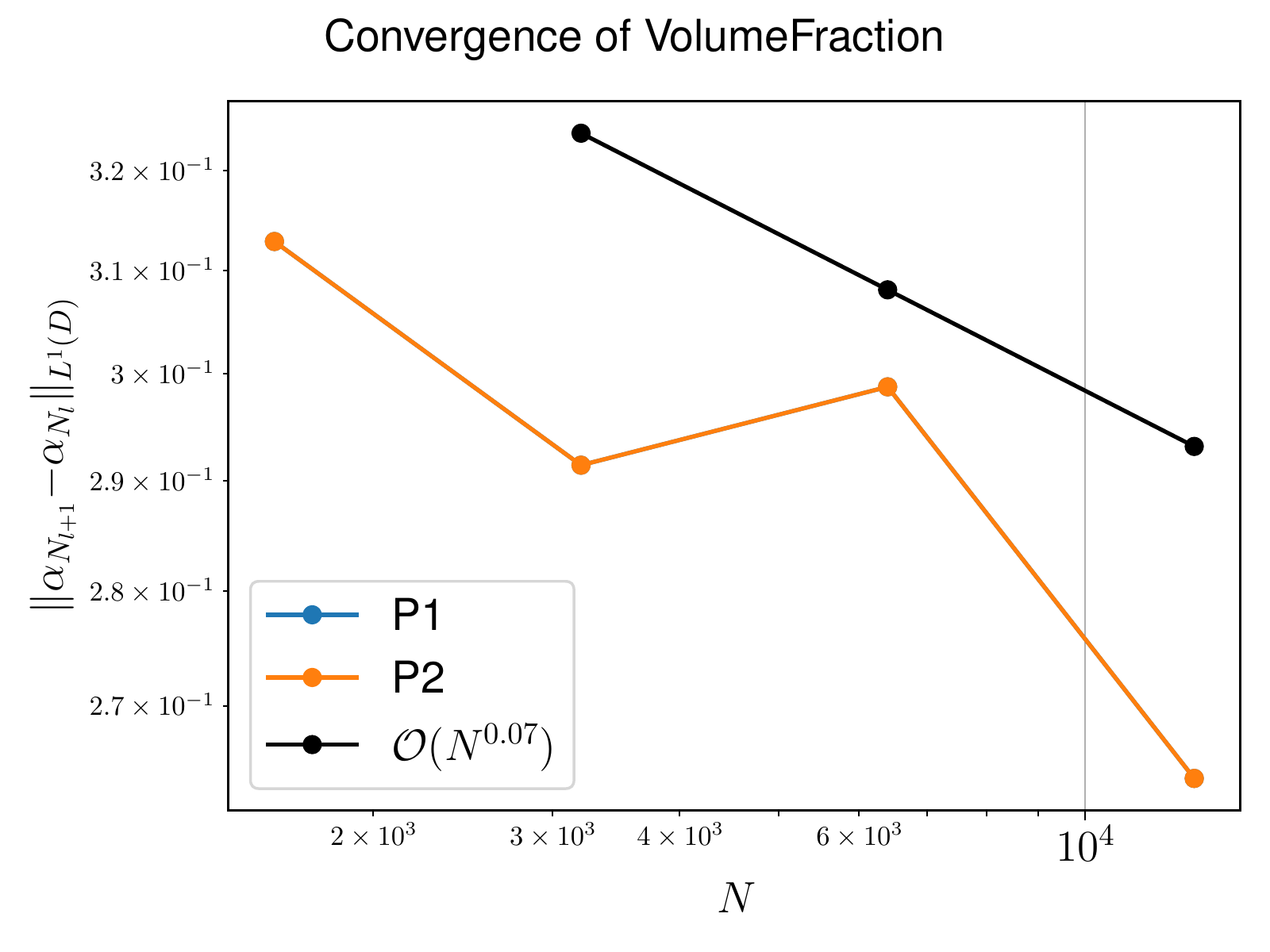}
}
\,
\subfloat[$L=100,\,\,M=5000$]{
\includegraphics[scale=0.25]{\main/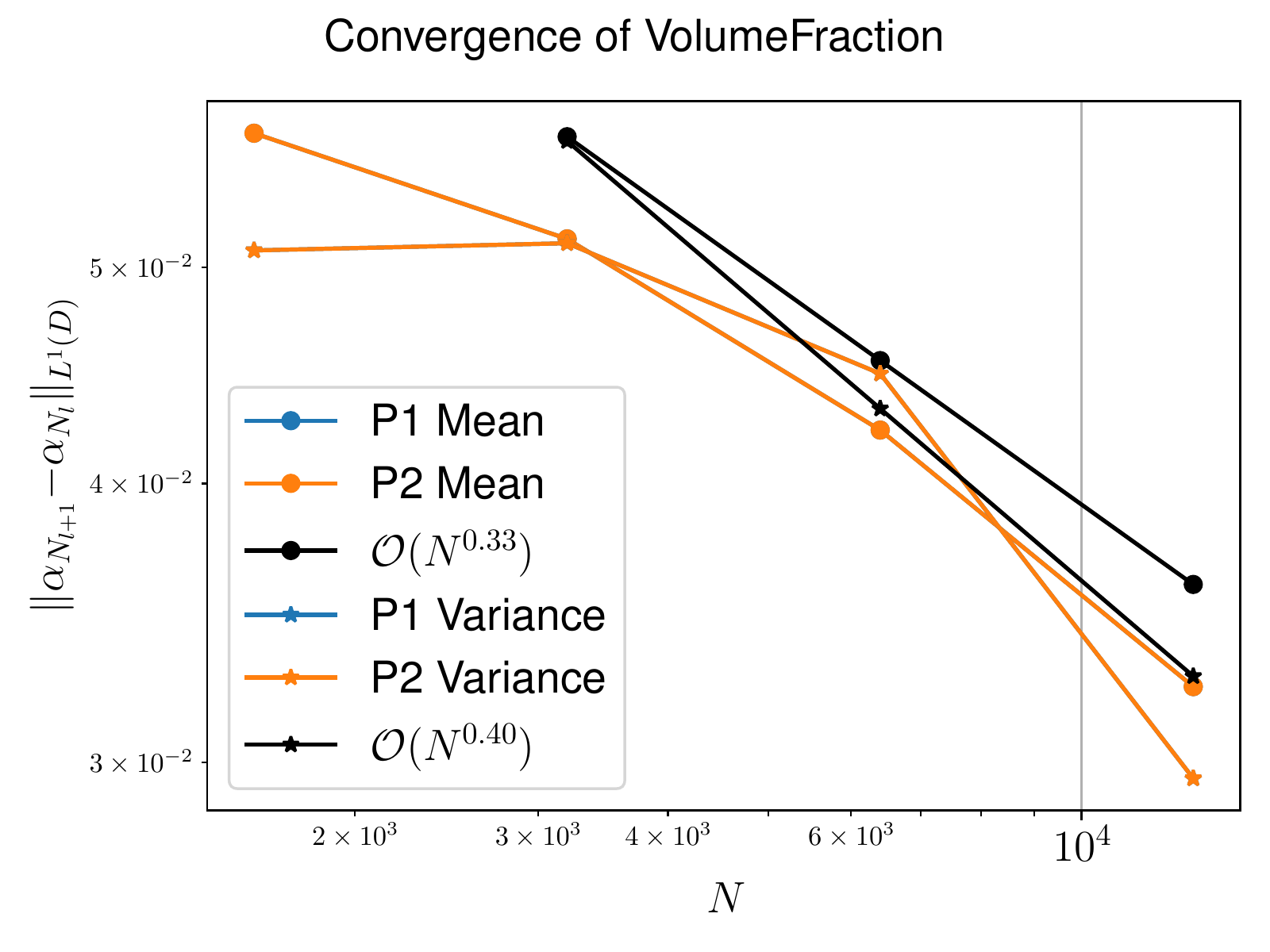}
}
\,
\subfloat[$L=1000,\,\,M=5000$]{
\includegraphics[scale=0.25]{\main/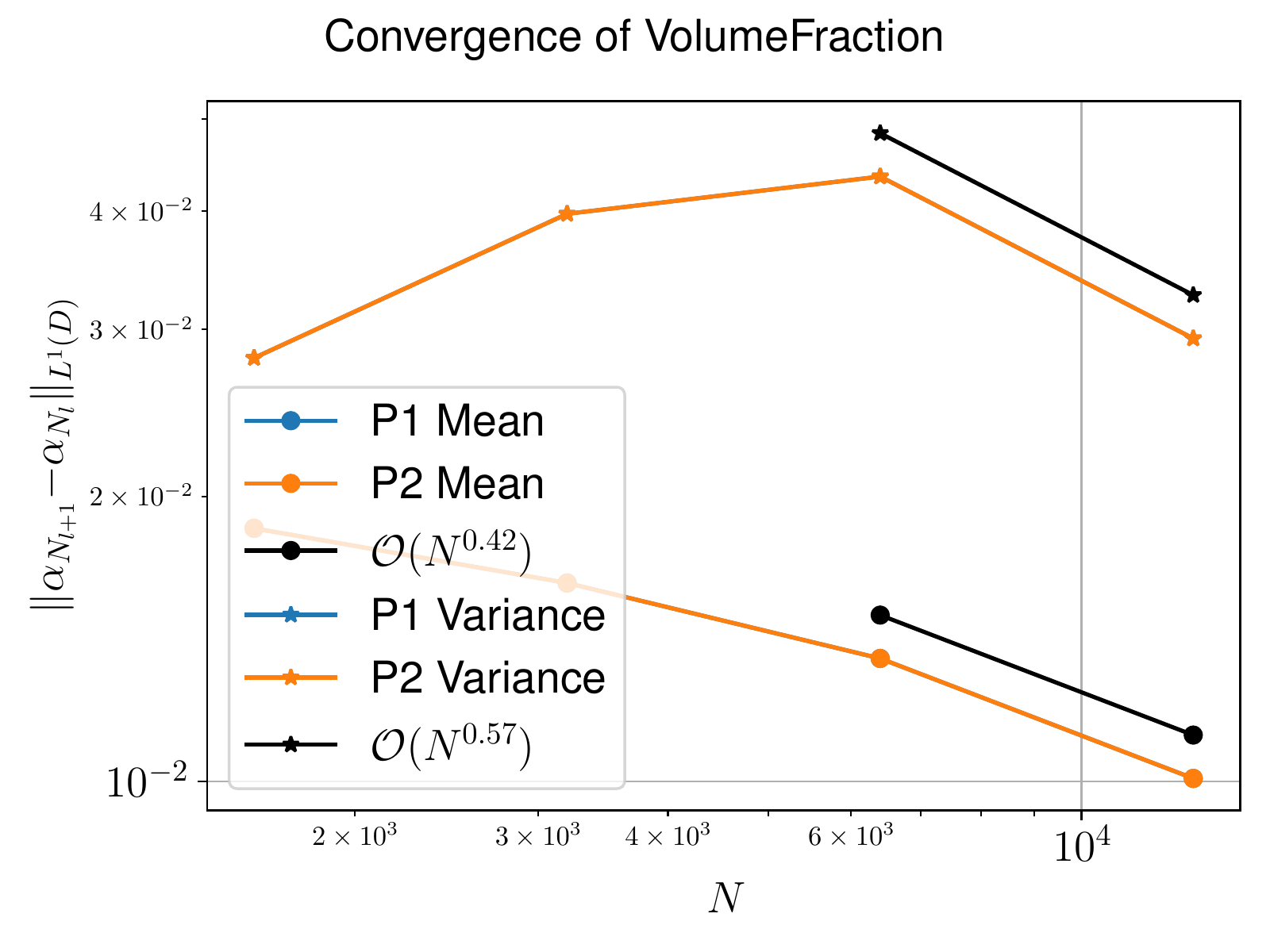}
}
\caption{Empirical convergence study for the test case under uniform conditions: bi-variate analysis for physical mesh resolution ($M$) and number of samples ($L$), as the sub-scale resolution is refined ($N\rightarrow\infty$).}\label{Fig:AI:T1_conv_bivariate}
\end{center}
\end{figure}

We conclude this test case by investigating the influence that the variation of physical mesh and sub-scale resolution have on macroscopic quantities. 
Indeed, based on the above discussion, it is clear that, for fixed physical mesh and number of samples, convergence is taking place as the sub-scale resolution is refined. \\
As discussed in \cite{Petrella22FT}, for a given two-phase distribution, convergence under physical mesh refinement is obtained, and thus for a single realization generated using Alg.\ref{al:AI:eMGP}.\\
Conversely, it is not clear what happens when one refines both physical and sub-scale resolutions. To this extent, we run the present test case for several physical mesh resolutions and using several number of samples. 
Mesh convergence study are then performed in the limit of vanishing sub-scale resolution. Corresponding plots in the loglog scale are reported in Fig.\ref{Fig:AI:T1_conv_bivariate}.
First let us consider the first column (on the left), about convergence for a single sample for several physical resolutions as the sub-scale is refined. 
Notice that, for low number of physical volumes $M\in\bbN$, convergence under sub-scale refinement is preserved at a rate approximately $1/2$, which doubles for variance.
Interestingly, as the physical mesh increases, the convergence rate gets deteriorated up to the point of showing no convergence (see rate in Fig.\ref{Fig:AI:T1_conv_bivariate}j). 
This is reasonable as samples generated using a fixed sub-scale resolution do not allocate dispersed matter in the same locations of those constructed using finer resolutions: samples generated using $N$ sub-volumes distribute phases in different places as compared to those generated with $2N$ sub-volumes. 
Thus, doubling the sub-scale resolution \emph{does not} provide more accurate results on the same sample.\\
For sufficiently fine physical resolution, one capture such discrepancy across sub-scale resolutions and convergence is clearly not recovered, as expected.
Nevertheless, such lack of convergence seems to disappear as the number of samples increases (see Fig.\ref{Fig:AI:T1_conv_bivariate}k and l):
this is essentially due to the fact that any event generated using $N$ sub-volumes can be generated using $2N$ volumes, thus making ensemble contain similar realizations at different sub-scale resolutions.
In turn, the distance in norm between the two levels of description reduces.\\
Interestingly, convergence with a fixed rate can be appreciated independent of the number of samples for any resolution (first three rows of Fig.\ref{Fig:AI:T1_conv_bivariate}). 
Such an outcome seems to be due to the submersion of the sub-scale grid (defined by interfaces) into the physical mesh, which clearly introduces an homogenization at the macroscopic level. This is further confirmed by the scarce convergence visible in the last row of Fig.\ref{Fig:AI:T1_conv_bivariate} as long as the physical mesh is finer than the sub-scale one (i.e. $N>M$).
This should not be regarded as a failure of the procedure, but rather as the result of an inversion in the hierarchy of the two scales: each sample is resolved in great detail, and more and more details are added as the number of samples increases, leading to an increase in variance, which then causes lack of convergence. 
This is in complete contrast with what happens once the proper hierarchy of scales is reestablished by increasing the sub-scale resolution.
Note how this "order" of hierarchies is actually imposed by the initial condition, which defines an expectation over the entire domain. 
By discretizing the volume fraction on a very fine grid, one defines volume-wise an expected value, which can be met only by further refining the (already) fine volumes. This necessarily introduces the need to generate regimes with a finer resolution than that used for the physical mesh.\\
Such observations suggest that the ab-initio method will produce convergent results only if the sub-scale is submerged in the physical one (as required by physics), and the use of very fine physical meshes imposes the need of a very large number of sub-volumes. 
Furthermore, by taking into account the discussion on using only sub-volumes with big size, one concludes that in order to obtain reliable results, dispersion at small-scale is unavoidable.

\subsubsection{Relaxation towards equilibrium}

As it is well-known, a primary characteristic of two-phase flow phenomena is the fact that both phases moves macroscopically with a single pressure and single velocity. Such phenomenon is the outcome of the smaller time scale at which the two phases are exchanging energy through the interface as compared to the system time scale. 
Typically, at the numerical level this is simulated via the use of stiff-mechanical relaxation terms which force the two phases to achieve the desired uniform mechanical conditions. 
In \cite{Abgrall&Saurel}, it was firstly constructed an explicit relation between the number of interfaces and the parameters that controls such relaxation process.
In this section we aim at constructing a test case for the analysis of the relaxation between phases: we consider a domain filled with two gas with uniform volume fractions and densities, but at different pressures. 
The corresponding initial condition reads
\[
\VV(x,0)
=
\begin{bmatrix}
\alpha^{(1)}_0 = 0.9\\
\VW^{(1)}_0\\
\alpha^{(2)}_0 = 0.1\\
\VW^{(2)}_0
\end{bmatrix}
\qquad\qquad
\VW^{(1)} =
\begin{bmatrix}
1\\
0\\
1
\end{bmatrix}
\qquad
\VW^{(2)} =
\begin{bmatrix}
0.125\\
0\\
0.1
\end{bmatrix}
\]
Triggered by the pressure difference, both phases start to exchange energy through the interfaces as to equilibrate the respective states.
Moreover, due to the uniform density, no variation but the one resulting from interfacial exchanges is involved.
Thus, the corresponding solutions for this problem are the time parametrized, (space-)constant states
$\VV^{(k)}(t) = [\alpha^{(k)}(t),\rho^{(k)}(t),u^{(k)}(t),p^{(k)}(t)]$
with $k=1,2$.
\begin{table}
\begin{center}
\begin{tabular}{c||c|c|c|c|c|c|}
&
$M$ & $\tilde{N}$ & $\mathrm{CFL}$ & $\delta^{(1)}$ & $\delta^{(2)}$ & $L$\\
\hline
\hline
\textbf{Ab-initio} & $1000$ & $2000$ & $0.9$ & $0.1$ & $0.1$ & $1000$\\
\hline
$\textbf{r}\equiv \bm{0}$ & $10000$ & $-$ & $0.9$ & $-$ & $-$ & $-$\\
\hline
$\textbf{r}\equiv \bm{1}$ & $10000$ & $-$ & $0.9$ & $-$ & $-$ & $-$\\
\hline
\end{tabular}
\end{center}
\caption{Parameters of computed solutions reported in Fig. \ref{Fig:AI:T2} }\label{Tab:AI:T2}
\end{table}
We first run the present test with the parameters summarized in Table \ref{Tab:AI:T2}, using a resampling strategy based on $N=100$ steps, and, for comparison, we also run the DEM scheme of \ref{Petrella22FT}. 
Notice that, for the DEM, single-pressure and single-velocity is achieved after the first step (due to the application of the relaxation step), as opposed to the ab-initio formulation.
Results for each scheme are presented in Fig.\ref{Fig:AI:T2}.\\
\begin{figure}[!htbp]
\begin{center}
\includegraphics[scale=\figsize]{\main/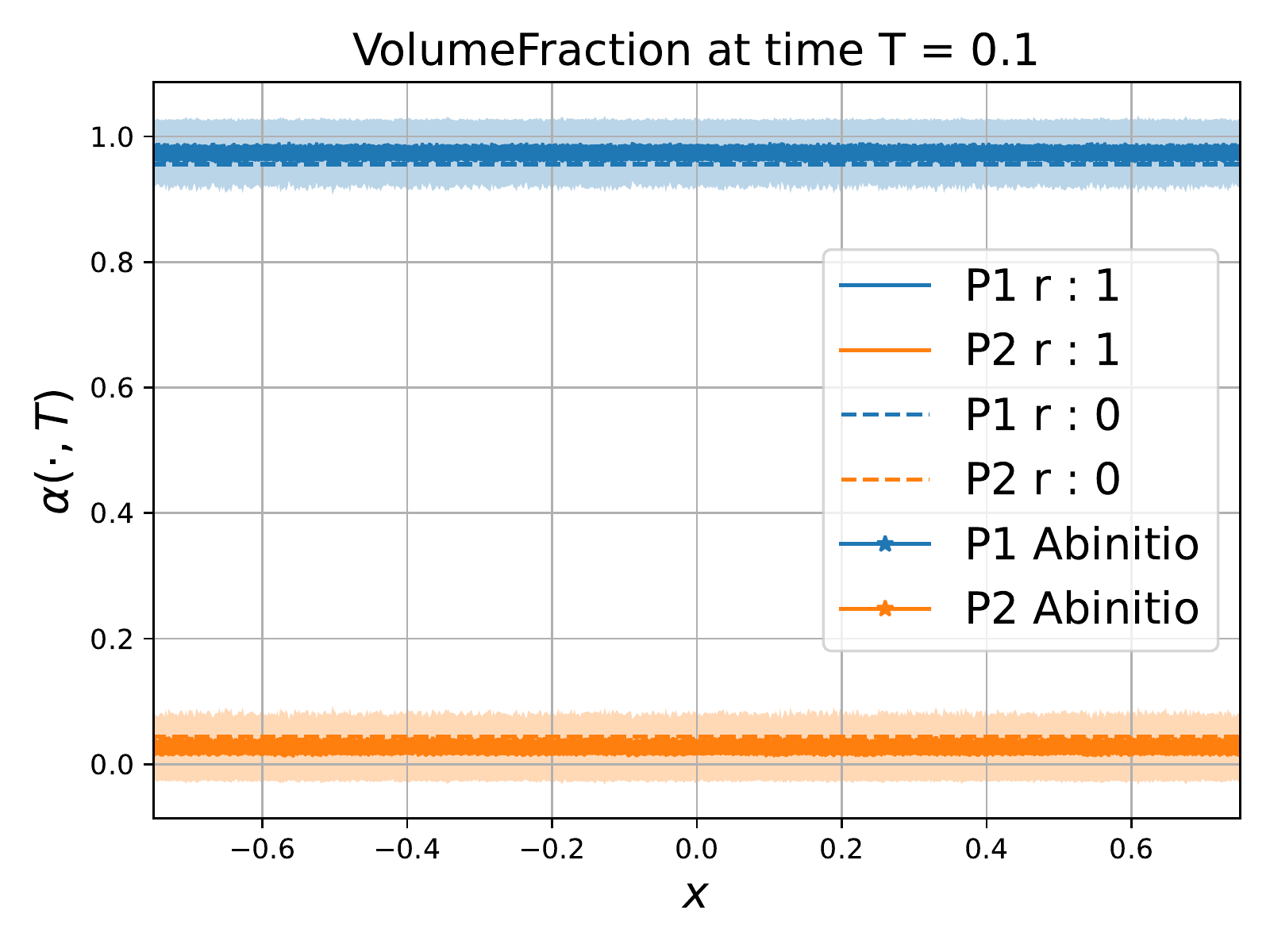}\,
\includegraphics[scale=\figsize]{\main/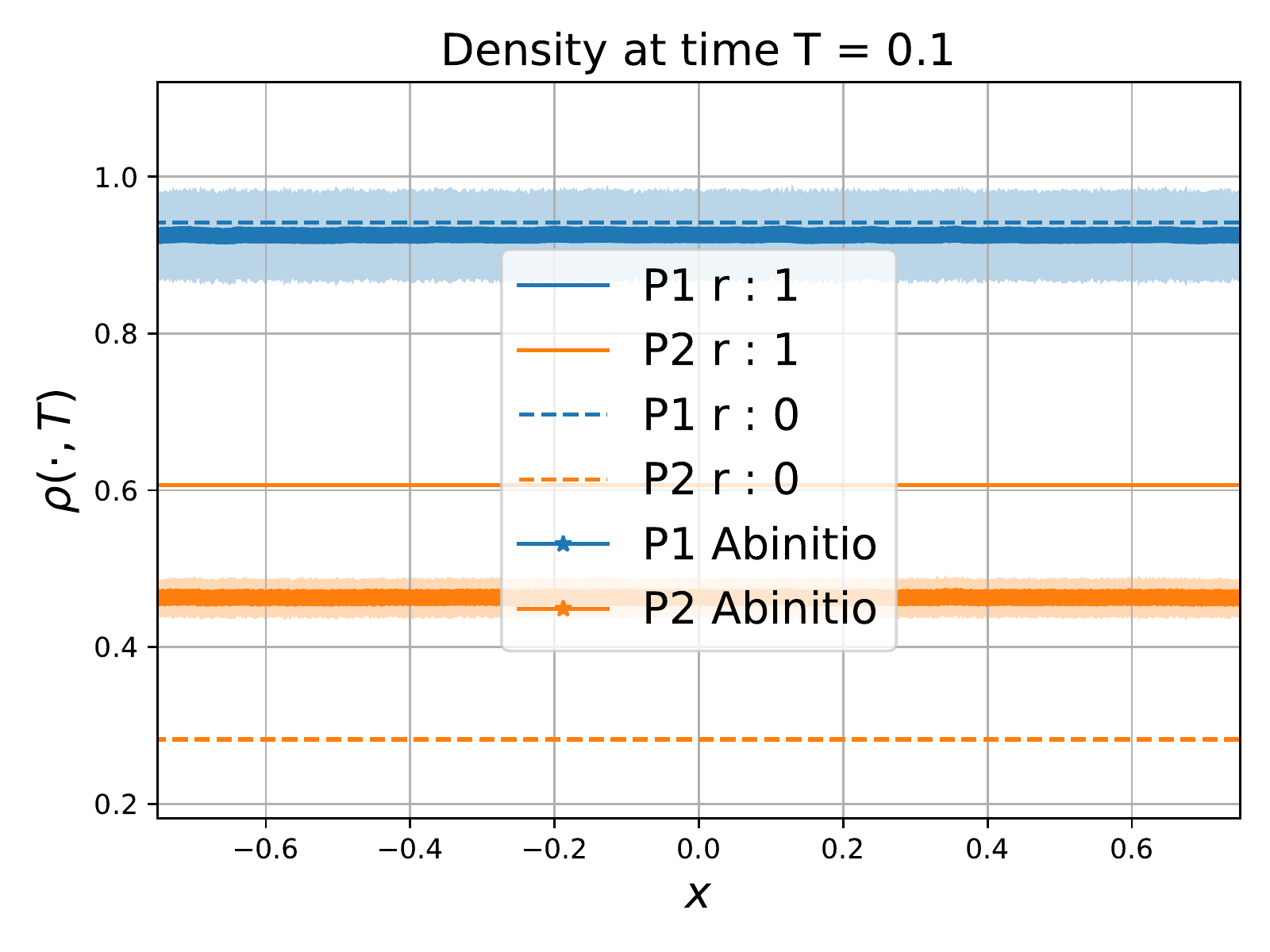}\\
\includegraphics[scale=\figsize]{\main/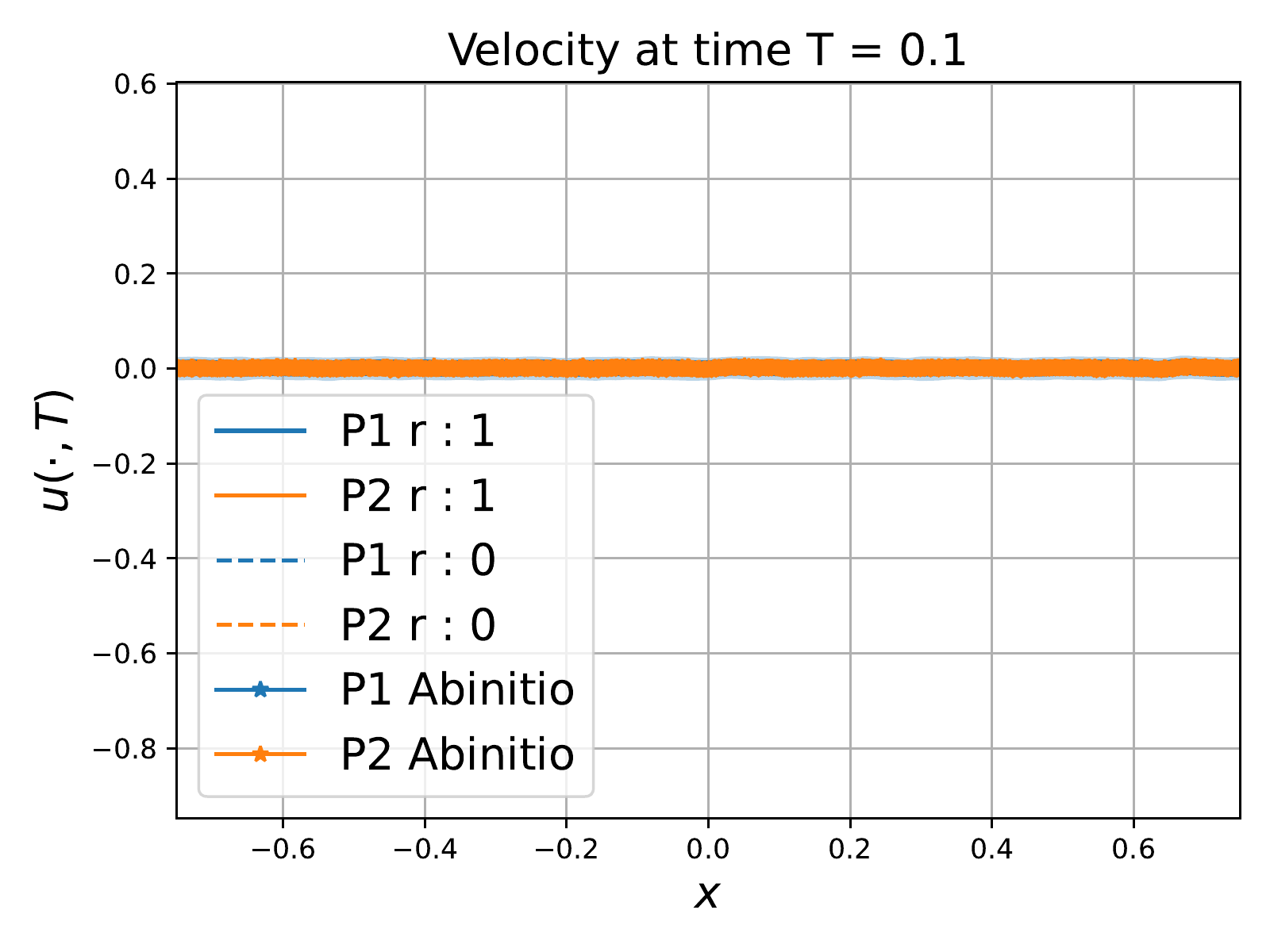}\,
\includegraphics[scale=\figsize]{\main/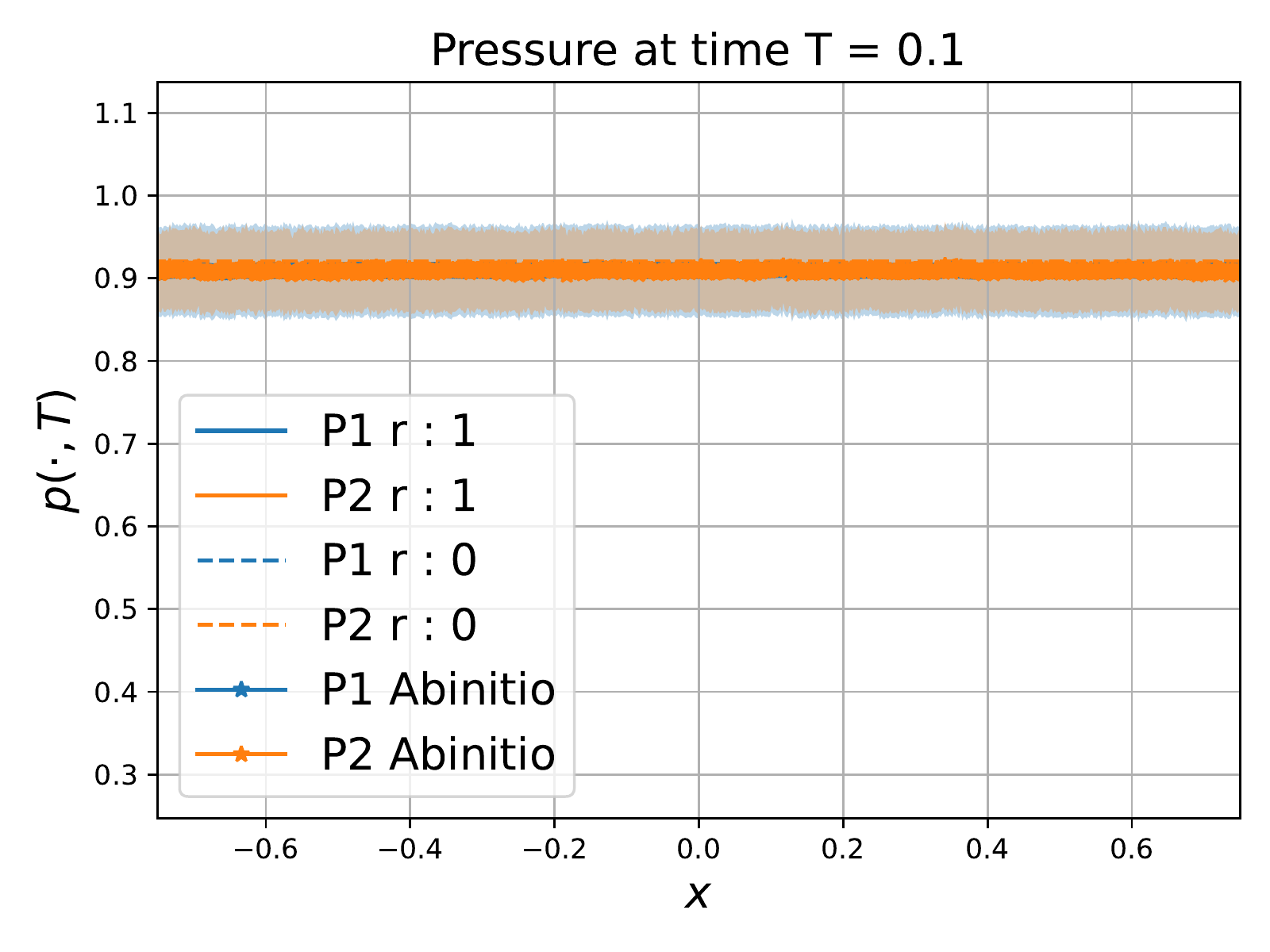}
\end{center}
\caption{Computed results for the test case about relaxation phenomena.
Solutions for the MC-based (Abinitio) strategy are presented along with the DEM for two constant choices of the parameter $r$ ($r\,:\,$). Different phases ($P\,:\,$) are highlighted using different colors.}
\label{Fig:AI:T2}
\end{figure}
First, notice that uniform conditions for each quantity of interest is recovered in the ab-initio simulation: this is in principle not trivial, since no relaxation strategy is employed by the MC-based method. 
Indeed, each sample is evolved independently, and so are predictions of interface location and corresponding macroscopic states. 
It then becomes presumable to impute the achievement of uniform conditions to the averaging procedure, emphasizing how it induces a process of homogenization over the ensemble.
\begin{figure}[!htbp]
\begin{center}
\begin{tikzpicture}
\node (FT) {
\includegraphics[scale=0.4]{\main/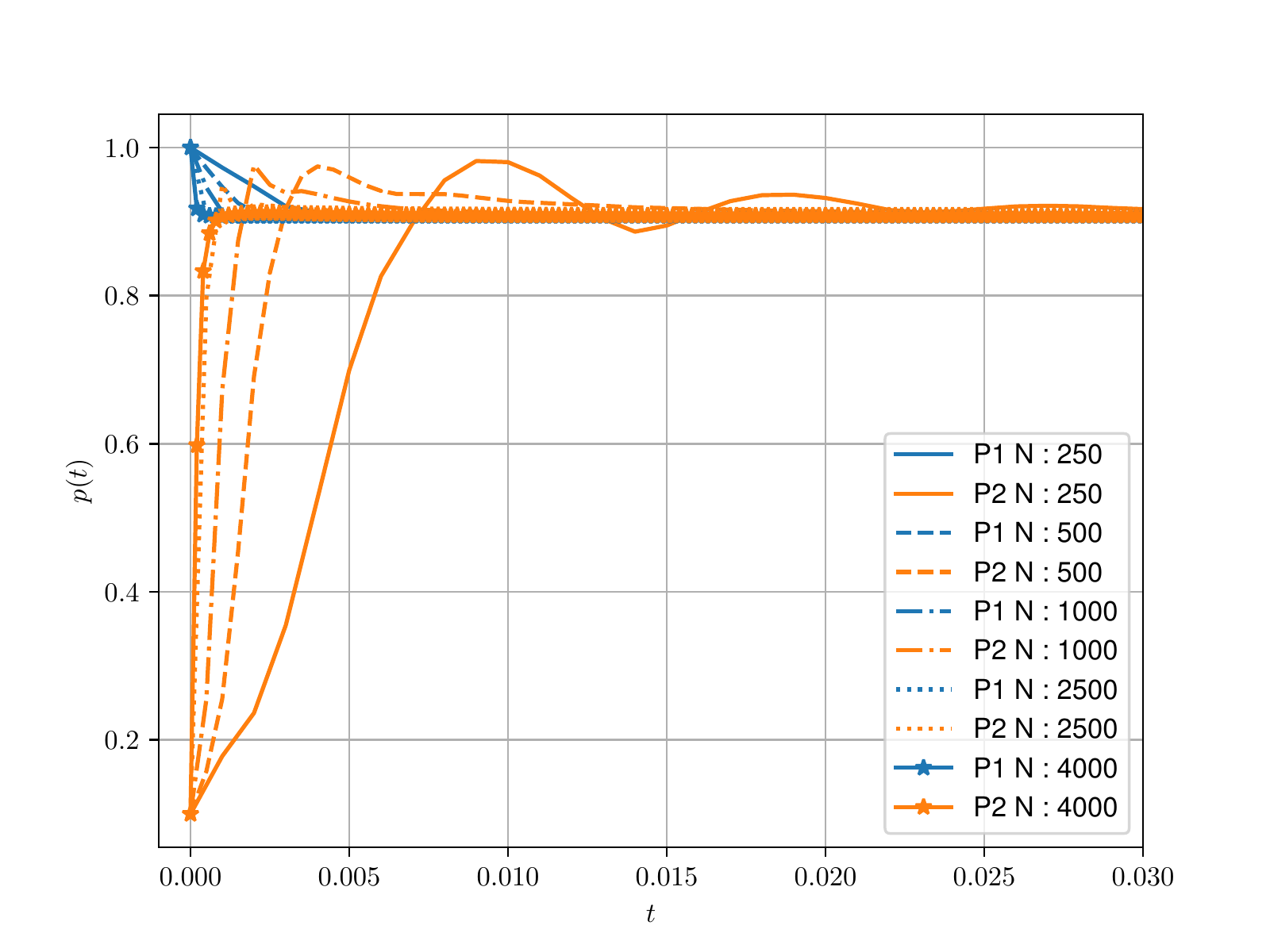}
};

\node (FTz) [rectangle, draw=black, right of=FT, xshift = 6cm, yshift=-0.4cm] {
\includegraphics[scale=0.45]{\main/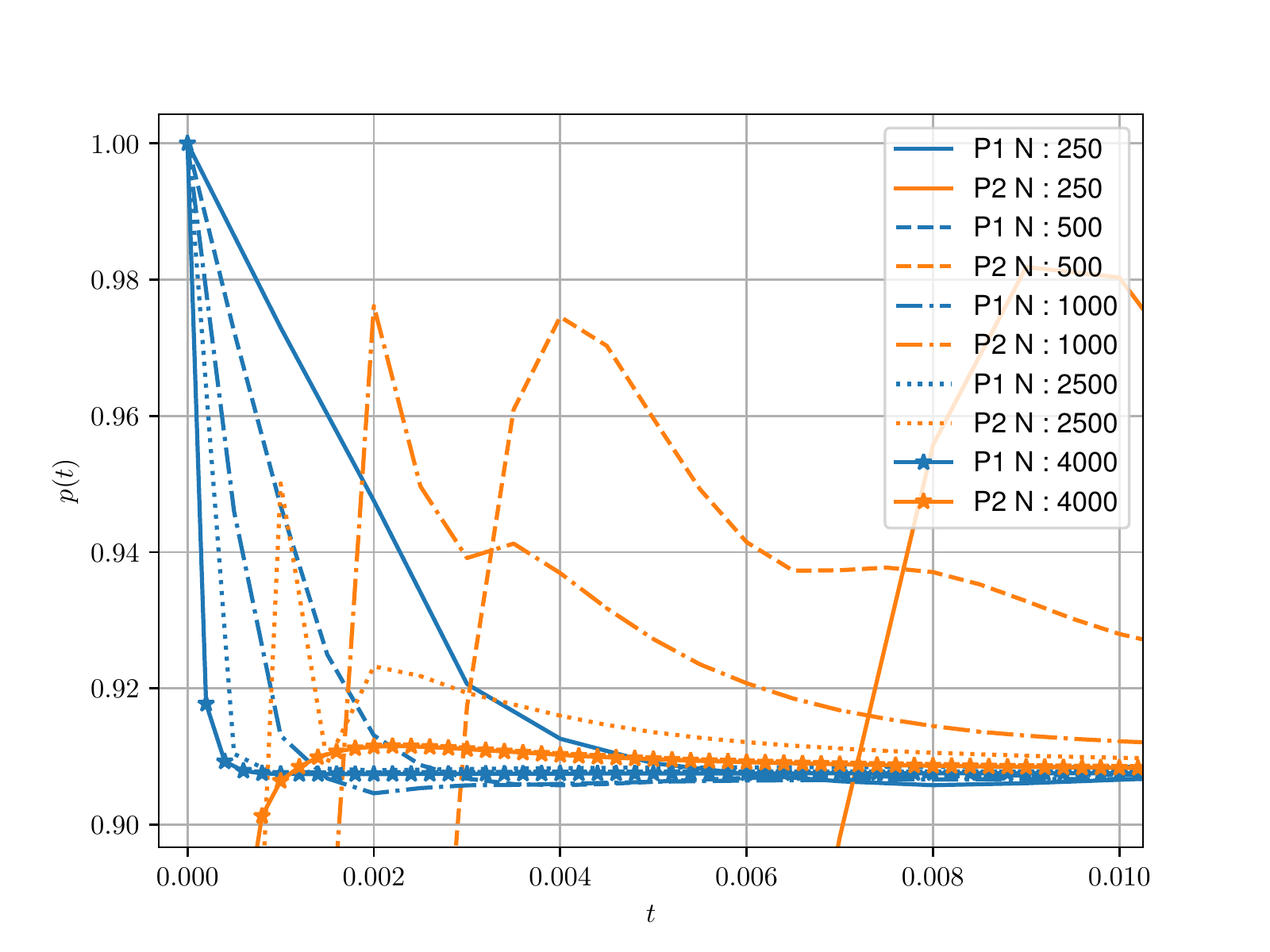}
};

\node [rectangle, draw=black, anchor=north west, xshift=-2.3cm, yshift = 1.8cm, minimum width=1.5cm, minimum height=0.5cm] (Z_on_FT) {};

\draw (Z_on_FT.south west) -- (FTz.south west);
\draw (Z_on_FT.north west) -- (FTz.north west);

\begin{scope}[on background layer]
\draw (Z_on_FT.north east) -- node [pos=0.4] (Mup){} (FTz.north east);
\draw (Z_on_FT.south east) -- node [pos=0.4] (Mdown){} (FTz.south east);
\end{scope}

\draw (Z_on_FT.north east) -- (Mup);
\draw (Z_on_FT.south east) -- (Mdown);
\end{tikzpicture}
\end{center}
\caption{Pressure plots against time for the test case about relaxation strategies.}
\label{Fig:AI:T2_p}
\end{figure}
Second, a strong discrepancy in the values attained by densities of phase $2$ can be seen between different choices of the parameter $r$ in the DEM scheme and the ab-initio simulation. 
The rational for such a difference resides in the speed of relaxation, which is \emph{prescribed} in the DEM, without any information about the actual regime of
the flow under consideration. 
Indeed, one typically let relaxation parameters run into infinity, without knowing the concrete values for such parameters. 
The present results highlight how crude such approximation may be.
Furthermore, in the DEM predictions, uniform mechanical equilibrium is achieved after the first time step, so that no variation afterwards is involved.
This highlights a qualitative and quantitative discrepancy between the DEM and the ab-iinitio: the former not only computes relaxed values incorrectly, but also flattens any time oscillation, thus over-simplifying the mechanics of the process.
In order to highlight the complexity of such process we compute the constant states $\VV^{(k)}(t)$ as the mean value over $D$, for each time step, and plot the values of pressure over time in Fig.\ref{Fig:AI:T2_p}.
\begin{figure}[!htbp]
\begin{center}
\includegraphics[scale=\figsize]{\main/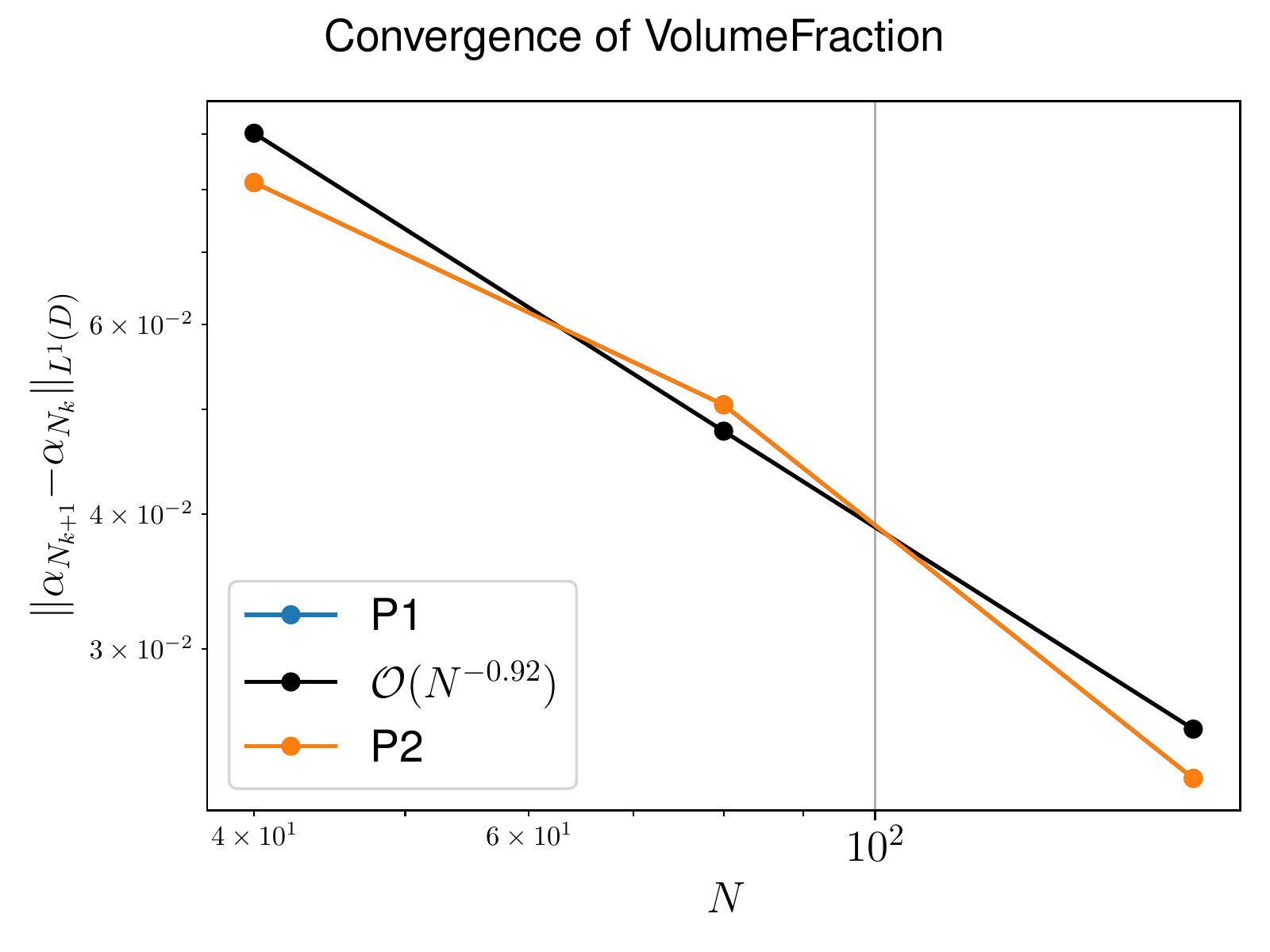}\,
\includegraphics[scale=\figsize]{\main/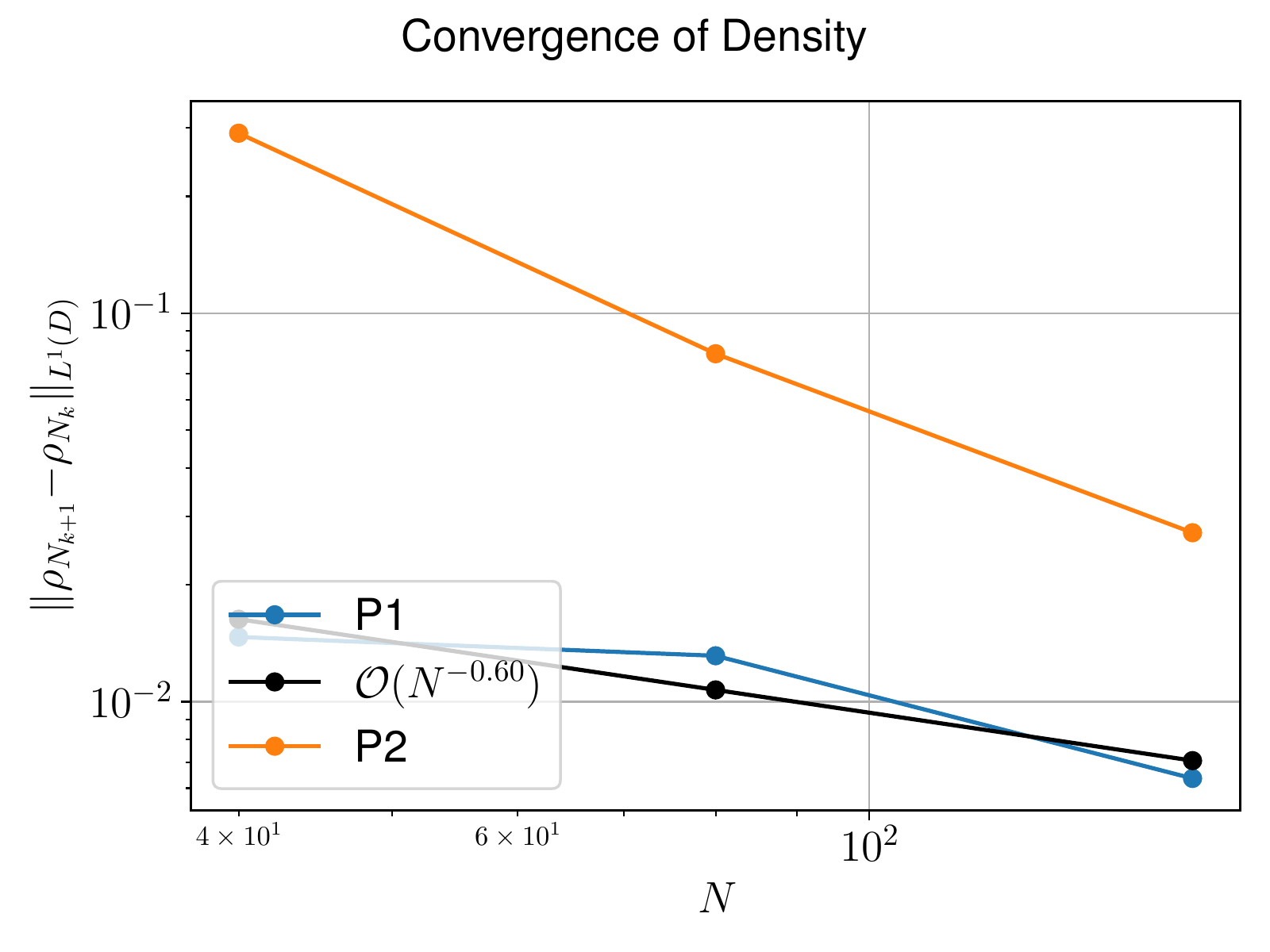}\\
\includegraphics[scale=\figsize]{\main/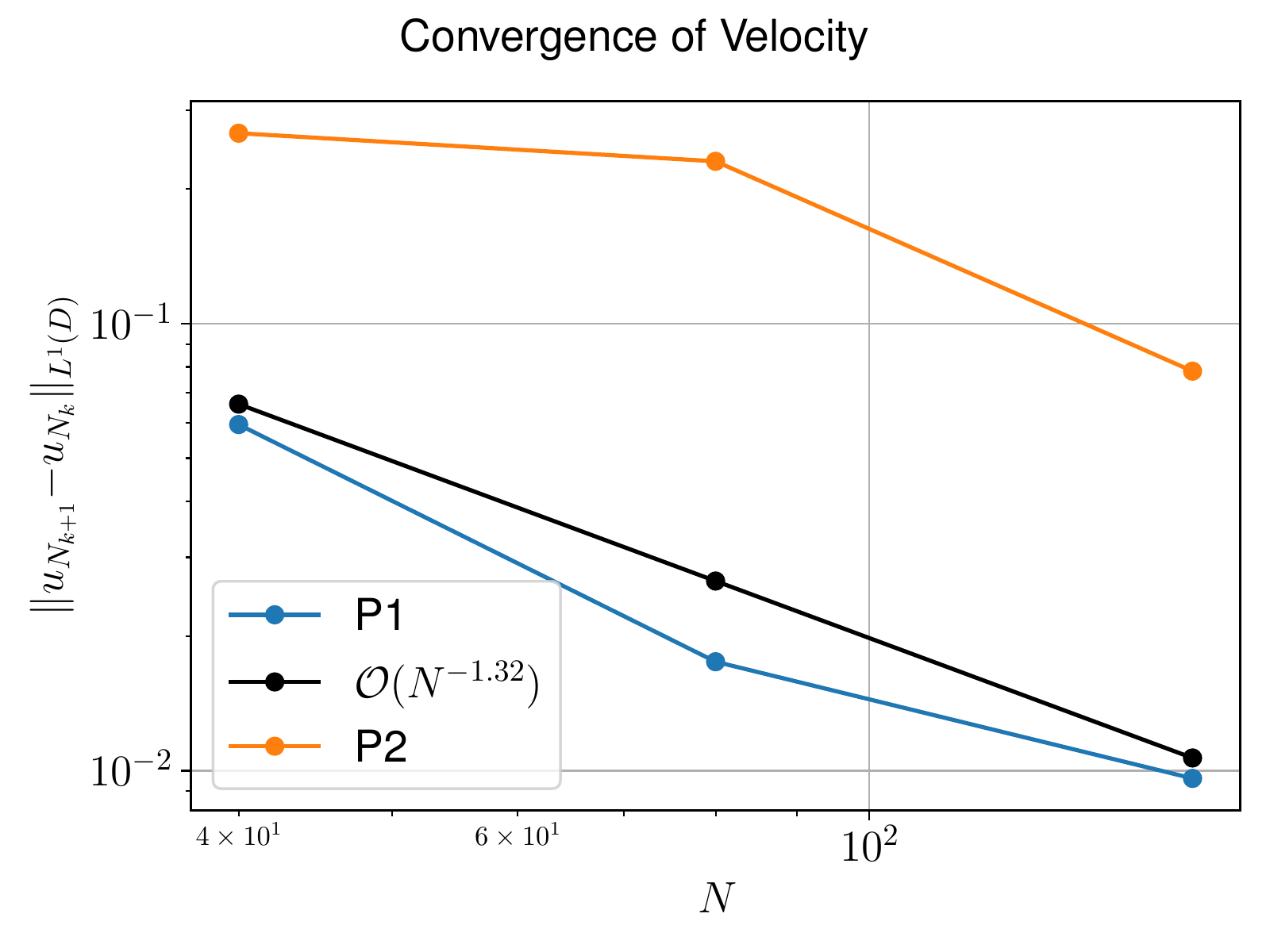}\,
\includegraphics[scale=\figsize]{\main/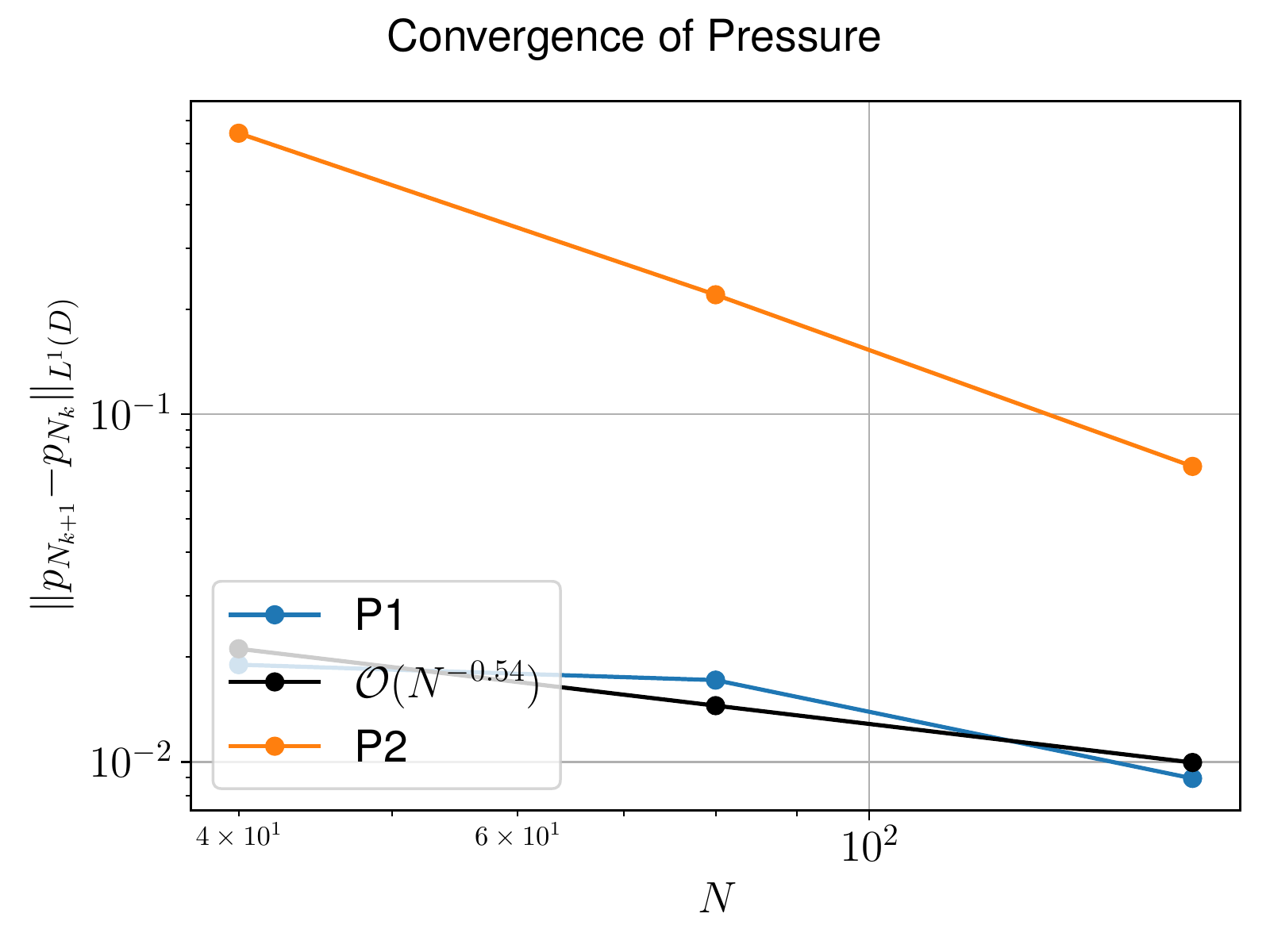}
\end{center}
\caption{Empirical convergence study for the ab-initio method on the test case about relaxation : error-vs-number of sub-volumes. Different phases ($P$) are highlighted with different colors.}
\label{Fig:AI:T2_conv}
\end{figure}
Notice how, for fixed number of volumes, pressure values for phase $2$ tend to oscillate around the equilibrium, and stabilize with the passing of time.
As the number of sub-volumes is increased (and thus the one of interfaces), such oscillations seems to disappear and reduction of the time at which the two-phases run into equilibrium is shifted towards zero.
We though point out that such oscillation process seems to be at the heart of the relaxation process, meaning that the continuous exchange happening at the interface is responsible for shaping macroscopic quantities, as well as the number of interfaces.
This, in turn, translates into the necessity of mapping any space-time control volume with its correct number of interfaces when dealing with relaxation phenomena.
The present formulation allows for tracking interfaces and can provide additional information to capture such parameters.
For this reason, the ab-initio perspective constitutes a more fundamental point of view than the DEM.\\ 
For the sake of completeness, we analyzed asymptotic properties of the present test case:
based on our numerical approximations, no improvement of this discrepancy is achieved under mesh-refinement in the DEM scheme nor under increase of sub-volumes number for the ab-initio.

We conclude this test case by performing an empirical convergence study for the ab-initio method under refinement of subvolumes number.
Results for the Cauchy rates of each quantity of interest computed using a sequence $N_j = 20\cdot 2^j$ $j=0,\ldots,4$ volumes is reported in Fig.\ref{Fig:AI:T2_conv}.

\subsubsection{A two-phase Sod's shock-tube problem}
\label{sec:AI:MLML:NE:Sod}

We consider a two-phase variant of the classical Sod's shock tube problem: 
\[
\VV_0(x) = 
\begin{cases}
\begin{bmatrix}
\alpha^{(1)}_L = 0.9\\
\VW_L^{(1)}\\
\alpha^{(2)}_L = 0.1\\
\VW_L^{(2)}
\end{bmatrix}
& x< 0\\
\begin{bmatrix}
\alpha^{(1)}_R = 0.1\\
\VW_R^{(1)}\\
\alpha^{(2)}_R = 0.9\\
\VW_R^{(2)}
\end{bmatrix}
& x>0
\end{cases}
\qquad
\qquad
\VW^{(k)}_L
=
\begin{bmatrix}
1\\
0\\
1
\end{bmatrix}
\quad
\VW^{(k)}_R
=
\begin{bmatrix}
0.125\\
0\\
0.1
\end{bmatrix}
\]

Results for the MC-version of the ab-initio method are plotted in Fig.\ref{Fig:AI:T3} against the two (limiting) choices of the hyper-parameter $r$ in the (first-order version of the) DEM. 
\begin{table}
\begin{center}
\begin{tabular}{c||c|c|c|c|c|c|c|}
&
$M$ & $\tilde{N}$ & $\mathrm{CFL}$ & $L$ & $\delta^{(1)}$ & $\delta^{(2)}$ & $L$\\
\hline
\hline
\textbf{Ab-initio} & $500$ & $6400$ & $-$ & $4$ & $0.05$ & $0.05$ & $1000$\\
\hline
$\textbf{r}\equiv \bm{0}$ & $10000$ & $-$ & $0.9$ & $-$ & $-$ & $-$ & $-$\\
\hline
$\textbf{r}\equiv \bm{1}$ & $10000$ & $-$ & $0.9$ & $-$ & $-$ & $-$ & $-$\\
\hline
\end{tabular}
\end{center}
\caption{Parameters of computed solutions reported in Fig. \ref{Fig:AI:T3} }\label{Tab:AI:T3}
\end{table}
The details of our simulations are summarized in Table \ref{Tab:AI:T3}. 
\begin{figure}[!htbp]
\begin{center}
\includegraphics[scale=\figsize]{\main/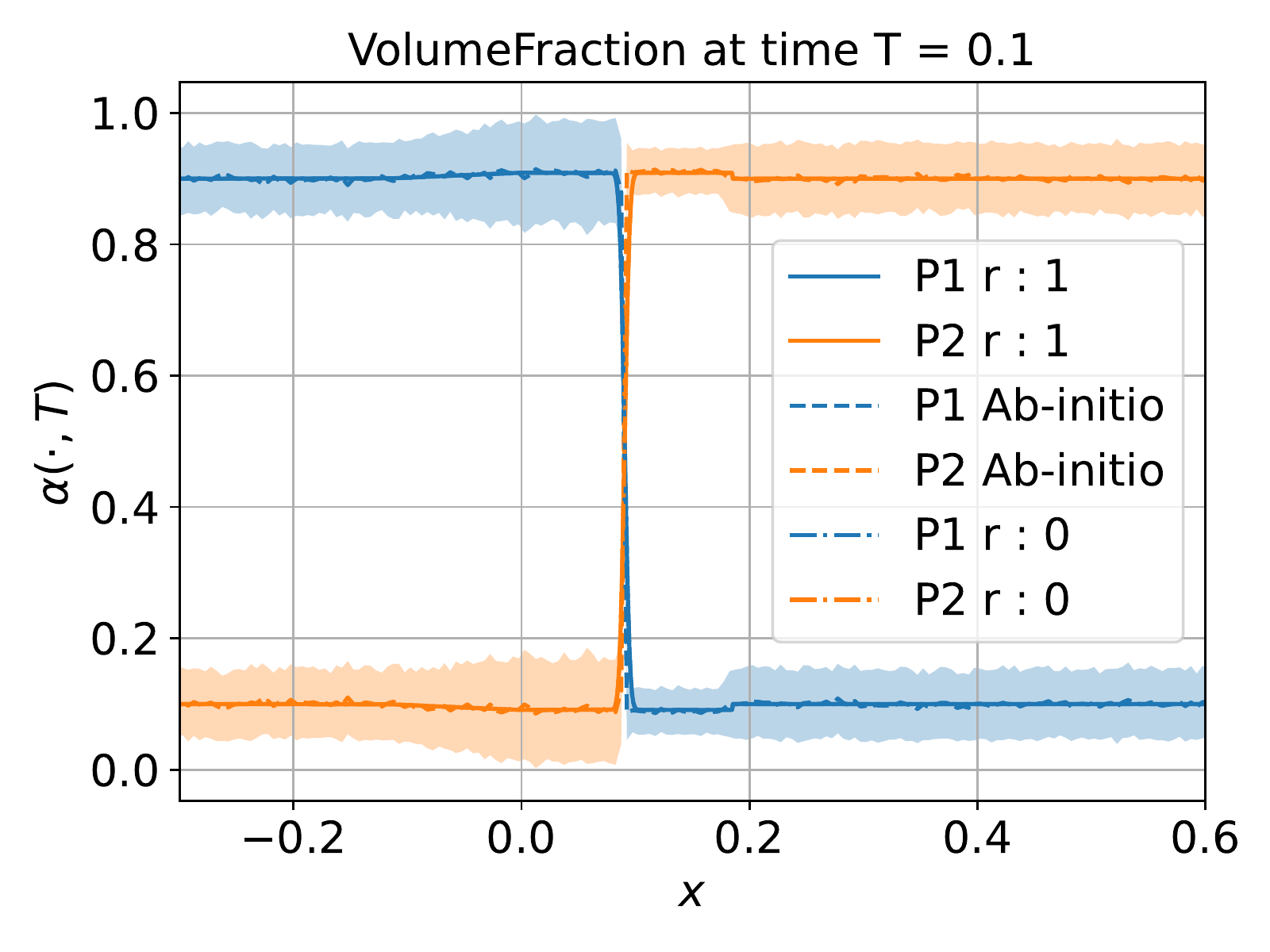}\,
\includegraphics[scale=\figsize]{\main/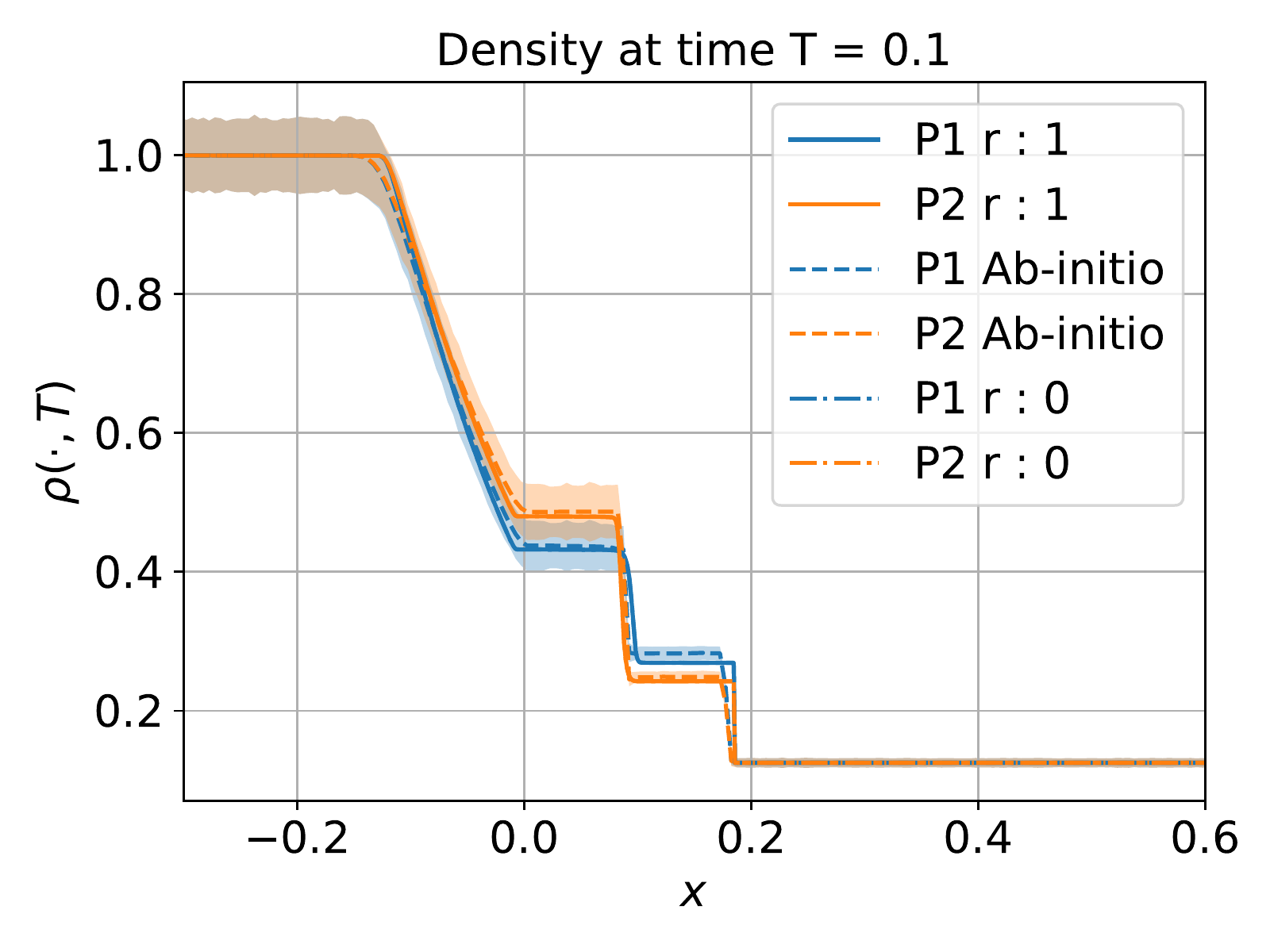}\\
\includegraphics[scale=\figsize]{\main/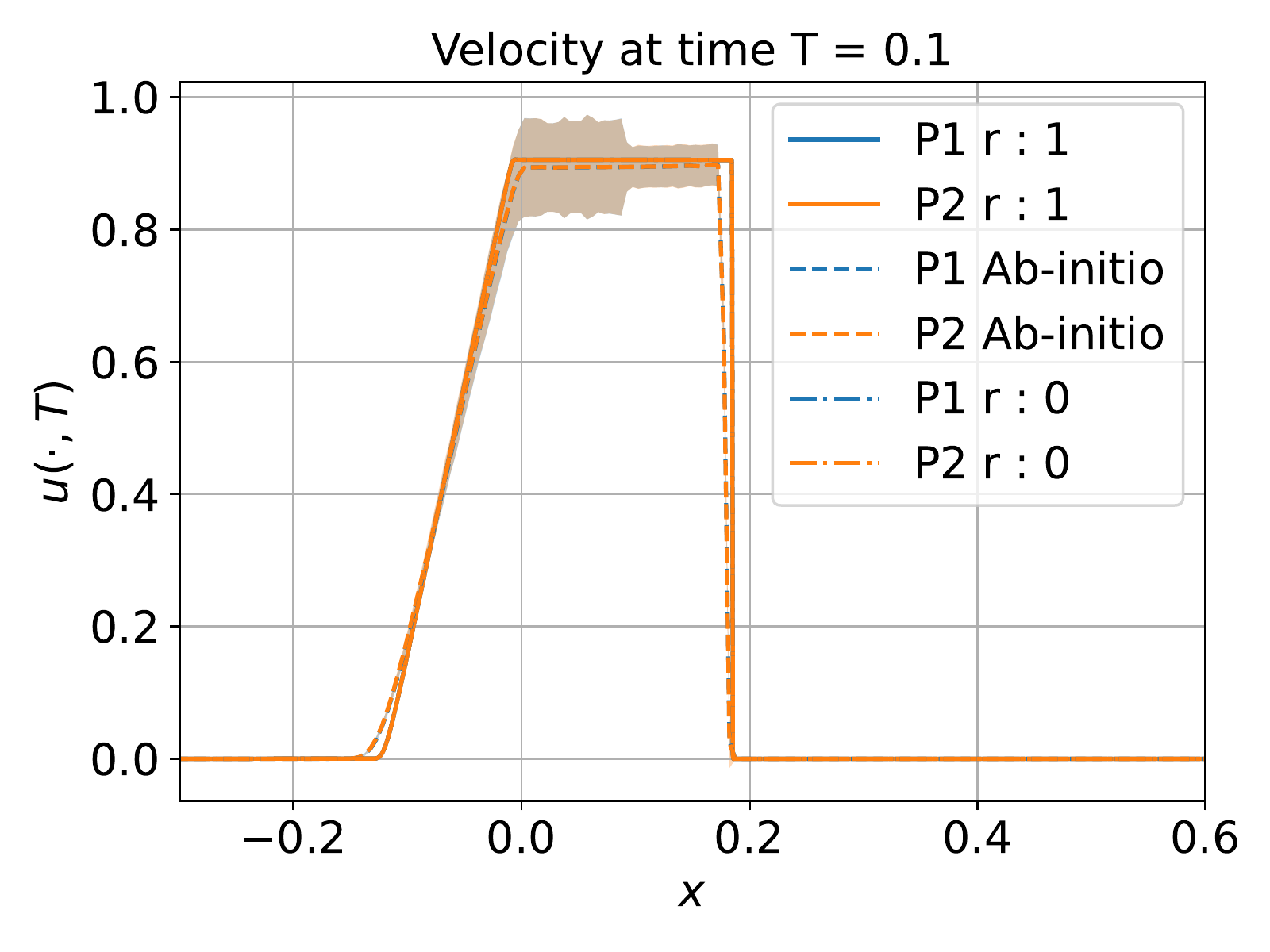}\,
\includegraphics[scale=\figsize]{\main/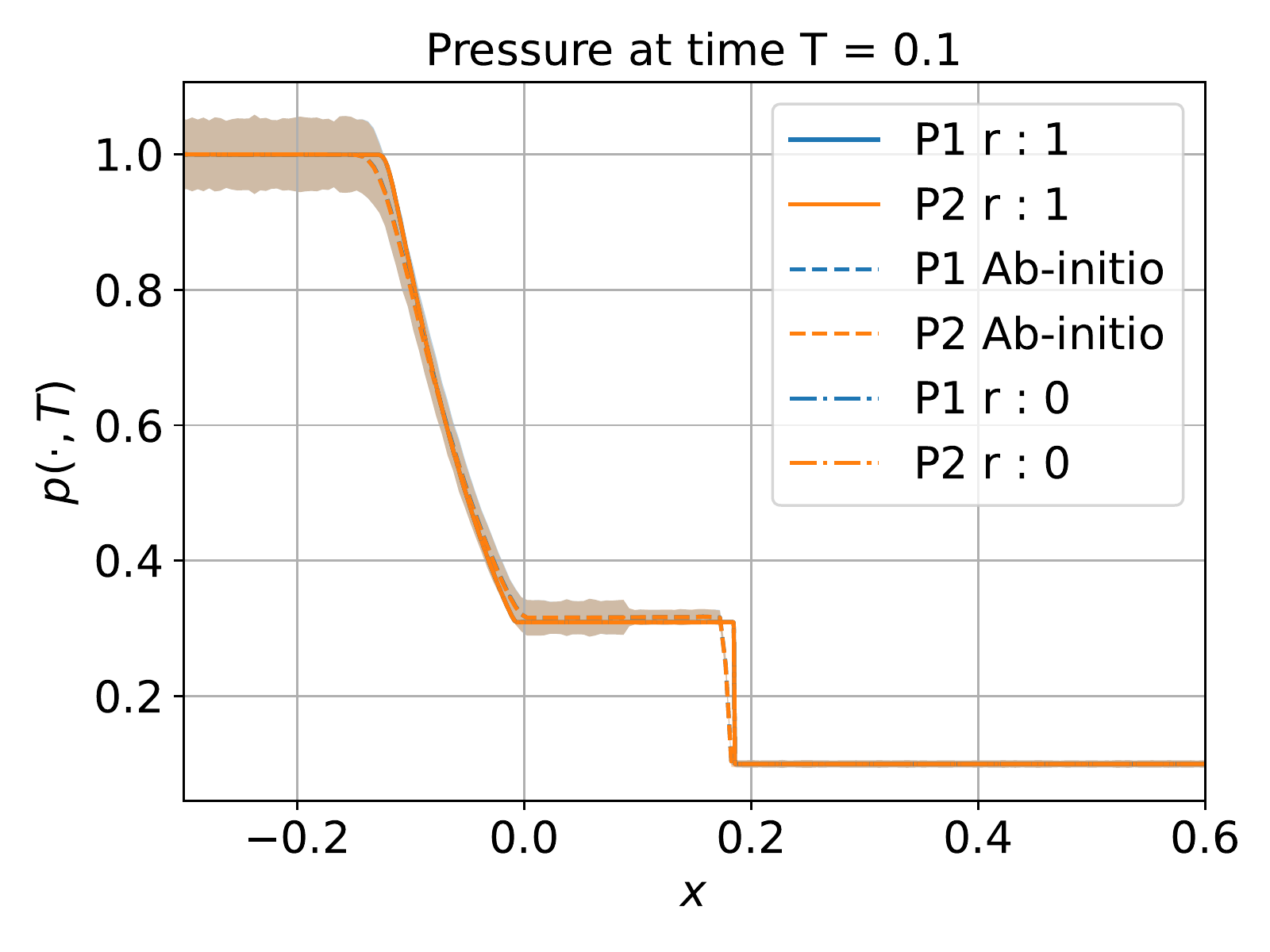}
\end{center}
\caption{Results of the Ab-initio method against two solutions of the DEM ($r\,:$) for the two-phase Sod's shock tube problem.
}
\label{Fig:AI:T3}
\end{figure}

First, notice the 'uniform mechanical conditions' (i.e. single-pressure and single-velocity) behavior in Ab-initio simulations.
As pointed out for the previous test case, this is quite striking as no relaxation is involved and equilibrium across phases is achieved only as an outcome of the averaging.
Second, we observe very good agreement between the Ab-initio and the DEM results, even if minor discrepancies can be seen in the plateau of velocity, and in the shock location of velocity and pressure predictions. 
On the other hand, we observe the (virtual) coalescence between results produced with the DEM using several choices of the hyper-parameter $r$.\\
By carefully analyzing the DEM scheme, one can notice that solutions produced with different choices of the hyper-parameter $r$ are approximately non-distinguishable. 
To further motivate such conclusion (and the following ones) we conduct a mesh convergence study of the DEM: for each resolution $M_j = 100\cdot2^{j}$ $j=1,\ldots,6$, we generate solutions for the DEM using constant $r\equiv 0$ and $r\equiv 1$, and compute the $L^1$-distance between the thus generated solutions
\[
e_j^{(k)}(y;T) := 
\Vert 
y^{(k)}(\cdot,T;r\equiv 0) - y^{(k)}(\cdot,T;r\equiv 1)
\Vert_{L^1(D)},
\qquad
y\in
\lbrace
\alpha, \rho, u, p
\rbrace,
\quad
k=1,2.
\]
Results in the log-log scale are shown in Fig.\ref{Fig:AI:T3_conv_r}. 
\begin{figure}[!htbp]
\begin{center}
\includegraphics[scale=\figsize]{\main/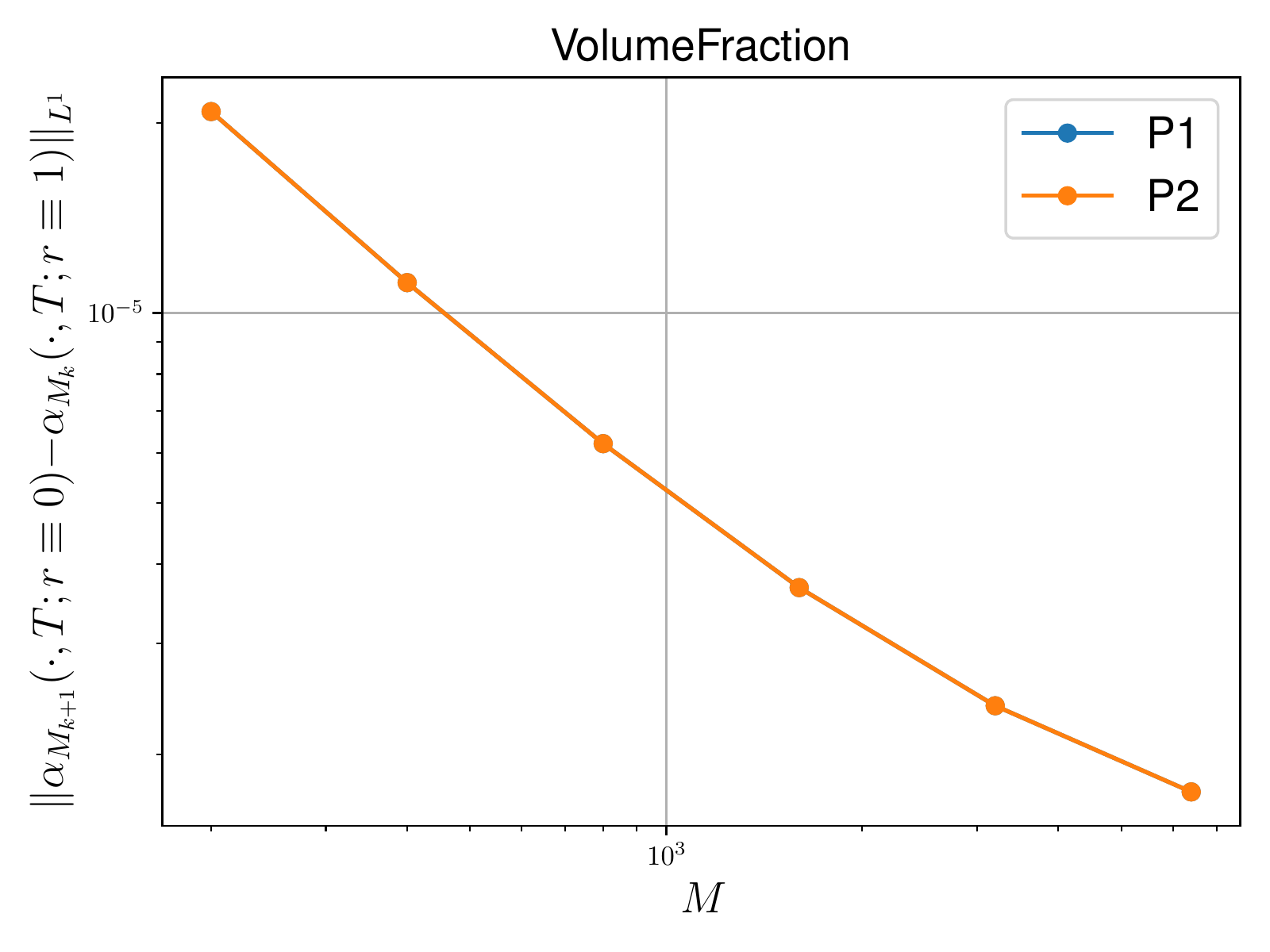}\,
\includegraphics[scale=\figsize]{\main/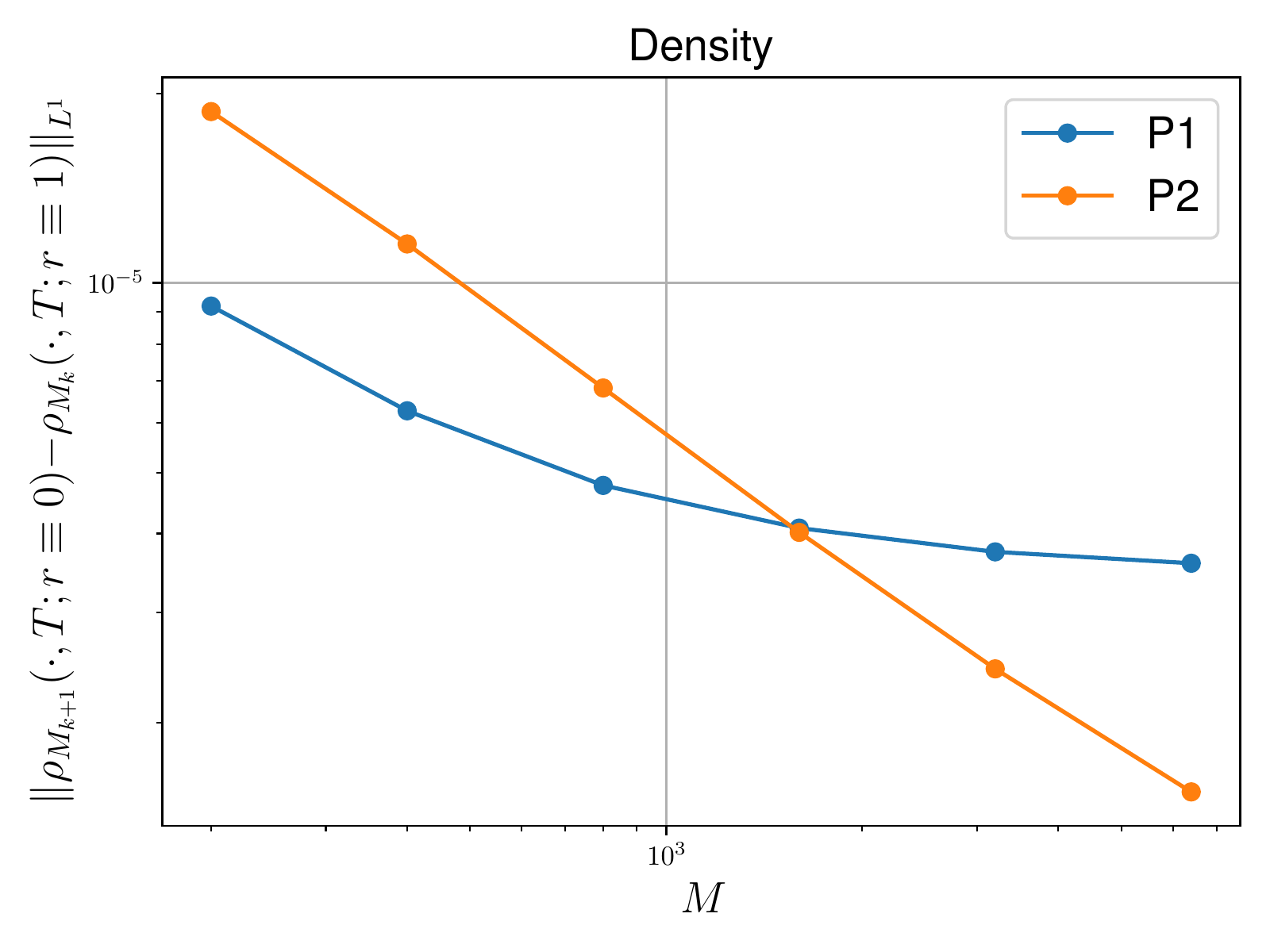}\\
\includegraphics[scale=\figsize]{\main/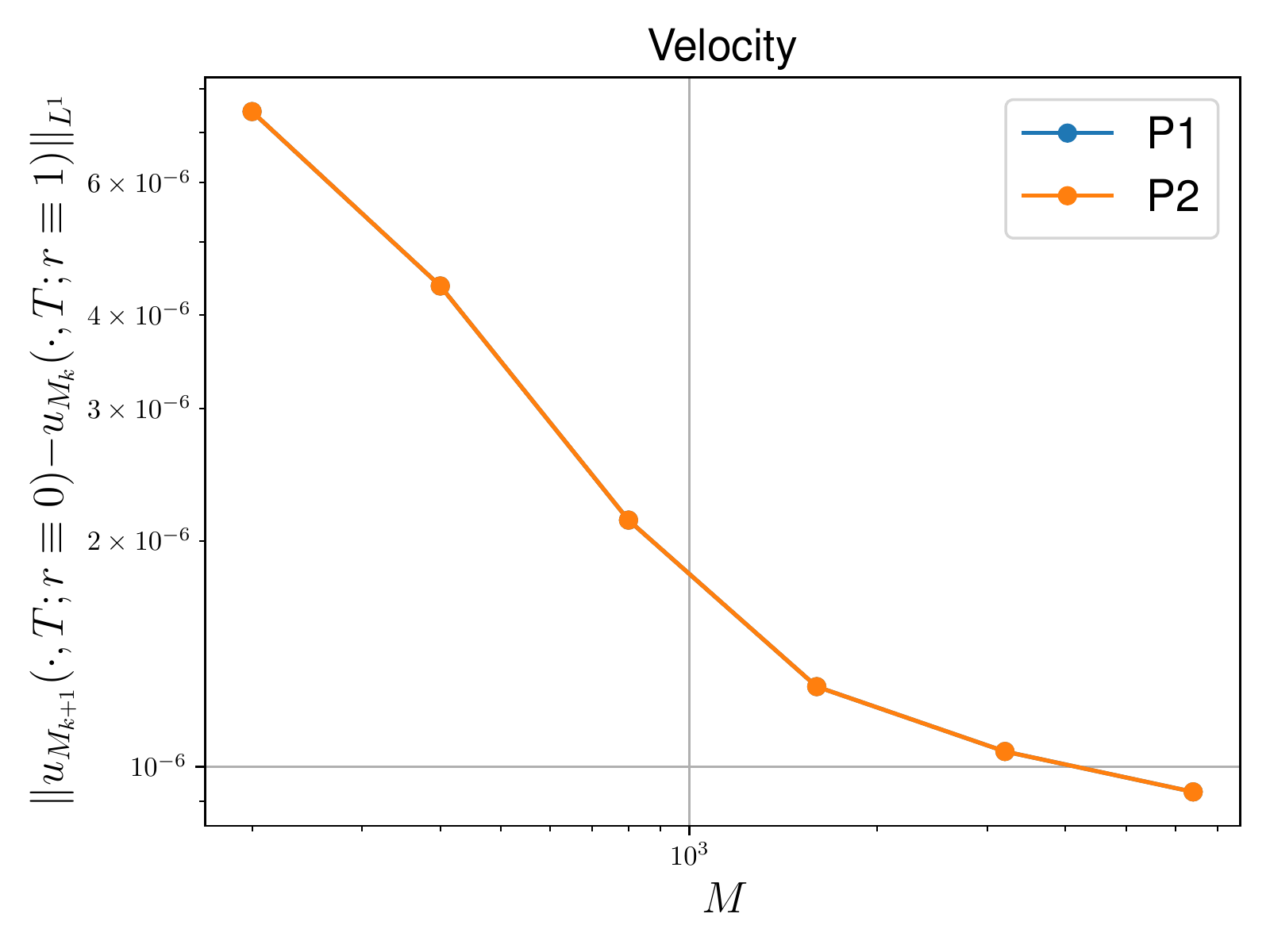}\,
\includegraphics[scale=\figsize]{\main/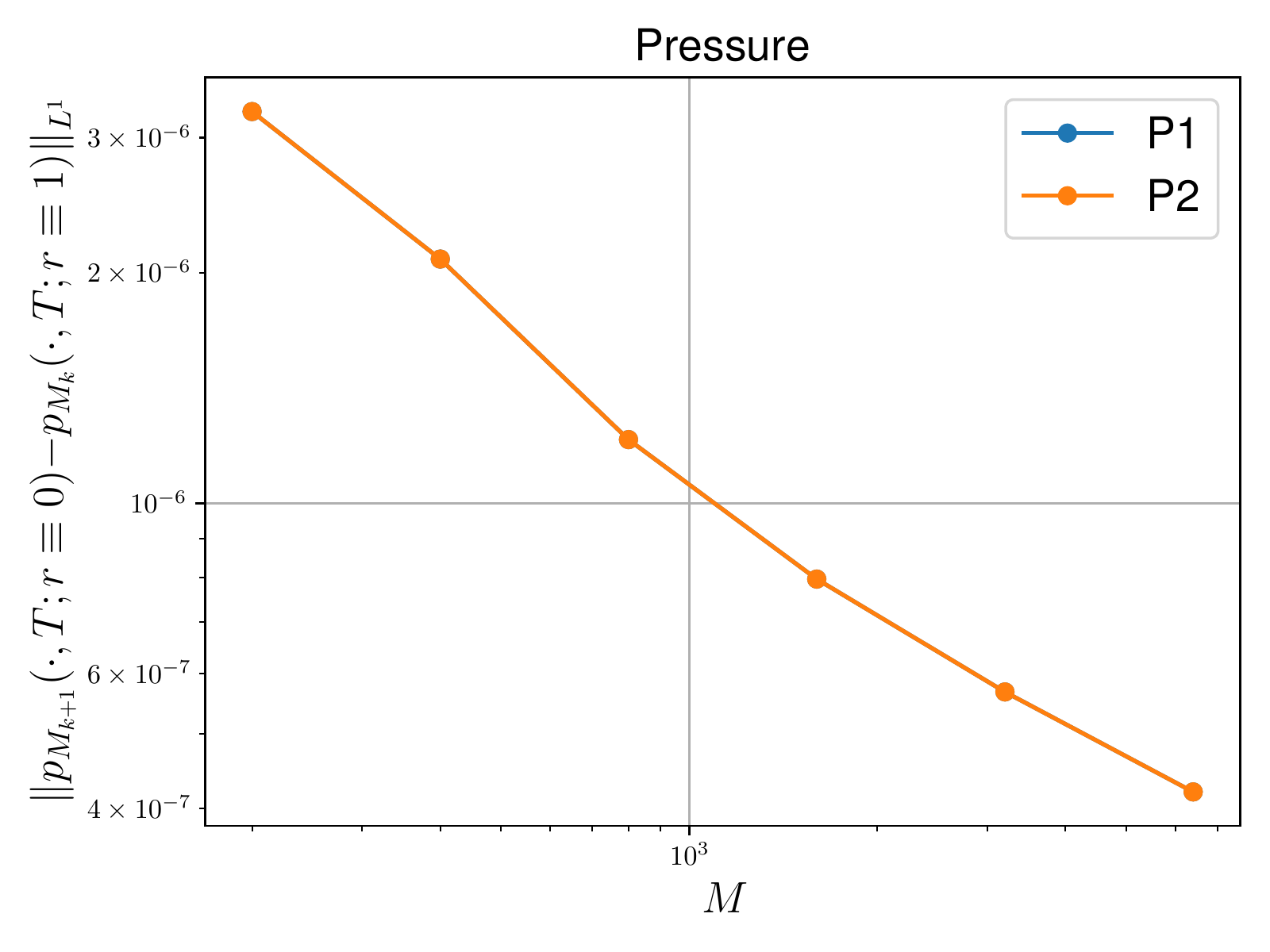}
\end{center}
\caption{Empirical convergence study for the distance between solutions generated using the DEM and different choices of the parameter $r$ for the two-phase Sod's shock tube problem.
}
\label{Fig:AI:T3_conv_r}
\end{figure}
Computed distance between macroscopic quantities of the DEM show steady convergence of all quantities of interest, except for the density value of phase $1$. This latter can be explained in the difference between the plateaus of solutions associated to different choices of the parameter $r$, which, however, corresponds to an error below the $0.001\%$.
Hence, up to such precision, one can consider predictions of the DEM essentially independent of the choice of the hyper-parameter $r$ for this test case.
In turn, the (virtually) unique solution prescribed with the DEM agrees with the one produced with the ab-initio.
It seems then reasonable to conclude that this test case is supporting a weak-uniqueness principle: if solutions generated with DEM are independent of the choice of the parameter $r$, then the uniquely defined solution should converge (up to some precision) to the (limit of) the ab-initio method.
This clearly highlights how the ab-initio method constitutes a generalization of the DEM.
\begin{figure}[!htbp]
\begin{center}
\includegraphics[scale=\figsize]{\main/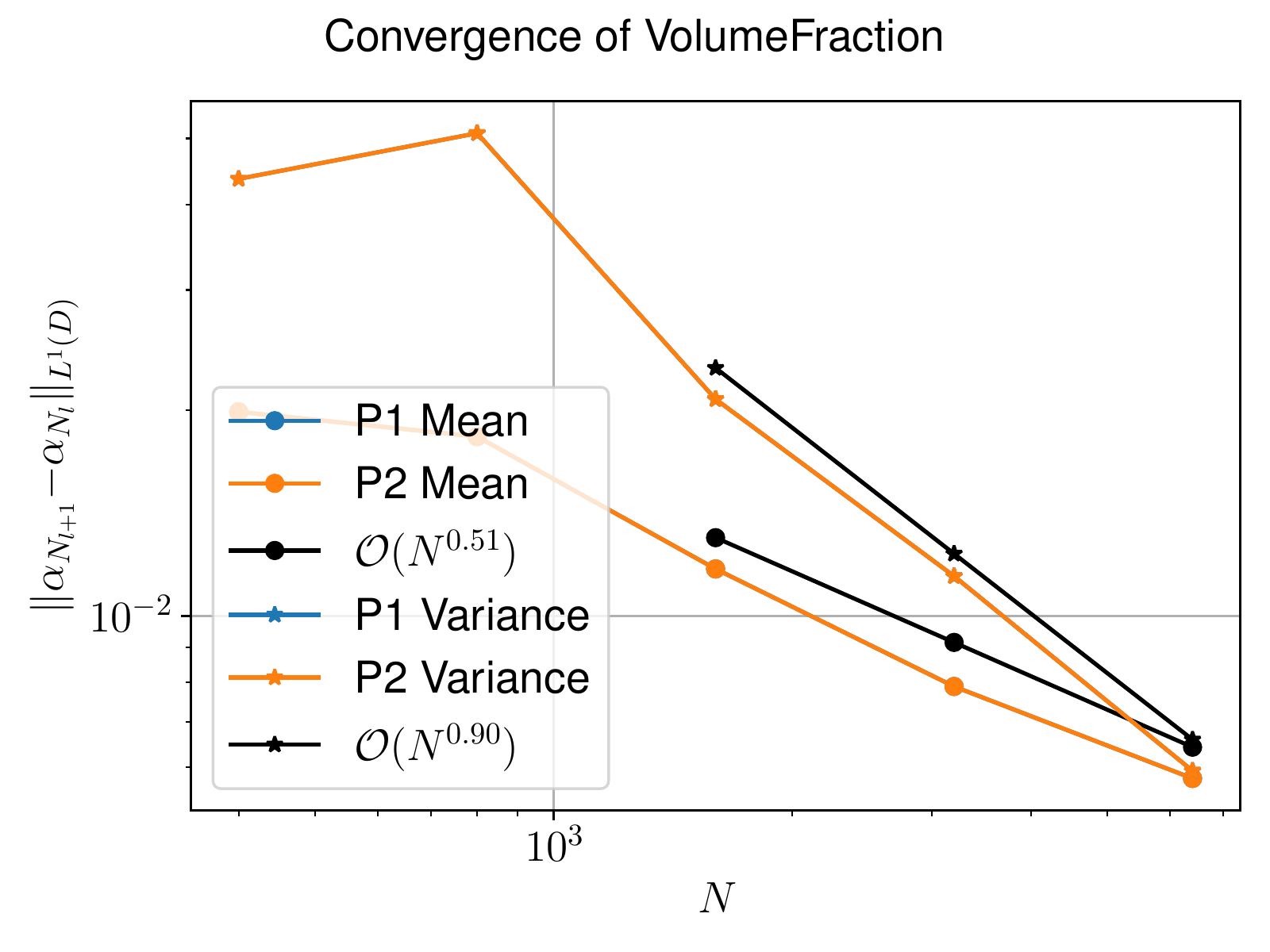}\,
\includegraphics[scale=\figsize]{\main/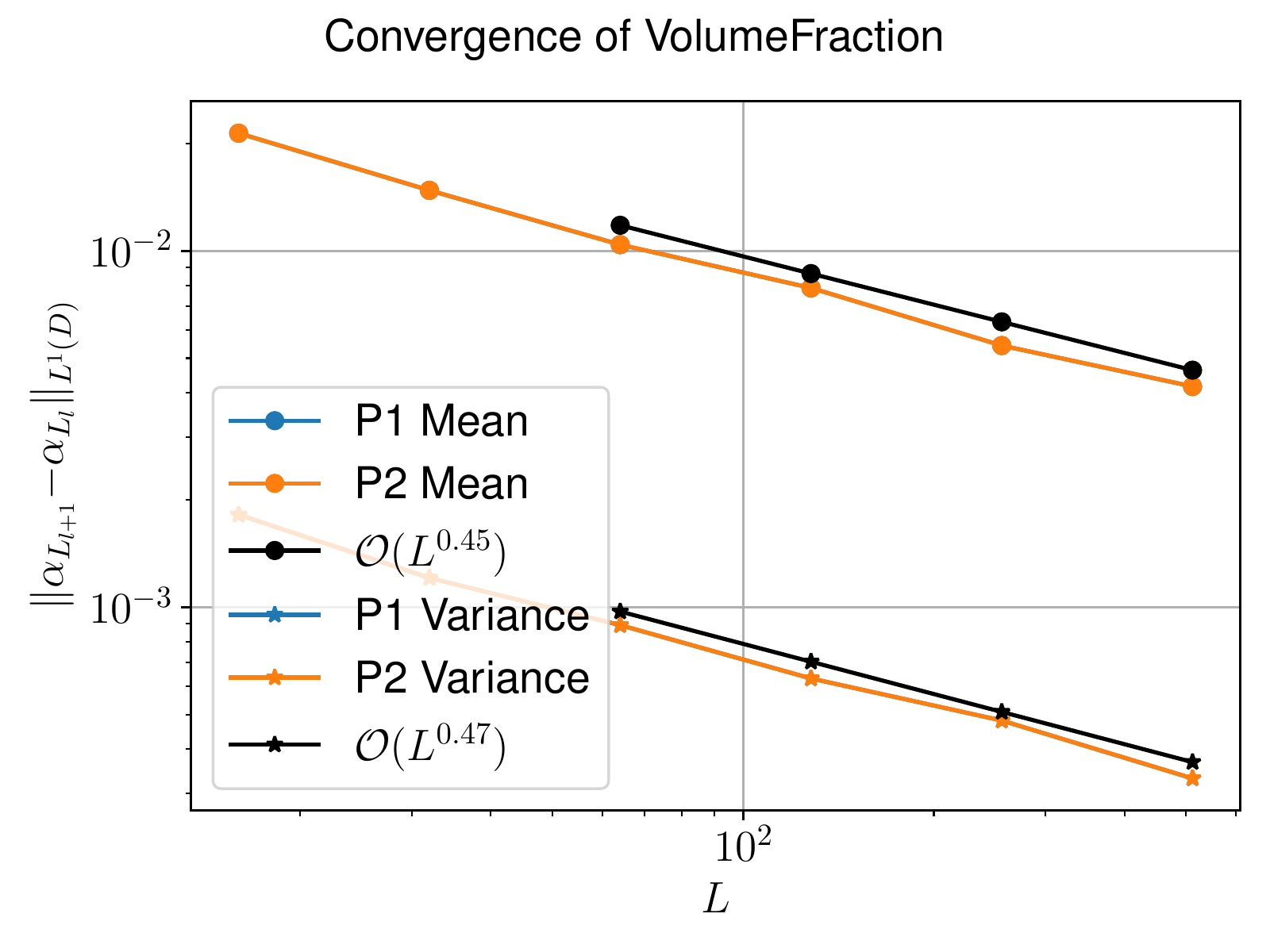}
\end{center}
\caption{Empirical convergence study for the two-phase Sod's shock tube problem for the Ab-initio method under sub-scale refinement (left) and number of samples (right).}
\label{Fig:AI:T3_conv}
\end{figure}
We conclude this test case by carrying out a mesh convergence study of the Ab-initio method for the present test case under number of sub-scale refinement and number of samples increment.
Results in the loglog scale for the volume fraction are reported in Fig.\ref{Fig:AI:T3_conv}, suggesting convergence in both refinement directions.

\subsubsection{Lax's shock tube problem}

We now consider a two-phase variant of the Lax's shock tube problem: 
\[
\VV_0(x) = 
\begin{cases}
\begin{bmatrix}
\alpha^{(1)}_L = 0.9\\
\VW_L^{(1)}\\
\alpha^{(2)}_L = 0.1\\
\VW_L^{(2)}
\end{bmatrix}
& x< 0\\
\begin{bmatrix}
\alpha^{(1)}_R = 0.1\\
\VW_R^{(1)}\\
\alpha^{(2)}_R = 0.9\\
\VW_R^{(2)}
\end{bmatrix}
& x>0
\end{cases}
\qquad
\qquad
\VW^{(k)}_L
=
\begin{bmatrix}
\rho_k\\
0.7\\
3.5
\end{bmatrix}
\quad
\VW^{(k)}_R
=
\begin{bmatrix}
\rho_k\\
0\\
0.1
\end{bmatrix}
\]
where $\rho_1 = 0.2$ and $\rho_2 = 1$. 
Phase $1$ is assumed to be governed by the IG-EOS, while the phase $2$ is associated to the SG-EOS and the parameters read 
\[
\gamma^{(1)} = 1.4,
\qquad
\qquad
\gamma^{(2)} = 1.6,
\quad
\pi^{(2)} = 2.5.
\]
\begin{table}[!h]
\begin{center}
\begin{tabular}{c||c|c|c|c|c|c|c|}
&
$M$ & $\tilde{N}$ & $\mathrm{CFL}$ & $L$ & $\delta^{(1)}$ & $\delta^{(2)}$ & $L$\\
\hline
\hline
\textbf{Ab-initio} & $500$ & $5000$ & $-$ & $5$ & $0.05$ & $0.1$ & $1000$\\
\hline
$\textbf{r}\equiv \bm{0}$ & $10000$ & $-$ & $0.9$ & $-$ & $-$ & $-$ & $-$\\
\hline
$\textbf{r}\equiv \bm{1}$ & $10000$ & $-$ & $0.9$ & $-$ & $-$ & $-$ & $-$\\
\hline
\end{tabular}
\end{center}
\caption{Parameters of computed solutions reported in Fig. \ref{Fig:AI:T4} }\label{Tab:AI:T4}
\end{table}
\begin{figure}[!htbp]
\begin{center}
\includegraphics[scale=\figsize]{\main/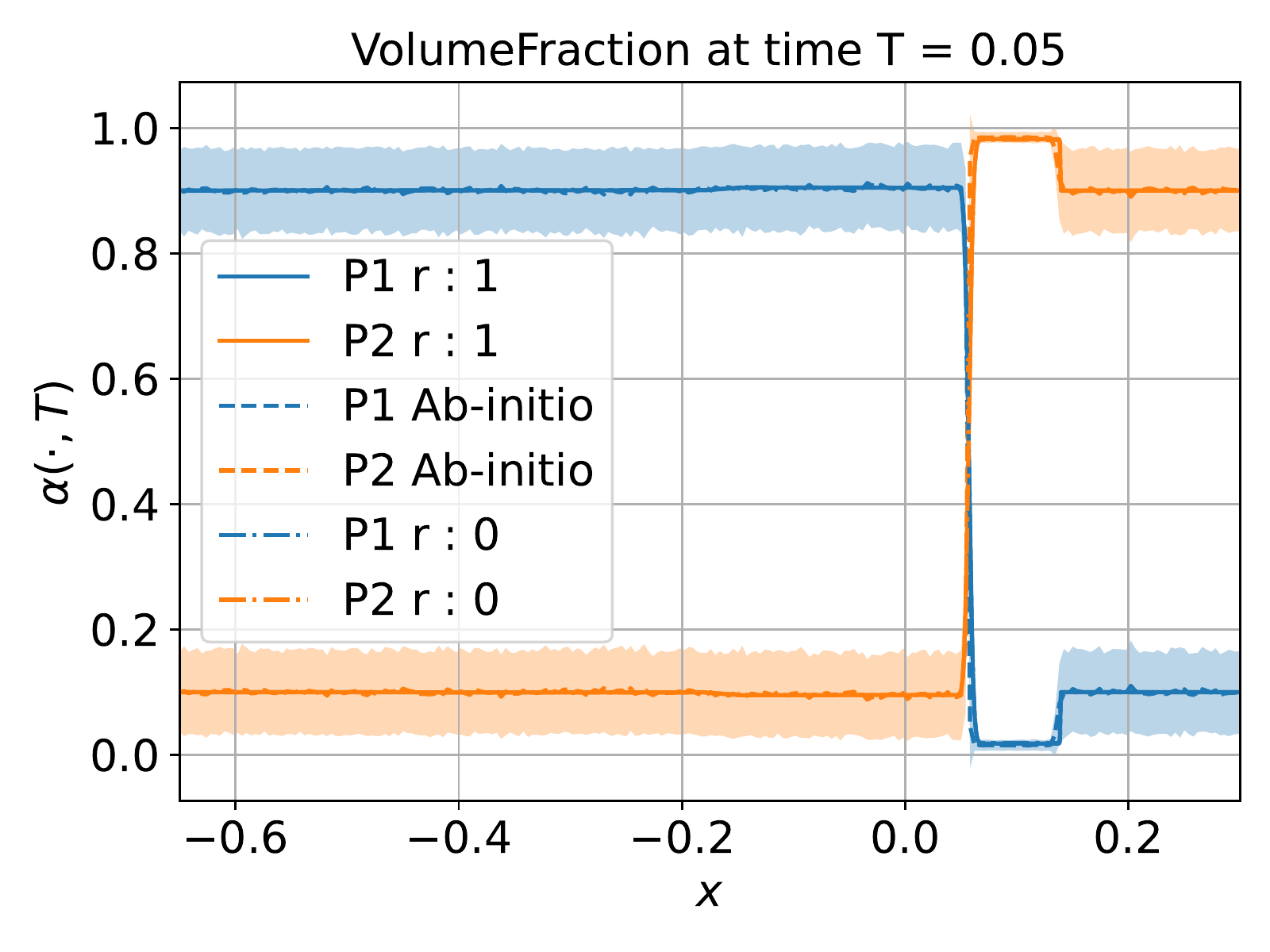}\,
\includegraphics[scale=\figsize]{\main/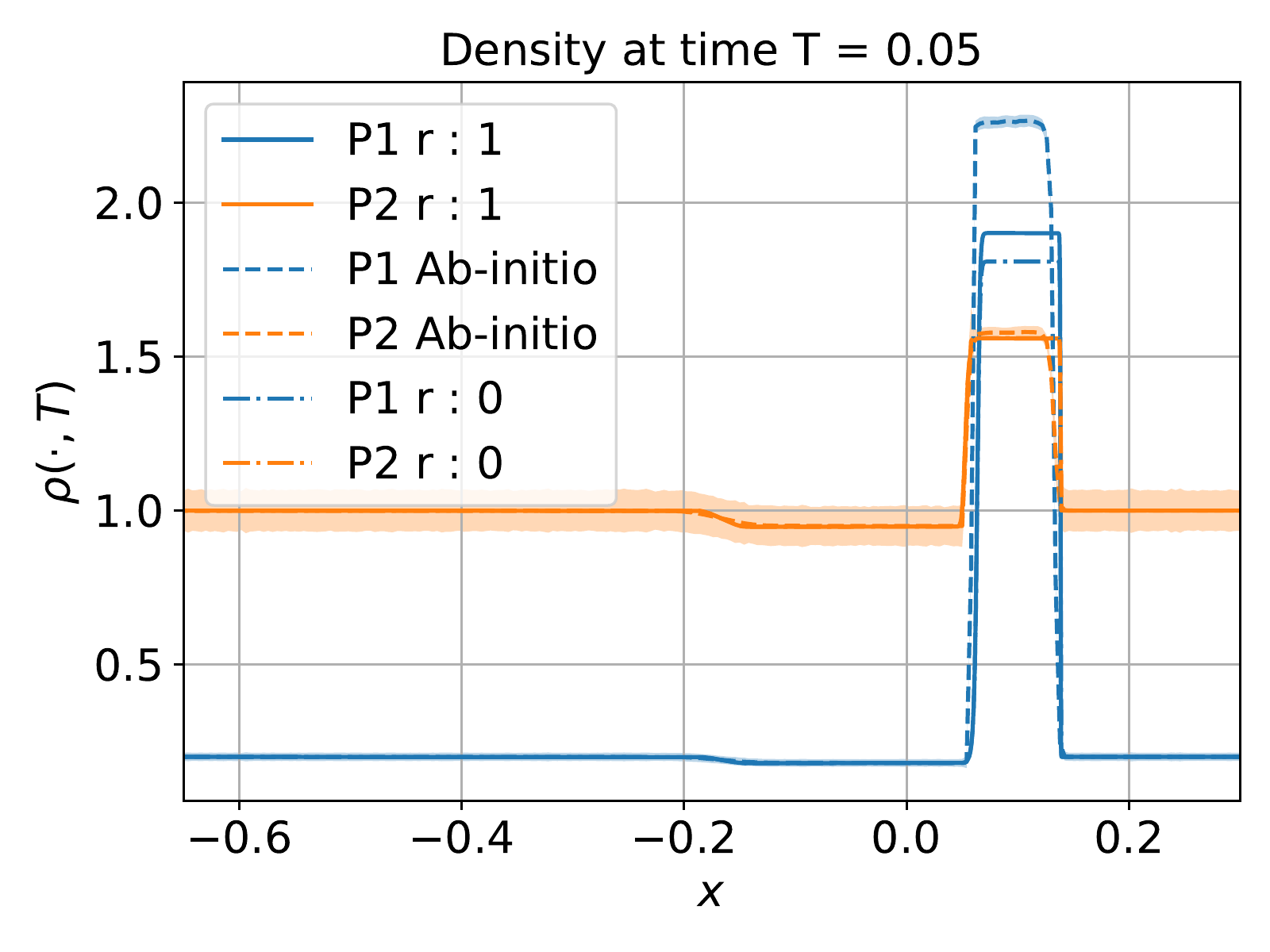}\\
\includegraphics[scale=\figsize]{\main/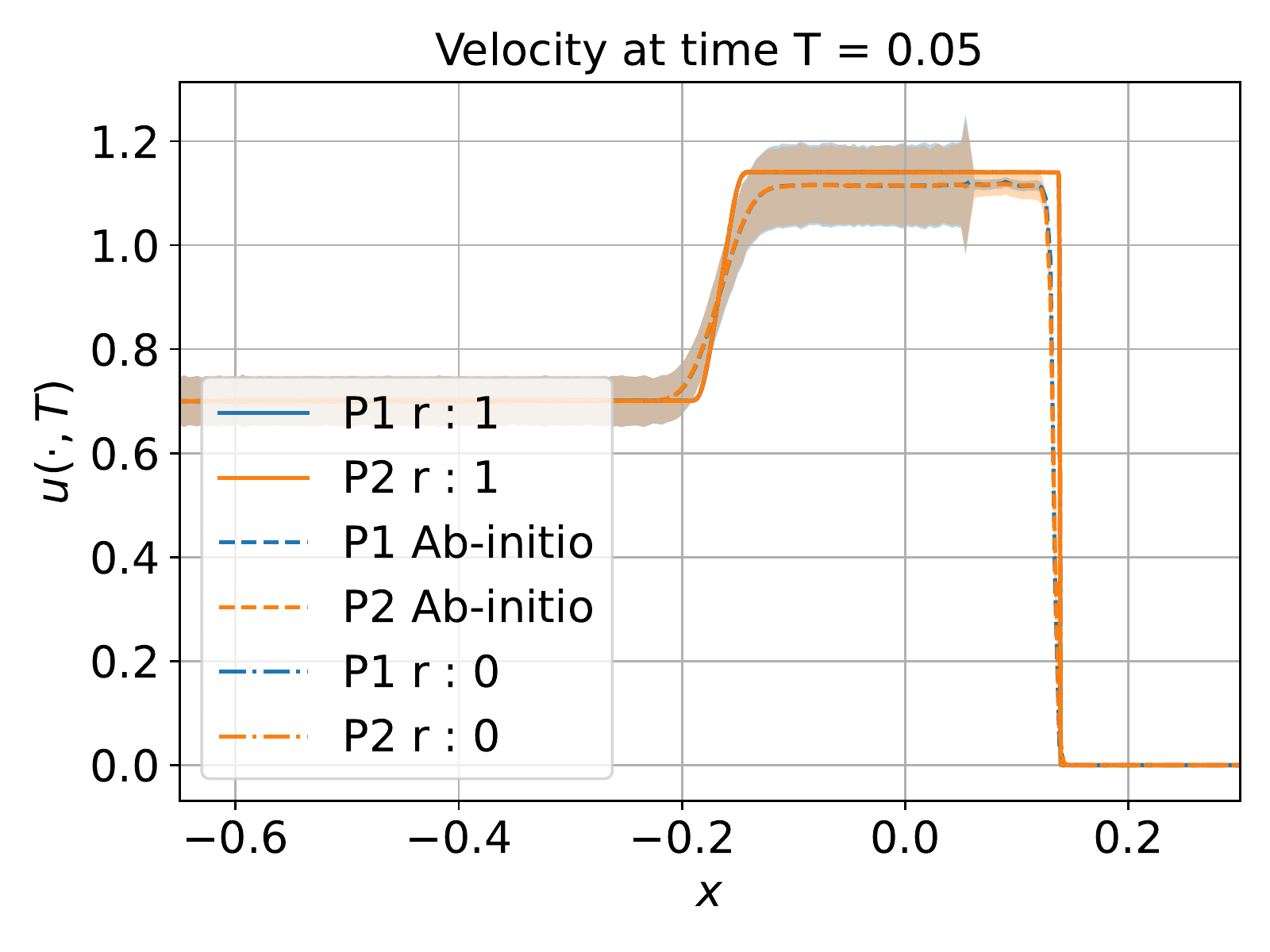}\,
\includegraphics[scale=\figsize]{\main/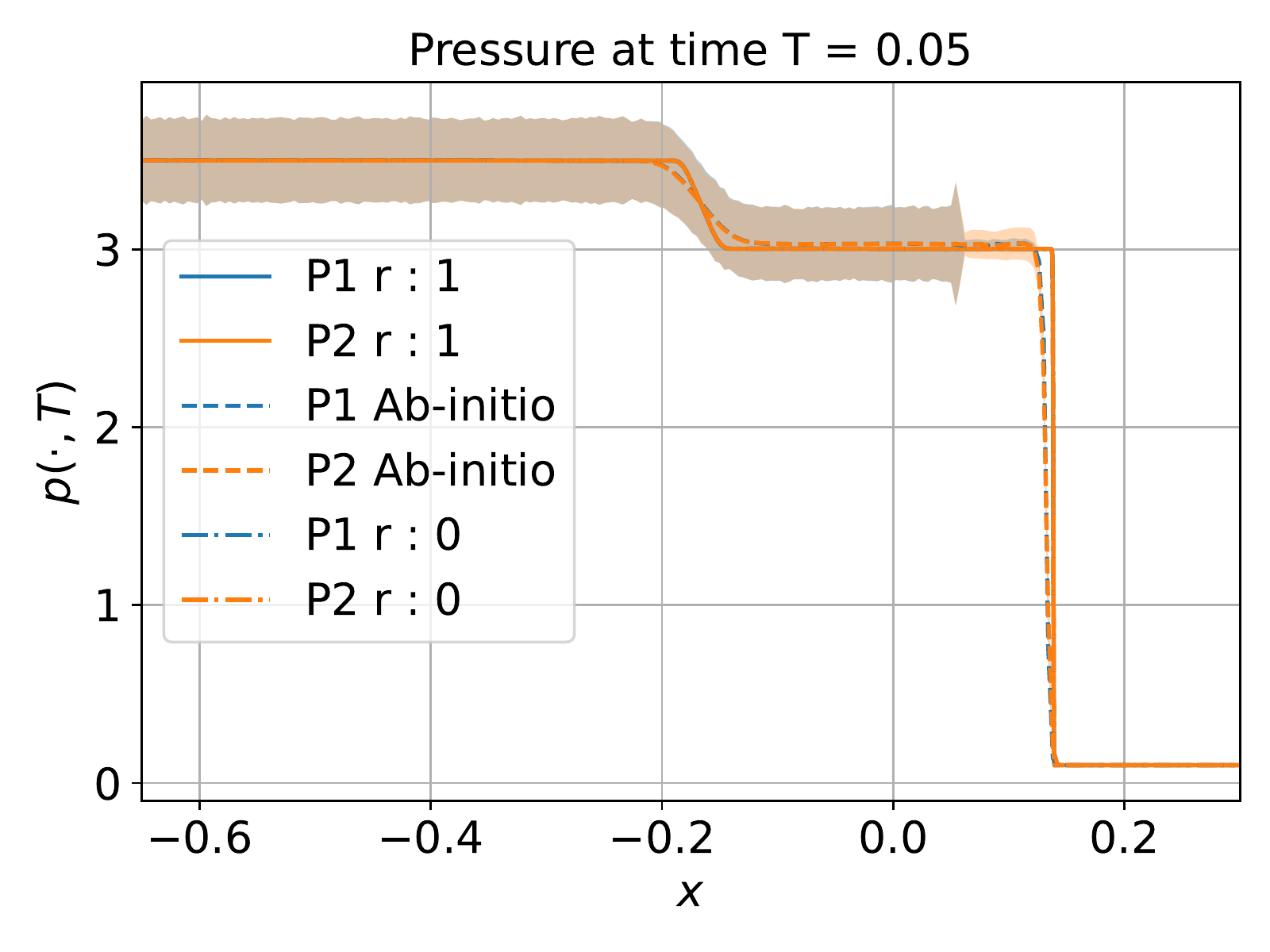}
\end{center}
\caption{Computed solutions for the Lax shock-tube problem. Results for the (MLMC-)ab-initio and two choices of the DEM ($r\,:$) scheme are shown. }
\label{Fig:AI:T4}
\end{figure}
Results for the ab-initio method are plotted in Fig.\ref{Fig:AI:T4} against the two (limiting) choices of the hyper-parameter $r$ in the DEM.
The details of our simulations are summarized in Table \ref{Tab:AI:T4}.\\
Notice the virtually coalescent behavior of results for phase $2$ associated to different choices of the hyperparameter $r$ in every quantity of interest plot. 
The DEM predictions show a good agreement with the ab-initio results, particularly for the volume fraction. Slight discrepancies can be notice in the plateau of the velocity plot.
Conversely, big discrepancies between the different methodologies can be appreciated around the (post-shock) density plateaus.
Interestingly, virtually no discrepancy can be appreciated in the shock speeds across the different methods.
This may be due to the marginal variation of computed results with respect to the hyperparameter $r$, which is affecting the hyperbolic step in the DEM. 
In contrast, the relaxation employed by the DEM is clearly inducing an erroneous value for the plateaus of the densities.
This underlines the necessity of sharply tracking two types of hyperparameters in the case of the DEM: the parameter $r$, controlling the convective part, and $\lambda_i = \bbE[N_{int}(\omega)/ \Dx]$, controlling the value at which the two phases are relaxing one another.
For the sake of completeness, we also carry out the usual empirical convergence study, whose results are reported in Fig.\ref{Fig:AI:T4_conv} in the log-log scale.
\begin{figure}[!htbp]
\begin{center}
\includegraphics[scale=\figsize]{\main/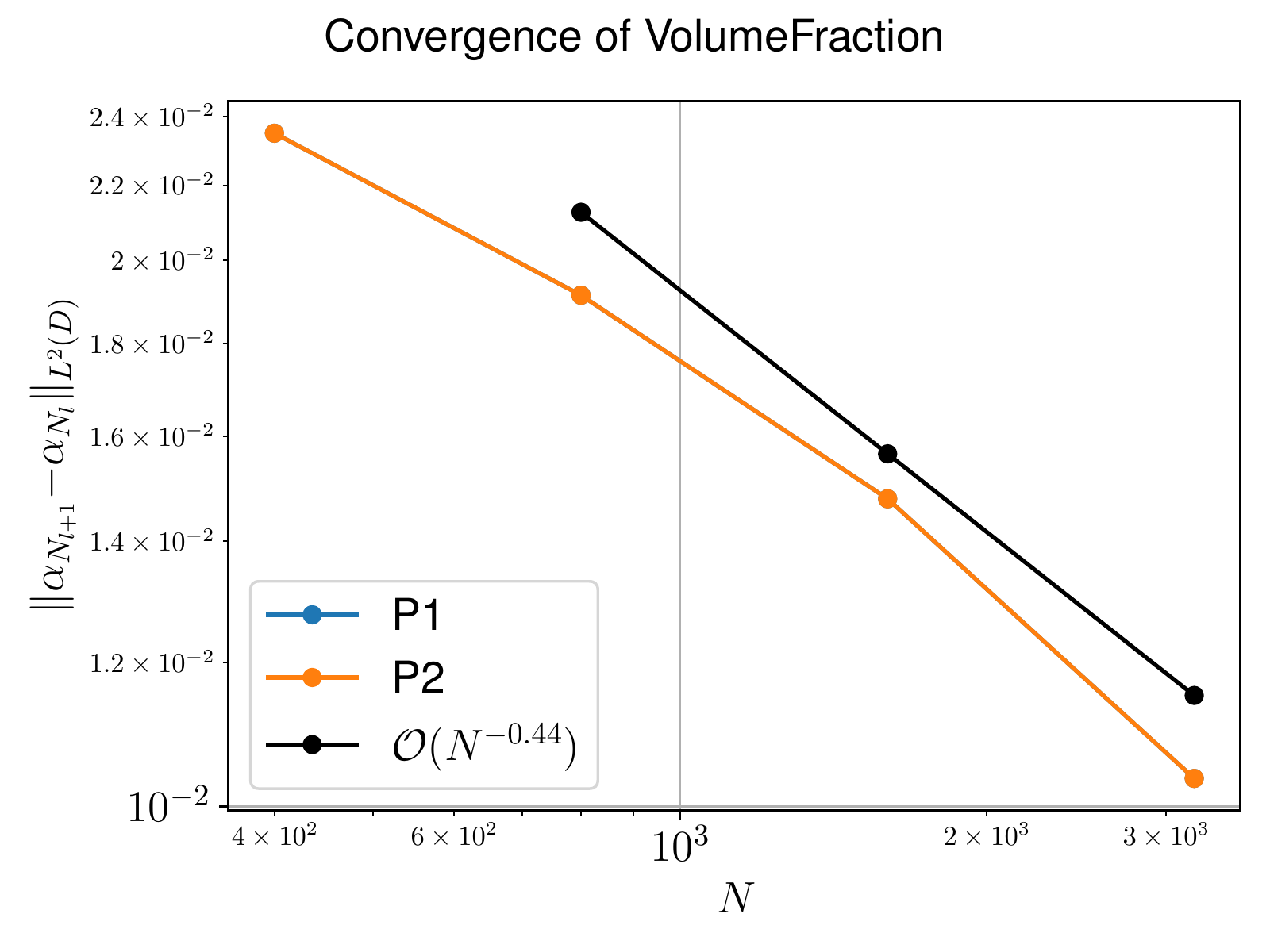}\,
\includegraphics[scale=\figsize]{\main/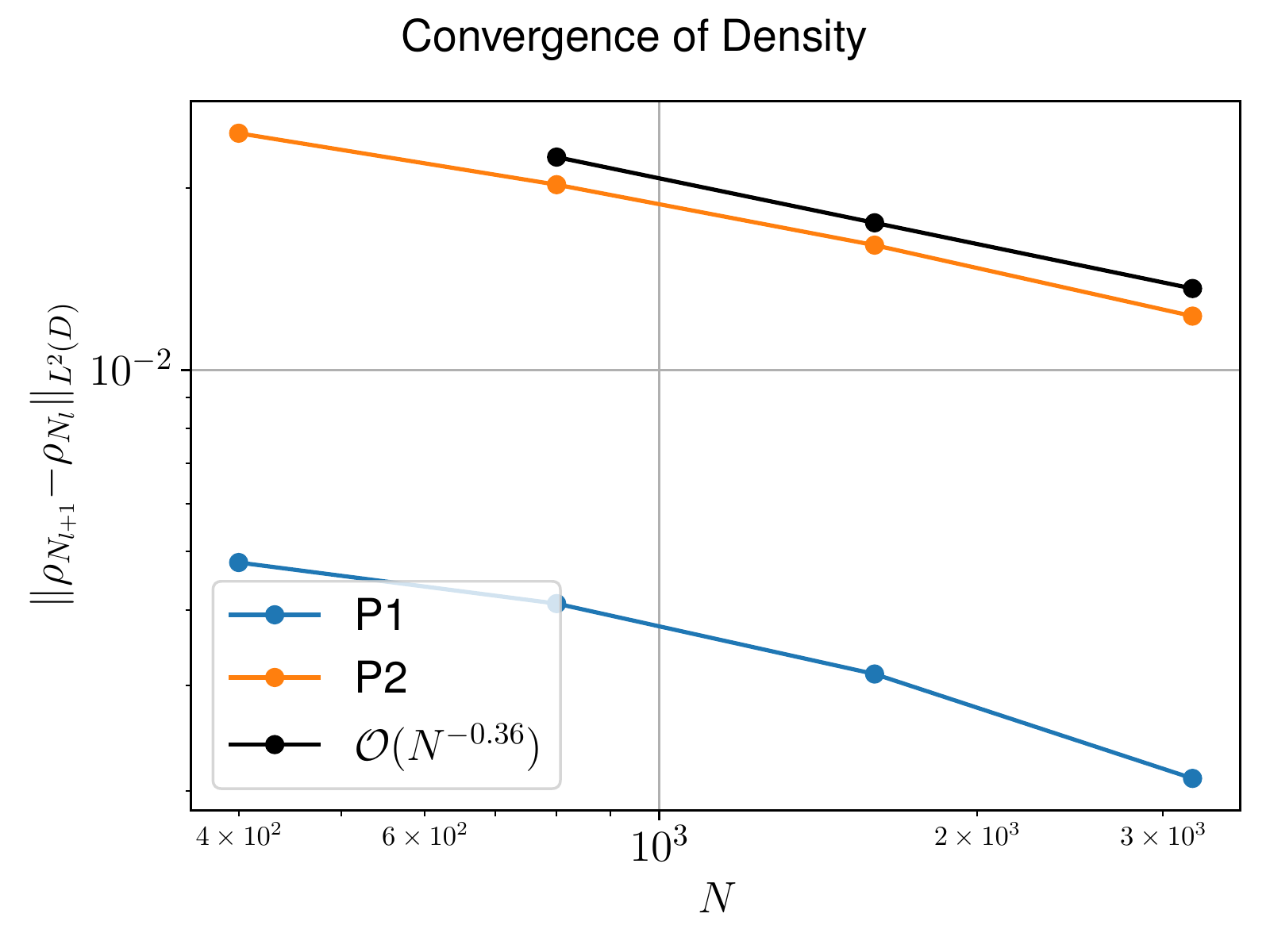}
\end{center}
\caption{Empirical convergence study for the two-phase Lax's shock tube problem for the Ab-initio method in its MLMC mode.}
\label{Fig:AI:T4_conv}
\end{figure}

\section{About the regime-generating strategy}

The evidences provided in the numerical experiments of Sections \ref{sec:AI:MC:NE} show that the methodology is converging with respect to the number of samples and with respect to the number of sub-volumes.
In particular, one establishes convergence for the volume fraction, which, in turn, implies convergence in mean of the characteristic functions $X^{(k)}$. 
Such a results is definitely non-trivial and it also provides evidences that the relative number of interfaces $\lambda_i := \bbE[N_{int} / \Delta x]$ is also converging.\\
Furthermore, numerical experiments showed that the methodology is stable under choice of the maximum number of sub-volumes. 
Such results seems to indicate that stability with respect to the sub-discretization is achieved in all the test-cases under consideration.\\
We stress here that the regime-generating algorithm provided in Alg.\ref{al:AI:eMGP} is based on a uniform distribution. 
Such an assumption is mostly justifiable for pragmatic reasons: if no information about the distribution of $X^{(k)}$ for some $k=1,2$ is given, then the uniform distribution assign equal weight to any event, since there is no reason to prefer any. 
What is more, we have shown that, in some cases, the ab-initio strategy is in good agreement with the DEM predictions. 
One can then reinterpret the DEM as to approximate ab-initio results starting from a uniform distribution.
The following section is devoted to a numerical investigation of similar strategies when starting from a different distribution.

Recently, in \cite{Perrier21}, a connection between bubble size and the image of a Gaussian process was established: the authors assumed that the characteristic function of the initial random two-phase distribution takes the form of a Gaussian Process (GP) \cite{BishopPR}. 
Such assumption is then used to derive an explicit form for the volume fraction, and for the closure problem.\\
More specifically, it is assumed that each initial characteristic function $X^{(k)}(x;\omega)$ can be written as
\begin{equation}
\label{eq:AI:X_gaussian}
X^{(k)}(x;\omega) = \frac{1 + \mathrm{sign}( g^{(k)}(x;\omega) )}{2}
\qquad
g^{(k)}(x;\cdot)
\sim
\mathrm{GP}(\mu^{(k)}(x), \sigma^{(k)}(x,y))
\end{equation}
where the mean $\mu\, :\, D \rightarrow \bbR$ and the positive definite kernel $\sigma\,:,\, D\times D\longrightarrow \bbR$ are two functions to be determined.\\
It is easy to check that for any $l\neq k = 1,2$, the saturation condition (\ref{eq:SaturationCondition}) implies that, for every $\omega\in\Omega$ $g^{(k)}(x;\omega) = - g^{(l)}(x;\omega)$, so that one can consider only one Gaussian process, say $g^{(1)}$.
In addition, simple calculations \cite{Perrier21} imply that
\[
\alpha^{(k)}(x) = \bbE\left[ X^{(k)}(x;\cdot) \right]
= \frac{1}{2}
\left(
1 + \mathrm{erf}
\left(
\frac{\mu^{(k)}(x)}{\sqrt{2}}
\right)
\right)
\]
so that, for a given $\alpha^{(k)}(x)$, one fixes the corresponding mean $\mu^{(k)}(x)$ of the GP $g^{(k)}(x;\cdot)$ by setting
\begin{equation}
\label{eq:AI:GP_mean}
\mu^{(k)}(x) = \sqrt{2}\mathrm{erf}^{-1}
\left(
2\alpha^{(k)}(x) - 1
\right).
\end{equation}
The advantage of considering such an approach is firstly provided by the characteristic property of GPs to lead to closed forms for posterior distributions and moments of any order, which, in turn, may be used to derive explicit formulas to solve the closure problem \cite{Perrier21}.
On the other hand, it remains unclear why considering a GP distribution of phases, in addition to leaving open the problem of defining an adequate (in some sense) kernel function $\sigma^{(k)}$.
In this section we want to comment on the difficulties related to taking such perspective in the context of the ab-initio method.\\
\begin{figure}[!htbp]
\begin{center}
\includegraphics[scale=\figsize]{\main/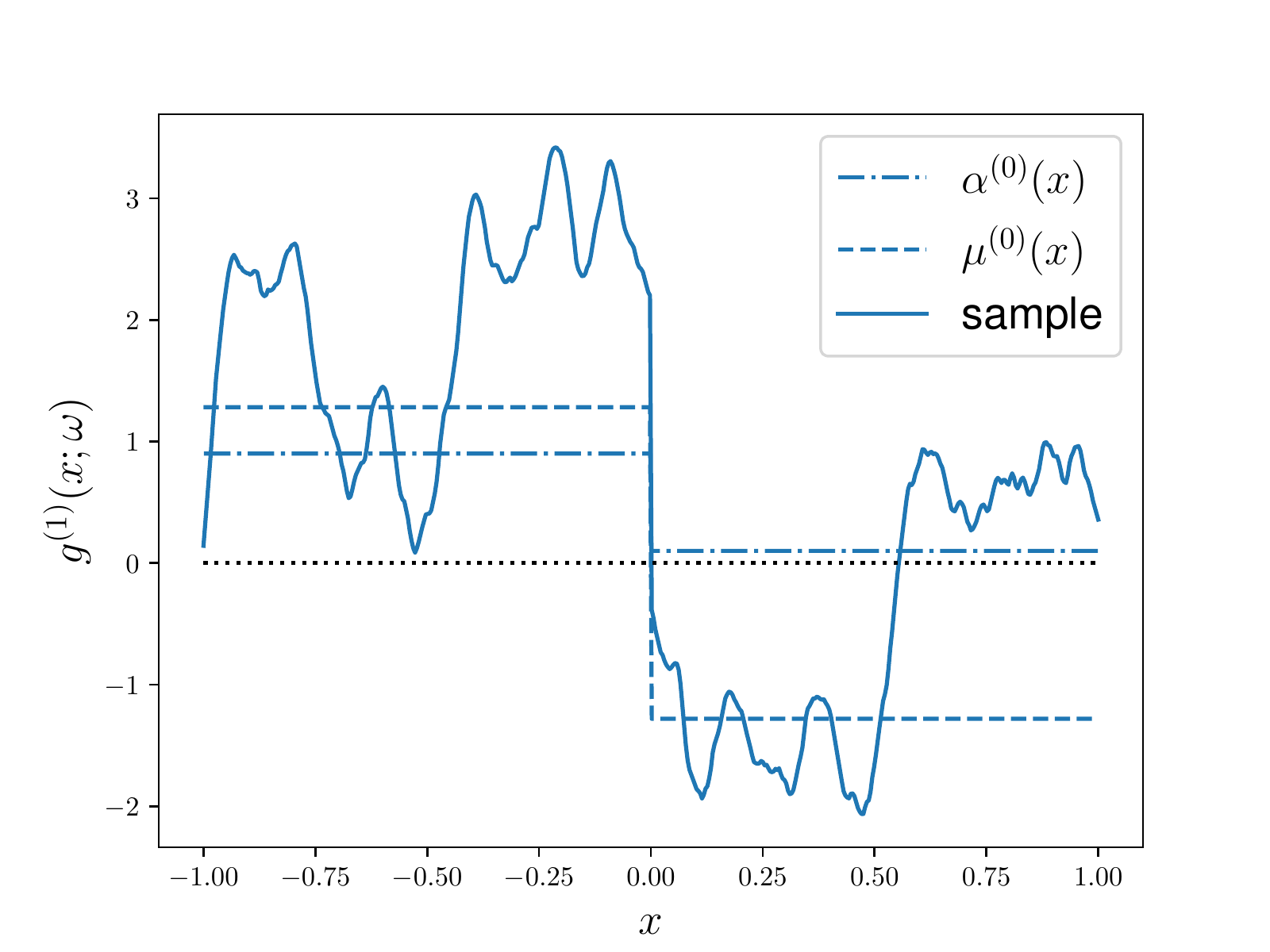}\,
\includegraphics[scale=\figsize]{\main/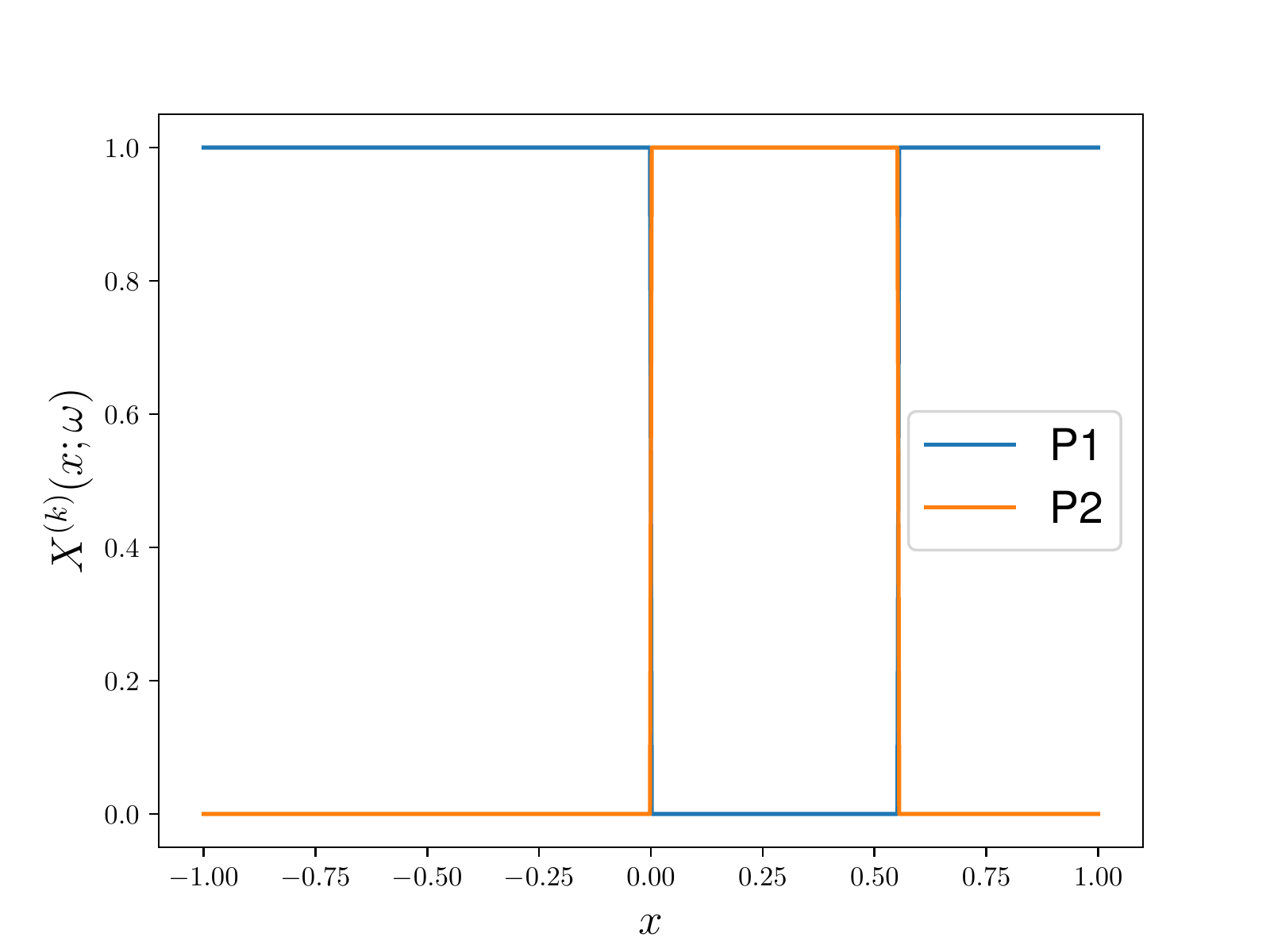}
\end{center}
\caption{Prototypical generation of a random two-phase distribution generated by a GP $g^{(1)}(x;\cdot)$ for a given initial volume fraction $\alpha^{(1)}(x)$. 
Left: volume fraction, mean $\mu^{(1)}(x)$ of $g^{(1)}(x;\cdot)$ and a typical sample;
Right: the two-phase distribution $X^{(k)}(x;\omega)$ induced by the realization plotted on the left.}
\label{Fig:AI:GP}
\end{figure}
Based on the assumed form (\ref{eq:AI:X_gaussian}) of $X^{(k)}(x;\omega)$, one understands this latter in terms of the level-set formulation
\[
X^{(k)}(x;\omega) 
=
1_{ \lbrace
g^{(k)}(x;\omega) \geq 0 
\rbrace
} 
=
\begin{cases}
1 & g^{(k)}(x;\omega) \geq 0\\
0 & \textit{otherwise}
\end{cases}.
\]
Moreover, for a given volume fraction function $\alpha^{(k)}(x)$, the mean value of the GP $g^{(k)}$ is fixed and at the practical level one can take advantage of the following representation
\[
g^{(k)}(x;\omega) = \mu^{(k)}(x) + \tilde{g}^{(k)}(x;\omega)
\qquad
\quad
\tilde{g}^{(k)}(x;\omega)
\sim
\mathrm{GP}(0,\sigma^{(k)}(x,y)).
\]
For notation ease, we will make no distinction between $g^{(k)}$ and $\tilde{g}^{(k)}$.\\
Once the target GP is fixed, an alternative to the regime-generating algorithm detailed in Section \ref{sec:MSR} can be constructed by sampling the GP $g^{(k)}$: one identifies (initial) interface locations as the points where samples change sign, see Fig.\ref{Fig:AI:GP} for an illustration.
Hence, by repeatedly sampling the GP $g^{(1)}$, an ensemble of i.i.d. samples can be constructed (see Alg \ref{al:AI:MGP_gaussian}) such that each of them can be evolve using the FT-operator. Notice that there is no need for an analogous relation to the consistency requirement (\ref{eq:AI:ConsistencyCond-initialLength}), since the mean of the GP is constructed as to comply with the initial volume fraction.

\begin{algorithm}
\caption{Micro-scale Generation algorithm under Gaussian distribution}\label{al:AI:MGP_gaussian}
\begin{algorithmic}
\STATE $\textbf{Data\,:\,}$ 
Mesh-width $\Delta$, initial volume fractions $\alpha^{(k)}_0(x)$ $k=1,2$,
kernel function $\sigma^{(k)}(x,y)$ for one $k\in\lbrace 1,2\rbrace$;\\
\STATE $\textbf{Output\,:\,}$ Two-phase distribution $X^{(k)}_0(x;\omega)$;

\STATE $\bullet$ Define the mesh $(x_i)_{i=1}^M$ such that it is an equispaced discretization of the domain having width $\Delta$;
\STATE $\bullet$ Define the GP $g^{(k)} \sim GP(0,\sigma^{(k)})$;
\STATE $\bullet$ Sample $g^{(k)}$: for a given realization $\omega\in\Omega$, project the sample on the mesh $(x_i)_{i=1}^M$ as to generate $\lbrace
g^{(1)}(x_i,\omega)\rbrace_{i=1}^M$;
\STATE $\bullet$ Update the sample-projection by adding the mean $\mu^{(k)}(x)$, obtained according to (\ref{eq:AI:GP_mean});
\STATE $\bullet$ Define the two-phase distribution $X^{(k)}(x_i,\omega)$ according to (\ref{eq:AI:X_gaussian});
\end{algorithmic}
\end{algorithm}

\begin{figure}[!htbp]
\begin{center}
\begin{tikzpicture}
\node (SP) {
\includegraphics[scale=0.5]{\main/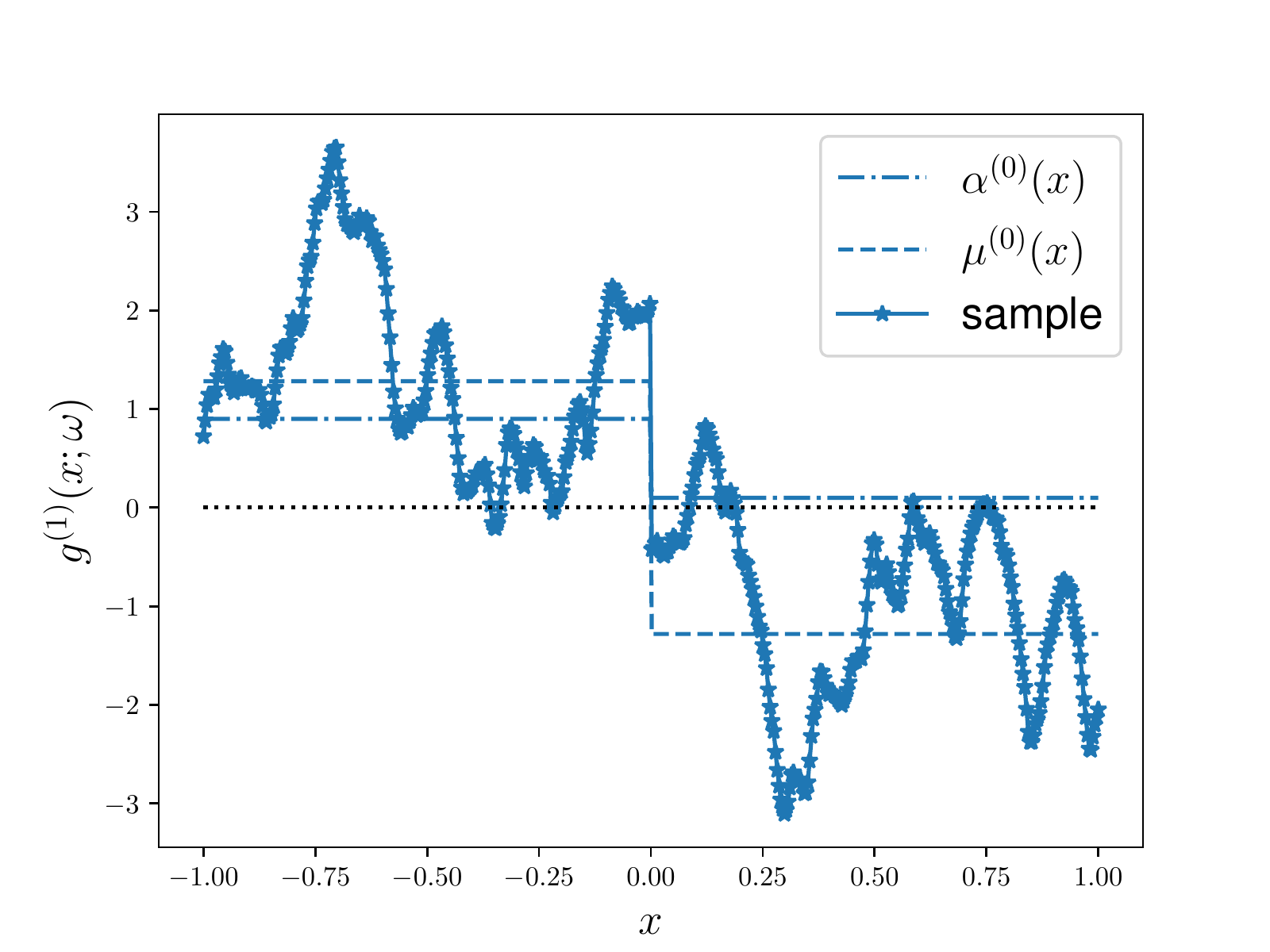}
};

\node (X) [rectangle, below of=SP, xshift=3.5cm, yshift=-7.5cm] {
\includegraphics[scale=0.6]{\main/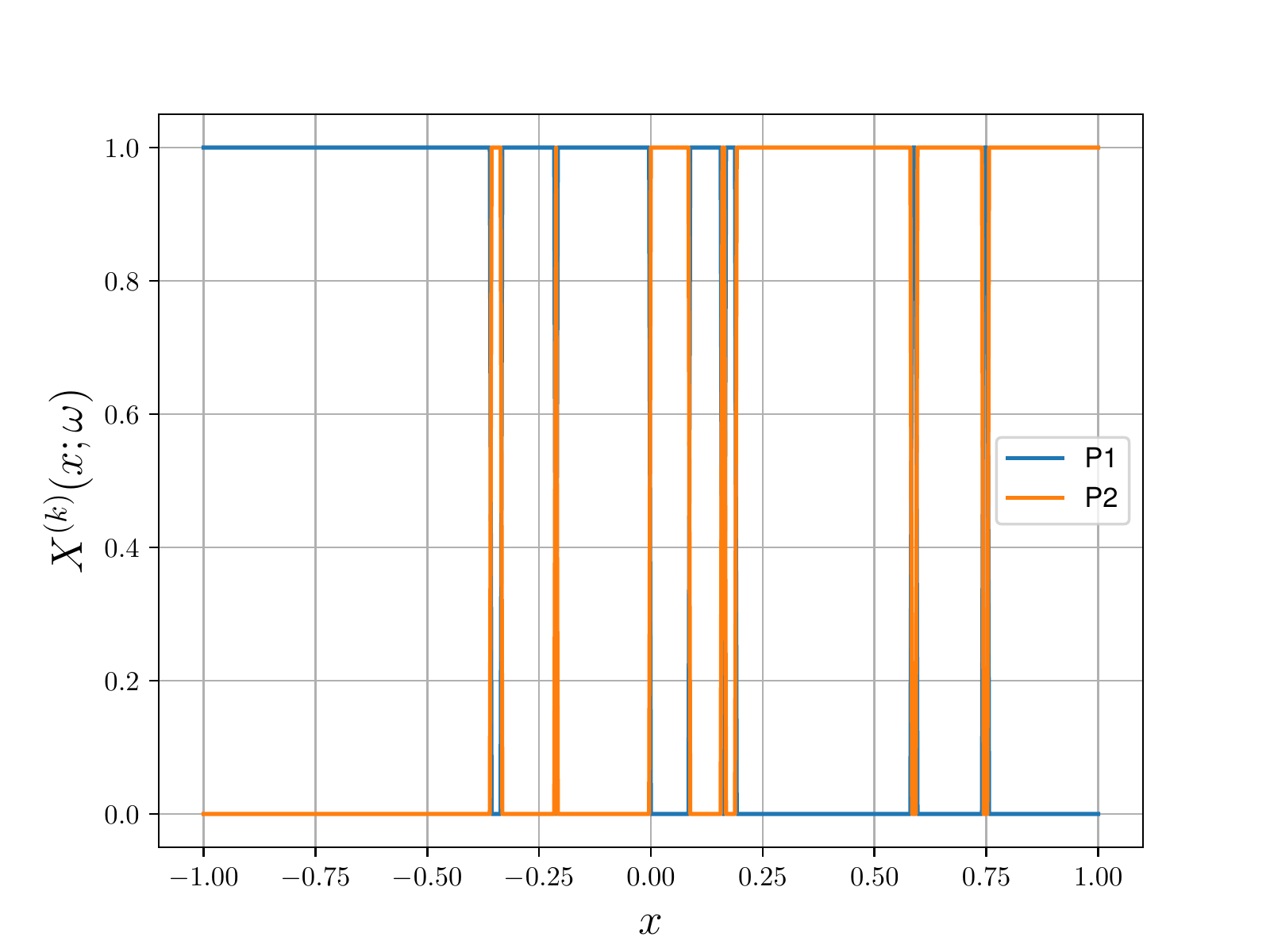}
};

\node [rectangle, draw=black, 
	below right= 4cm of SP.north west,
	xshift = -0.0cm, 
	minimum width=1.2cm, minimum height=0.8cm] 
	(Z_on_SP) {};

\node (SPz) [rectangle, draw=black, right of=SP, xshift = 5cm, yshift=-2.5cm] {
\includegraphics[scale=0.4]{\main/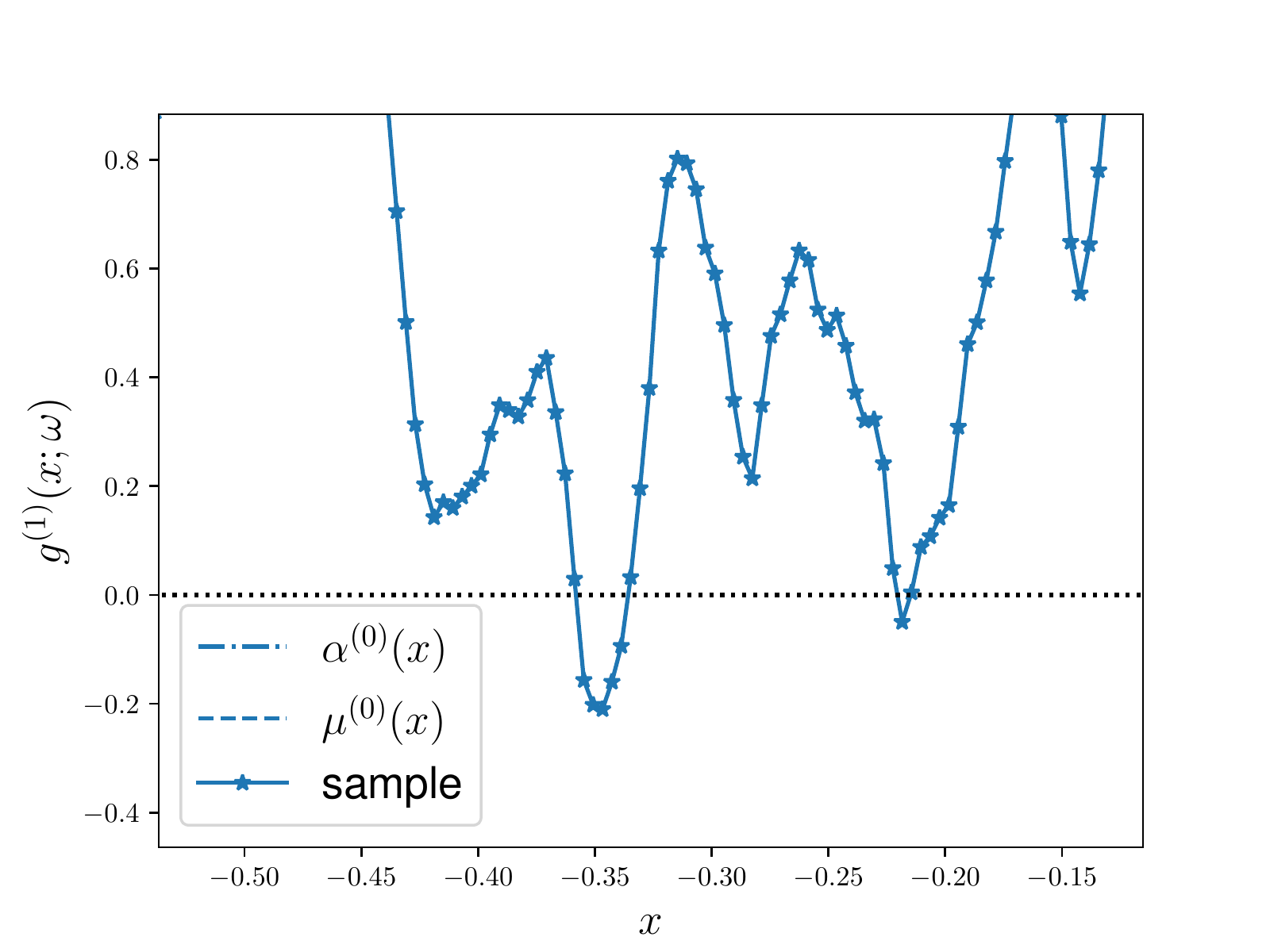}
};

\draw (Z_on_SP.north east) -- (SPz.north east);
\draw (Z_on_SP.south west) -- (SPz.south west);
\draw (Z_on_SP.north west) -- (SPz.north west);

\begin{scope}[on background layer]
\draw (Z_on_SP.south east) -- node [pos=0.37] (Mup){} (SPz.south east);
\end{scope}

\draw (Z_on_SP.south east) -- (Mup);

\node [ellipse,draw=black, 
		below right= 5.5cm of SPz.north west,
		xshift=0.7cm, yshift=0.8cm,		
		minimum width=0.4cm, minimum height=0.4cm] (Z_on_SPz) {};
		
\node [ellipse,draw=black, 
		below right= 4cm of X.north west,
		xshift=1.45cm,	yshift=0.9cm,	
		minimum width=0.15cm, minimum height=5.5cm] (Z_on_X) {};

\path[-stealth] (Z_on_SPz) edge[bend left] (Z_on_X);
\end{tikzpicture}
\caption{Example of a sample of the GP $g^{(1)}$ with an isolated small dispersed particle.}\label{Fig:AI:GP_removed}
\end{center}
\end{figure}
The use of GP in modern applications has motivated extensive efforts in their construction at the numerical level, and most of the coding languages to date provide support for their implementation.
Hence, at the practical level the construction of a sampling strategy is affordable.

Nevertheless,
it should be noted that any sample from a GP takes a finite representation over a mesh at the numerical level.
Indeed, given a gird of points $\VX = (x_i)_{i=1}^M$, the construction of $g^{(1)}(x,\omega)$ for some $\omega\in\Omega$ is achieved by sampling the (multidimensional) normal distribution $\CN(\mu^{(k)}(\VX), \sigma^{(k)}(\VX,\VX))$, where $\mu^{(k)}(\VX)\in\bbR^{M}$ and $\sigma^{(k)}(\VX,\VX)\in\bbR^{M\times M}$ are the vector and matrix (respectively) generated by evaluating the corresponding functions on $\VX$. 
In particular, the generation of one sample requires, on one hand, the construction of the matrix $\sigma^{(k)}(\VX,\VX)$ and on the other its consequent Cholesky decomposition at the cost of order $\CO(M^3)$.
Thus, the dimension of the sampling grid used to evaluated $g^{(1)}$ cubicly increases the cost of sampling a GP as the mesh is refined. 
This introduces already a computational disadvantage as compared to the Alg.\ref{al:AI:eMGP}, due to matrix-decomposition.\\
Furthermore, notice that it is not difficult to incur in situations where the sampled GP changes sign at localized points (see Fig. \ref{Fig:AI:GP_removed}), so that an isolated dispersed portion of matter of width $\Delta = \CO(M^{-1})$ is present and no finer scales can be reproduced using the mentioned algorithm.
Hence, the (arbitrary) sampling mesh width $\Delta$ in Alg.\ref{al:AI:MGP_gaussian} defines a control on the minimum width of dispersed phase, in complete analogy to the choice of the number of sub-volumes in Alg.\ref{al:AI:eMGP}. 
The sub-volumes that are affected by each phase depend on the choice of the kernel function.

In \cite{Perrier21}, big attention is devoted to the choice of the Matern$(\nu,\zeta)$ kernel function
\[
\sigma^{(k)}_{\nu,\zeta}(x,y)
=
\frac{2^{1-\nu}}{\Gamma(\nu)}
\left(
\frac{\sqrt{2\nu}(y-x)}{\zeta}
\right)^\nu
\CK_\nu
\left(
\frac{\sqrt{2\nu}(y-x)}{\zeta}
\right)
\] 
where $\nu,\zeta$ are positive parameters controlling the variation of the GP, $\CK_\nu$ is a modified Bessel function and $\Gamma$ denotes the gamma-function
\[
\Gamma(z) = 
\int_0^\infty
t^{z-1}e^{-t}\,\dt.
\]
Matern kernels are widely used in many applications due to their flexibility under variation of the parameters $\nu$ and $\zeta$, which control smoothness of samples.
\begin{figure}[!htbp]
\begin{center}

\subfloat[Gaussian sample, $\Delta = 5\cdot 10^3$]{
\includegraphics[scale=0.25]{\main/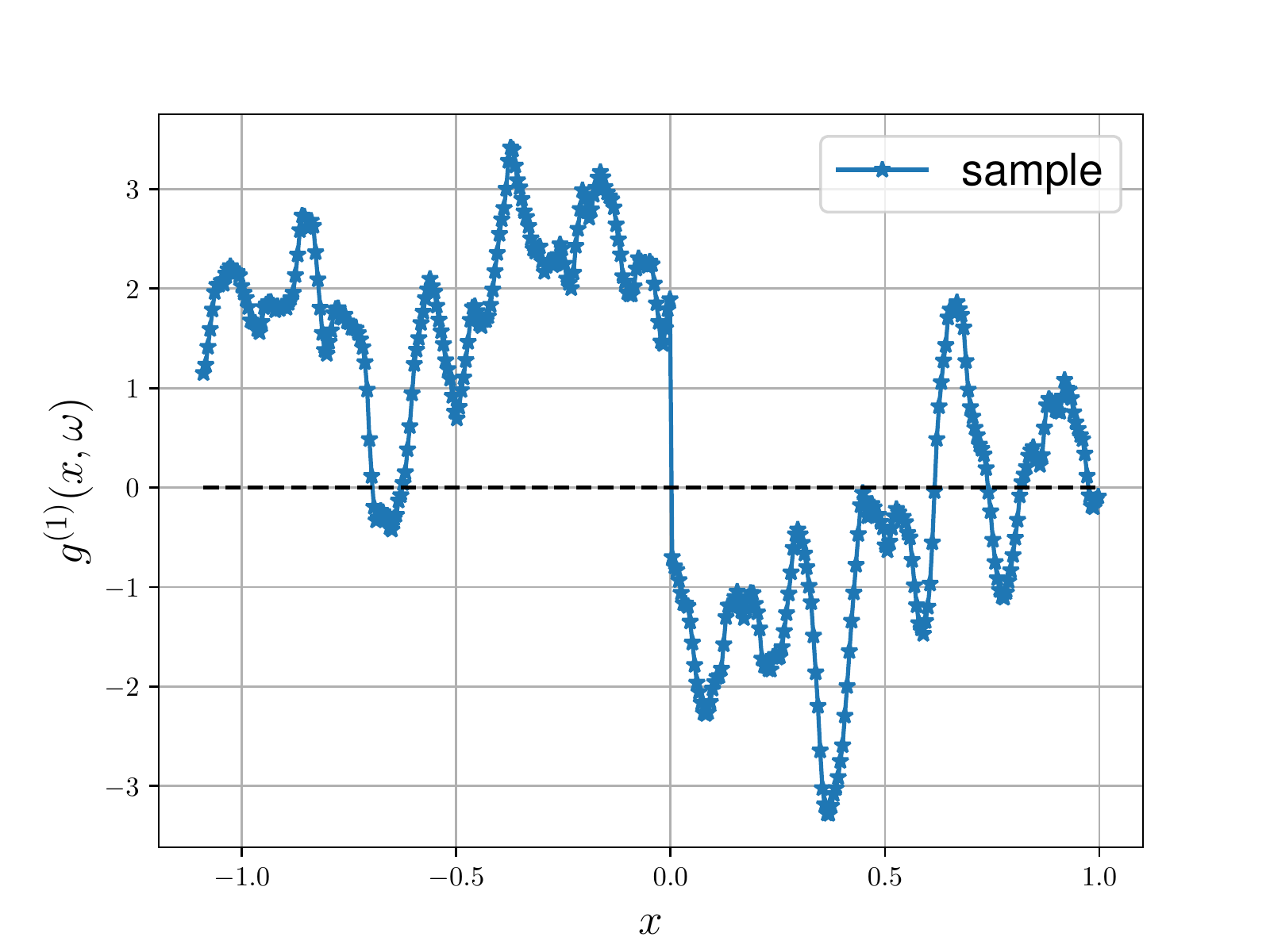}
}
\,
\subfloat[Gaussian, $\Delta = 5\cdot 10^3$]{
\includegraphics[scale=0.25]{\main/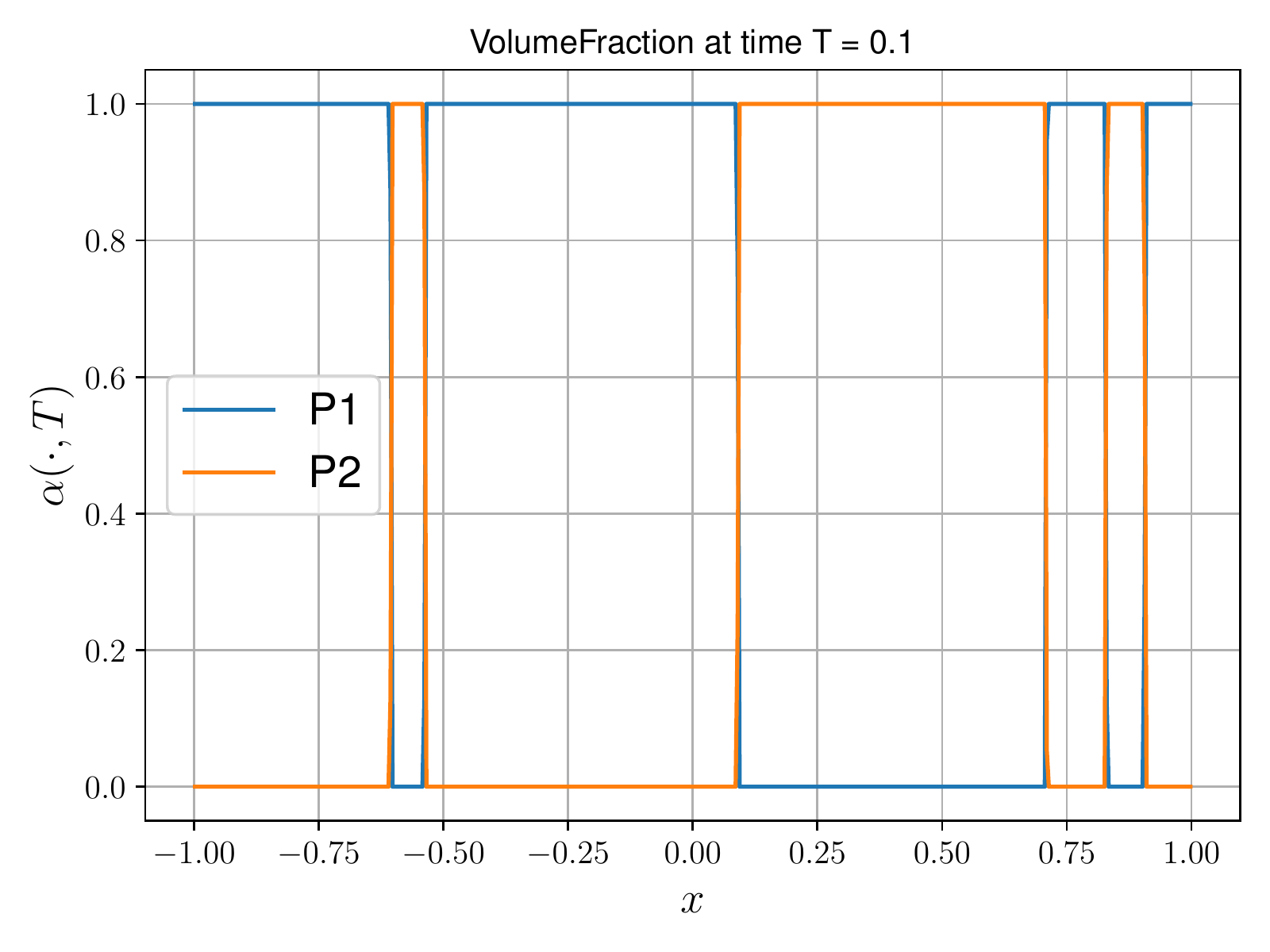}
}
\,
\subfloat[Uniform, $\Delta = 5\cdot 10^3$]{
\includegraphics[scale=0.25]{\main/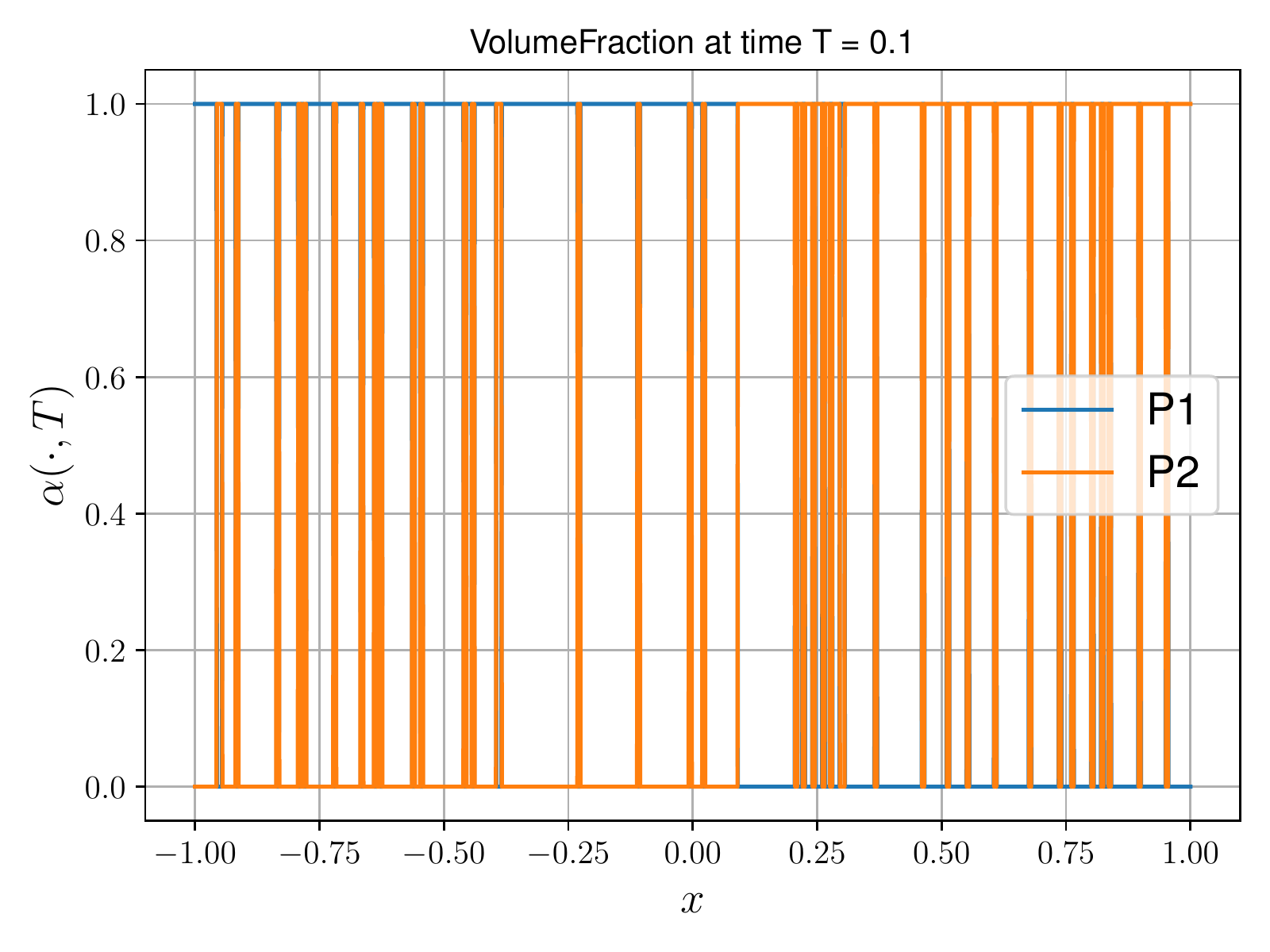}
}\\

\subfloat[Gaussian sample, $\Delta = 1.25\cdot 10^3$]{
\includegraphics[scale=0.25]{\main/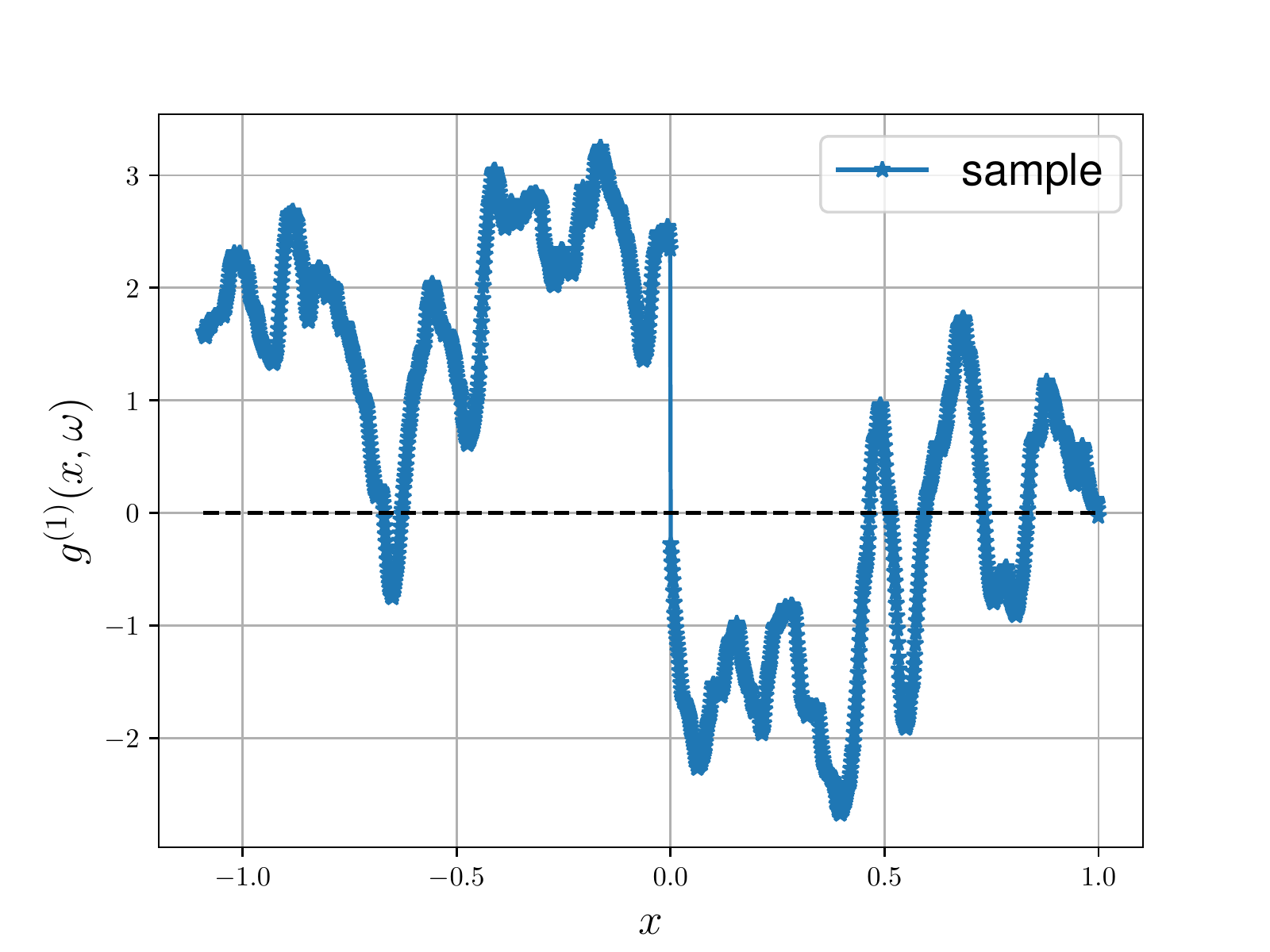}
}
\,
\subfloat[Gaussian, $\Delta = 1.25\cdot 10^3$]{
\includegraphics[scale=0.25]{\main/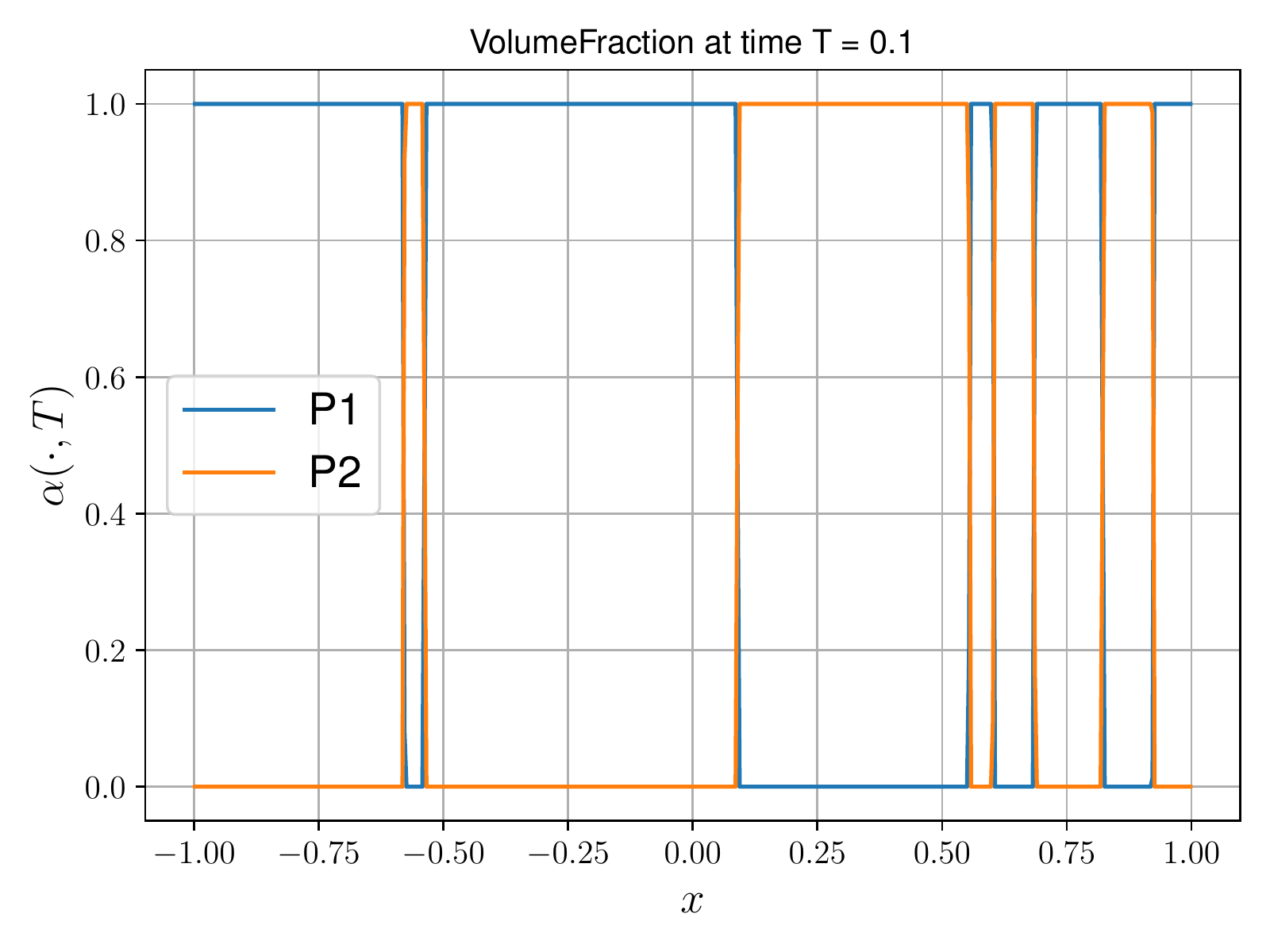}
}
\,
\subfloat[Uniform, $\Delta = 1.25\cdot 10^3$]{
\includegraphics[scale=0.25]{\main/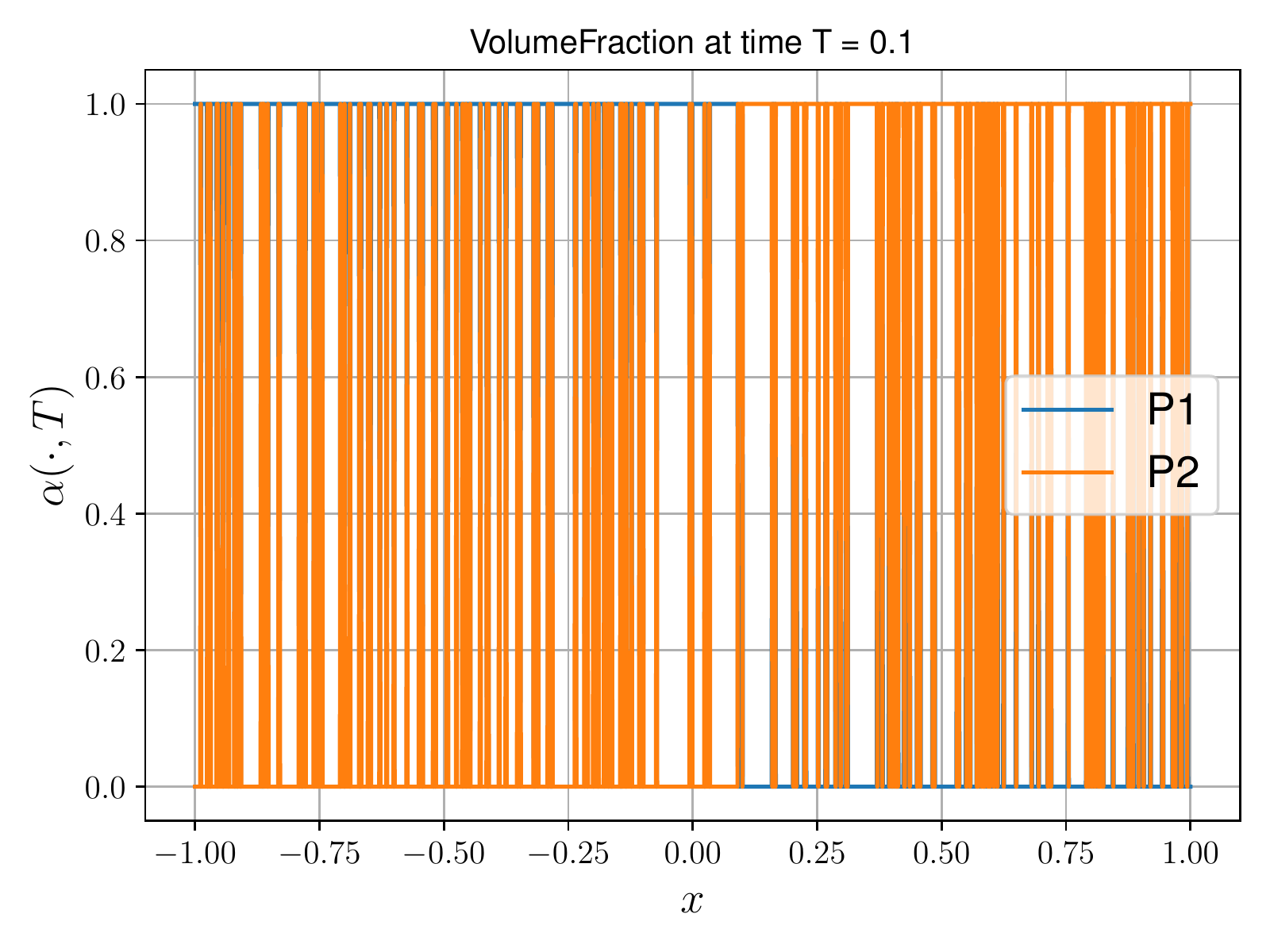}
}\\

\subfloat[Gaussian sample, $\Delta = 0.3125\cdot 10^3$]{
\includegraphics[scale=0.25]{\main/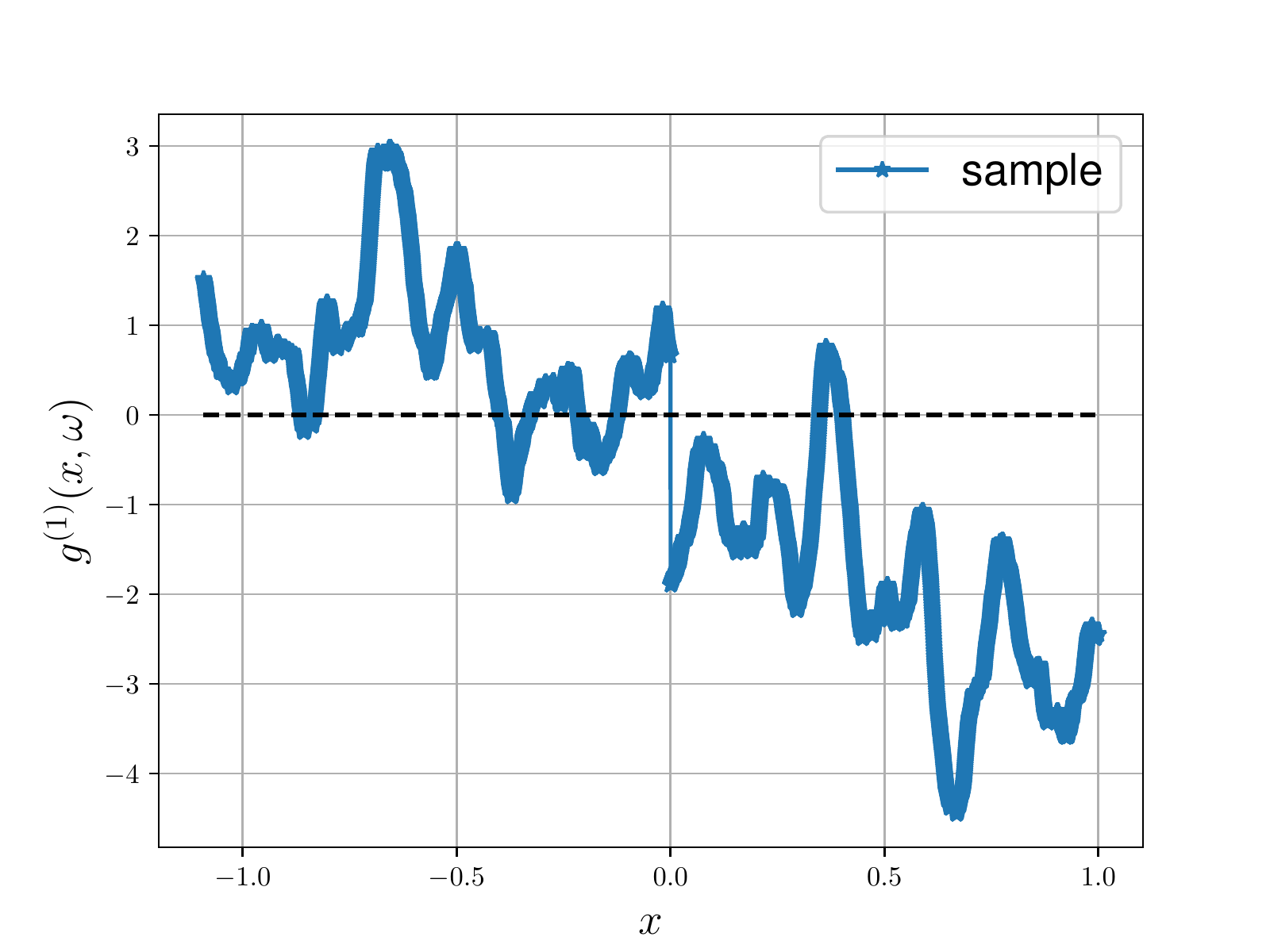}
}
\,
\subfloat[Gaussian, $\Delta = 0.3125\cdot 10^3$]{
\includegraphics[scale=0.25]{\main/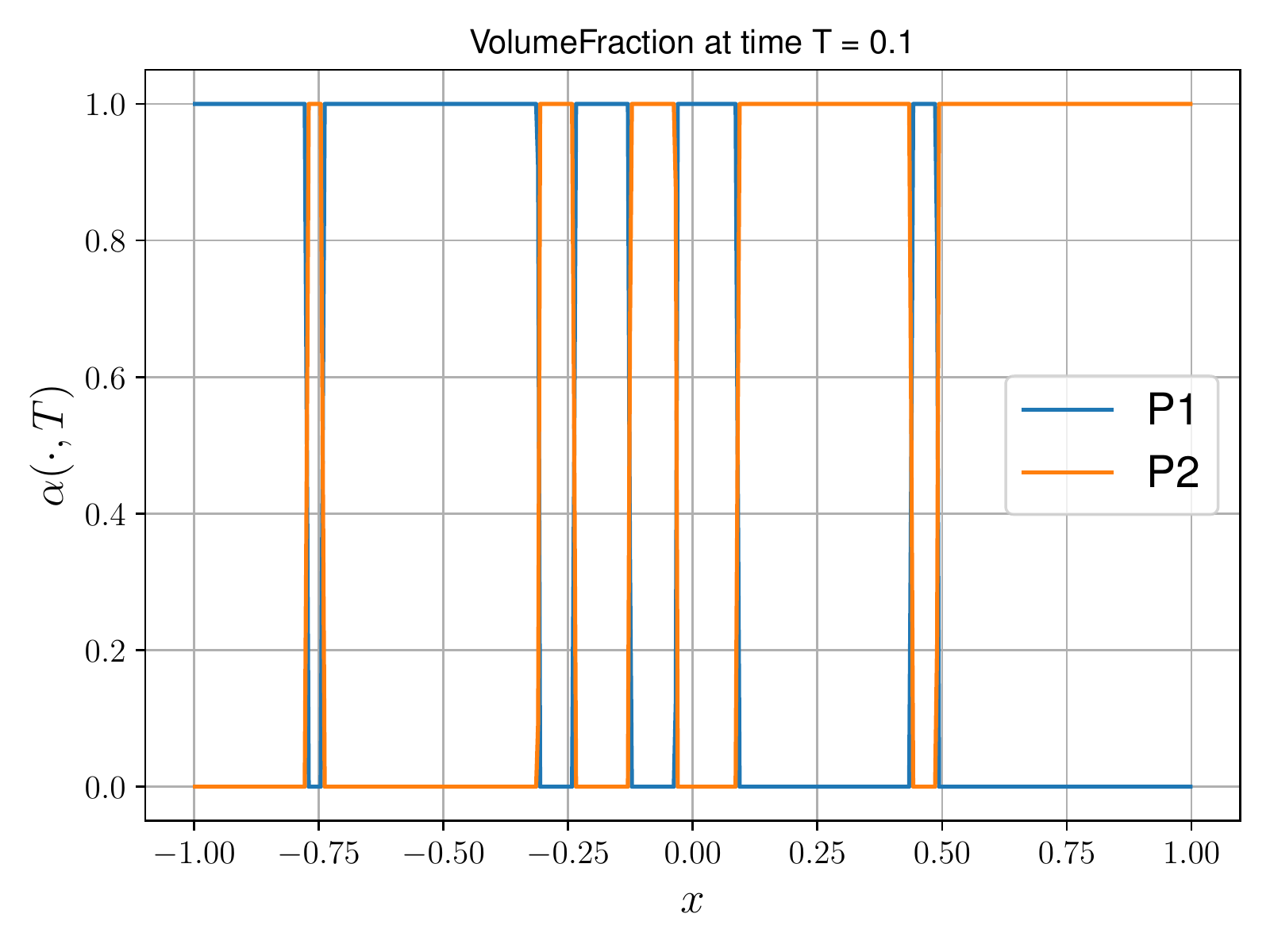}
}
\,
\subfloat[Uniform, $\Delta = 0.3125\cdot 10^3$]{
\includegraphics[scale=0.25]{\main/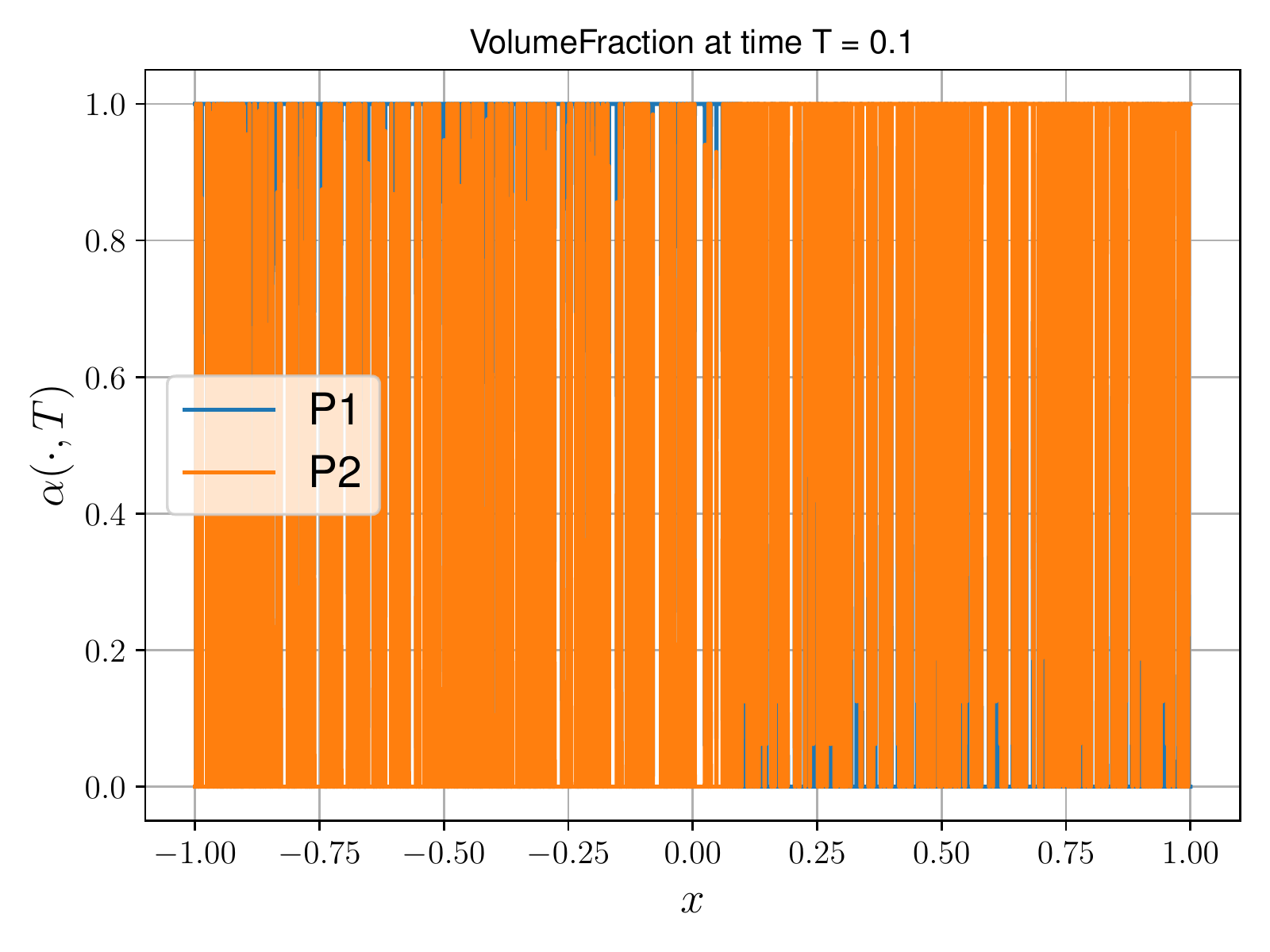}
}\\

\caption{Comparison of two prototypical samples generated by Alg.\ref{al:AI:eMGP} (Uniform) and Alg.\ref{al:AI:MGP_gaussian} (Gaussian).}
\label{Fig:AI:GP_smp_cmp}
\end{center}
\end{figure}

To help appreciate the difference in the sampling strategies provided by the Alg.\ref{al:AI:eMGP} (Uniform) and Alg.\ref{al:AI:MGP_gaussian} (Gaussian), we plot in Fig.\ref{Fig:AI:GP_smp_cmp} two prototypical realizations generated with both algorithms when applied to the test case of Sec. \ref{sec:AI:MC:NE:Un}, using several sub-scale resolutions and under the assumption of considering a Matern$(1.5,0.06)$ kernel function.
Furthermore, we plot also the GP sample generating the initial two-phase distributions in case of Alg.\ref{al:AI:MGP_gaussian}.
For each row (i.e. for a fixed sub-scale resolution), one can observe that the evolved samples generated with the two algorithms differ in regime: uniform distribution generates a dispersed regime constituting of isolated dispersed portion of matter, whereas the Gaussian one induces phase agglomerates. 
Such trend is preserved as the sub-scale resolution is refined, meaning that uniform distribution generates even more dispersed matter, while Gaussian realizations seems to produce (clusterized) larger portion of phases.
Such an observation indicates that, under the assumption of a Gaussian distribution, great care needs to be put in choosing the appropriate kernel function, as it corresponds to choosing a specific form for the regime under consideration. 

Notice that both strategies produce diverse realizations as the sub-scale is refined, meaning that refinement of the sub-scale \emph{does not} define a better resolution on the same sample. 
Indeed, sampling the GP at finer grids requires the evaluation of a different multivariate normal distribution and different Cholesky decompositions, so that each sample ends up being different. 
In turn, the variation of disperse matter location induces lack of sample-wise convergence under sub-scale refinement, in contrast to what happens under uniform distribution generation of samples.

Lack of sample-wise convergence was recently observed to be enjoyed by numerical approximations of turbulent flow simulations, and the starting point for the design of novel solution paradigms for systems of hyperbolic conservation laws \cite{LyePhD, Fjordholm2017}. In the same references, convergence (in adequate norm) was recovered by considering solutions as the mean of some statistical process, suggesting that an homogenization process takes place when passing to the averages.
The present case presents several analogies to this latter, and we would like to comment on the implications of choosing a specific kernel function for the sub-discretization problem.

\begin{figure}[!htbp]
\begin{center}
\subfloat[Gaussian $L = 100$ and $\zeta = 0.06$]{
\includegraphics[scale=0.3]{\main/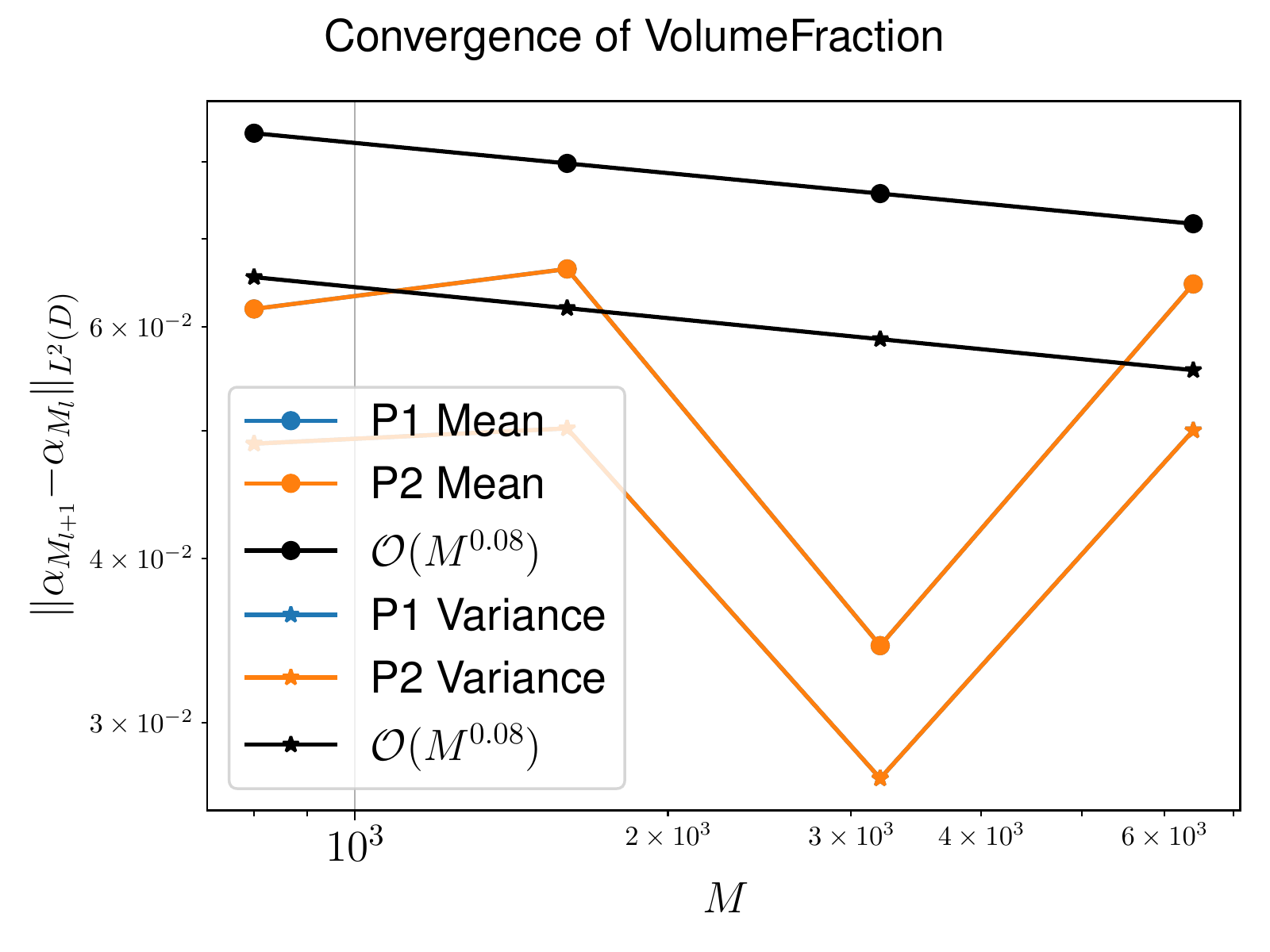}
}\,
\subfloat[Gaussian $L = 800$ and $\zeta = 0.06$]{
\includegraphics[scale=0.3]{\main/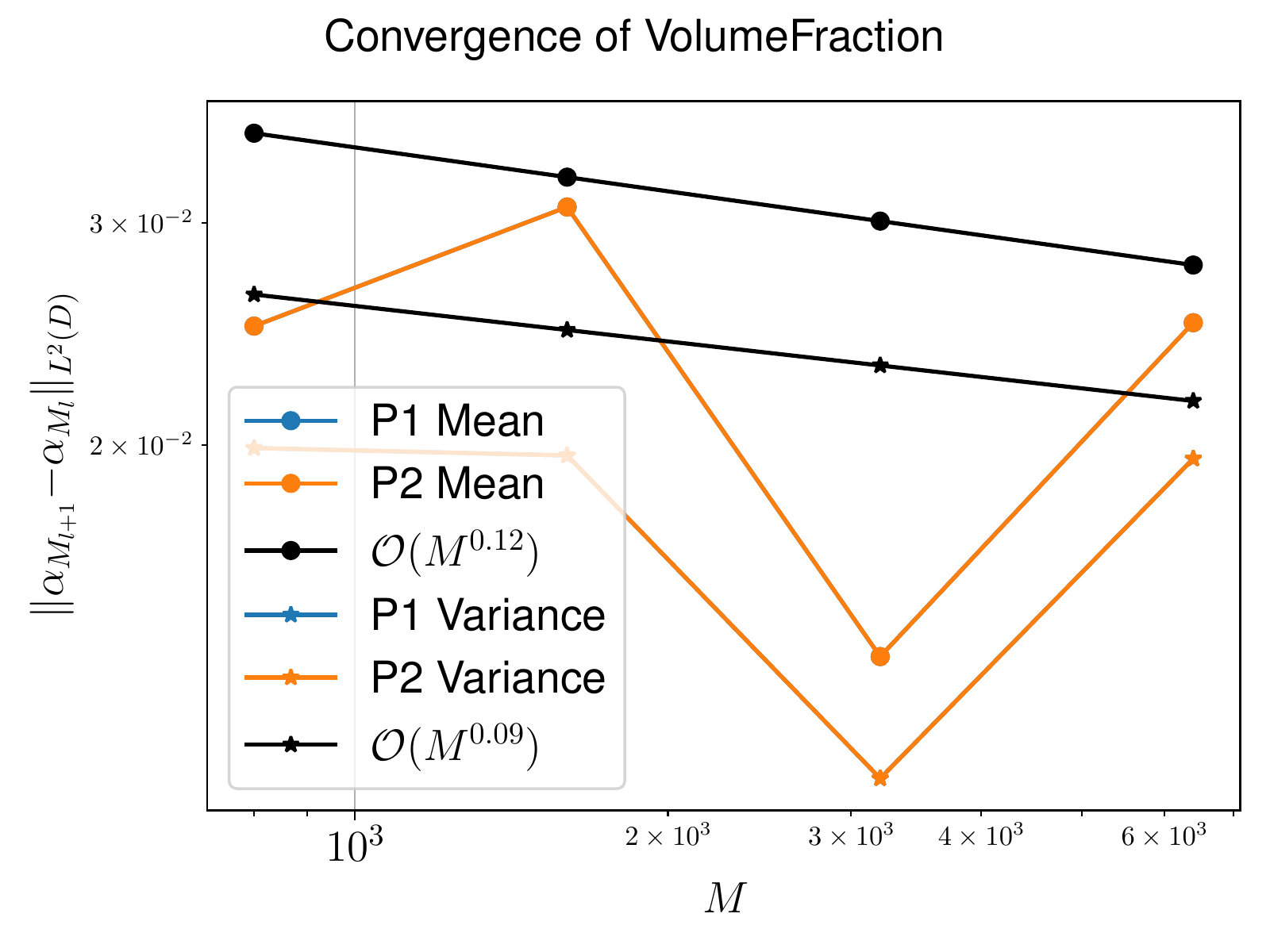}
}
\caption{Empirical convergence study for the regime-generating algorithm \ref{al:AI:MGP_gaussian} under sub-scale refinement.}
\label{Fig:AI:GP:ens_conv}
\end{center}
\end{figure}

As before, we first run the test case about uniform conditions (Sec.\ref{sec:AI:MC:NE:Un}) for two fixed number of samples ($L=200$ and $L=800$) using a Matern$(\frac{3}{2},\zeta)$ with $\zeta = 0.06$ and plot Cauchy rates for the volume fraction in Fig.\ref{Fig:AI:GP:ens_conv}.
Lack of convergence is recovered disrespectfully of the number of samples. 
In particular, we highlight that for a moderate number of samples, no substantial convergence can be appreciated. This implies that the homogenization property introduced by the passage to the average is reduced by the (essentially) non-decreasing trend of variance. 

The above results show that convergence for big sizes of dispersed particle is not taking place, or happening at a very slow rate to be irrelevant for practical usage.
The significance of this results it that the contribution of small dispersed particle is unavoidable to recover convergence in the ab-initio framework at an efficient/affordable manner.
This has the disadvantage of increasing the computational cost to evolve any of such samples, whose bottleneck can be mitigated using Multi-level strategies \cite{Sukys14}.
To this extent, one should note that it is necessary to define a sufficiently large sampling mesh to recover some notion of convergence, which would then increase cubicly the computational cost to produce each sample.
Hence, a deep investigation about the computational advantages for such an approach seems to be necessary.

Conversely, reducing the length scale for the kernel function (and consequently reducing the smoothness of the corresponding GP), do present some favorable advantages.
Indeed, we propose in Fig.\ref{Fig:AI:GP_smp_cmp_zeta_small} a similar comparison to the one exposed in Fig.\ref{Fig:AI:GP_smp_cmp}, where samples are produced by reducing the hyperparameter $\zeta$ up to $0.001$.
Notice the increased complexity of samples as the sub-scale is refined for the Gaussian samples, as opposed to the situation depicted in Fig.\ref{Fig:AI:GP_smp_cmp}.
Thus, reducing the smoothness of the underlying GP (or equivalently, by increasing its total variation), one recovers a similar trend under sub-scale refinement to the one observed for the uniform distribution-based algorithm.

\begin{figure}[!htbp]
\begin{center}
\subfloat[Gaussian sample, $\Delta = 5\cdot 10^3$]{
\includegraphics[scale=0.25]{\main/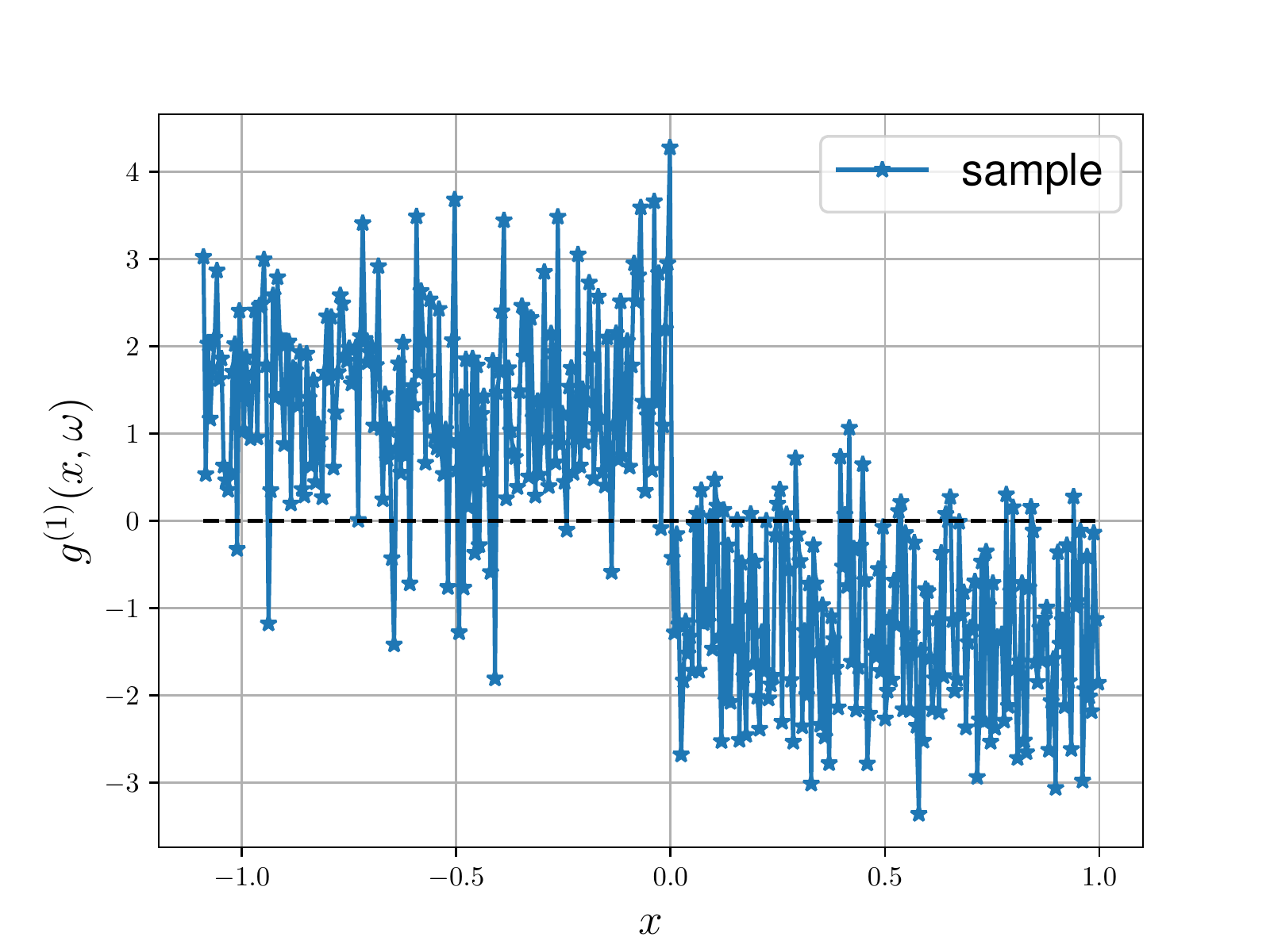}
}
\,
\subfloat[Gaussian, $\Delta = 5\cdot 10^3$]{
\includegraphics[scale=0.25]{\main/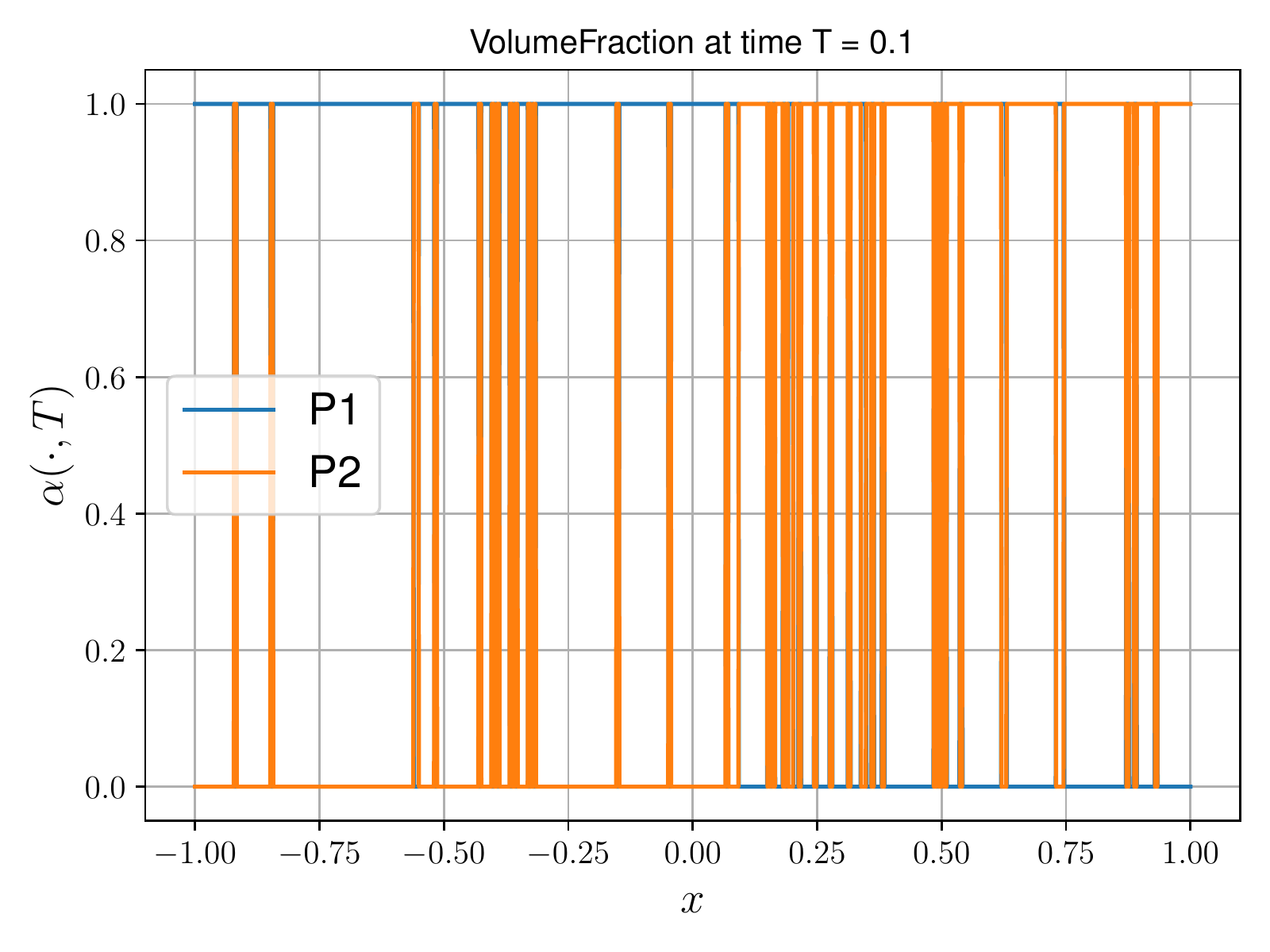}
}
\,
\subfloat[Uniform, $\Delta = 5\cdot 10^3$]{
\includegraphics[scale=0.25]{\main/gaussian/sample_cmp/uniform_200.pdf}
}\\

\subfloat[Gaussian sample, $\Delta = 1.25\cdot 10^3$]{
\includegraphics[scale=0.25]{\main/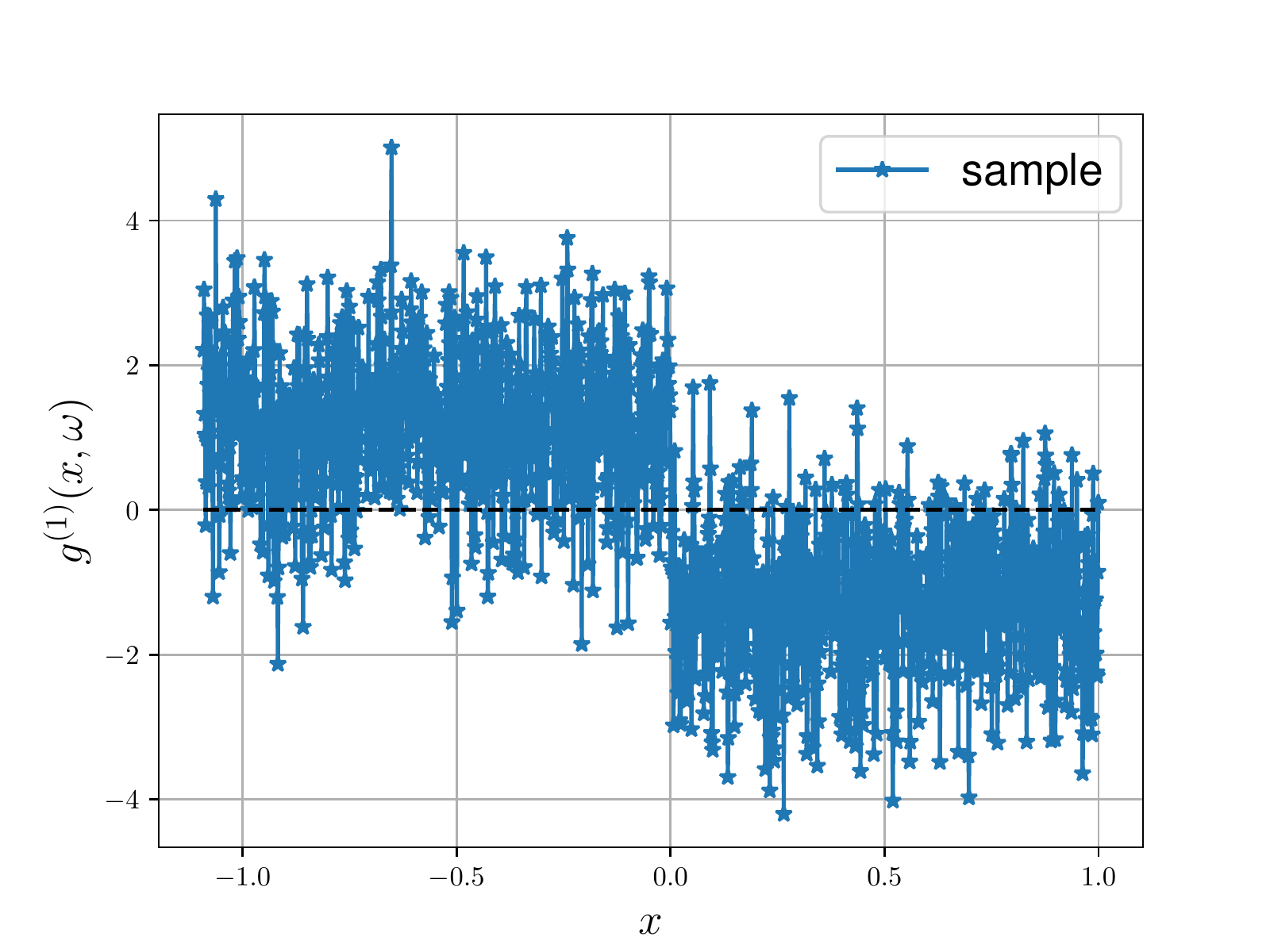}
}
\,
\subfloat[Gaussian, $\Delta = 1.25\cdot 10^3$]{
\includegraphics[scale=0.25]{\main/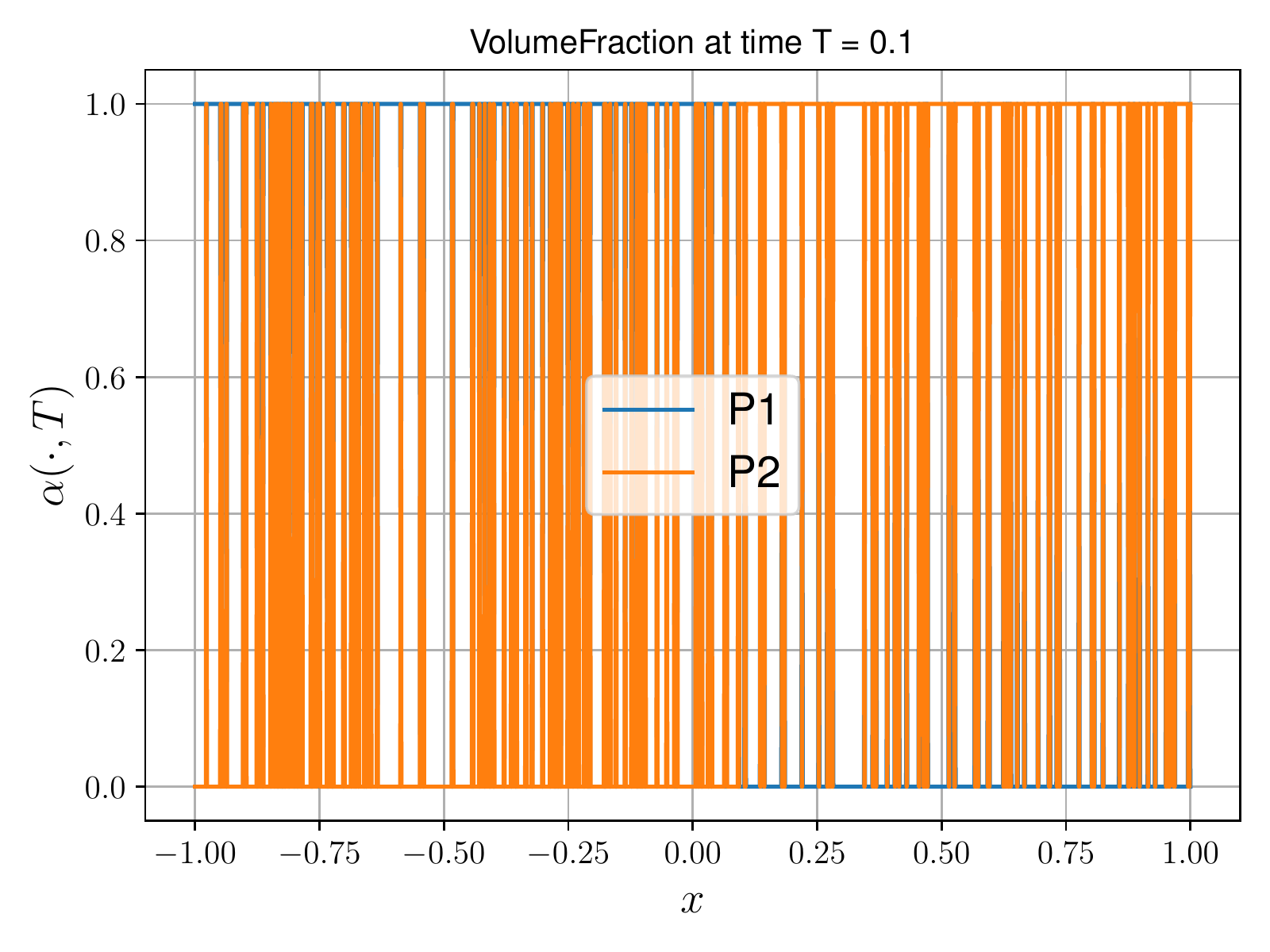}
}
\,
\subfloat[Uniform, $\Delta = 1.25\cdot 10^3$]{
\includegraphics[scale=0.25]{\main/gaussian/sample_cmp/uniform_800.pdf}
}\\

\subfloat[Gaussian sample, $\Delta = 0.3125\cdot 10^3$]{
\includegraphics[scale=0.25]{\main/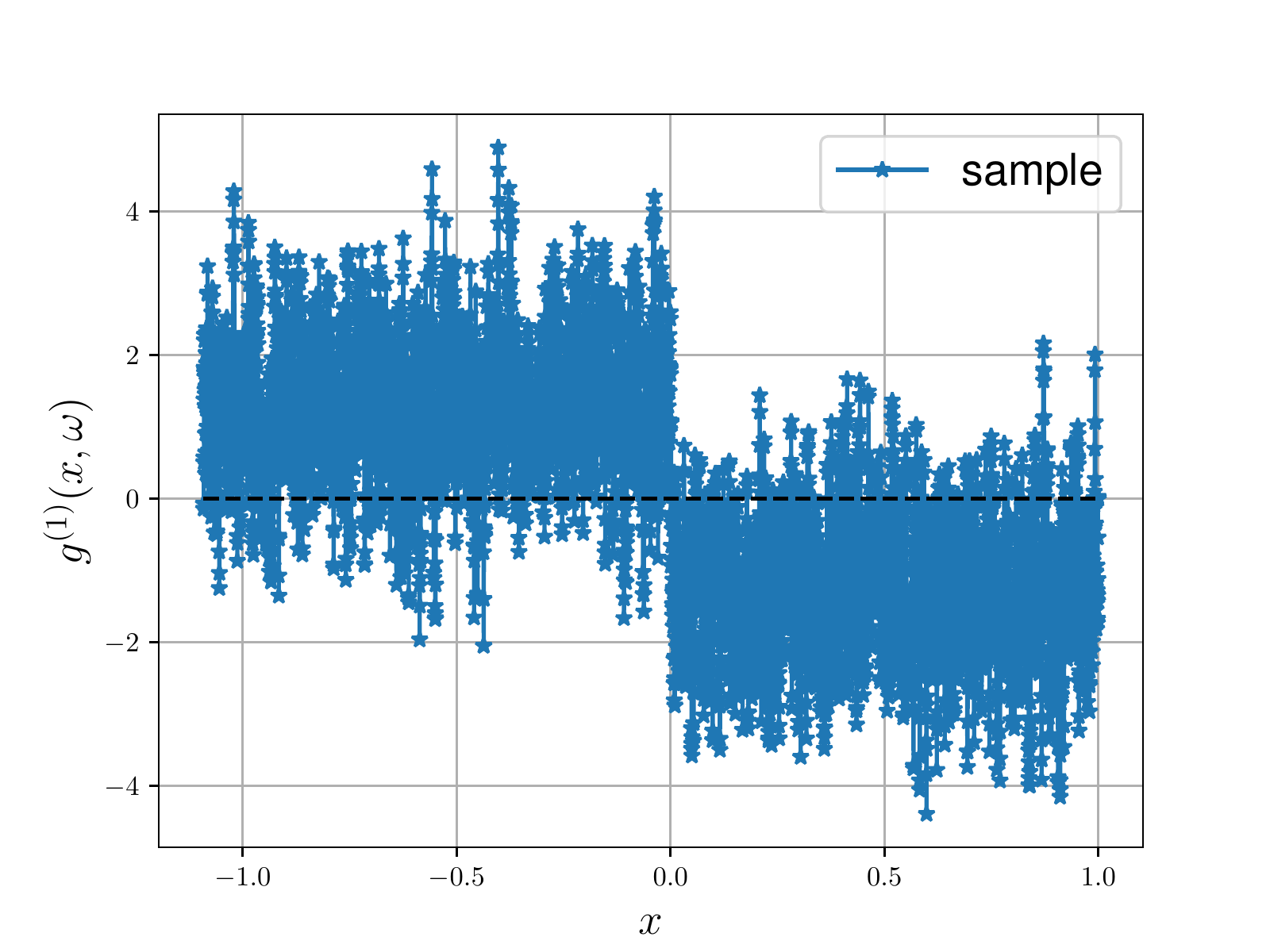}
}
\,
\subfloat[Gaussian, $\Delta = 0.3125\cdot 10^3$]{
\includegraphics[scale=0.25]{\main/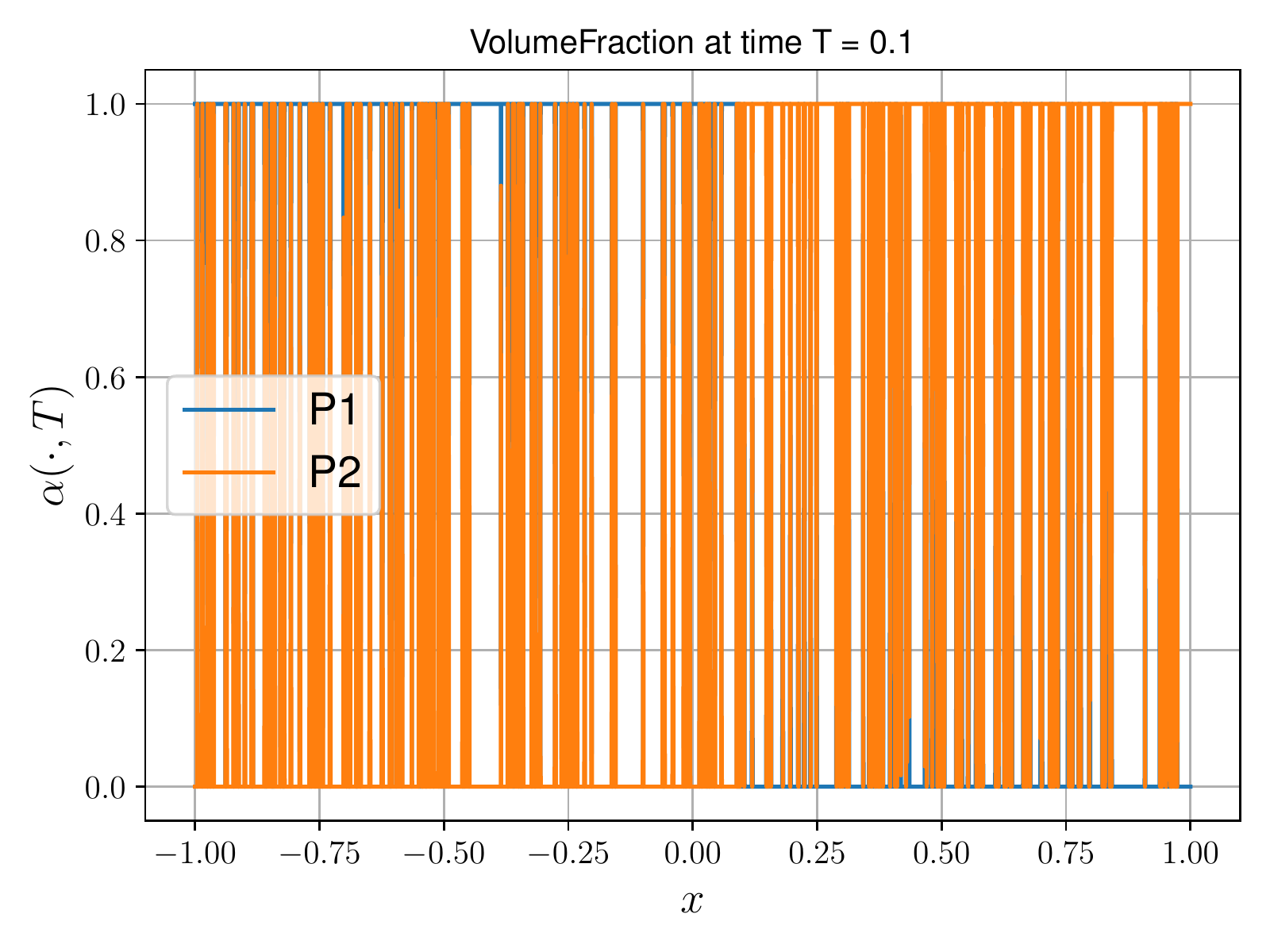}
}
\,
\subfloat[Uniform, $\Delta = 0.3125\cdot 10^3$]{
\includegraphics[scale=0.25]{\main/gaussian/sample_cmp/uniform_3200.pdf}
}

\caption{Samples comparison for Alg. \ref{al:AI:MGP_gaussian} under assumption of Mater$(\frac{3}{2},\zeta)$ with $\zeta = 0.001$. Compare obtained results with those of Fig.\ref{Fig:AI:GP_smp_cmp}.}
\label{Fig:AI:GP_smp_cmp_zeta_small}
\end{center}
\end{figure}

Notice that such an observation encourages two types of analysis: the first is to investigates whether convergence can be recovered for sufficiently small values of the hyperparameter $\zeta$, and, if this is the case, towards what it is converging.
Based on the observations performed in Fig.\ref{Fig:AI:GP_smp_cmp_zeta_small}, if there exists a small value of $\zeta$ such that any sample $j=1,\ldots,J$ produced by exploiting the Gaussian representation is very close (in some norm) initially to a sample $l = 1,\ldots,L$ generated by the uniform-based Alg.\ref{al:AI:eMGP}, then the two ensemble averages shall be very close, due to the continuity and stability of the FT algorithm.
In particular, this would result in concluding that the ab-initio method is in fact computing stable results under variation of the underlying distribution.

\begin{figure}[!htbp]
\begin{center}
\subfloat[Gaussian $L=1$]{
\includegraphics[scale=0.25]{\main/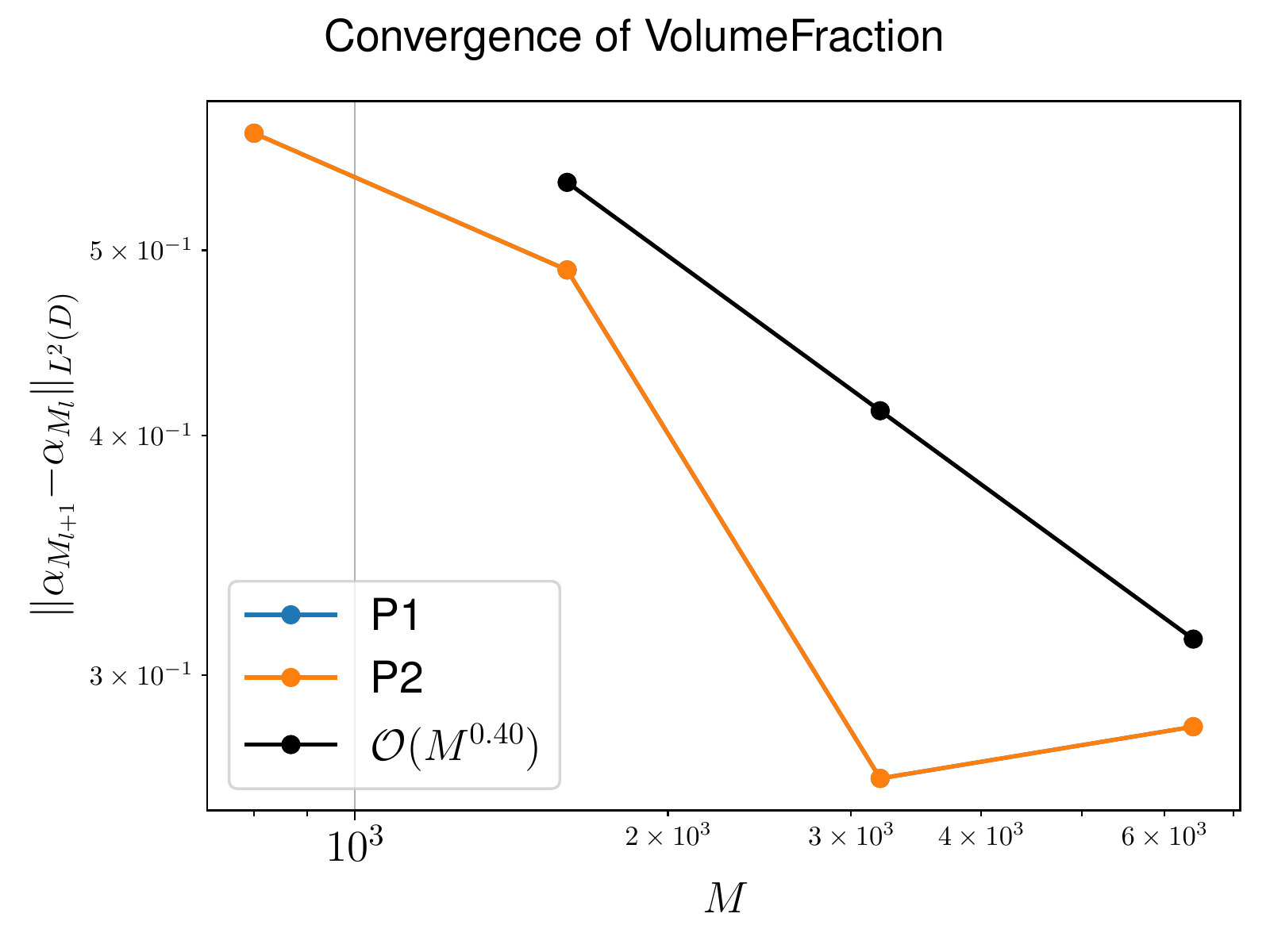}
}\,
\subfloat[Gaussian $L=100$]{
\includegraphics[scale=0.25]{\main/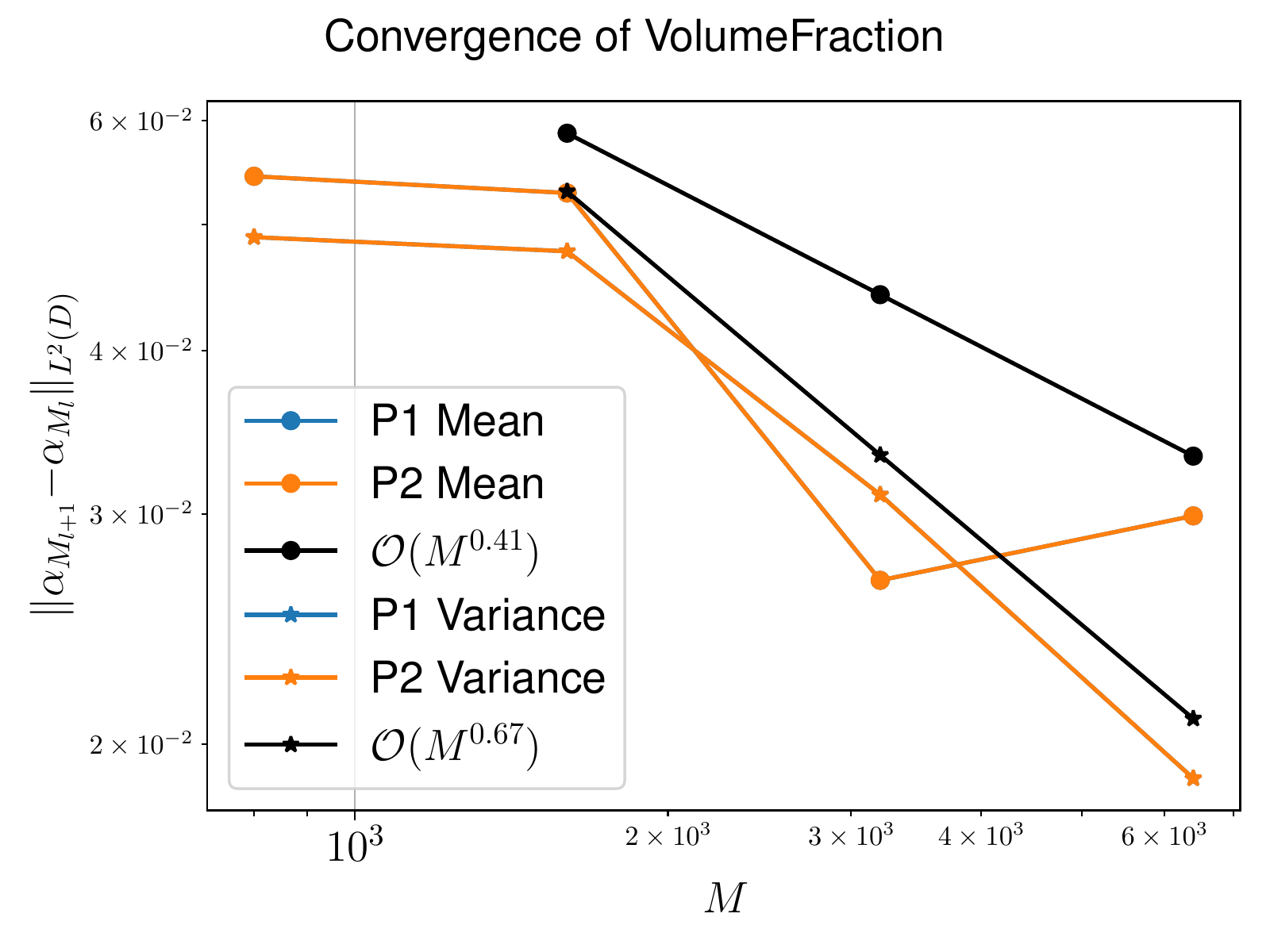}
}\,
\subfloat[Gaussian $L=1000$]{
\includegraphics[scale=0.25]{\main/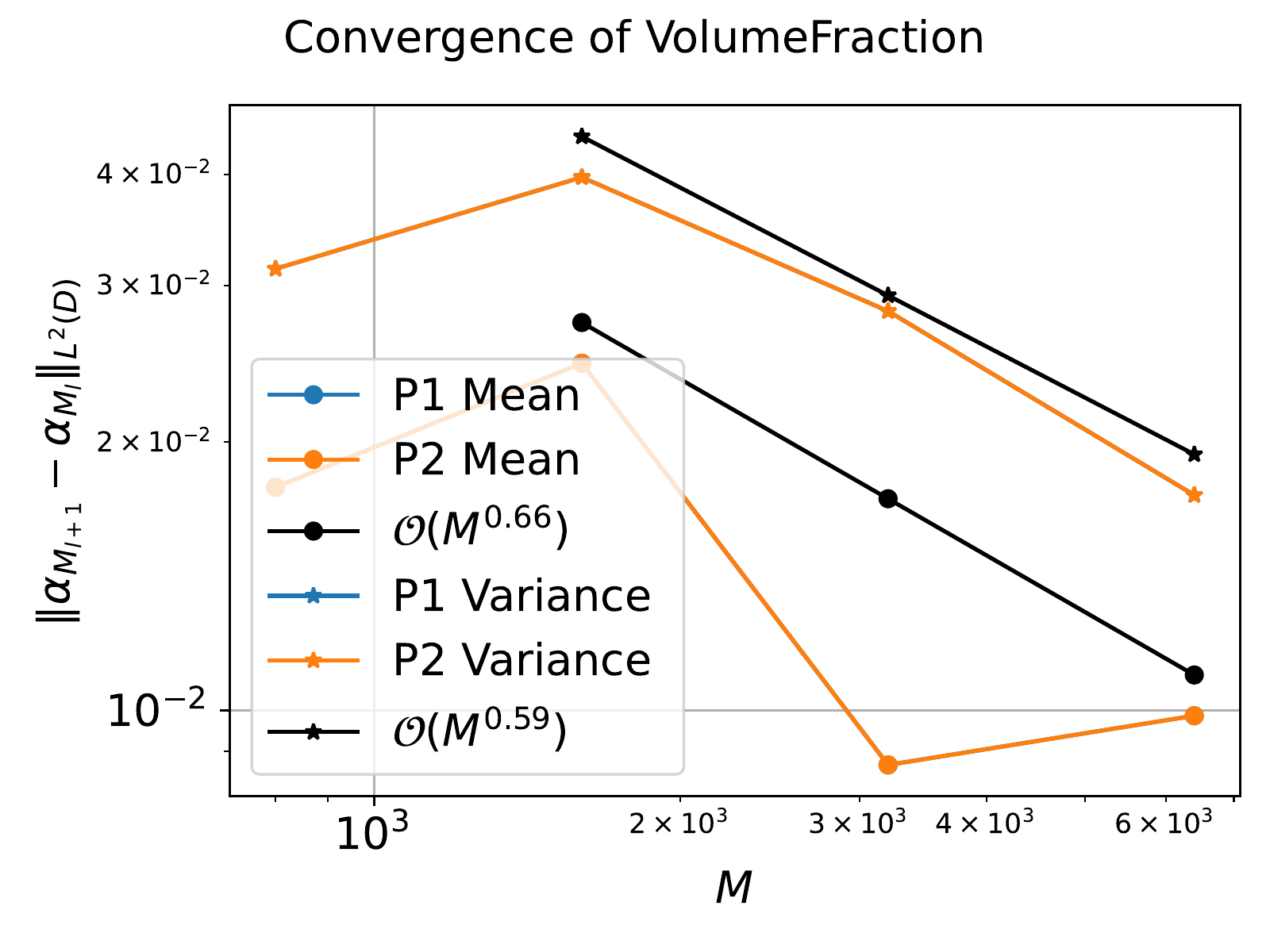}
}
\caption{Empirical convergence study for the regime-generating algorithm \ref{al:AI:MGP_gaussian} under the assumption of a kernel function of type Matern$(\frac{3}{2},\zeta)$ with $\zeta = 0.0001$.}
\label{Fig:AI:GP_conv_zeta_small}
\end{center}
\end{figure}

\begin{figure}
\begin{center}
\includegraphics[scale=0.5]{\main/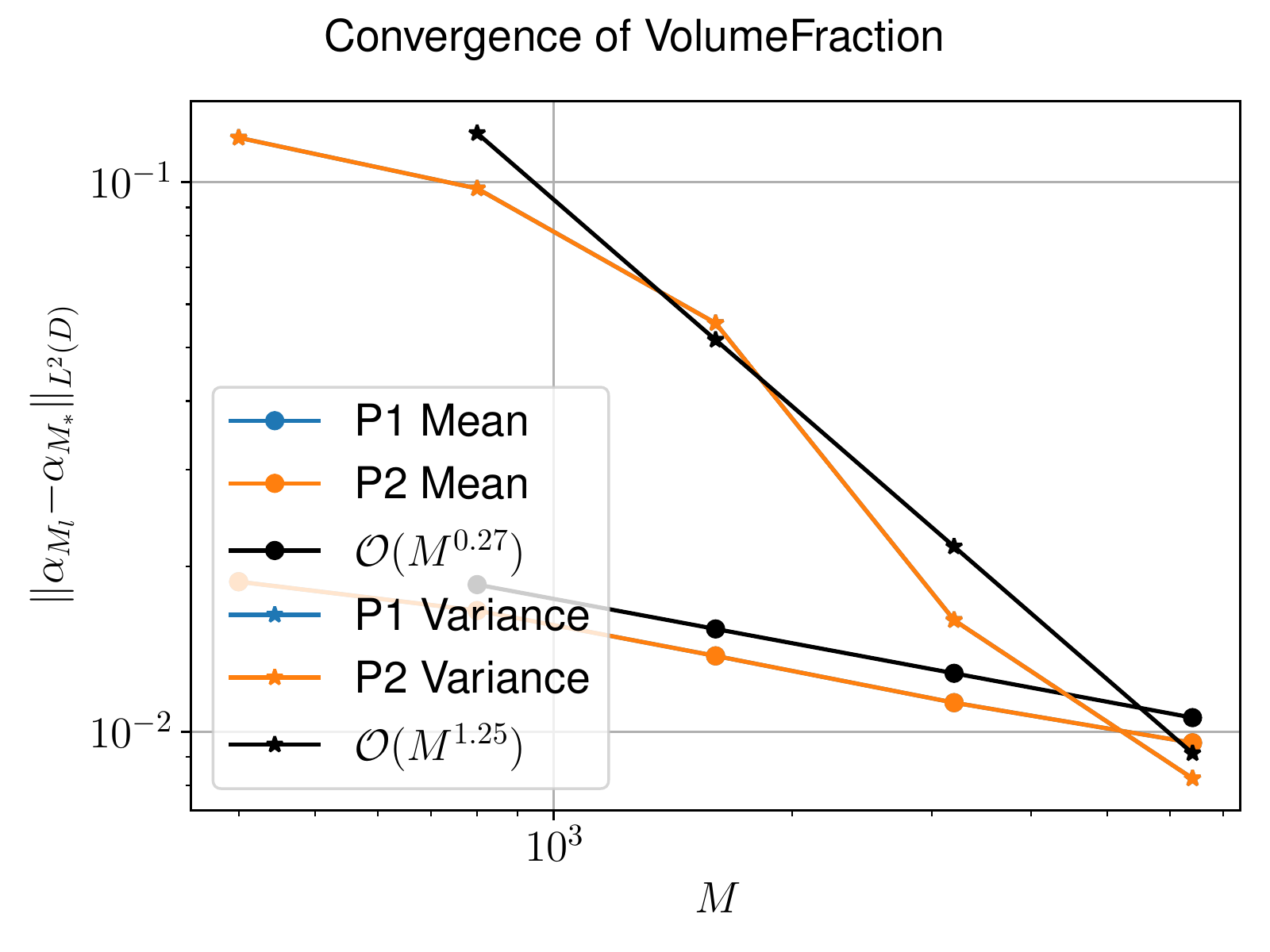}
\caption{Empirical convergence study of ensembles generated with algorithm \ref{al:AI:MGP_gaussian} (Gaussian) towards those generated using algorithm \ref{al:AI:eMGP} (Uniform).}
\label{Fig:AI:GP_towards_Uni}
\end{center}
\end{figure}

In Fig.\ref{Fig:AI:GP_conv_zeta_small} we propose a mesh convergence study for several ensemble sizes $L = 1, 100, 800$ for the volume fraction when samples are generated using a GP with Matern kernel with hyperparameters $\nu = \frac{3}{2}$ and $\zeta = 0.0001$. Finally, we plot in Fig.\ref{Fig:AI:GP_towards_Uni} convergence of computed results using a Gaussian-like distribution towards those generated by using the uniform-distribution-based algorithm.

Depicted results again suggest convergence sample-wise and for ensemble averages. 
Additionally, these latter seems to provide the same approximation that would construct by taking a uniform-based algorithm.

Notice that the above discussion is entirely performed on the test case of Section \ref{sec:AI:MC:NE:Un}, since it is well-suited to investigate the impact of the sub-discretization.
Nevertheless, due to continuity of the FT algorithm, the same conclusions could be extended to different initial conditions since the strategy employed for producing samples is independent of the physical characteristics under consideration.
For the sake of completeness we present in Fig.\ref{Fig:AI:GP:Sod_cmp} a prototypical comparison between the two strategies when applied to a non-trivial test case like the one of Sec. \ref{sec:AI:MLML:NE:Sod}, showing perfect agreement between predictions.

\begin{figure}[!htbp]
\begin{center}
\includegraphics[scale=\figsize]{\main/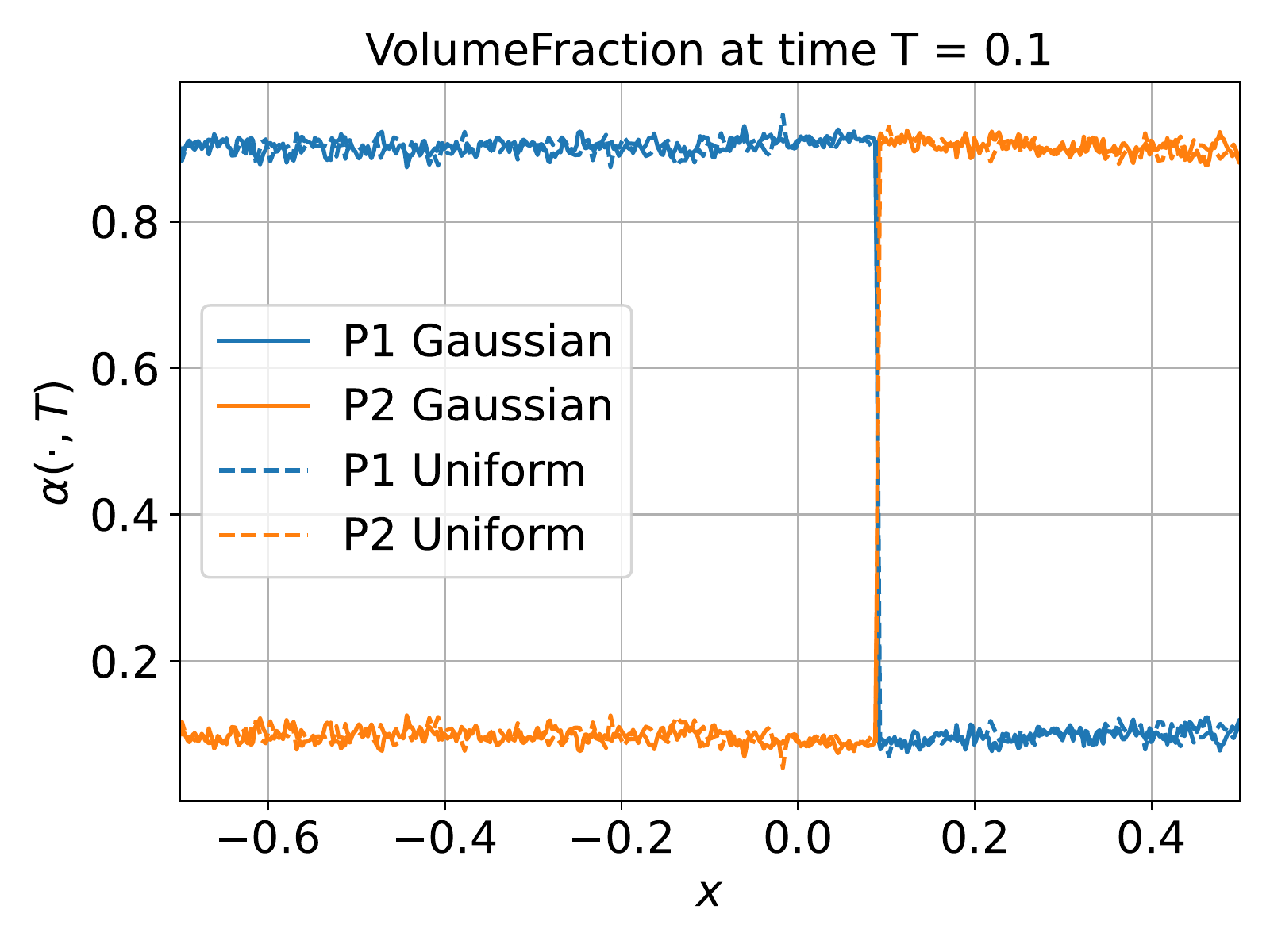}\,
\includegraphics[scale=\figsize]{\main/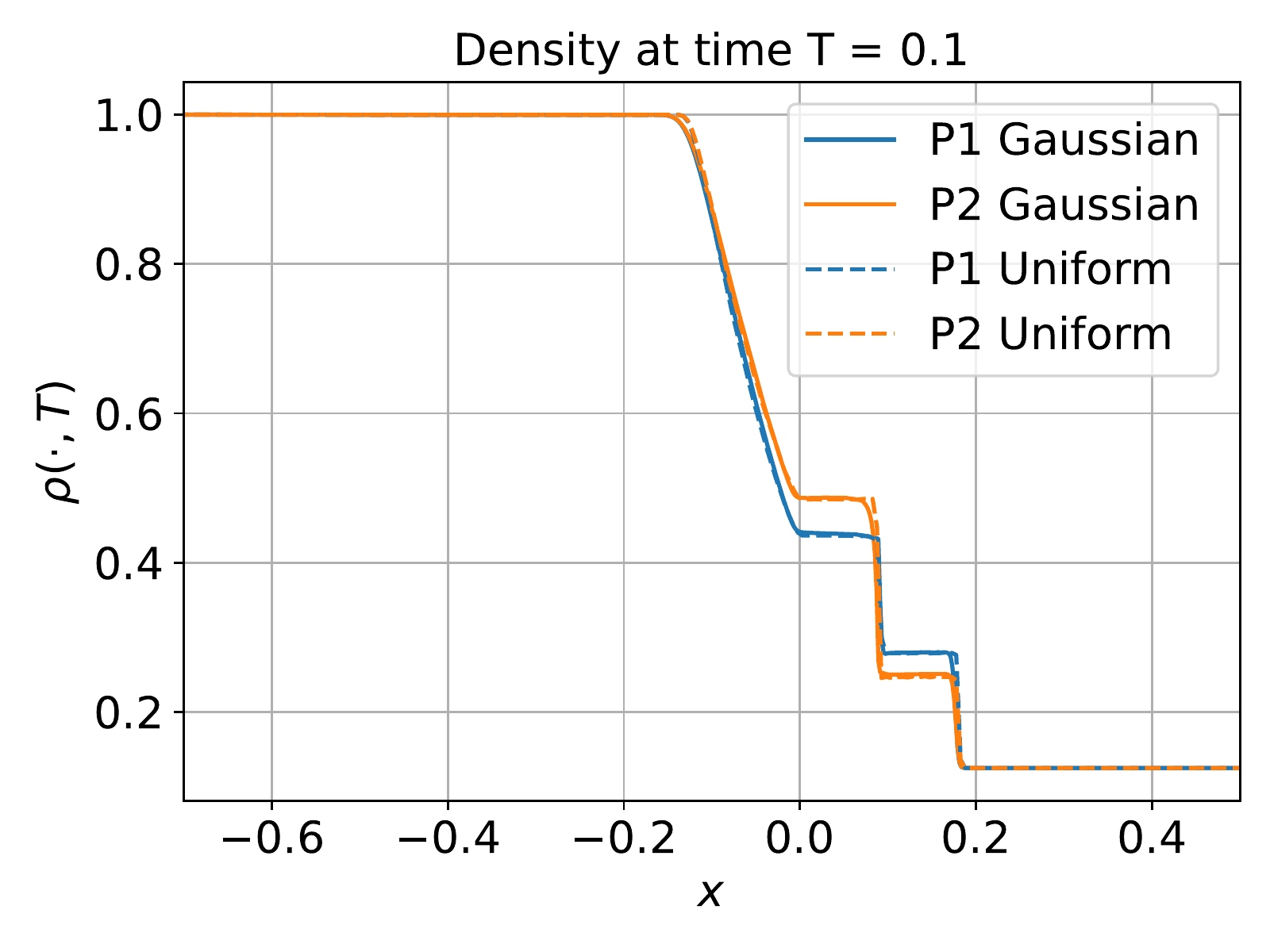}\\
\includegraphics[scale=\figsize]{\main/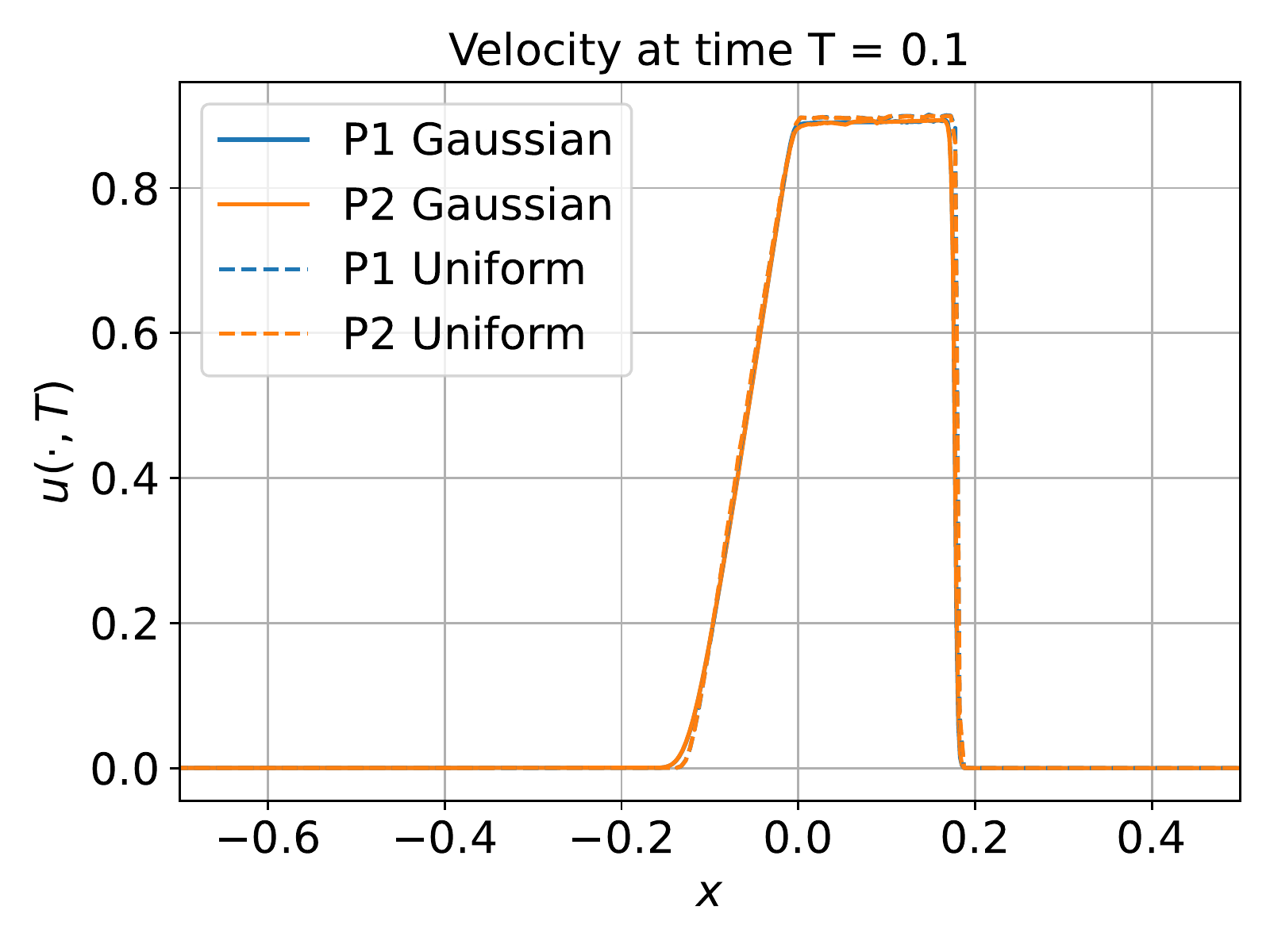}\,
\includegraphics[scale=\figsize]{\main/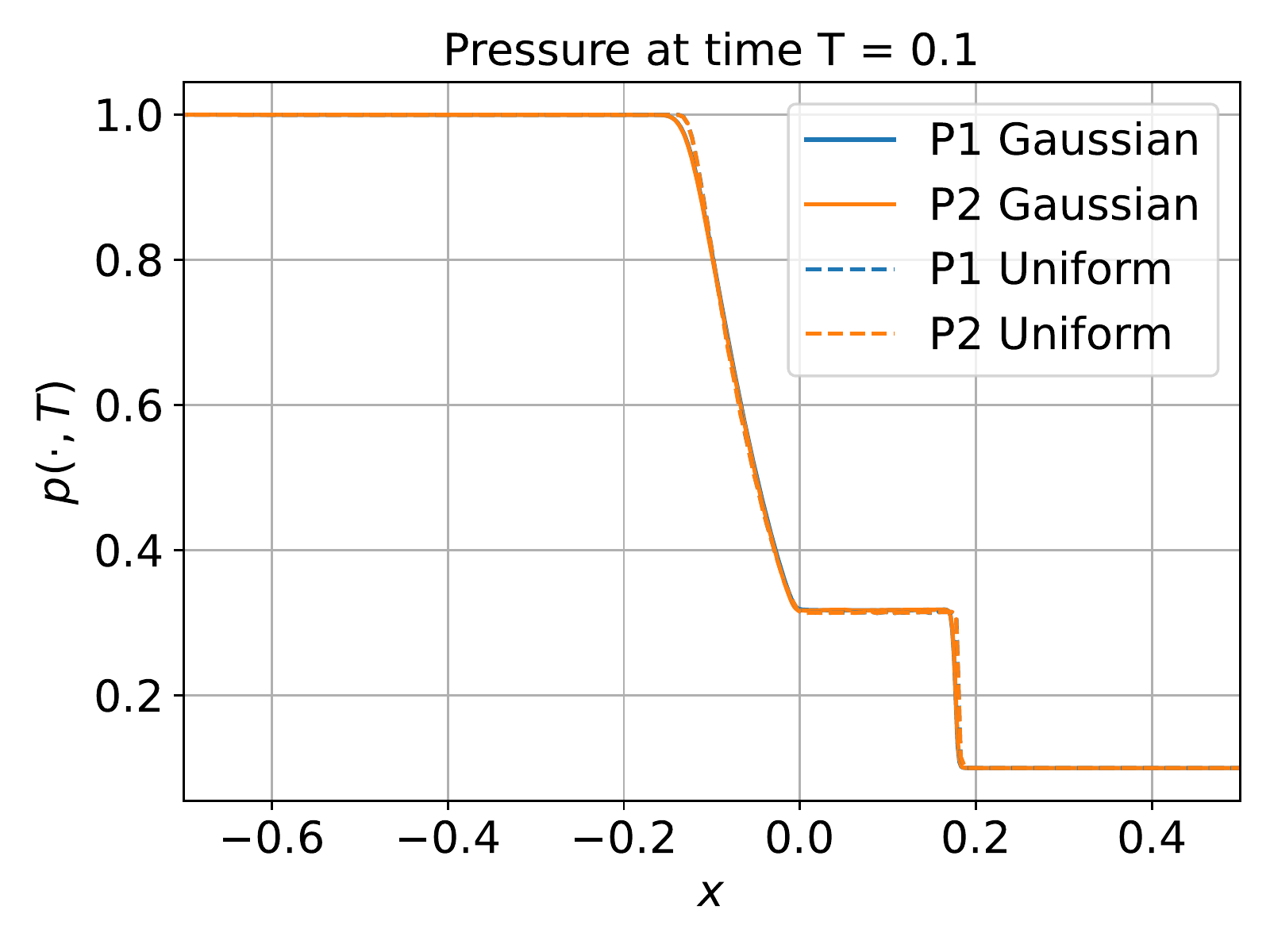}
\caption{Comparison for the test case of Sec.\ref{sec:AI:MLML:NE:Sod} using a uniform distribution and a Gaussian representation.
Results for the Gaussian distribution have been produced using $L=1024$ samples under the assumption of a Matern$(\frac{3}{2},0.005)$ kernel.}
\label{Fig:AI:GP:Sod_cmp}
\end{center}
\end{figure}

\section{Discussion}
\label{sec:disc}
Inspite of their great importance in applications, it is fair to say that a suitable mathematical framework for describing multiphase flows and an efficient numerical methodology for simulating them is still lacking. The inherent uncertainities in the dynamics of multiphase flows necessitate a statistical description. However, the task of deriving equations for the time-evolution of statistical quantities of interest runs into the formidable obstacle of the so-called \emph{closure} problem, leading to macroscopic models that are incomplete, underdetermined and/or inaccurate. These issues arise as information about the underlying microstructure is lacking at the macroscopic scale.

A promising alternative lies in the so-called \emph{discrete equation method} (DEM) \cite{Abgrall&Saurel}, where suitable numerical methods are used to recover microstructure information at the macroscopic level and a continuous description of multiphase flow results from taking a zero-resolution limit. Unfortunately and as exposed in a recent paper \cite{Petrella2022}, this approach also leads to underdetermined models, resulting in an infinite family of possible solutions. In particular, solutions depend on the choice of a parameter $r \in [0,1]$ that models the effect of underlying probability coefficients in a statistical description.  

Given these issues with existing models, we presented an alternative approach in this paper. 
Our starting point where the Eqns. \eqref{eq:Etwo-phase}, which describe the time-evolution of statistical averages for a one-dimensional two-phase flow. 
Instead of trying to model these averages in a self-consistent manner, we explore the alternative avenue of directly \emph{sampling} them using a \emph{Monte-Carlo} approach. 
The resulting algorithm \ref{al:AI:MC} was based on the following key ingredients,
\begin{itemize}
    \item The underlying probability space is sampled and the expectation over it is replaced by empirical (Monte-Carlo) averages \eqref{eq:AI:MC-estimators}. 
    \item A numerical evolution operator is then required to propagate the samples over time. We chose the sharp-interface \emph{front tracking} (see Algorithm \ref{al:AI:FT}) to evolve the samples over time, introduced in \cite{Petrella22FT}. 
    The FT algorithm has the key advantage to keeping the interface sharp and not adding numerical viscosity. Thus, it ameliorates the vexing issues of artificial mixing zones that plague several popular diffuse-interface schemes for multiphase flows. 
    \item A novel \emph{microscale generation} procedure is proposed to generate samples, at initial time, from the (given) macroscopic values of the initial averages.  
\end{itemize}
These ingredients are combined together in a novel \emph{ab-initio} algorithm for directly computing statistical averages of quantities of interest in a two-phase flow. 
To the best of our knowledge, this is the first time that such an algorithm has been proposed in the extensive literature on multiphase flows. 

We test the \emph{ab-initio} algorithm \ref{al:AI:MC} on a suite of test cases to draw the following conclusions,
\begin{itemize}
    \item The ab-initio algorithm is found to be robust at simulating two-phase flow in different underlying flow regimes and provide both the statistical mean of the quantities of interest as well as their variance (and higher moments) which allows one to infer possible uncertainties, implicit in the flow description. 
    \item The algorithm is empirically shown to converge as the number of samples, as well as the number of bubbles within each volume, are increased. Such convergence has been observed to fail when the macroscopic resolution is finer than the microscopic one, due to an inconsistent inversion of the inherent hierarchy of scales take under consideration. 
    In particular, sample-wise convergence under sub-scale refinement is happening only once the  micro-scale gets submerged in the macroscopic one.
    \item We compare the results of the ab-initio algorithm to those generated by a macroscopic DEM scheme of \cite{Petrella2022} to find that as long as the macroscopic DEM scheme leads to a unique solution (for instance, the extreme values of $r=0,1$ lead to the same flow), then it \emph{coincides} with the mean of results generated using the ab-initio method. 
    This indicates conditional uniqueness of the macroscopic models: if variations of the hyperparameter $r\in[0,1]$ do not associate to different results, then the DEM is in fact computing the mean of the ab-initio method. 
   In turn, one concludes that the ab-initio algorithm is a generalization of the macroscopic approach. 
   \item However, in the more generic situations where there is no uniqueness in the macroscopic results (for instance, choosing $r=0$ and $r=1$ in the DEM scheme of \cite{Petrella2022} leads to different solutions), we find significant discrepancies (in the range of $10\%$) in both the values of the intermediate states as well as in locations of shocks, between the ab-initio and macroscopic results. 
   In such cases, it is clear that the macroscopic schemes are unable to recover the ground truth ab-initio flow by just varying the parameter $r$.
   \item Carefully designed test cases have shown how much of the discrepancy resides in the speed at which the two phases are driven towards equilibrium. Such a conclusion represent a concerning bottleneck for the accurate simulation of two-phase flow phenomena as it makes imperative to track over space and time the relative number of interfaces per resolution-volume. Such information is, in many practical situations, difficult to establish, if at all possible, and the ab-initio method presents an opportunity to produce synthetic approximations of it.
\end{itemize}
Summarizing, the proposed ab-initio algorithm provides a viable as well as robust framework for simulating two-phase flows in one space dimension. Given that this algorithm does not require any \emph{closure} assumptions and the microstructure information is implicitly included, such ab-initio simultions can serve as the ground truth for designing macroscopic models for multiphase flows. 

From a practical point of view, the methodology is computationally intensive, with the larger weight being associated to the FT algorithm. Such computational cost has been observed to increase as the sub-scale is refined, due to the larger amount of details needed to be resolved.
In addition, the extension of such an approach to multi-dimensions seems to be hard. These are part of the reasons why it is authors' belief that future work should focus on improving the numerical evolution operator efficiency. Similarly, enhancing convergence with respect to the number of samples could lead to improvements of the overall methodology for practical use.

\begin{figure}
\begin{center}
\begin{tikzpicture}[node distance=2cm]
		\node (p1) {
%		$\alpha_0^{(k)}\textbf{U}_0^{(k)}$		
		\begin{adjustbox}{max totalsize={.2\textwidth}{.5\textheight},center}
		\begin{tikzpicture}
		\pgfmathsetmacro{\Dx}{7};
		\pgfmathsetmacro{\hei}{4};
		\pgfmathsetmacro{\top}{1};
		\pgfmathsetmacro{\sp}{1};		
		
		\pgfmathsetmacro{\xi}{0};
		\pgfmathsetmacro{\xL}{\xi -\Dx};
		\pgfmathsetmacro{\xLL}{\xL - \sp};
		\pgfmathsetmacro{\xR}{\xi + \Dx};
		\pgfmathsetmacro{\xRR}{\xR + \sp};
		
		\tkzDefPoint(\xLL, 0){xim};
		\tkzLabelPoint[below, xshift = 0.5cm](xim){$x_{i-\frac{1}{2}}$};
		\tkzDefPoint(\xRR, 0){xip};
		\tkzLabelPoint[below, xshift = -0.5cm](xip){$x_{i+\frac{1}{2}}$};
		\tkzDefPoint(\xi, 0){xi};
		\tkzLabelPoint[below](xi){$x_{i}$};
		
		\draw [thick] (xim) -- (xip);
		\draw [thick] (\xL, -\top) -- (\xL, \hei + \top);
		\draw [thick] (\xR, -\top) -- (\xR, \hei + \top);		
				
		%Rectangles
		\pgfmathsetmacro{\N}{12};
		\pgfmathsetmacro{\D}{2*\Dx/\N};
		
		\foreach \Dn/\ind [evaluate=\Dn as \ss using \Dn + \s,
		remember=\ss as \s (initially 0)] in {2.3/1,1.2/2,1.2/1,2.3/2,1.2/1,3.4/2,1.2/1,1.2/2}
		{	
			\ifthenelse{\ind=1}{
				\draw [fill=blue!20] (\xL + \s,0) rectangle (\xL+\ss,\hei);	
				\node [color=blue!50!black] at (\xL + \s + 0.5*\Dn,0.5*\hei){$U^{(1)}_i$};
			}{
				\draw [fill=green!20] (\xL + \s,0) rectangle (\xL+\ss,\hei);
				\node [color=green!50!black] at (\xL + \s + 0.5*\Dn,0.5*\hei){$U^{(2)}_i$};
			};			
		}
		\end{tikzpicture}
\end{adjustbox}		
		};
		\node[right of =p1, yshift = -2cm, xshift=2cm] (p2) {$\mathbb{E}\left[\textbf{U}_0^{(k)}\right]$}
			edge[pre] node[auto, above] {$\mathbb{E}$} (p1);
			
		\node[below of =p2, yshift = -1cm, xshift = -1cm] (p4) {$S_T\Bigg[\mathbb{E}\left[\textbf{U}_0^{(k)}\right]\Bigg]$}
		edge[pre] node[auto, right] {$S_T^\Delta, \, \Delta\rightarrow 0$} (p2);
	
		\path[->] (p1) edge[bend left, color=red] node[auto, color=red, left] {DEM} (p4); 
		
		\node[left of = p1, yshift = -2cm, xshift=-2cm] (p3) {$S_T\left[ \textbf{U}_0^{(k)} \right]$}
			edge[pre] node[auto] {$S_T^\Delta, \, \Delta\rightarrow 0$} (p1);
			
		\node[below of =p3, yshift = -1cm, xshift = 1cm] (p5) {$\mathbb{E}\Bigg[S_T\left[\textbf{U}_0^{(k)}\right]\Bigg]$}
		edge[pre] node[auto, left] {$\mathbb{E}_M, \, M\rightarrow \infty$} (p3);
	
		\path[->] (p1) edge[bend right, color=red] node[auto, color=red] {Ab-initio} (p5);

		\path[<->] (p4) edge[dashed, color=red] node[auto, color=red] {$?$} (p5);	
\end{tikzpicture}
\end{center}
\caption{Schematic representation of the road-map employed by the DEM and the Ab-initio schemes.}\label{fig:schemes_discrepancy}
\end{figure}
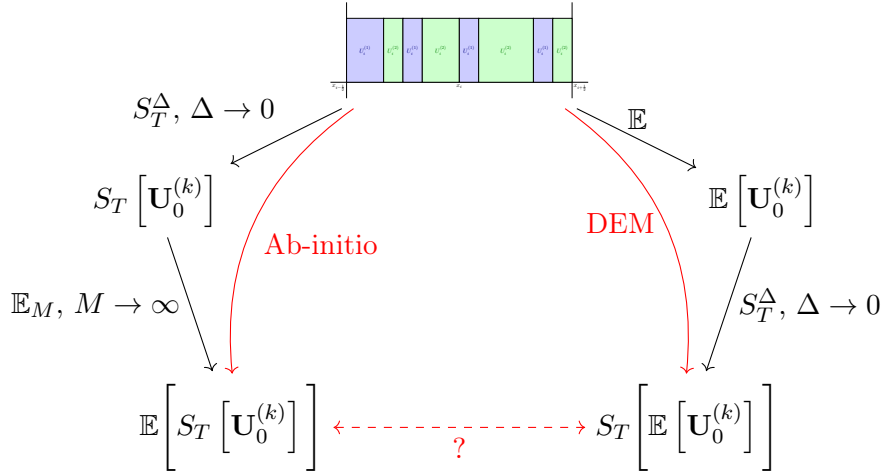
Finally, we would like to make comments from a higher point of view. 
The hereby collected evidences point into the direction of refining the approach for the relaxation step of macroscopic schemes: due to a finite number of interfaces involved in the ab-initio simulations, physical quantities of interest relax towards equilibrium at a significantly slower speed as compared to the one provided by macroscopic algorithms. 
This to say that letting the $r$-parameter vary is not enough to correctly capture quantities of interest.\\
A conspicuous amount of theories dealing with ensemble modeling of two-phase flow rely on the assumption that the evolution operator and the mean one commute, under reasonable assumptions, in analogy to procedures employed for turbulent flow simulations.
A prototypical example of such view point is the DEM: one carefully designs an evolution scheme for the mean operator and aim at capturing averages in the limit of vanishing resolution. 
At the practical level, such strategy \emph{virtually} construct the mean across all possible regimes, inherently assuming commutation of the operators involved.\\
Conversely, the ab-initio approach approximates the moments of evolved data, by \emph{concretely} constructing (approximations of) the microscale regime.\\
In this work we showed that for some test cases, the two road-maps (see Fig. \ref{fig:schemes_discrepancy}) lead to the same predictions, thus underlying two points
\begin{itemize}
\item one has constructed numerical evidences that, under specific physical conditions, the relaxation procedure of the DEM acts correctly. 
This is non-trivial to establish, as the relative number of interfaces is typically hard to compute, while being responsible for the speed at which the two-phases relax towards equilibrium;
\item from the mathematical point of view, one found test cases where commutation between such operators is indeed happening.
\item For such test cases where commutation is happening, one understands DEM predictions as an approximation of the first moment of solutions generated by the ab-initio.
\end{itemize}
However, in this work we showed that such commutation is not achieved in general, and solutions converge to different models, posing the question of what are we computing with the DEM.\\
The gap between the two methodologies seems to be possibly eliminated by tracking the correct number of interface per volume of resolution, which defines a time-parametrized distribution over the domain of interest.
Such an insight, suggests that the DEM is in fact lacking information about the time variation of the underlying law associated to the regime.
In turn, one should target a measure over a functions rather than just statistical moments of interest. In this light, it is authors belief that the ab-initio simulation seems to provide a much more fundamental information as compared to $r$-constant-valued mesoscopic models, since makes almost no assumption about the driving forces that shape solutions.\\
This change in perspective present a many similarities to the recent outbreak of new solutions paradigma for systems of hyperbolic conservation laws \cite{Fjordholm2017,fjordholm17}.
Interestingly, such solutions concepts have been observed to be stable under variations of the underlying distribution \cite{LyePhD}, as it was observed hereby.

\section{Conclusions}
\label{sec:concl}

The inherent uncertainties in the dynamics of multiphase flow necessitate a statistical description, which, when approached in the continuous-theory framework, has been proven to lead to unclosed models.
Hereby we develop the \emph{ab-initio} method for the direct simulation of averaged equations as a blend of the Front-Tracking (FT) method and the Monte-Carlo (MC) sampling.
To the best of our knowledge, this constitutes the first direct simulation of two-phase flow in compressible character for two fluids.

The methodology is composed of three essential ingredients: a regime generating procedure, a numerical evolution operator and a statistical operator.
The ab-initio algorithm is found to be robust at simulating two-phase flows in different underlying flow regimes and provides both the statistical mean of the quantities of interest as well as their variance (and higher moments), allowing one to infer possible uncertainties implicit in the flow description. The algorithm is shown to converge as the number of samples and the number of bubbles within each volume are increased, but convergence fails when the macroscopic resolution is finer than the microscopic one, due to an inconsistent inversion of the inherent hierarchy of scales taken under consideration.

The article compares the results of the ab-initio algorithm to those generated by a macroscopic DEM scheme and finds that as long as the latter leads to a unique solution, then it coincides with the mean of results generated using the ab-initio method. 
In turn, one concludes that the ab-initio algorithm constitutes a generalization of the macroscopic approach. 
However, in more generic situations where there is no uniqueness in the macroscopic results, significant discrepancies are found between the ab-initio and macroscopic results. 
Carefully designed test cases show that much of the discrepancy resides in the speed at which the two phases are driven towards equilibrium, representing a concerning bottleneck for the accurate simulation of two-phase flows.

In fact, the relaxation procedure employed by the DEM assumes a fixed (infinite) rate for the relaxation parameters, without any apparent justification.
The convergence of the volume fraction obtained on all test cases considered in fact implies a convergence of the average number of interfaces per volume, thus resulting in a finite rate relaxation.

In addition, the ab-initio method allows for the synthetic approximation of such hyperparameters, which could be used to enhance macroscopic models predictions.
Such a task seems to be viable in the context of data driven algorithms, with the primary example of Machine-Learning framework \cite{BishopPR}.

Interestingly, we find that the ab-initio method produces stable and converging results, which are, in addition, stable with respect to variations of the underlying distribution.

Conceptually, this work shows how it is imperative to track over space and time the distribution of the relative number of interfaces in order to properly simulate two-phase flow phenomena, where microscale effects play an essential role in shaping solutions and granting convergence.

These as well many other fundamental questions can be further investigated by means of the newly developed ab-initio framework, which, in its current formulation, already defines a flexible generalization of any known two-phase flow continuous model.

\appendix

\section{About the scheme of \cite{Petrella2022}}
\label{app:r-model}

In \cite{Petrella2022} it was provided a one-parameter family of schemes for multiphase flows, as a generalization of the classical Godunov scheme.\\
This scheme take the form

\begin{equation}
\label{semi-discrete_form}
\frac{d \left(\alpha^{(k)}_i\tilde{\textbf{U}}^{(k)}_i\right)}{dt} +
\frac{\mathbb{E}_{i+\frac{1}{2}}\left[X^{(k)}\tilde{\textbf{F}}\right]
-\mathbb{E}_{i-\frac{1}{2}}\left[X^{(k)}\tilde{\textbf{F}}\right]}{\Delta x} 
= 
\frac{\mathbb{E}_{boundary}\left[\tilde{\textbf{F}}^{lag}\right]_i}{\Delta x} 
+ 
\mathbb{E}_{relax}\left[\tilde{\textbf{F}}^{lag}\right]_i
\end{equation}
where
\begin{itemize}
\item the time-dependent vector of (extended) conserved variables $\tilde{\textbf{U}}^{(k)}_i$ and associated flux $\tilde{\textbf{F}}^{(k)}$ for phase $k\in\lbrace 1,2\rbrace$ at space location $x=x_{i}$ read
\[
\tilde{\textbf{U}}^{(k)}_i = 
\begin{bmatrix}
1\\
\rho^{(k)}\\
\rho^{(k)}u^{(k)}\\
\rho^{(k)}E^{(k)}
\end{bmatrix}_i
\qquad
\tilde{\textbf{F}}^{(k)}_i =
\begin{bmatrix}
0\\
\rho^{(k)}u^{(k)}\\
\rho^{(k)}{u^{(k)}}^2 + p^{(k)}\\
u^{(k)}\left(\rho^{(k)}E^{(k)} + p^{(k)}\right)\\
\end{bmatrix}_i
\quad
E^{(k)} = \frac{1}{2}{u^{(k)}}^2 + e^{(k)}(\rho^{(k)},p^{(k)})
\] 
\item the $i+\frac{1}{2}$-th flux-indicator $\beta_{i+\frac{1}{2}}^{(p,q)}$ reads
\begin{equation*}
\beta_{i+\frac{1}{2}}^{(p,q)} := \mathrm{sign}\left(\sigma_{i+\frac{1}{2}}\left(\tilde{\textbf{U}}^{(p)}_{i},\tilde{\textbf{U}}^{(q)}_{i+1}\right)\right)
= 
\begin{cases}
1 & \sigma_{i+\frac{1}{2}}\left(\tilde{\textbf{U}}^{(p)}_{i},\tilde{\textbf{U}}^{(q)}_{i+1}\right) > 0\\
-1 & \sigma_{i+\frac{1}{2}}\left(\tilde{\textbf{U}}^{(p)}_{i},\tilde{\textbf{U}}^{(q)}_{i+1}\right) < 0
\end{cases}
\end{equation*}
where $\sigma_{i+\frac{1}{2}}\left(\tilde{\textbf{U}}^{(p)}_{i},\tilde{\textbf{U}}^{(q)}_{i+1}\right)$ denotes the interface velocity between $\tilde{\textbf{U}}^{(p)}_{i}$ and $\tilde{\textbf{U}}^{(q)}_{i+1}$.
\item the $\pm$-notation denotes
\[
a^+ := \max(a,0),
\qquad
a^- := \min(a,0)
\]
\item the $i+\frac{1}{2}$-th $r$-dependent probability coefficients $\mathbb{P}_{i+\frac{1}{2}}\left[\Sigma_k,\Sigma_l\right]$ of finding phase $k$ on the left and phase $l$ on the right of the cell-interface at $x=x_{i+\frac{1}{2}}$ reads
\begin{subequations}
\label{eq:Convex}
\begin{align}
\mathbb{P}_{i+\frac{1}{2}}\left[\Sigma_p, \Sigma_p\right] &= r \max\left(\alpha_{i}^p-\alpha_{i+1}^q,0\right) + (1-r)\min\left(\alpha_{i}^p,\alpha_{i+1}^p\right)\label{Convex-pp}\\
\mathbb{P}_{i+\frac{1}{2}}\left[\Sigma_p, \Sigma_q\right] &= r\min\left(\alpha_{i}^p,\alpha_{i+1}^q\right) + (1-r)\max\left(\alpha_{i}^p-\alpha_{i+1}^p,0\right)\label{Convex-pq}
\end{align}
\end{subequations}
for some hyperparameter $r = r_{i+\frac{1}{2}}(t)\in [0,1]$.
\item the $i+\frac{1}{2}$ contribution of the average flux reads
\begin{align*}
\mathbb{E}_{i+\frac{1}{2}}\left[X^{(k)}\tilde{\textbf{F}}\right]& 
:= \mathbb{P}_{i+\frac{1}{2}}\left[\Sigma_k,\Sigma_k\right]
\tilde{\textbf{F}}\left(\tilde{\textbf{U}}^{(k)}_{i}, \tilde{\textbf{U}}^{(k)}_{i+1}\right) 
+ 
\left(\beta_{i+\frac{1}{2}}^{(k,l)}\right)^+ \mathbb{P}_{i+\frac{1}{2}}\left[\Sigma_k,\Sigma_l\right] \tilde{\textbf{F}}\left(\tilde{\textbf{U}}^{(k)}_{i},\tilde{\textbf{U}}^{(l)}_{i+1}\right)\\
&\qquad  + \left(-\beta_{i+\frac{1}{2}}^{(l,k)}\right)^+ \mathbb{P}_{i+\frac{1}{2}}\left[\Sigma_l,\Sigma_k\right]\tilde{\textbf{F}}\left(\tilde{\textbf{U}}^{(l)}_{i},\tilde{\textbf{U}}^{(k)}_{i+1}\right)
\end{align*}
\item the $i$-th contribution of the Lagrangian fluxes coming from the boundary of each volume reads
\begin{align*}
\mathbb{E}_{boundary}\left[\tilde{\textbf{F}}^{lag}\right]_i& 
:= 
\left(\beta_{i-\frac{1}{2}}^{(l,k)}\right)^+ \mathbb{P}_{i-\frac{1}{2}}\left[\Sigma_l, \Sigma_k\right] \tilde{\textbf{F}}^{lag}\left(\tilde{\textbf{U}}^{(l)}_{i-1},\tilde{\textbf{U}}^{(k)}_{i}\right) \\
-\left(\beta_{i-\frac{1}{2}}^{(k,l)}\right)^+ & \mathbb{P}_{i-\frac{1}{2}}\left[\Sigma_k, \Sigma_l\right] \tilde{\textbf{F}}^{lag}\left(\tilde{\textbf{U}}^{(k)}_{i-1},\tilde{\textbf{U}}^{(l)}_{i}\right)
+ \left(-\beta_{i+\frac{1}{2}}^{(l,k)}\right)^+ \mathbb{P}_{i+\frac{1}{2}}\left[\Sigma_l, \Sigma_k\right] \tilde{\textbf{F}}^{lag}\left(\tilde{\textbf{U}}^{(l)}_{i},\tilde{\textbf{U}}^{(k)}_{i+1}\right)\\
& -\left(-\beta_{i+\frac{1}{2}}^{(k,l)}\right)^+ \mathbb{P}_{i+\frac{1}{2}}\left[\Sigma_k, \Sigma_l\right] \tilde{\textbf{F}}^{lag}\left(\tilde{\textbf{U}}^{(k)}_{i},\tilde{\textbf{U}}^{(l)}_{i+1}\right)
\end{align*}
\item the $i$-th contribution of the internal Lagrangian fluxes reads
\begin{align*}
\mathbb{E}_{relax}\left[\tilde{\textbf{F}}^{lag}\right]_i& := \mathbb{E}\left[\frac{N_{int}(\omega)}{\Delta x}\right]\left( \tilde{\textbf{F}}^{lag}(\tilde{\textbf{U}}^{(l)}_i,\tilde{\textbf{U}}^{(k)}_i)-\tilde{\textbf{F}}^{lag}(\tilde{\textbf{U}}^{(k)}_i,\tilde{\textbf{U}}^{(l)}_i)\right)\\
\end{align*}
\end{itemize}

The probability coefficients (\ref{eq:Convex}) were found by assuming the following relations to hold true
\begin{subequations}
\label{ConsistencyConds_ansatz}
\begin{align}
\mathbb{P}_{i+\frac{1}{2}}\left[\Sigma_p, \Sigma_p\right] + \mathbb{P}_{i+\frac{1}{2}}\left[\Sigma_p, \Sigma_q\right] &= \alpha_i^{(p)}(t)\\
\mathbb{P}_{i+\frac{1}{2}}\left[\Sigma_p, \Sigma_p\right] + \mathbb{P}_{i+\frac{1}{2}}\left[\Sigma_q, \Sigma_p\right] &= \alpha_{i+1}^{(p)}(t)
\end{align}
\end{subequations}
for any cell index $i$, and any phase indexes $p,q\in\lbrace 1,2\rbrace$.\\
Such assumption essentially tells that the probability of finding phase $p$ at the left of the interface $x=x_{i+\frac{1}{2}}$ coincides with the volume fraction $\alpha^{(p)}_i$ on the same side. Similarly, for the right side.\\

Such a relation turns out also to automatically verify the so called Abgrall criterion: phases under uniform mechanical conditions (i.e. moving with a unique velocity and a unique pressure), will evolve preserving the same conditions.\\
In order to show such result, we first make a fundamental assumption on the Riemann Solver under use.
As a numerical procedure, scheme (\ref{semi-discrete_form}) makes use of a Riemann Solver \cite{ToroRS}, whose employment has become routine in fluid-dynamics simulations.
Hereby, we assume that the solver under use to concretely build scheme (\ref{semi-discrete_form}) admits the following decomposition
\begin{subequations}
\label{eq:RS-decomposition}
\begin{align}
& \textbf{F}\left(\textbf{V}, \textbf{W}\right) = u\left(\textbf{V}, \textbf{W}\right) \textbf{U}\left(\textbf{V}, \textbf{W}\right) + p\left(\textbf{V}, \textbf{W}\right)\textbf{D}\left(\textbf{V}, \textbf{W}\right), \qquad \textbf{D}\left(\textbf{V}, \textbf{W}\right) = \begin{bmatrix}
0\\
1\\
u\left(\textbf{V}, \textbf{W}\right)
\end{bmatrix}\\
&
\textit{ and }
\qquad
u\left(
\begin{bmatrix}
\rho_L\\
u\\
p_L
\end{bmatrix}
,
\begin{bmatrix}
\rho_L\\
u\\
p_R
\end{bmatrix}
\right)
=u,
\qquad
p\left(
\begin{bmatrix}
\rho_L\\
u_L\\
p
\end{bmatrix}
,
\begin{bmatrix}
\rho_L\\
u_R\\
p
\end{bmatrix}
\right)
=p
\end{align}
\end{subequations}
where $u\left(\textbf{V}, \textbf{W}\right)$, $\textbf{U}\left(\textbf{V}, \textbf{W}\right)$ and $p\left(\textbf{V}, \textbf{W}\right)$ denote the velocity, the vector of unknowns and the pressure, respectively, resulting from the resolution of the RP between the initial states $\textbf{V}$ and $\textbf{W}$ evaluated at the same sampling point where the flux is. 
The decomposition (\ref{eq:RS-decomposition}) is a well-known property of the physical flux, as well as of several RS of common use, see \cite{ToroRS}.\\

Under the assumption (\ref{eq:RS-decomposition}), 
and the uniform conditions 
\begin{equation}
\label{eq:uniform_conditions}
u^{(k)}_i = u>0 
\qquad
p^{(k)}_i = p
\end{equation}
one gets that
\[
\tilde{\textbf{F}}^{lag}
\left(\textbf{V},\textbf{W}\right) 
= 
\textbf{F}\left(\textbf{V},\textbf{W}\right) 
- 
\sigma\left(\textbf{V},\textbf{W}\right)
\textbf{U}\left(\textbf{V},\textbf{W}\right)
 =
 \begin{bmatrix}
 -u\\
  p\textbf{D}
\end{bmatrix}  
,
 \qquad
 \textbf{D} = 
 \begin{bmatrix}
 0\\
 1\\
 u
 \end{bmatrix}
\]
so that
\begin{align*}
\tilde{\textbf{F}}^{lag} \left(\tilde{\textbf{U}}^{(l)}_i,\tilde{\textbf{U}}^{(k)}_{i}\right)
-
\tilde{\textbf{F}}^{lag}\left(\tilde{\textbf{U}}^{(k)}_i,\tilde{\textbf{U}}^{(l)}_i\right) = 
\begin{bmatrix}
 -u\\
  p\textbf{D}
\end{bmatrix}
-
\begin{bmatrix}
 -u\\
  p\textbf{D}
\end{bmatrix}
=
\textbf{0}
\end{align*}
and the relaxation terms vanishes. 
Moreover the Lagrangian fluxes coming from the boundary then reads
\begin{align*}
\mathbb{E}_{boundary}\left[\tilde{\textbf{F}}^{lag}\right]_i 
&= 
\mathbb{P}_{i-\frac{1}{2}}\left[\Sigma_l,\Sigma_k\right]
\tilde{\textbf{F}}^{lag}
\left(
\tilde{\textbf{U}}^{(l)}_{i-1},\tilde{\textbf{U}}^{(k)}_{i}
\right) 
- 
\mathbb{P}_{i-\frac{1}{2}}\left[\Sigma_k,\Sigma_l\right]
\tilde{\textbf{F}}^{lag}
\left(
\tilde{\textbf{U}}^{(k)}_{i-1},\tilde{\textbf{U}}^{(l)}_{i}
\right)\\
&=
\left(
\mathbb{P}_{i-\frac{1}{2}}\left[\Sigma_l,\Sigma_k\right] - \mathbb{P}_{i-\frac{1}{2}}\left[\Sigma_k,\Sigma_l\right]
\right)
\begin{bmatrix}
-u\\
p\textbf{D}
\end{bmatrix}
\end{align*}
The $i+\frac{1}{2}$-th contribution of the flux instead reads
\begin{align*}
\mathbb{E}_{i+\frac{1}{2}}\left[X^{(k)}\tilde{\textbf{F}}\right]
& 
:= 
\mathbb{P}_{i+\frac{1}{2}}\left[\Sigma_k,\Sigma_k\right]
\tilde{\textbf{F}}\left(\tilde{\textbf{U}}^{(k)}_{i}, \tilde{\textbf{U}}^{(k)}_{i+1}\right) 
+ 
\mathbb{P}_{i+\frac{1}{2}}\left[\Sigma_k,\Sigma_l\right] \tilde{\textbf{F}}\left(\tilde{\textbf{U}}^{(k)}_{i},\tilde{\textbf{U}}^{(l)}_{i+1}\right)\\
&=
u
\begin{bmatrix}
0\\
\mathbb{P}_{i+\frac{1}{2}}\left[\Sigma_k,\Sigma_k\right]
\textbf{U}\left(\textbf{U}^{(k)}_{i}, 
\textbf{U}^{(k)}_{i+1}\right) 
+ 
\mathbb{P}_{i+\frac{1}{2}}\left[\Sigma_k,\Sigma_l\right] 
\textbf{U}\left(\textbf{U}^{(k)}_{i},
\textbf{U}^{(l)}_{i+1}\right)
\end{bmatrix}
\\
&
\qquad
+ 
\left(
\mathbb{P}_{i+\frac{1}{2}}\left[\Sigma_k,\Sigma_k\right] 
+
\mathbb{P}_{i+\frac{1}{2}}\left[\Sigma_k,\Sigma_l\right]
\right)
\begin{bmatrix}
0\\
p\textbf{D}
\end{bmatrix}
\end{align*}
Applying the definition of various terms in the latter system,
one gets the following update formulas 
at time $t = t^{n+1}=t^n+\Delta t$
\begin{align*}
\alpha^{(k),n+1}_i &= \alpha^{(k),n}_i - u \frac{\Delta t}{\Delta x}
\left(
\mathbb{P}_{i-\frac{1}{2}}\left[\Sigma_l,\Sigma_k\right] - \mathbb{P}_{i-\frac{1}{2}}\left[\Sigma_k,\Sigma_l\right]
\right)\\
\left(
\alpha^{(k)} 
\rho^{(k)}
\right)^{n+1}_i
&= \left(
\alpha^{(k)} 
\rho^{(k)}
\right)^{n}_i 
- u \frac{\Delta t}{\Delta x}\Delta \rho_i\\
\left(
\alpha^{(k)} 
\rho^{(k)}u^{(k)}
\right)^{n+1}_i
&= \left(
\alpha^{(k)} 
\rho^{(k)}
\right)^{n}_i 
u 
- u^2 \frac{\Delta t}{\Delta x}\Delta \rho_i\\
&
\qquad
+
p\frac{\Delta t}{\Delta x}
\left(
\mathbb{P}_{i+\frac{1}{2}}\left[\Sigma_k,\Sigma_k\right] 
+ \mathbb{P}_{i+\frac{1}{2}}\left[\Sigma_k,\Sigma_l\right]
- \mathbb{P}_{i-\frac{1}{2}}\left[\Sigma_k,\Sigma_k\right] 
- \mathbb{P}_{i-\frac{1}{2}}\left[\Sigma_l,\Sigma_k\right]
\right)
\end{align*}
where 
\begin{align*}
\Delta\rho_i 
&:=
\mathbb{P}_{i+\frac{1}{2}}\left[\Sigma_k,\Sigma_k\right]
\rho\left(\textbf{U}^{(k)}_{i}, 
\textbf{U}^{(k)}_{i+1}\right) 
+ 
\mathbb{P}_{i+\frac{1}{2}}\left[\Sigma_k,\Sigma_l\right] 
\rho\left(\textbf{U}^{(k)}_{i},
\textbf{U}^{(l)}_{i+1}\right)\\
&-
\mathbb{P}_{i-\frac{1}{2}}\left[\Sigma_k,\Sigma_k\right]
\rho\left(\textbf{U}^{(k)}_{i-1}, 
\textbf{U}^{(k)}_{i}\right) 
- 
\mathbb{P}_{i-\frac{1}{2}}\left[\Sigma_k,\Sigma_l\right] 
\rho\left(\textbf{U}^{(k)}_{i-1},
\textbf{U}^{(l)}_{i}\right)
\end{align*}

Such an update scheme yields the following proposition.
\begin{proposition}
[Necessary conditions for Abgrall's criterion fulfillment]
\label{prop:uniform}
The scheme (\ref{semi-discrete_form}) with Riemann Solver verifying (\ref{eq:RS-decomposition}) and Forward Euler time-stepping under uniform conditions (\ref{eq:uniform_conditions}) preserves the same conditions only if
the probability coefficients verify
\begin{equation}
\label{eq:necessary_proba}
\mathbb{P}_{i+\frac{1}{2}}\left[\Sigma_k,\Sigma_k\right] 
+ \mathbb{P}_{i+\frac{1}{2}}\left[\Sigma_k,\Sigma_l\right]
= 
\mathbb{P}_{i-\frac{1}{2}}\left[\Sigma_k,\Sigma_k\right] 
+ \mathbb{P}_{i-\frac{1}{2}}\left[\Sigma_l,\Sigma_k\right]
\end{equation}
\end{proposition}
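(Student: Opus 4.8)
The plan is to read the necessary condition directly off the three Forward-Euler update formulas derived immediately above the statement (for $\alpha^{(k)}$, $\alpha^{(k)}\rho^{(k)}$ and $\alpha^{(k)}\rho^{(k)}u^{(k)}$) by imposing that the reconstructed velocity again equals the uniform value $u$. All the convective bookkeeping has already been folded into those equations, so the remaining work is almost entirely algebraic: I would not re-derive the fluxes but simply exploit the cancellation structure they exhibit.

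First I would fix what ``preserves the same conditions'' means at the discrete level. Starting from data satisfying (\ref{eq:uniform_conditions}), the scheme preserves uniform conditions precisely when the reconstructed velocity $u^{(k),n+1}_i := \left(\alpha^{(k)}\rho^{(k)}u^{(k)}\right)^{n+1}_i \big/ \left(\alpha^{(k)}\rho^{(k)}\right)^{n+1}_i$ again equals $u$ (and the pressure stays $p$). For the particular condition (\ref{eq:necessary_proba}) it suffices to invoke velocity preservation, so I would restrict attention to the mass and momentum updates and leave pressure preservation aside.

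The key step is then to equate $\left(\alpha^{(k)}\rho^{(k)}u^{(k)}\right)^{n+1}_i$ with $u\cdot\left(\alpha^{(k)}\rho^{(k)}\right)^{n+1}_i$. Substituting the mass update into the right-hand side, the terms $u\left(\alpha^{(k)}\rho^{(k)}\right)^{n}_i$ and $-u^2\frac{\Delta t}{\Delta x}\Delta\rho_i$ appear identically on both sides and cancel, leaving only the pressure contribution $p\frac{\Delta t}{\Delta x}\bigl(\mathbb{P}_{i+\frac{1}{2}}\left[\Sigma_k,\Sigma_k\right]+\mathbb{P}_{i+\frac{1}{2}}\left[\Sigma_k,\Sigma_l\right]-\mathbb{P}_{i-\frac{1}{2}}\left[\Sigma_k,\Sigma_k\right]-\mathbb{P}_{i-\frac{1}{2}}\left[\Sigma_l,\Sigma_k\right]\bigr)$. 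Since $p>0$ for a genuine gas and $\frac{\Delta t}{\Delta x}>0$, this prefactor may be divided out, yielding exactly (\ref{eq:necessary_proba}). The implication runs from preservation to the probability identity, matching the ``only if'' in the statement.

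The only genuinely delicate point is not the cancellation but the bookkeeping of the $\beta$-indicators and the $\pm$-superscripts that produced the clean two-term flux used in the update equations. Under uniform conditions all interface velocities equal $u>0$, whence every $\beta^{(p,q)}_{i\pm\frac{1}{2}}=+1$, so $\bigl(\beta^{(k,l)}_{i\pm\frac{1}{2}}\bigr)^+=1$ while $\bigl(-\beta^{(l,k)}_{i\pm\frac{1}{2}}\bigr)^+=0$; this is precisely what reduces $\mathbb{E}_{i+\frac{1}{2}}\left[X^{(k)}\tilde{\textbf{F}}\right]$ to the simplified form entering the updates. I would therefore verify carefully that this sign collapse holds, since it is the single place where the hypotheses (\ref{eq:RS-decomposition}) and $u>0$ are actually used; once it is in place, the cancellation above delivers the result without further effort.
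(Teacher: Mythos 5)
Your argument is essentially identical to the paper's: the paper also forms $\left(u^{(k)}\right)^{n+1}_i = \left(\alpha^{(k)}\rho^{(k)}u^{(k)}\right)^{n+1}_i \big/ \left(\alpha^{(k)}\rho^{(k)}\right)^{n+1}_i$, observes the $u\left(\alpha^{(k)}\rho^{(k)}\right)^{n}_i$ and $u^2\frac{\Delta t}{\Delta x}\Delta\rho_i$ terms recombine into $u\left(\alpha^{(k)}\rho^{(k)}\right)^{n+1}_i$, and reads off that the residual pressure term forces (\ref{eq:necessary_proba}). Your careful check of the $\beta$-indicator sign collapse under $u>0$ corresponds exactly to the reductions the paper performs in the computations immediately preceding the proposition, so the proposal is correct and follows the same route.
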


\begin{proof}
The thesis follows by previous computations and the fact that
the value of the velocity associated to phase $k$ can be computed according to
\begin{align*}
\left(
u^{(k)}
\right)^{n+1}_i
&
=
\frac{
\left(
\alpha^{(k)} 
\rho^{(k)}u^{(k)}
\right)^{n+1}_i
}{
\left(
\alpha^{(k)} 
\rho^{(k)}
\right)^{n+1}_i
}
= u + 
p\frac{\Delta t}{\Delta x}
\frac{
\mathbb{P}_{i+\frac{1}{2}}\left[\Sigma_k,\Sigma_k\right] 
+ \mathbb{P}_{i+\frac{1}{2}}\left[\Sigma_k,\Sigma_l\right]
- \mathbb{P}_{i-\frac{1}{2}}\left[\Sigma_k,\Sigma_k\right] 
- \mathbb{P}_{i-\frac{1}{2}}\left[\Sigma_l,\Sigma_k\right]
}{
\left(
\alpha^{(k)} 
\rho^{(k)}
\right)^{n+1}_i
}
\end{align*} 
\end{proof}

\begin{remark}
This means that, under uniform conditions, the probability of finding phase $k$ on the right of the volume $i$ should be the same to the one of finding it on its left. 
One could also derive necessary conditions for the probability coefficients by computing the update formula for the pressure, which would require the knowledge of the specific form of the internal energy, though. 
For the point we would like to make, equation (\ref{eq:necessary_proba}) suffices: relations (\ref{ConsistencyConds_ansatz}) are \emph{one} of the infinite relations linking the probability coefficients, which render (\ref{eq:necessary_proba}) trivially true.
Alternatively, choosing the probability coefficients to verify (\ref{ConsistencyConds_ansatz}) ensure that the Abgrall criterion holds true, for many RS of common use (i.e. those verifying \ref{eq:RS-decomposition}).
\end{remark}

\bibliographystyle{plain}
\bibliography{./biblio}

\end{document}